\newtheorem{theorem}{Theorem}[chapter]
\newtheorem{proposition}[theorem]{Proposition}
\newtheorem{corollary}[theorem]{Corollary}
\newtheorem{lemma}[theorem]{Lemma}
\newtheorem{definition}[theorem]{Definition}
\newtheorem*{definition-notag}{Definition}
\theoremstyle{remark}
\newtheorem{remark}[theorem]{Remark}
\newtheorem{example}[theorem]{Example}
\DeclareMathOperator{\Var}{Var}
\DeclareMathOperator{\Cov}{Cov}
\DeclareMathOperator{\supp}{supp}
\DeclareMathOperator{\vol}{vol}
\DeclareMathOperator{\Median}{Median}
\DeclareMathOperator{\tr}{tr}
\DeclareMathOperator{\rank}{rank}
\def \< {\langle}
\def \> {\rangle}
\def \N {\mathbb{N}}
\def \R {\mathbb{R}}
\def \C {\mathbb{C}}
\def \E {\mathbb{E}}
\def \P {\mathbb{P}}
\def \EE {\mathcal{E}}
\def \NN {\mathcal{N}}
\def \TT {\mathcal{T}}
\def \a {\alpha}
\def \e {\varepsilon}
\def \d {\delta}
\def \l {\lambda}
\def \s {\sigma}
\def \S {\Sigma}
\def \psione {{\psi_1}}
\def \psitwo {{\psi_2}}
\def \smin {s_{\min}}
\def \smax {s_{\max}}
\begin{document}

\title{Introduction to the non-asymptotic analysis of random matrices}

\author{Roman Vershynin\footnote{Partially supported by NSF grants FRG DMS 0918623, DMS 1001829} \\
University of Michigan\\
\texttt{romanv@umich.edu}
}

\date{
  \small{{\bf Chapter~5} of: Compressed Sensing, Theory and Applications. Edited by Y.~Eldar and G.~Kutyniok. Cambridge University Press, 2012.
    pp.~210--268.}\\
  \qquad \vspace{4cm} \\ 
  August 11, 2010; final revision November 23, 2011
  }

\maketitle

\tableofcontents 

\clearpage

\noindent This is a tutorial on some basic non-asymptotic methods and concepts in random matrix theory. 
The reader will learn several tools for the analysis of the extreme singular values of random matrices 
with independent rows or columns. Many of these methods sprung off from the development of 
geometric functional analysis since the 1970's. They have applications in several fields, 
most notably in theoretical computer science, statistics and signal processing. 
A few basic applications are covered in this text, particularly for the problem of estimating covariance matrices 
in statistics and for validating probabilistic constructions of measurement matrices in compressed sensing. 
These notes are written particularly for graduate students and beginning researchers in different areas, 
including functional analysts, probabilists, theoretical statisticians, electrical engineers, 
and theoretical computer scientists.

\setcounter{chapter}{5}

\section{Introduction}					\label{s: introduction}

\paragraph{Asymptotic and non-asymptotic regimes}
Random matrix theory studies properties of $N \times n$ matrices $A$ chosen from some distribution
on the set of all matrices. As dimensions $N$ and $n$ grow to infinity, one observes that the spectrum of $A$ tends to 
stabilize. This is manifested in several {\em limit laws}, which may be regarded as random matrix versions of the 
central limit theorem. Among them is Wigner's semicircle law for the eigenvalues of symmetric Gaussian
matrices, the circular law for Gaussian matrices, the Marchenko-Pastur law for Wishart matrices $W = A^*A$ where
$A$ is a Gaussian matrix, the Bai-Yin and Tracy-Widom laws for the extreme eigenvalues of Wishart matrices $W$.
The books \cite{Mehta, AGZ, Deift-Gioev, Bai-Silverstein} offer thorough introduction to the classical 
problems of random matrix theory and its fascinating connections.  

The asymptotic regime where the dimensions $N,n \to \infty$ is well suited for the purposes of 
statistical physics, e.g. when random matrices serve as finite-dimensional models of 
infinite-dimensional operators. But in some other areas including statistics,
geometric functional analysis, and compressed sensing, the limiting regime may not be very useful \cite{RV ICM}. 
Suppose, for example, that we ask about the largest singular value $\smax(A)$
(i.e. the largest eigenvalue of $(A^*A)^{1/2}$); to be specific assume that $A$ is an $n \times n$ matrix
whose entries are independent standard normal random variables. 
The asymptotic random matrix theory answers this question as follows: 
the Bai-Yin law (see Theorem~\ref{Bai-Yin}) states that
$$
\smax(A) / 2\sqrt{n} \to 1	\quad \text{almost surely}
$$ 
as the dimension $n \to \infty$.
Moreover, the limiting distribution of $\smax(A)$ is known to be the Tracy-Widom law
(see \cite{Soshnikov, FeSo}).
In contrast to this, a non-asymptotic answer to the same question is the following: 
in {\em every} dimension $n$, one has 
$$
\smax(A) \le C\sqrt{n} \quad \text{with probability at least } 1 - e^{-n},
$$
 here $C$
is an absolute constant (see Theorems~\ref{Gaussian} and \ref{sub-gaussian rows}). 
The latter answer is less precise (because of an absolute constant $C$) but more quantitative
because for fixed dimensions $n$ it gives an exponential probability of success.\footnote{For 
this specific model (Gaussian matrices),Theorems~\ref{Gaussian} and \ref{Gaussian deviation}
even give a sharp absolute constant $C\approx 2$ here. But the 
result mentioned here is much more general as we will see later; it only requires
independence of rows or columns of $A$.}
This is the kind of answer we will seek in this text --
guarantees up to absolute constants in all dimensions, and with large probability.

\paragraph{Tall matrices are approximate isometries}
The following heuristic will be our guideline:
{\em tall random matrices should act as approximate isometries}.
So, an $N \times n$ random matrix $A$ with $N \gg n$ should act
almost like an isometric embedding of $\ell_2^n$ into $\ell_2^N$:
$$
(1-\d) K \|x\|_2 \le \|Ax\|_2 \le (1+\d) K \|x\|_2 \quad \text{for all } x \in \R^n
$$
where $K$ is an appropriate normalization factor and $\d \ll 1$. 
Equivalently, this says that all the singular values of $A$ are close to each other: 
$$
(1-\d)K \le \smin(A) \le \smax(A) \le (1+\d)K,
$$
where $\smin(A)$ and $\smax(A)$ denote the smallest and the largest singular values of $A$. 
Yet equivalently, this means that tall matrices are well conditioned: the {\em condition number}
\index{Condition number}
of $A$ is $\kappa(A) = \smax(A)/\smin(A) \le (1+\d)/(1-\d) \approx 1$. 

In the asymptotic regime and for random matrices with independent entries, our heuristic
is justified by Bai-Yin's law, which is Theorem~\ref{Bai-Yin} below. 
Loosely speaking, it states that as the dimensions $N,n$ increase to infinity while the aspect ratio $N/n$ is fixed,
we have
\begin{equation}							\label{Bai-Yin heuristic}
 \sqrt{N} - \sqrt{n} \approx \smin(A) \le \smax(A) \approx \sqrt{N} + \sqrt{n}.
\end{equation}
In these notes, we study $N \times n$ random matrices $A$ with independent rows or independent columns,
but not necessarily independent entries. 
We develop non-asymptotic versions of \eqref{Bai-Yin heuristic} for such matrices,
which should hold for all dimensions $N$ and $n$. The desired results should have the form 
\begin{equation}							\label{heuristic}
\sqrt{N} - C \sqrt{n} \le \smin(A) \le \smax(A) \le \sqrt{N} + C \sqrt{n}
\end{equation}
with large probability, e.g. $1-e^{-N}$, where $C$ is an absolute constant.\footnote{More accurately,  
we should expect $C=O(1)$ to depend on easily computable quantities of the distribution, 
such as its moments. This will be clear from the context.} 
For tall matrices, where $N \gg n$, both sides of this inequality
would be close to each other, which would guarantee that $A$ is an approximate isometry.

\paragraph{Models and methods}
We shall study quite general models of random matrices -- those with independent rows
or independent columns that are sampled from high-dimensional distributions. We will place either  
strong moment assumptions on the distribution (sub-gaussian growth of moments), 
or no moment assumptions at all (except finite variance). This leads us
to four types of main results: 

\begin{enumerate} \setlength{\itemsep}{-3pt}
  \item Matrices with independent sub-gaussian rows: Theorem~\ref{sub-gaussian rows}
  \item Matrices with independent heavy-tailed rows: Theorem~\ref{heavy-tailed rows} 
  \item Matrices with independent sub-gaussian columns: Theorem~\ref{sub-gaussian columns} 
  \item Matrices with independent heavy-tailed columns: Theorem~\ref{heavy-tailed columns} 
\end{enumerate}

These four models cover many natural classes of random matrices that occur in applications, 
including random matrices with independent entries (Gaussian and Bernoulli in particular)
and random sub-matrices of orthogonal matrices (random Fourier matrices in particular). 

The analysis of these four models is based on a variety of tools of probability theory 
and geometric functional analysis, most of which have not been covered in the texts on the
``classical'' random matrix theory. The reader will learn basics on sub-gaussian 
and sub-exponential random variables,
isotropic random vectors, large deviation inequalities for sums of independent random variables,
extensions of these inequalities to random matrices, and several basic methods of high dimensional probability 
such as symmetrization, decoupling, and covering ($\e$-net) arguments.

\paragraph{Applications}
In these notes we shall emphasize two applications, one in statistics and one in compressed sensing. 
Our analysis of random matrices with independent rows immediately applies to
a basic problem in statistics -- {\em estimating covariance matrices} of high-dimensional 
distributions. If a random matrix $A$ has i.i.d. rows $A_i$, then
$A^*A = \sum_i A_i \otimes A_i$ is the {\em sample covariance matrix}. If $A$ has
independent columns $A_j$, then $A^*A = (\< A_j, A_k\> )_{j,k}$ is the {\em Gram matrix}. 
Thus our analysis of the row-independent and column-independent models can be interpreted as a
study of sample covariance matrices and Gram matrices of high dimensional distributions.
We will see in Section~\ref{s: covariance} that for a general distribution in $\R^n$, its covariance matrix 
can be estimated from a sample of size $N = O(n \log n)$ drawn from the distribution. 
Moreover, for sub-gaussian distributions we have an even better bound $N = O(n)$.
For low-dimensional distributions, much fewer samples are needed -- if a distribution 
lies close to a subspace of dimension $r$ in $\R^n$, then a sample of size $N = O(r \log n)$ 
is sufficient for covariance estimation.

In compressed sensing, the best known measurement matrices are random. A sufficient
condition for a matrix to succeed for the purposes of compressed sensing
is given by the {\em restricted isometry property}. Loosely speaking, this property 
demands that all sub-matrices of given size be well-conditioned. This fits well in the circle of 
problems of the non-asymptotic random matrix theory. Indeed, we will see in Section~\ref{s: restricted isometries} 
that all basic models 
of random matrices are nice restricted isometries. These include Gaussian and Bernoulli matrices,
more generally all matrices with sub-gaussian independent entries, and even more generally
all matrices with sub-gaussian independent rows or columns. Also, the class of restricted
isometries includes random Fourier matrices, more generally random sub-matrices of 
bounded orthogonal matrices, and even more generally matrices whose rows are independent
samples from an isotropic distribution with uniformly bounded coordinates. 

\paragraph{Related sources}
This text is a tutorial rather than a survey, so we focus on explaining methods rather than results.
This forces us to make some concessions in our choice of the subjects. 
{\em Concentration of measure} and its applications to random matrix theory are only briefly mentioned. 
For an introduction into concentration of measure suitable for a beginner, 
see \cite{Ball} and \cite[Chapter~14]{Matousek}; 
for a thorough exposition see \cite{MS, Ledoux};
for connections with random matrices see \cite{DS, Ledoux extremal}. The monograph \cite{Ledoux-Talagrand}
also offers an introduction into concentration of measure and related probabilistic methods in 
analysis and geometry, some of which we shall use in these notes.

We completely avoid the important (but more difficult) model of {\em symmetric random matrices} 
with independent entries on and above the diagonal. 
Starting from the work of F\"uredi and Komlos \cite{FuKo},
the largest singular value (the spectral norm) of symmetric random matrices has been a subject 
of study in many works; see e.g. \cite{Meckes, Vu, PeSo} and the references therein. 

We also did not even attempt to discuss sharp 
small {\em deviation inequalities} (of Tracy-Widom type) for the extreme eigenvalues. 
Both these topics and much more are discussed in the surveys \cite{DS, Ledoux extremal, RV ICM},
which serve as bridges between asymptotic and non-asymptotic problems in random matrix theory.

Because of the absolute constant $C$ in \eqref{heuristic}, our analysis of
the smallest singular value (the {\em ``hard edge''}) will only be useful for sufficiently tall matrices, where $N \ge C^2 n$. 
For square and almost square matrices, the hard edge problem will be only briefly mentioned in Section~\ref{s: entries}.
The surveys \cite{Tao-Vu survey, RV ICM} discuss this problem at length, and they offer a glimpse
of connections to other problems of random matrix theory 
and additive combinatorics.

Many of the results and methods presented in these notes are known in one form or another.
Some of them are published while some others belong to the folklore of probability in Banach spaces, 
geometric functional analysis, and related areas. 
When available, historic references are given in Section~\ref{s: notes}.

\paragraph{Acknowledgements}
The author is grateful to the colleagues who made a number of improving suggestions for the 
earlier versions of the manuscript, in particular to Richard Chen, Subhroshekhar Ghosh, Alexander Litvak, Deanna Needell, Holger Rauhut, 
S V N Vishwanathan and the anonymous referees. Special thanks are due to Ulas Ayaz and Felix Krahmer
who thoroughly read the entire text, and whose numerous comments led to significant improvements of 
this tutorial.

\section{Preliminaries}			\label{s: preliminaries}

\subsection{Matrices and their singular values}

The main object of our study will be an $N \times n$ matrix $A$ with real or complex entries. 
We shall state all results in the real case; the reader will be able 
to adjust them to the complex case as well. Usually but not always one should think of tall matrices $A$, 
those for which $N \ge n > 1$. By passing to the adjoint matrix $A^*$, many results can be carried over 
to ``flat'' matrices, those for which $N \le n$. 

It is often convenient to study $A$ through the $n \times n$ symmetric positive-semidefinite matrix the matrix $A^*A$. 
The eigenvalues of $|A| := \sqrt{A^*A}$ are therefore non-negative real numbers. Arranged in a non-decreasing 
order, they are called the {\em singular values}\footnote{In the literature, singular values are also called {\em s-numbers}.}
\index{Singular values}
of $A$ and denoted $s_1(A) \ge \cdots \ge s_n(A) \ge 0$.
Many applications require estimates on the extreme singular values
$$
\smax(A) := s_1(A), \quad \smin(A) := s_n(A).
$$
The smallest singular value is only of interest for tall matrices, since 
for $N < n$ one automatically has $\smin(A) = 0$. 

Equivalently, $\smax(A)$ and $\smin(A)$ 
are respectively the smallest number $M$ and the largest number $m$ such that 
\begin{equation}										\label{mM}
m \|x\|_2 \le \|Ax\|_2 \le M \|x\|_2
\quad \text{for all } x \in \R^n.
\end{equation}
In order to interpret this definition geometrically, we look at $A$ as a linear operator
from $\R^n$ into $\R^N$.
The Euclidean distance between any two points in $\R^n$
can increase by at most the factor $\smax(A)$ and decrease
by at most the factor $\smax(A)$ under the action of $A$.
Therefore, the extreme singular values control the distortion of the Euclidean geometry 
under the action of $A$. If $\smax(A) \approx \smin(A) \approx 1$ then $A$ 
acts as an {\em approximate isometry},\index{Approximate isometries} or more accurately an approximate
isometric embedding of $\ell_2^n$ into $\ell_2^N$. 

The extreme singular values can also be described 
in terms of the {\em spectral norm of $A$},\index{Spectral norm} which is by definition
\begin{equation}							\label{spectral norm}
\|A\| = \|A\|_{\ell_2^n \to \ell_2^N} = \sup_{x \in \R^n \setminus \{0\}} \frac{\|Ax\|_2}{\|x\|_2}
= \sup_{x \in S^{n-1}} \|Ax\|_2.
\end{equation}
\eqref{mM} gives a link between the extreme singular values and 
the spectral norm:
$$
\smax(A) = \|A\|, \quad \smin(A) = 1/\|A^\dagger\|
$$
where $A^\dagger$ denotes the pseudoinverse of $A$; if $A$ is invertible then $A^\dagger = A^{-1}$.

\subsection{Nets}

Nets are convenient means to discretize compact sets. In our study we will mostly need 
to discretize the unit Euclidean sphere $S^{n-1}$ in the definition of the spectral norm \eqref{spectral norm}. 
Let us first recall a general definition of an $\e$-net. 

\begin{definition}[Nets, covering numbers]	\index{Net} \index{Covering numbers}
  Let $(X,d)$ be a metric space and let $\e>0$. 
  A subset $\NN_\e$ of $X$ is called an {\em $\e$-net} of $X$
  if every point $x \in X$ can be approximated to within $\e$ 
  by some point $y \in \NN_\e$, i.e. so that 
  $d(x,y) \le \e$.
  The minimal cardinality of an $\e$-net of $X$, if finite, is denoted $\NN(X,\e)$
  and is called the {\em covering number}\footnote{Equivalently, $\NN(X,\e)$ is the minimal 
    number of balls with radii $\e$ and with centers in $X$ needed to cover $X$.} 
  of $X$ (at scale $\e$).
\end{definition}

From a characterization of compactness we remember that $X$ is compact 
if and only if $\NN(X,\e) < \infty$ for each $\e > 0$. 
A quantitative estimate on $\NN(X, \e)$
would give us a {\em quantitative version of compactness} of $X$.\footnote{In 
  statistical learning theory and geometric functional analysis, 
  $\log \NN(X,\e)$ is called {\em the metric entropy of $X$}. 
  In some sense it measures the ``complexity'' of metric space $X$.}
Let us therefore take a simple example of a metric space, the unit Euclidean sphere $S^{n-1}$ 
equipped with the Euclidean metric\footnote{A similar result holds for the 
geodesic metric on the sphere, since for small $\e$ these two distances are equivalent.}
$d(x,y) = \|x-y\|_2$, and estimate its covering numbers. 

\begin{lemma}[Covering numbers of the sphere]                     				\label{net cardinality}
  The unit Euclidean sphere $S^{n-1}$ equipped with the Euclidean metric satisfies
  for every $\e>0$ that 
  $$
  \NN(S^{n-1},\e) \le \Big( 1 + \frac{2}{\e} \Big)^n.
  $$
\end{lemma}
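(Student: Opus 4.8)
The plan is to use a volumetric (packing) argument, which is the standard way to bound covering numbers of subsets of Euclidean space. First I would choose a maximal $\e$-separated subset $\NN_\e$ of $S^{n-1}$, i.e.\ a maximal set of points whose pairwise distances all exceed $\e$; such a set exists by Zorn's lemma (or simply by a greedy construction, since the sphere is compact). Maximality immediately implies that $\NN_\e$ is an $\e$-net: if some $x \in S^{n-1}$ had distance $> \e$ to every point of $\NN_\e$, we could add $x$ to the set, contradicting maximality. So it remains to bound $|\NN_\e|$.

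The key step is the packing bound. Since the points of $\NN_\e$ are $\e$-separated, the open balls of radius $\e/2$ centered at these points are pairwise disjoint. Moreover, each such ball is contained in the ball of radius $1 + \e/2$ centered at the origin, because every center lies on $S^{n-1}$ and hence has norm $1$. Comparing Lebesgue volumes in $\R^n$, and using that the volume of a Euclidean ball of radius $r$ scales as $r^n$, we get
$$
|\NN_\e| \cdot \Big(\frac{\e}{2}\Big)^n \le \Big(1 + \frac{\e}{2}\Big)^n,
$$
so that
$$
|\NN_\e| \le \Big(\frac{1 + \e/2}{\e/2}\Big)^n = \Big(1 + \frac{2}{\e}\Big)^n.
$$
Since $\NN(S^{n-1},\e) \le |\NN_\e|$, this proves the claim.

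There is no serious obstacle here; the argument is routine. The one point that deserves a word of care is that the inscribed balls $B(y,\e/2)$ for $y \in \NN_\e$ are full-dimensional balls in $\R^n$ (not subsets of the sphere), and that they genuinely fit inside $B(0, 1+\e/2)$: this uses only the triangle inequality together with $\|y\|_2 = 1$. One should also note the argument works for all $\e > 0$ — for $\e \ge 2$ the sphere has diameter $2 \le \e$, so a single point suffices and the bound is trivially true, while for small $\e$ the estimate is the meaningful one. A minor aesthetic alternative would be to inflate by $\e$ rather than $\e/2$ (covering $S^{n-1}$ directly by a net and comparing to a slightly larger shell), but the half-radius packing version gives exactly the stated constant $1 + 2/\e$.
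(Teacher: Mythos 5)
Your proof is correct and follows essentially the same volume argument as the paper: take a maximal $\e$-separated subset, observe that maximality makes it an $\e$-net, and bound its cardinality by packing disjoint $\e/2$-balls inside the ball of radius $1+\e/2$. The extra remarks about the case $\e \ge 2$ and the triangle inequality are fine but not needed.
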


\begin{proof}
This is a simple {\em volume argument}.
Let us fix $\e>0$ and choose $\NN_\e$ to be a maximal $\e$-separated subset of $S^{n-1}$.
In other words, $\NN_\e$ is such that 
$d(x,y) \ge \e$ for all $x, y \in \NN_\e$, $x \ne y$,
and no subset of $S^{n-1}$ containing $\NN_\e$ has this property.\footnote{One 
can in fact construct $\NN_\e$ inductively by first selecting an arbitrary 
point on the sphere, and at each next step selecting a point that is at distance at least $\e$
from those already selected. By compactness, this algorithm will terminate after finitely 
many steps and it will yield a set $\NN_\e$ as we required.}

The maximality property implies that $\NN_\e$ is an $\e$-net of $S^{n-1}$. Indeed, 
otherwise there would exist  $x \in S^{n-1}$ that is at least $\e$-far from all points in $\NN_\e$.
So $\NN_\e \cup \{x\}$ would still be an $\e$-separated set, contradicting the minimality property.

Moreover, the separation property implies via the triangle inequality that the balls of radii $\e/2$ 
centered at the points in $\NN_\e$ are disjoint. On the other hand, all such balls lie in
$(1+\e/2) B_2^n$ where $B_2^n$ denotes the unit Euclidean ball centered at the origin. 
Comparing the volume gives
$\vol \big( \frac{\e}{2} B_2^n \big) \cdot |\NN_\e|
\le \vol \big( (1 + \frac{\e}{2}) B_2^n \big)$.
Since $\vol \big( r B_2^n \big) = r^n \vol(B_2^n)$ for all $r \ge 0$, we conclude that 
$|\NN_\e| \le (1+\frac{\e}{2})^n / (\frac{\e}{2})^n = (1+\frac{2}{\e})^n$
as required.
\end{proof}

Nets allow us to reduce the complexity of computations with linear operators. 
One such example is the computation of the spectral norm. To evaluate the 
spectral norm by definition \eqref{spectral norm} one needs to take the supremum 
over the whole sphere $S^{n-1}$. However, one can essentially replace
the sphere by its $\e$-net:

\begin{lemma}[Computing the spectral norm on a net] \index{Spectral norm!computing on a net} \label{norm on net general}
  Let $A$ be an $N \times n$ matrix,
  and let $\NN_\e$ be an $\e$-net of  $S^{n-1}$
  for some $\e \in [0,1)$. Then 
  $$
  \max_{x \in \NN_\e} \|Ax\|_2 \le \|A\| \le (1 - \e)^{-1} \max_{x \in \NN_\e} \|Ax\|_2
  $$  
\end{lemma}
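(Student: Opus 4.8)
The plan is to prove the two inequalities separately; both follow from elementary considerations, with the triangle inequality doing all the work on the harder side.

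The left-hand inequality is immediate. Since $\NN_\e \subseteq S^{n-1}$, the maximum of $\|Ax\|_2$ over $\NN_\e$ is taken over a subset of the sphere, so it is at most the supremum of $\|Ax\|_2$ over all of $S^{n-1}$, which by \eqref{spectral norm} equals $\|A\|$.

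For the right-hand inequality, I would first fix a point $x \in S^{n-1}$ at which the supremum in \eqref{spectral norm} is attained; this is legitimate because $S^{n-1}$ is compact and $x \mapsto \|Ax\|_2$ is continuous, so $\|Ax\|_2 = \|A\|$ for some such $x$. (If one prefers to avoid invoking compactness, the same argument goes through with a near-maximizer $x$ satisfying $\|Ax\|_2 \ge \|A\| - \d$ and a limiting argument in $\d$.) Now choose $y \in \NN_\e$ with $\|x-y\|_2 \le \e$, which exists by the net property, and write $Ax = Ay + A(x-y)$. The triangle inequality together with the definition of $\|A\|$ gives
$$
\|A\| = \|Ax\|_2 \le \|Ay\|_2 + \|A(x-y)\|_2 \le \max_{z \in \NN_\e} \|Az\|_2 + \|A\|\,\|x-y\|_2 \le \max_{z \in \NN_\e} \|Az\|_2 + \e \|A\|.
$$
Since $\e < 1$ and $\|A\| < \infty$, we may subtract $\e\|A\|$ from both sides to obtain $(1-\e)\|A\| \le \max_{z \in \NN_\e} \|Az\|_2$, which is the desired bound after dividing by $1-\e$.

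There is no real obstacle here; the only step that requires a moment's attention is the final rearrangement, where we move the term $\e\|A\|$ to the left-hand side and use $\e \in [0,1)$ to divide by $1-\e$ — this is exactly the point at which the hypothesis $\e < 1$ is needed, and it is also why a crude bound $\|A(x-y)\|_2 \le \|A\|\|x-y\|_2$ (rather than anything sharper) suffices.
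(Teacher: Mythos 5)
Your proof is correct and follows essentially the same route as the paper: fix a maximizer $x \in S^{n-1}$, approximate it by $y \in \NN_\e$, use the triangle inequality to pass from $\|Ax\|_2$ to $\|Ay\|_2$ up to an error $\e\|A\|$, and rearrange using $\e < 1$. The only cosmetic difference is that you write $\|Ax\|_2 \le \|Ay\|_2 + \|A(x-y)\|_2$ where the paper writes $\|Ay\|_2 \ge \|Ax\|_2 - \|Ax-Ay\|_2$, which is the same inequality.
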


\begin{proof}
The lower bound in the conclusion follows from the definition. To prove the upper bound
let us fix $x \in S^{n-1}$ for which $\|A\| = \|Ax\|_2$, and choose $y \in \NN_\e$ 
which approximates $x$ as $\|x-y\|_2 \le \e$. By the triangle inequality we have
$\|Ax-Ay\|_2 \le \|A\| \|x-y\|_2 \le \e \|A\|$.
It follows that 
$$
\|Ay\|_2 \ge \|Ax\|_2 - \|Ax-Ay\|_2
\ge \|A\| - \e\|A\| = (1-\e)\|A\|.
$$
Taking maximum over all $y \in \NN_\e$ in this inequality, we complete the proof.
\end{proof}

A similar result holds for symmetric $n \times n$ matrices $A$,
whose spectral norm can be computed via the associated quadratic form: 
$\|A\| = \sup_{x \in S^{n-1}} |\< Ax,x\> |$.
Again, one can essentially replace the sphere by its $\e$-net:

\begin{lemma}[Computing the spectral norm on a net]  \index{Spectral norm!computing on a net}	\label{norm on net}
  Let $A$ be a symmetric $n \times n$ matrix,
  and let $\NN_\e$ be an $\e$-net of $S^{n-1}$
  for some $\e \in [0,1)$. Then 
  $$
  \|A\| = \sup_{x \in S^{n-1}} |\< Ax, x\> | 
  \le (1 - 2\e)^{-1} \sup_{x \in \NN_\e} |\< Ax, x\> |.
  $$  
\end{lemma}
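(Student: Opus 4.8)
The plan is to mimic the structure of the proof of Lemma~\ref{norm on net general}: the lower bound is trivial since $\NN_\e \subset S^{n-1}$, so the work is entirely in the upper bound. Let me abbreviate $M := \sup_{x \in \NN_\e} |\langle Ax, x\rangle|$. First I would fix $x \in S^{n-1}$ achieving (or nearly achieving) the supremum $\|A\| = \sup_{x \in S^{n-1}} |\langle Ax, x\rangle|$, and choose $y \in \NN_\e$ with $\|x - y\|_2 \le \e$. The goal is to control $|\langle Ax, x\rangle|$ in terms of $|\langle Ay, y\rangle| \le M$ plus an error term that is itself a small multiple of $\|A\|$, which can then be absorbed into the left-hand side.

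The key algebraic step is the identity
$$
\langle Ax, x\rangle - \langle Ay, y\rangle = \langle Ax, x - y\rangle + \langle A(x-y), y\rangle,
$$
obtained by adding and subtracting $\langle Ax, y\rangle$ (and using symmetry of $A$ only implicitly — actually this identity holds for any $A$, but symmetry is what lets us express $\|A\|$ via the quadratic form in the first place). Bounding each term by Cauchy–Schwarz and the operator norm gives
$$
\big| \langle Ax, x\rangle - \langle Ay, y\rangle \big| \le \|A\|\,\|x\|_2\,\|x-y\|_2 + \|A\|\,\|x-y\|_2\,\|y\|_2 \le 2\e \|A\|,
$$
using $\|x\|_2 = \|y\|_2 = 1$ and $\|x - y\|_2 \le \e$. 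Hence $\|A\| = |\langle Ax, x\rangle| \le |\langle Ay, y\rangle| + 2\e\|A\| \le M + 2\e\|A\|$. Rearranging yields $(1 - 2\e)\|A\| \le M$, and since $\e < 1/2$ is needed for this to be meaningful we divide to get $\|A\| \le (1-2\e)^{-1} M$, as claimed. (One should note that the statement is only nontrivial for $\e < 1/2$; for $\e \in [1/2, 1)$ the bound is vacuous as the right-hand side is infinite or negative, so there is nothing to prove.)

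I do not anticipate a serious obstacle here; the only mild subtlety is the standard compactness point that the supremum defining $\|A\|$ is attained on $S^{n-1}$ (the quadratic form $x \mapsto \langle Ax, x\rangle$ is continuous on the compact sphere), which justifies fixing an exact maximizer $x$ rather than an approximate one. If one prefers to avoid invoking attainment, the same computation goes through with an $x$ satisfying $|\langle Ax,x\rangle| \ge \|A\| - \eta$ for arbitrary $\eta > 0$ and then letting $\eta \to 0$. The factor $2\e$ (versus $\e$ in Lemma~\ref{norm on net general}) is exactly the price of the quadratic form having $x$ appear in both slots, which is why two error terms arise rather than one.
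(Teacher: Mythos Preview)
Your proof is correct and follows essentially the same route as the paper's: fix a maximizer $x$, approximate by $y \in \NN_\e$, use the identity $\langle Ax,x\rangle - \langle Ay,y\rangle = \langle Ax, x-y\rangle + \langle A(x-y), y\rangle$, bound by $2\e\|A\|$, and rearrange. Your added remarks on attainment via compactness and on the vacuity of the bound for $\e \ge 1/2$ are sound elaborations not present in the paper's terse version.
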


\begin{proof}
Let us choose $x \in S^{n-1}$ for which $\|A\| = |\< Ax, x\> |$,
and choose $y \in \NN_\e$ which approximates $x$ as $\|x - y\|_2 \le \e$.
By the triangle inequality we have
\begin{align*}							
|\< Ax, x\> - \< Ay, y\> |
&= |\< Ax, x-y\> + \< A(x-y), y\> |\\
&\le \|A\| \|x\|_2 \|x-y\|_2 + \|A\| \|x-y\|_2 \|y\|_2
\le 2 \e \|A\|.
\end{align*}
It follows that 
$|\< Ay, y \> | \ge |\< Ax, x\> | - 2 \e \|A\| = (1-2\e) \|A\|$.
Taking the maximum over all $y \in \NN_\e$ in this inequality completes the proof.
\end{proof}

\subsection{Sub-gaussian random variables}	\index{Sub-gaussian!random variables}			\label{s: sub-gaussian}

In this section we introduce the class of sub-gaussian random variables,\footnote{It would be more
rigorous to say that we study {\em sub-gaussian probability distributions}. 
The same concerns some other properties of random variables and random vectors we study later in this text.
However, it is convenient for us to focus on random variables and vectors because we will form random matrices out of them.}
those whose distributions are dominated by the distribution of a centered gaussian
random variable. This is a convenient and quite wide class, which contains in particular
the standard normal and all bounded random variables. 

Let us briefly recall some of the well known properties of the {\em standard normal random variable} $X$.
The distribution of $X$ has density $\frac{1}{\sqrt{2 \pi}} e^{-x^2/2}$ and is denoted $N(0,1)$. 
Estimating the integral of this density between $t$ and $\infty$ one checks that 
the tail of a standard normal random variable $X$ decays super-exponentially:
\begin{equation}							\label{normal tail}
\P \{ |X| > t \} = \frac{2}{\sqrt{2 \pi}} \int_t^\infty e^{-x^2/2} \, dx
\le 2 e^{-t^2/2}, \quad t \ge 1,
\end{equation}
see e.g. \cite[Theorem 1.4]{Durrett} for a more precise two-sided inequality. 
The absolute moments of $X$ can be computed as
\begin{equation}							\label{normal moments}
(\E |X|^p)^{1/p} = \sqrt{2} \Big[ \frac{\Gamma((1+p)/2)}{\Gamma(1/2)} \Big]^{1/p}
= O(\sqrt{p}), \quad p \ge 1.
\end{equation}
The moment generating function of $X$ equals 
\begin{equation}							\label{normal mgf}
\E \exp(tX) = e^{t^2/2}, \quad t \in \R.
\end{equation}

Now let $X$ be a general random variable. We observe that these three properties are equivalent --
a super-exponential tail decay like in \eqref{normal tail}, the moment growth \eqref{normal moments},
and the growth of the moment generating function like in \eqref{normal mgf}. 
We will then focus on the class of random variables that satisfy these properties, which we shall call
sub-gaussian random variables. 

\begin{lemma}[Equivalence of sub-gaussian properties]			\label{sub-gaussian properties}
  Let $X$ be a random variable. Then the following properties are equivalent with parameters 
  $K_i > 0$ differing from each other by at most an absolute constant factor.\footnote{The precise meaning 
  of this equivalence is the following. There exists an absolute constant $C$ such that property $i$
  implies property $j$ with parameter $K_j \le C K_i$ for any two properties $i,j=1,2,3$.}
  
  \begin{enumerate}
    \item Tails:  
      $\P \{ |X| > t \} \le \exp(1-t^2/K_1^2)$ for all $t \ge 0$;
    \item Moments:
      $(\E |X|^p)^{1/p} \le K_2 \sqrt{p}$ for all $p \ge 1$;
    \item Super-exponential moment:
      $\E \exp(X^2/K_3^2) \le e$.
  \end{enumerate}
  Moreover, if $\E X = 0$ then properties 1--3 are also equivalent 
  to the following one: 
  \begin{enumerate} \setcounter{enumi}{3}
    \item Moment generating function:
      $\E \exp(tX) \le \exp(t^2 K_4^2)$ for all $t \in \R$.
  \end{enumerate}
  
\end{lemma}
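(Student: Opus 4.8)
The plan is to establish the cycle of implications $1 \Rightarrow 2 \Rightarrow 3 \Rightarrow 1$, and then, under the assumption $\E X = 0$, add the two-way link $2 \Leftrightarrow 4$ (or $3 \Leftrightarrow 4$), tracking in each step how the new parameter compares to the old one up to an absolute constant. I will rely only on elementary tools: the integral identity $\E |X|^p = \int_0^\infty p t^{p-1} \P\{|X| > t\}\, dt$, Stirling's formula for $\Gamma$, the Taylor expansion of $\exp$, and Markov's inequality.

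For $1 \Rightarrow 2$, I would plug the tail bound into the integral identity, substitute $s = t^2/K_1^2$, and recognize the resulting integral as a Gamma function: $\E|X|^p \le e \cdot \frac{p}{2} K_1^p \Gamma(p/2)$, so that $(\E|X|^p)^{1/p} \le C K_1 \sqrt{p}$ after applying $\Gamma(p/2) \le (p/2)^{p/2}$ and absorbing $e^{1/p}, (p/2)^{1/p} = O(1)$. For $2 \Rightarrow 3$, I would expand $\E \exp(X^2/K_3^2) = \sum_{p \ge 0} \frac{\E X^{2p}}{p! K_3^{2p}}$, bound $\E X^{2p} \le (K_2 \sqrt{2p})^{2p} = (2K_2^2)^p p^p$ using property 2, use $p! \ge (p/e)^p$ so that the $p$-th term is at most $(2 e K_2^2 / K_3^2)^p$, and choose $K_3 = C K_2$ large enough that this geometric series sums to at most $e$ (the $p=0$ term contributes $1$, leaving a small tail). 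For $3 \Rightarrow 1$, this is just Markov applied to the nonnegative variable $\exp(X^2/K_3^2)$: $\P\{|X| > t\} = \P\{\exp(X^2/K_3^2) > \exp(t^2/K_3^2)\} \le e \cdot \exp(-t^2/K_3^2)$, which is exactly property 1 with $K_1 = K_3$.

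For the centered case, assuming $\E X = 0$, I would prove $2 \Rightarrow 4$ by expanding $\E \exp(tX) = 1 + t\E X + \sum_{p \ge 2} \frac{t^p \E X^p}{p!}$, noting the linear term vanishes, bounding $|\E X^p| \le (K_2\sqrt p)^p$, and using $p! \ge (p/e)^p$ to get each term $\le (e K_2^2 t^2 / p)^{p/2}$; summing and comparing against the Taylor series of $\exp(t^2 K_4^2) = \sum (t^2 K_4^2)^k / k!$ yields property 4 with $K_4 = C K_2$ — this requires a little care to handle small versus large $|t|$ uniformly, e.g. by bounding the even and odd contributions and using $1 + x \le e^x$. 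The reverse $4 \Rightarrow 1$ is the classical Chernoff/Laplace-transform bound: optimize $\P\{X > t\} \le \E\exp(\lambda X) e^{-\lambda t} \le \exp(\lambda^2 K_4^2 - \lambda t)$ over $\lambda > 0$, giving $\exp(-t^2/(4K_4^2))$, then do the same for $-X$ and union bound.

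The main obstacle is not any single implication — each is routine — but the bookkeeping of constants: one must verify that composing the chain $1 \to 2 \to 3 \to 1$ (and the analogous arguments involving $4$) multiplies parameters only by absolute constants, which is automatic since each arrow does so, but it should be stated cleanly. A secondary subtlety is the step $2 \Rightarrow 3$ (and $2 \Rightarrow 4$): the series converges only once $K_3$ (resp. $K_4$) is taken a definite constant multiple of $K_2$ larger than the "radius of convergence" threshold $\sqrt{2e}\,K_2$, so the choice of constant must be made explicit before summing, rather than after. Everything else is bounding Gamma functions and geometric series.
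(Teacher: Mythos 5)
Your proposal is correct and follows essentially the same route as the paper: the cyclic chain $1 \Rightarrow 2 \Rightarrow 3 \Rightarrow 1$ via the tail-integral identity, Taylor expansion with $p! \ge (p/e)^p$, and exponential Markov, together with $2 \Rightarrow 4$ by series comparison and $4 \Rightarrow 1$ by the Chernoff bound with $\lambda = t/(2K_4^2)$. The constant-bookkeeping remark and the caveat about fixing $K_3, K_4$ before summing the geometric series are both handled implicitly in the paper's proof (which normalizes $K_i = 1$ by homogeneity and picks the small constant $c$ or large constant $C$ up front), so there is no gap.
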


\begin{proof}
{\bf 1. $\Rightarrow$ 2.}
Assume property 1 holds. 
By homogeneity, rescaling $X$ to $X/K_1$ we can assume that $K_1=1$. 
Recall that for every non-negative random variable $Z$, integration by parts 
yields the identity 
$\E Z = \int_0^\infty \P \{ Z \ge u \} \, du$.
We apply this for $Z = |X|^p$. 
After change of variables $u = t^p$, we obtain using property 1 that
$$
\E |X|^p 
= \int_0^\infty \P \{ |X| \ge t \} \, p t^{p-1} \, dt
\le \int_0^\infty e^{1-t^2} p t^{p-1} \, dt
= \big( \frac{ep}{2} \big) \Gamma \big( \frac{p}{2} \big) 
\le \big( \frac{ep}{2} \big) \big( \frac{p}{2} \big)^{p/2}.
$$
Taking the $p$-th root yields property 2 with a suitable absolute constant $K_2$.

{\bf 2. $\Rightarrow$ 3.}
Assume property 2 holds. 
As before, by homogeneity we may assume that $K_2 = 1$. Let $c>0$ be a sufficiently small absolute constant. 
Writing the Taylor series of the exponential function, we obtain
$$
\E \exp(c X^2) 
= 1 + \sum_{p=1}^\infty \frac{c^p \E(X^{2p})}{p!}
\le 1 + \sum_{p=1}^\infty \frac{c^p (2p)^p}{p!}	
\le 1 + \sum_{p=1}^\infty (2c/e)^p.
$$
The first inequality follows from property 2; in the second one we use $p! \ge (p/e)^p$.
For small $c$ this gives $\E \exp(c X^2) \le e$, which 
is property 3 with $K_3 = c^{-1/2}$.

{\bf 3. $\Rightarrow$ 1.}
Assume property 3 holds. As before we may assume that $K_3=1$. 
Exponentiating and using Markov's inequality\footnote{This simple argument is sometimes called 
exponential Markov's inequality.}
and then property 3, we have
$$
\P \{ |X| > t \} 
= \P \{ e^{X^2} \ge e^{t^2} \} 
\le e^{-t^2} \E e^{X^2}
\le e^{1-t^2}.
$$
This proves property 1 with $K_1=1$.

{\bf 2. $\Rightarrow$ 4.}  Let us now assume that $\E X = 0$ and property 2 holds; 
as usual we can assume that $K_2 = 1$. 
We will prove that property 4 holds with an appropriately large absolute constant $C = K_4$.
This will follow by estimating Taylor series for the exponential function
\begin{equation}										\label{mgf above}
\E \exp(tX) 
= 1 + t \E X + \sum_{p=2}^\infty \frac{t^p \E X^p}{p!} 
\le 1 +  \sum_{p=2}^\infty \frac{t^p p^{p/2}}{p!}	
\le 1 +  \sum_{p=2}^\infty \Big( \frac{e|t|}{\sqrt{p}} \Big)^p.
\end{equation}
The first inequality here follows from $\E X = 0$ and property 2; the second one holds since $p! \ge (p/e)^p$.
We compare this with Taylor's series for 
\begin{equation}										\label{exp t squared}
\exp(C^2 t^2) 
= 1 + \sum_{k=1}^\infty \frac{(C|t|)^{2k}}{k!}
\ge 1 + \sum_{k=1}^\infty \Big( \frac{C|t|}{\sqrt{k}} \Big)^{2k}
= 1 + \sum_{p \in 2\N} \Big( \frac{C|t|}{\sqrt{p/2}} \Big)^p.
\end{equation}
The first inequality here holds because $p! \le p^p$; the second one is obtained by substitution $p=2k$.
One can show that the series in \eqref{mgf above} is bounded by the series in \eqref{exp t squared} with large absolute constant $C$. 
We conclude that $\E \exp(tX) \le \exp(C^2 t^2)$, which proves property~4.

{\bf 4. $\Rightarrow$ 1.}
Assume property 4 holds; we can also assume that $K_4=1$. 
Let $\l > 0$ be a parameter to be chosen later. By exponential Markov inequality,
and using the bound on the moment generating function given in property 4, we obtain
$$
\P \{ X \ge t \} = \P \{ e^{\l X} \ge e^{\l t} \}
\le e^{-\l t} \E e^{\l X}
\le e^{-\l t + \l^2}.
$$
Optimizing in $\l$ and thus choosing $\l = t/2$ we conclude that 
$\P \{ X \ge t \} \le e^{-t^2/4}$.
Repeating this argument for $-X$, we also obtain $\P \{ X \le -t \} \le e^{-t^2/4}$.
Combining these two bounds we conclude that 
$\P \{ |X| \ge t \} \le 2 e^{-t^2/4} \le e^{1-t^2/4}$.
Thus property 1 holds with $K_1=2$.
The lemma is proved. 
\end{proof}

\begin{remark}
\begin{enumerate}
  \item The constants $1$ and $e$ in properties 1 and 3 respectively are chosen for convenience.
  Thus the value $1$ can be replaced by any positive number and the value $e$ can be replaced
  by any number greater than $1$.   
    
  \item The assumption $\E X = 0$ is only needed to prove the necessity of property 4;
  the sufficiency holds without this assumption. 
\end{enumerate}
\end{remark}

\begin{definition}[Sub-gaussian random variables]  \index{Sub-gaussian!random variables}
  A random variable $X$ that satisfies one of the equivalent properties 1 -- 3 in Lemma~\ref{sub-gaussian properties}
  is called a {\em sub-gaussian random variable}.
  The {\em sub-gaussian norm} \index{Sub-gaussian!norm} of $X$, denoted $\|X\|_\psitwo$, 
  is defined to be the smallest $K_2$ in property 2.
  In other words,\footnote{The sub-gaussian norm is also called $\psitwo$ norm in the literature.}
  $$
  \|X\|_\psitwo = \sup_{p \ge 1} p^{-1/2} (\E |X|^p)^{1/p}.
  $$  
\end{definition}

The class of sub-gaussian random variables on a given probability space 
is thus a normed space. By Lemma~\ref{sub-gaussian properties},
every sub-gaussian random variable $X$ satisfies:
\begin{gather}
\P \{ |X| > t \} \le \exp(1-ct^2/\|X\|_\psitwo^2) \quad \text{for all } t \ge 0; 	\label{sub-gaussian tail}\\
(\E |X|^p)^{1/p} \le \|X\|_\psitwo \sqrt{p} \quad \text{for all } p \ge 1; 		\label{sub-gaussian moments}\\
\E \exp(cX^2/\|X\|_\psitwo^2) \le e; \nonumber\\
\text{if $\E X =0$ then } \E \exp(tX) \le \exp(C t^2 \|X\|_\psitwo^2) \quad \text{for all } t \in \R, \label{sub-gaussian mgf}
\end{gather}
where $C, c > 0$ are absolute constants. Moreover, up to absolute constant factors, 
$\|X\|_\psitwo$ is the smallest possible number in each of these inequalities.

\begin{example}
Classical examples of sub-gaussian random variables are Gaussian, Bernoulli
and all bounded random variables.  
\begin{enumerate}
  \item {\bf (Gaussian):} A standard normal random variable $X$ is sub-gaussian with 
  $\|X\|_\psitwo \le C$ where $C$ is an absolute constant. This follows from \eqref{normal moments}. 
  More generally, if $X$ is 
  a centered normal random variable with variance $\s^2$, then $X$ is sub-gaussian 
  with $\|X\|_\psitwo \le C \s$.
  
  \item {\bf (Bernoulli):} \index{Bernoulli!random variables} Consider a random variable $X$ with distribution 
  $\P\{X=-1\} = \P\{X=1\} = 1/2$. We call $X$ a {\em symmetric Bernoulli random variable}.
  Since $|X|=1$, it follows that $X$ is a sub-gaussian random variable with $\|X\|_\psitwo = 1$. 
      
  \item {\bf (Bounded):} More generally, consider any bounded random variable $X$, 
  thus $|X| \le M$ almost surely for some $M$. Then $X$ is a sub-gaussian random variable with $\|X\|_\psitwo \le M$.
  We can write this more compactly as
  $\|X\|_\psitwo \le \|X\|_\infty$.
\end{enumerate}
\end{example}

A remarkable property of the normal distribution is {\em rotation invariance}. 
Given a finite number of independent centered normal random variables $X_i$,
their sum $\sum_i X_i$ is also a centered normal random variable, 
obviously with $\Var(\sum_i X_i) = \sum_i \Var(X_i)$.
Rotation invariance passes onto sub-gaussian random variables, although approximately:

\begin{lemma}[Rotation invariance]  \index{Rotation invariance}			\label{rotation invariance}
  Consider a finite number of independent centered sub-gaussian random variables $X_i$.
  Then $\sum_i X_i$ is also a centered sub-gaussian random variable. Moreover, 
  $$
  \big\| \sum_i X_i \big\|_\psitwo^2 
  \le C \sum_i \|X_i\|_\psitwo^2
  $$
  where $C$ is an absolute constant. 
\end{lemma}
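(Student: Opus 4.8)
The plan is to use the moment generating function characterization of sub-gaussianity (property 4 in Lemma~\ref{sub-gaussian properties}), which is the one property that behaves well under sums of independent random variables, exactly mirroring the way one proves that a sum of independent Gaussians is Gaussian by multiplying moment generating functions. Since each $X_i$ is centered and sub-gaussian, \eqref{sub-gaussian mgf} gives $\E \exp(t X_i) \le \exp(C t^2 \|X_i\|_\psitwo^2)$ for all $t \in \R$, where $C$ is the absolute constant from that inequality.

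First I would observe that $\sum_i X_i$ is centered, since each $X_i$ is. Then, using independence, the moment generating function factorizes:
\begin{equation*}
\E \exp\Big(t \sum_i X_i\Big) = \prod_i \E \exp(t X_i) \le \prod_i \exp(C t^2 \|X_i\|_\psitwo^2) = \exp\Big(C t^2 \sum_i \|X_i\|_\psitwo^2\Big).
\end{equation*}
This shows that $\sum_i X_i$ satisfies property 4 with parameter $K_4^2 = C \sum_i \|X_i\|_\psitwo^2$ (i.e. $K_4 = \sqrt{C}\,(\sum_i \|X_i\|_\psitwo^2)^{1/2}$). Since $\sum_i X_i$ is centered, Lemma~\ref{sub-gaussian properties} (the implications $4 \Rightarrow 1 \Rightarrow 2$, or $4 \Rightarrow 2$ directly through the stated equivalence) then guarantees that $\sum_i X_i$ is sub-gaussian and that its sub-gaussian norm $\|\sum_i X_i\|_\psitwo$, which is the smallest valid $K_2$, is bounded by an absolute constant multiple of $K_4$. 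Squaring, $\|\sum_i X_i\|_\psitwo^2 \le C' \sum_i \|X_i\|_\psitwo^2$ for a new absolute constant $C'$, which is the claimed inequality after renaming the constant.

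I do not anticipate a serious obstacle here; the argument is essentially a one-line computation with moment generating functions plus an appeal to the equivalence lemma. The only point requiring a little care is bookkeeping the absolute constants through the chain of equivalences: the constant from property 4 back to property 2 is absolute by Lemma~\ref{sub-gaussian properties}, so the final $C$ is a product of two absolute constants and hence absolute. One should also note that the lemma's hypothesis that the $X_i$ are centered is exactly what licenses the use of the moment generating function bound \eqref{sub-gaussian mgf}, which otherwise need not hold; and finiteness of the collection ensures the product and the sum are well-defined with no convergence issues.
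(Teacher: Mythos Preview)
Your proof is correct and follows essentially the same approach as the paper: factorize the moment generating function using independence, apply \eqref{sub-gaussian mgf} to each factor, and then convert the resulting bound on $K_4$ back to a bound on the $\psitwo$-norm via the equivalence of properties 2 and 4 in Lemma~\ref{sub-gaussian properties}. The paper's argument is the same one-line computation you wrote, with the same bookkeeping of absolute constants.
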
 

\begin{proof}
The argument is based on estimating the moment generating function.
Using independence and \eqref{sub-gaussian mgf} we have for every $t \in \R$: 
\begin{align*}
\E \exp \big( t \sum_i X_i \big)
&= \E \prod_i \exp(t X_i) 
= \prod_i \E \exp(t X_i)
\le \prod_i \exp(C t^2 \|X_i\|_\psitwo^2) \\
&= \exp(t^2 K^2) \quad \text{where } K^2 = C \sum_i \|X_i\|_\psitwo^2.
\end{align*}
Using the equivalence of properties 2 and 4 in Lemma~\ref{sub-gaussian properties}
we conclude that $\|\sum_i X_i\|_\psitwo \le C_1 K$ where $C_1$ is an absolute constant. 
The proof is complete.
\end{proof}

The rotation invariance immediately yields a {\em large deviation inequality} 
for sums of independent sub-gaussian random variables: 

\begin{proposition}[Hoeffding-type inequality]	\index{Hoeffding-type inequality}	\label{sub-gaussian large deviations}
  Let $X_1,\ldots,X_N$ be independent centered sub-gaussian random variables,
  and let $K = \max_i \|X_i\|_\psitwo$. 
  Then for every  $a = (a_1,\ldots,a_N) \in \R^N$ and every $t \ge 0$, we have
  $$
  \P \Big\{ \Big| \sum_{i=1}^N a_i X_i \Big| \ge t \Big\} 
  \le e \cdot \exp \Big( -\frac{ct^2}{K^2\|a\|_2^2} \Big)
  $$   
  where $c>0$ is an absolute constant. 
\end{proposition}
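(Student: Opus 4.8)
The plan is to recognize $\sum_i a_i X_i$ as a sum of independent centered sub-gaussian random variables, use the rotation invariance lemma (Lemma~\ref{rotation invariance}) to bound its sub-gaussian norm by a multiple of $K\|a\|_2$, and then simply read off the tail from the sub-gaussian tail estimate \eqref{sub-gaussian tail}.

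First I would dispose of the degenerate case $a = 0$: then the left-hand side vanishes for $t > 0$ and equals $1 \le e$ for $t = 0$, so the inequality holds trivially; assume henceforth $a \ne 0$. For each $i$ the random variable $a_i X_i$ is centered, since $\E X_i = 0$, and is sub-gaussian; moreover, directly from the definition $\|X\|_\psitwo = \sup_{p \ge 1} p^{-1/2}(\E |X|^p)^{1/p}$ the sub-gaussian norm is homogeneous, so $\|a_i X_i\|_\psitwo = |a_i| \, \|X_i\|_\psitwo \le |a_i| \, K$.

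Next I would apply Lemma~\ref{rotation invariance} to the independent centered sub-gaussian random variables $a_1 X_1, \ldots, a_N X_N$. It gives that $S := \sum_{i=1}^N a_i X_i$ is again a centered sub-gaussian random variable, with
$$
\|S\|_\psitwo^2 \le C \sum_{i=1}^N \|a_i X_i\|_\psitwo^2 \le C K^2 \sum_{i=1}^N a_i^2 = C K^2 \|a\|_2^2,
$$
where $C$ is the absolute constant from that lemma. Finally, applying the sub-gaussian tail bound \eqref{sub-gaussian tail} to $S$ yields, for every $t \ge 0$,
$$
\P\{|S| > t\} \le \exp\!\Big( 1 - \frac{c_0 t^2}{\|S\|_\psitwo^2} \Big) \le \exp\!\Big( 1 - \frac{c_0 t^2}{C K^2 \|a\|_2^2} \Big) = e \cdot \exp\!\Big( -\frac{c t^2}{K^2 \|a\|_2^2} \Big),
$$
with $c := c_0 / C$ an absolute constant, $c_0$ being the constant appearing in \eqref{sub-gaussian tail}. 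The bound for $\P\{|S| \ge t\}$ follows since $\P\{|S| \ge t\} = \lim_{s \uparrow t} \P\{|S| > s\}$ and the right-hand side above is continuous in $t$.

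I do not expect a genuine obstacle here — the proposition is essentially a corollary of rotation invariance. The only points needing a little care are verifying the scalar-homogeneity of $\|\cdot\|_\psitwo$ (immediate from the defining supremum) and the bookkeeping of the two absolute constants, one from Lemma~\ref{rotation invariance} and one from \eqref{sub-gaussian tail}, so that they combine into the single constant $c$ claimed in the statement.
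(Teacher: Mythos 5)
Your proof is correct and is exactly the paper's argument: apply the rotation-invariance Lemma~\ref{rotation invariance} to bound $\|\sum_i a_i X_i\|_\psitwo$ by $C^{1/2} K \|a\|_2$, then read off the tail from \eqref{sub-gaussian tail}. The extra remarks about homogeneity of $\|\cdot\|_\psitwo$, the $a=0$ case, and passing from $>$ to $\ge$ are just the routine bookkeeping that the paper leaves implicit.
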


\begin{proof}
The rotation invariance (Lemma~\ref{rotation invariance}) implies the bound
$\|\sum_i a_i X_i\|_\psitwo^2 \le C \sum_i a_i^2 \|X_i\|_\psitwo^2 \le C K^2 \|a\|_2^2$.
Property \eqref{sub-gaussian tail} yields the required tail decay.
\end{proof}

\begin{remark}
  One can interpret these results (Lemma~\ref{rotation invariance} and Proposition~\ref{sub-gaussian large deviations})
  as one-sided {\em non-asymptotic manifestations of the central limit theorem}. For example, 
  consider the normalized sum of independent symmetric Bernoulli random variables
  $S_N = \frac{1}{\sqrt{N}} \sum_{i=1}^N \e_i$. Proposition~\ref{sub-gaussian large deviations}
  yields the tail bounds $\P \{ |S_N| > t \}  \le e \cdot e^{-ct^2}$ for any number of terms $N$. 
  Up to the absolute constants $e$ and $c$, these tails coincide with those of the standard normal random variable
  \eqref{normal tail}.
  
\end{remark}

Using moment growth \eqref{sub-gaussian moments}
instead of the tail decay \eqref{sub-gaussian tail}, we immediately 
obtain from Lemma~\ref{rotation invariance} a general form of the well known Khintchine inequality:

\begin{corollary}[Khintchine inequality]	\index{Khinchine inequality}				\label{Khintchine}		
  Let $X_i$ be a finite number of independent sub-gaussian random variables
  with zero mean, unit variance, and $\|X_i\|_{\psitwo} \le K$.
  Then, for every sequence of coefficients $a_i$ and every exponent $p \ge 2$ we have
  $$
  \big( \sum_i a_i^2 \big)^{1/2}
  \le \big( \E \big| \sum_i a_i X_i \big|^p \big)^{1/p}
  \le C K \sqrt{p} \, \big( \sum_i a_i^2 \big)^{1/2}
  $$   
  where $C$ is an absolute constant.
\end{corollary}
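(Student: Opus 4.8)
The plan is to derive the Khintchine inequality directly from the rotation invariance lemma (Lemma~\ref{rotation invariance}) for the upper bound, and from a convexity/Jensen argument for the lower bound.

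\textbf{Upper bound.} First I would set $S = \sum_i a_i X_i$. Since each $a_i X_i$ is centered and sub-gaussian with $\|a_i X_i\|_\psitwo = |a_i| \|X_i\|_\psitwo \le |a_i| K$, Lemma~\ref{rotation invariance} gives that $S$ is centered sub-gaussian with
$$
\|S\|_\psitwo^2 \le C_0 \sum_i a_i^2 \|X_i\|_\psitwo^2 \le C_0 K^2 \sum_i a_i^2,
$$
for an absolute constant $C_0$. Then the moment bound \eqref{sub-gaussian moments} applied to $S$ yields, for every $p \ge 2$,
$$
\big( \E |S|^p \big)^{1/p} \le \|S\|_\psitwo \sqrt{p} \le \sqrt{C_0}\, K \sqrt{p} \, \big( \sum_i a_i^2 \big)^{1/2},
$$
which is the claimed upper bound with $C = \sqrt{C_0}$.

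\textbf{Lower bound.} For the lower bound I would use that the $L^p$ norm dominates the $L^2$ norm for $p \ge 2$ (by Jensen's inequality, or Hölder), so $(\E |S|^p)^{1/p} \ge (\E |S|^2)^{1/2}$. Then I expand $\E |S|^2 = \E \big( \sum_i a_i X_i \big)^2 = \sum_{i,j} a_i a_j \E(X_i X_j)$; by independence and zero mean the cross terms vanish, and by unit variance $\E(X_i^2) = 1$, leaving $\E|S|^2 = \sum_i a_i^2$. Hence $(\E|S|^p)^{1/p} \ge (\sum_i a_i^2)^{1/2}$.

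\textbf{Main obstacle.} There is no real obstacle here — both directions are short given the tools already developed. The only point requiring a little care is keeping the constants honest: the constant $C$ in the statement is the one coming out of the rotation invariance lemma (squared and rooted), and one should note that the hypotheses (zero mean, unit variance) are used in two different places — zero mean is needed both for rotation invariance and for killing the cross terms in the lower bound, while unit variance is needed only to identify $\E|S|^2$ with $\sum_i a_i^2$ exactly (rather than up to constants). If one only assumed $\E X_i^2 \asymp 1$ the lower bound would still hold up to an absolute factor.
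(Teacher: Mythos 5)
Your proof is correct and follows essentially the same route as the paper: the upper bound via the rotation invariance lemma (Lemma~\ref{rotation invariance}) combined with the moment growth property \eqref{sub-gaussian moments}, and the lower bound via H\"older/Jensen plus independence and unit variance. The only cosmetic difference is that the paper phrases the upper bound as ``argue as in Proposition~\ref{sub-gaussian large deviations}'' rather than invoking Lemma~\ref{rotation invariance} directly, but the underlying argument is identical.
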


\begin{proof}
The lower bound follows by independence and H\"older's inequality: indeed, 
$\big( \E \big| \sum_i a_i X_i \big|^p \big)^{1/p} \ge \big( \E \big| \sum_i a_i X_i \big|^2 \big)^{1/2}
= \big( \sum_i a_i^2 \big)^{1/2}$. For the upper bound, we argue as in Proposition~\ref{sub-gaussian large deviations},
but use property \eqref{sub-gaussian moments}.
\end{proof}

\subsection{Sub-exponential random variables}	\index{Sub-exponential!random variables} \label{s: sub-exponential}

Although the class of sub-gaussian random variables is natural and quite wide, 
it leaves out some useful random variables which have tails heavier than gaussian.
One such example is a standard exponential random variable
-- a non-negative random variable with exponential tail decay 
\begin{equation}							\label{exponential}
\P\{X \ge t\} = e^{-t}, \quad t \ge 0.
\end{equation}
To cover such examples, we consider a class of {\em sub-exponential random variables},
those with at least an exponential tail decay. With appropriate 
modifications, the basic properties of sub-gaussian random variables hold
for sub-exponentials. 
In particular, a version of Lemma~\ref{sub-gaussian properties} holds with a similar proof 
for sub-exponential properties, except for property 4 of the moment generating function. 
Thus for a random variable $X$ the following properties are equivalent with parameters 
$K_i > 0$ differing from each other by at most an absolute constant factor:
\begin{gather}
\P \{ |X| > t \} \le \exp(1-t/K_1) \quad \text{for all } t \ge 0;  	\label{sub-exponential tail} \\
(\E |X|^p)^{1/p} \le K_2 p \quad \text{for all } p \ge 1; 		\label{sub-exponential moments} \\
\E \exp(X/K_3) \le e.									\label{sub-exponential integrability}
\end{gather} 

\begin{definition}[Sub-exponential random variables]  \index{Sub-exponential!random variables}
  A random variable $X$ that satisfies one of the equivalent properties 
  \eqref{sub-exponential tail} -- \eqref{sub-exponential integrability}
  is called a {\em sub-exponential random variable}.
  The {\em sub-exponential norm} \index{Sub-exponential!norm} of $X$, denoted $\|X\|_\psione$, 
  is defined to be the smallest parameter $K_2$.
  In other words,
  $$
  \|X\|_\psione = \sup_{p \ge 1} p^{-1} (\E |X|^p)^{1/p}.
  $$ 
\end{definition}

\begin{lemma}[Sub-exponential is sub-gaussian squared]				\label{sub-exponential squared}
  A random variable $X$ is sub-gaussian if and only if $X^2$ is sub-exponential.
  Moreover,
  $$
  \|X\|_\psitwo^2 \le \|X^2\|_\psione \le 2 \|X\|_\psitwo^2. 
  $$
\end{lemma}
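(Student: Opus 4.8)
The plan is to exploit the definitions of the two norms directly via the moment characterizations, since both $\|\cdot\|_{\psitwo}$ and $\|\cdot\|_{\psione}$ are defined as suprema over $p$ of normalized $L^p$-norms. Write $a_p = (\E|X|^p)^{1/p}$. Then by definition $\|X\|_{\psitwo} = \sup_{p\ge 1} p^{-1/2} a_p$ and $\|X^2\|_{\psione} = \sup_{p \ge 1} p^{-1} (\E|X|^{2p})^{1/p} = \sup_{p\ge 1} p^{-1} a_{2p}^2$. The key algebraic observation is the reindexing identity $(\E|X^2|^p)^{1/p} = (\E|X|^{2p})^{1/p} = \big((\E|X|^{2p})^{1/(2p)}\big)^2 = a_{2p}^2$, which ties the two quantities together once we track the mismatch between the index $p$ and the index $2p$.

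First I would establish the lower bound $\|X\|_{\psitwo}^2 \le \|X^2\|_{\psione}$. For any $p \ge 1$, using $a_{2p}^2 = (\E|X^2|^p)^{1/p} \le p \,\|X^2\|_{\psione}$, we get $a_{2p} \le \sqrt{p}\,\|X^2\|_{\psione}^{1/2}$, hence $(2p)^{-1/2} a_{2p} \le \tfrac{1}{\sqrt 2}\|X^2\|_{\psione}^{1/2}$. Taking the supremum over $p \ge 1$ covers all even indices, but we need the supremum over \emph{all} $q \ge 1$ in the definition of $\|X\|_{\psitwo}$; here I would invoke monotonicity of $q \mapsto a_q$ (from Jensen / the inclusion of $L^q$ spaces on a probability space) to pass from an arbitrary $q$ up to the next even integer $2\lceil q/2\rceil \le q+2 \le 3q$, so that $q^{-1/2} a_q \le q^{-1/2} a_{2\lceil q/2 \rceil} \le q^{-1/2}\sqrt{2\lceil q/2\rceil}\,\|X^2\|_{\psione}^{1/2}$; a short estimate of $2\lceil q/2\rceil / q$ then yields the claimed constant $1$ (possibly one must be slightly careful here — I return to this below).

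Next, the upper bound $\|X^2\|_{\psione} \le 2\|X\|_{\psitwo}^2$. For any $p \ge 1$, $(\E|X^2|^p)^{1/p} = a_{2p}^2 \le (2p)\,\|X\|_{\psitwo}^2$ directly from the moment bound \eqref{sub-gaussian moments} applied at exponent $2p$, so $p^{-1}(\E|X^2|^p)^{1/p} \le 2\|X\|_{\psitwo}^2$; taking the supremum over $p \ge 1$ gives the result immediately and cleanly.

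The main obstacle is the lower bound: the definition of $\|X\|_{\psitwo}$ takes a supremum over all real $p \ge 1$, while squaring naturally produces only even moments of $X$, so one must interpolate the odd-index moments. The honest way is via monotonicity of $a_q$ in $q$, and then one must check that the loss incurred by rounding $q$ up to the nearest even integer is absorbed by the constant — for $q \ge 1$ one has $2\lceil q/2\rceil \le q+1 \le 2q$, giving $q^{-1/2}a_q \le \sqrt{2}\,(2\lceil q/2\rceil)^{-1/2} a_{2\lceil q/2\rceil} \le \sqrt 2 \cdot \tfrac{1}{\sqrt 2}\|X^2\|_{\psione}^{1/2}$, which is exactly the bound with constant $1$. (If a cleaner route is preferred: one can instead define everything through the integrability property \eqref{sub-gaussian tail}–\eqref{sub-exponential integrability}, noting $\E\exp(X^2/K^2) = \E\exp((X^2)/K^2)$ makes the $\psitwo$-integrability of $X$ and the $\psione$-integrability of $X^2$ literally the same statement with the same parameter $K$ — but since the norms are defined via the moment conditions, the moment argument above is the one that matches the stated constants.)
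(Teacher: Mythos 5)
Your proof is essentially the natural expansion of the paper's terse ``follows easily from the definition,'' and it is correct in substance: the upper bound $\|X^2\|_\psione \le 2\|X\|_\psitwo^2$ is clean and exactly right, and the lower bound $\|X\|_\psitwo^2 \le \|X^2\|_\psione$ does go through. Two small remarks on the lower bound. First, there is a conceptual detour: in the definition of $\|X^2\|_\psione$ the parameter $p$ ranges over all \emph{real} $p\ge 1$, so the substitution $q=2p$ already covers every real $q\ge 2$, not merely the even integers. There is therefore no need to round $q$ up to $2\lceil q/2\rceil$; the only exponents that genuinely require interpolation are $q\in[1,2)$, which you can handle directly by monotonicity: $q^{-1/2}a_q \le a_2 \le \|X^2\|_\psione^{1/2}$, using $a_2^2 = \E X^2 \le \|X^2\|_\psione$ (the $p=1$ term in the supremum). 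Second, the intermediate claim $2\lceil q/2\rceil \le q+1$ is false for non-integer $q$: for instance $q=2.1$ gives $2\lceil 1.05\rceil = 4 > 3.1$. You don't actually need it --- the weaker, correct bound $2\lceil q/2\rceil \le 2q$ (valid for all $q\ge 1$) is what you use to get $\sqrt{q^*/q}\le\sqrt 2$, and that suffices for the stated constant $1$. With these trims the argument is exactly the intended one.
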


\begin{proof}
This follows easily from the definition.
\end{proof}

The moment generating function of a sub-exponential random variable has a similar 
upper bound as in the sub-gaussian case (property 4 in Lemma~\ref{sub-gaussian properties}).
The only real difference is that the bound only holds in a neighborhood of zero rather than on the 
whole real line. This is inevitable, as the moment generating function
of an exponential random variable \eqref{exponential} does not exist for $t \ge 1$. 

\begin{lemma}[Mgf of sub-exponential random variables]		\label{sub-exponential mgf}
  Let $X$ be a centered sub-exponential random variable. Then, for $t$ such that
  $|t| \le c/\|X\|_\psione$, one has
  $$
  \E \exp(t X) \le \exp(C t^2 \|X\|_\psione^2)
  $$
  where $C, c > 0$ are absolute constants. 
\end{lemma}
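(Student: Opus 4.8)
The plan is to estimate the moment generating function directly through its Taylor series, exactly as in the implication $2 \Rightarrow 4$ of Lemma~\ref{sub-gaussian properties}, but now feeding in the sub-exponential moment bound \eqref{sub-exponential moments} in place of the sub-gaussian one. By homogeneity we may rescale $X$ so that $\|X\|_\psione = 1$; then it suffices to prove $\E \exp(tX) \le \exp(C t^2)$ for all $t$ with $|t| \le c$, since undoing the rescaling turns this into the asserted bound $\E \exp(tX) \le \exp(C t^2 \|X\|_\psione^2)$ for $|t| \le c/\|X\|_\psione$.

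First I would expand $\E \exp(tX) = 1 + t\,\E X + \sum_{p \ge 2} \frac{t^p \E X^p}{p!}$. The centering hypothesis $\E X = 0$ annihilates the linear term. For $p \ge 2$ I would bound $|\E X^p| \le \E |X|^p \le p^p$ using \eqref{sub-exponential moments} with $K_2 = 1$, and then use $p! \ge (p/e)^p$ to get $\frac{|t|^p p^p}{p!} \le (e|t|)^p$. Thus $\E \exp(tX) \le 1 + \sum_{p \ge 2}(e|t|)^p$, and for $|t| \le \tfrac{1}{2e}$ the geometric series sums to $\frac{(e|t|)^2}{1 - e|t|} \le 2e^2 t^2$.

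Finally I would conclude $\E \exp(tX) \le 1 + 2e^2 t^2 \le \exp(2e^2 t^2)$ via the elementary inequality $1 + x \le e^x$, which yields the statement with the explicit choices $C = 2e^2$ and $c = \tfrac{1}{2e}$.

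The only point that deserves a moment's care — and the genuine obstacle compared with the sub-gaussian situation — is that the Taylor series manipulation is legitimate only once $|t|$ is confined to a neighborhood of the origin, where the series converges; this is precisely why, unlike property~4 of Lemma~\ref{sub-gaussian properties}, the bound cannot be expected to hold for all $t \in \R$ (consider the exponential random variable \eqref{exponential}). The restriction $|t| \le c/\|X\|_\psione$ is exactly what makes the geometric series summable, and everything else is the routine bookkeeping already performed in Lemma~\ref{sub-gaussian properties}.
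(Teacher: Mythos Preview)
Your proof is correct and follows essentially the same approach as the paper's own argument: rescale to $\|X\|_\psione=1$, expand the Taylor series, kill the linear term by centering, bound the moments by $p^p$, and sum the resulting geometric series for $|t|\le 1/(2e)$ to obtain the constants $C=2e^2$, $c=1/(2e)$. The paper even phrases it the same way, as a repetition of the implication $2\Rightarrow 4$ in Lemma~\ref{sub-gaussian properties} with the sub-exponential moment bound substituted in.
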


\begin{proof}
The argument is similar to the sub-gaussian case. 
We can assume that $\|X\|_\psione=1$ by replacing $X$ with $X/\|X\|_\psione$
and $t$ with $t \|X\|_\psione$. Repeating the proof of the 
implication 2 $\Rightarrow$ 4 of Lemma~\ref{sub-gaussian properties}
and using $\E|X|^p \le p^p$ this time, we obtain that 
$\E \exp(tX) \le 1 + \sum_{p=2}^\infty (e|t|)^p$.
If $|t| \le 1/2e$ then the right hand side is bounded by $1 + 2e^2 t^2 \le \exp(2e^2 t^2)$.
This completes the proof. 
\end{proof}

Sub-exponential random variables satisfy a  {\em large deviation inequality} 
similar to the one for sub-gaussians (Proposition~\ref{sub-gaussian large deviations}).
The only significant difference is that {\em two tails} have to appear here -- 
a gaussian tail responsible for the central limit theorem, 
and an exponential tail coming from the tails of each term.

\begin{proposition}[Bernstein-type inequality]	\index{Bernstein-type inequality} \label{sub-exponential large deviations}
  Let~$X_1,\ldots,X_N$ be independent centered sub-exponential random variables,
  and $K = \max_i \|X_i\|_\psione$. 
  Then for every $a = (a_1,\ldots,a_N) \in \R^N$
  and every $t \ge 0$, we have
  $$
  \P \Big\{ \Big| \sum_{i=1}^N a_i X_i \Big| \ge t \Big\} 
  \le 2 \exp \Big[ -c \min \Big( \frac{t^2}{K^2\|a\|_2^2}, \; \frac{t}{K\|a\|_\infty} \Big) \Big]
  $$  
  where $c>0$ is an absolute constant. 
\end{proposition}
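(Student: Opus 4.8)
The plan is to follow the classical exponential-moment (Bernstein) scheme. By rescaling each $X_i$ we may assume $K = 1$; and by absorbing $a_i$ into $X_i$ we see that it suffices to bound $\P\{\sum_i Y_i \ge t\}$ and $\P\{-\sum_i Y_i \ge t\}$ where $Y_i = a_i X_i$ is centered sub-exponential with $\|Y_i\|_\psione \le |a_i|$. First I would invoke the exponential Markov inequality: for $\l > 0$,
\[
\P\Big\{ \sum_i Y_i \ge t \Big\} \le e^{-\l t} \prod_i \E \exp(\l Y_i).
\]
Now I apply Lemma~\ref{sub-exponential mgf} to each factor: provided $\l |a_i| \le c$ for every $i$, i.e. $\l \le c/\|a\|_\infty$, we get $\E \exp(\l Y_i) \le \exp(C \l^2 a_i^2)$, hence $\prod_i \E \exp(\l Y_i) \le \exp(C \l^2 \|a\|_2^2)$. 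This yields
\[
\P\Big\{ \sum_i Y_i \ge t \Big\} \le \exp\big( -\l t + C \l^2 \|a\|_2^2 \big)
\qquad \text{for } 0 < \l \le c/\|a\|_\infty .
\]

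The next step is to optimize the exponent $-\l t + C\l^2\|a\|_2^2$ in $\l$ over the admissible range $(0, c/\|a\|_\infty]$. The unconstrained minimizer is $\l^* = t/(2C\|a\|_2^2)$. If $\l^* \le c/\|a\|_\infty$ — equivalently $t \le (2Cc)\|a\|_2^2/\|a\|_\infty$, which is the "sub-gaussian regime" — plugging in $\l^*$ gives the bound $\exp(-t^2/(4C\|a\|_2^2))$, i.e. the gaussian tail $\exp(-c_1 t^2/\|a\|_2^2)$. Otherwise, $t$ is large and I take the boundary value $\l = c/\|a\|_\infty$; then $-\l t + C\l^2\|a\|_2^2 = -ct/\|a\|_\infty + Cc^2\|a\|_2^2/\|a\|_\infty^2$, and since in this regime $C c \|a\|_2^2/\|a\|_\infty^2 \le t/(2\|a\|_\infty)$, the second term is at most half the first, so the exponent is $\le -ct/(2\|a\|_\infty)$, giving the exponential tail $\exp(-c_2 t/\|a\|_\infty)$. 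In both cases the bound is dominated by $\exp\big(-c_3 \min(t^2/\|a\|_2^2,\, t/\|a\|_\infty)\big)$ for a suitable absolute constant $c_3$. Applying the same argument to $-Y_i$ and adding the two tail bounds produces the factor $2$ and completes the proof after restoring $K$.

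The only real subtlety — and the step I would be most careful about — is bookkeeping the case split so that the constants glue together into a single clean $\min(\cdot,\cdot)$ expression; the two regimes are precisely delimited by whether the unconstrained optimal $\l^*$ falls inside the interval $(0, c/\|a\|_\infty]$, and one must check that on the boundary regime the ``leftover'' quadratic term is genuinely absorbed by a constant fraction of the linear term. This is routine but is where a sloppy argument would lose the sharp $\min$ form. Everything else — the product formula for the mgf, the use of Lemma~\ref{sub-exponential mgf}, and the symmetrization over $\pm X_i$ — is standard and causes no difficulty.
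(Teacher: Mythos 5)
Your proposal is correct and follows essentially the same route as the paper: exponential Markov inequality, application of Lemma~\ref{sub-exponential mgf} valid on the restricted range $\lambda \le c/\|a\|_\infty$, optimization of the quadratic exponent over that range, and symmetrization over $\pm X_i$. The paper compresses your two-case analysis into the single choice $\lambda = \min\big(t/2C\|a\|_2^2,\, c/\|a\|_\infty\big)$, but the verification you spell out for the boundary case (that the leftover quadratic term is absorbed by half the linear term) is precisely what makes that choice yield the stated $\min$ bound.
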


\begin{proof}
Without loss of generality, we assume that $K=1$ by replacing $X_i$ with $X_i/K$ 
and $t$ with $t/K$. We use the exponential Markov inequality for
the sum $S = \sum_i a_i X_i$ and with a parameter $\l>0$:
$$
\P \{ S \ge t \} 
= \P \{ e^{\l S} \ge e^{\l t} \}  
\le e^{-\l t} \E e^{\l S}
= e^{-\l t} \prod_i \E \exp (\l a_i X_i).
$$
If $|\l| \le c/\|a\|_\infty$ then $|\l a_i| \le c$ for all $i$, so Lemma~\ref{sub-exponential mgf} yields
$$
\P \{ S \ge t \}  
\le e^{-\l t} \; \prod_i \exp (C \l^2 a_i^2) 
=  \exp(-\l t + C \l^2 \|a\|_2^2).
$$
Choosing $\l = \min( t/2C\|a\|_2^2, \, c/\|a\|_\infty )$, we obtain that 
$$
\P \{ S \ge t \} \le \exp \Big[ - \min \Big( \frac{t^2}{4C\|a\|_2^2}, \; \frac{ct}{2\|a\|_\infty} \Big) \Big].
$$
Repeating this argument for $-X_i$ instead of $X_i$, we obtain the same bound for
$\P \{ -S \ge t \}$. A combination of these two bounds completes the proof.
\end{proof}

\begin{corollary}						\label{average sub-exponentials}
  Let $X_1,\ldots,X_N$ be independent centered sub-exponential random variables,
  and let $K = \max_i \|X_i\|_\psione$. 
  Then, for every $\e \ge 0$, we have
  $$
  \P \Big\{ \Big| \sum_{i=1}^N X_i \Big| \ge \e N \Big\} 
  \le 2 \exp \Big[ -c \min \Big( \frac{\e^2}{K^2}, \; \frac{\e}{K} \Big) N \Big]
  $$  
  where $c>0$ is an absolute constant.  
\end{corollary}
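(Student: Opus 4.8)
The plan is to derive Corollary~\ref{average sub-exponentials} directly from the Bernstein-type inequality (Proposition~\ref{sub-exponential large deviations}) by the obvious specialization of the coefficient vector. First I would set $a = (1,1,\ldots,1) \in \R^N$, so that $\sum_{i=1}^N a_i X_i = \sum_{i=1}^N X_i$, and compute the two norms that appear in Proposition~\ref{sub-exponential large deviations}: $\|a\|_2^2 = N$ and $\|a\|_\infty = 1$. Then I would apply the proposition with the threshold $t = \e N$, which gives
$$
\P \Big\{ \Big| \sum_{i=1}^N X_i \Big| \ge \e N \Big\}
\le 2 \exp \Big[ -c \min \Big( \frac{\e^2 N^2}{K^2 N}, \; \frac{\e N}{K} \Big) \Big]
= 2 \exp \Big[ -c \min \Big( \frac{\e^2}{K^2}, \; \frac{\e}{K} \Big) N \Big],
$$
where in the last step I factor $N$ out of the minimum, using that $\min(aN, bN) = N \min(a,b)$ for $N \ge 0$. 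This is exactly the claimed bound with the same absolute constant $c$.

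There is essentially no obstacle here: the statement is a one-line corollary obtained by plugging a specific vector into a theorem proved immediately above. The only things to be careful about are the bookkeeping of the exponents — making sure $\e^2 N^2 / (K^2 N)$ simplifies to $\e^2 N / K^2$ and that $N$ can legitimately be pulled out of the $\min$ — and noting that the case $\e = 0$ (allowed by the hypothesis $\e \ge 0$) is trivial since the right-hand side is then $2$, which is a valid probability bound. No new ideas, no covering arguments, no moment generating function estimates beyond what is already in the proof of Proposition~\ref{sub-exponential large deviations} are needed.
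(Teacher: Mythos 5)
Your proof is correct and is exactly the argument the paper gives: specialize Proposition~\ref{sub-exponential large deviations} to $a_i = 1$ and $t = \e N$, compute $\|a\|_2^2 = N$, $\|a\|_\infty = 1$, and factor $N$ out of the minimum.
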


\begin{proof}
This follows from Proposition~\ref{sub-exponential large deviations} for $a_i=1$ and $t=\e N$.
\end{proof}

\begin{remark}[Centering]				\label{centering}
The definitions of sub-gaussian and sub-exponential random variables $X$ do not require
them to be centered. In any case, one can always center $X$ using the simple fact that
if $X$ is sub-gaussian (or sub-exponential), then so is $X - \E X$. Moreover,  
$$
\|X - \E X\|_\psitwo \le 2 \|X\|_\psitwo, \quad
\|X - \E X\|_\psione \le 2 \|X\|_\psione.
$$
This follows by triangle inequality
$\|X - \E X\|_\psitwo \le \|X\|_\psitwo + \|\E X\|_\psitwo$ along with 
$\|\E X\|_\psitwo = |\E X| \le \E|X| \le \|X\|_\psitwo$,
and similarly for the sub-exponential norm.
\end{remark}

\subsection{Isotropic random vectors} \index{Isotropic random vectors}			\label{s: isotropic}

Now we carry our work over to higher dimensions. We will thus 
be working with random vectors $X$ in $\R^n$, 
or equivalently probability distributions in $\R^n$.

While the concept of the mean $\mu = \E Z$ of a random variable $Z$ remains the same
in higher dimensions, the second moment $\E Z^2$ 
is replaced by the $n \times n$ {\em second moment matrix} \index{Second moment matrix}
of a random vector $X$, defined as 
$$
\Sigma = \Sigma(X) = \E X \otimes X = \E X X^T
$$
where $\otimes$ denotes the outer product of vectors in $\R^n$. 
Similarly, the concept of variance $\Var(Z) = \E(Z - \mu)^2 = \E Z^2 - \mu^2$ of a random variable
is replaced in higher dimensions with the {\em covariance matrix} \index{Covariance matrix} of a random vector 
$X$, defined as 
$$
\Cov(X) = \E (X-\mu) \otimes (X-\mu) = \E X \otimes X - \mu \otimes \mu
$$
where $\mu = \E X$. 
By translation, many questions can be reduced to the case of centered random vectors, 
for which $\mu = 0$ and $\Cov(X) = \Sigma(X)$. We will also need a higher-dimensional 
version of unit variance: 

\begin{definition}[Isotropic random vectors]  \index{Isotropic random vectors}
  A random vector $X$ in $\R^n$ is called {\em isotropic} if $\Sigma(X) = I$.
  Equivalently, $X$ is isotropic if
  \begin{equation}							\label{isotropy}
  \E \< X, x\> ^2 = \|x\|_2^2 	\quad \text{for all } x \in \R^n.
  \end{equation}
\end{definition}

Suppose $\Sigma(X)$ is an invertible matrix, which means that the distribution of $X$ is 
not essentially supported on any proper subspace of $\R^n$. 
Then $\Sigma(X)^{-1/2} X$ is an isotropic random vector in $\R^n$. 
Thus every non-degenerate random vector can be made isotropic by an appropriate
linear transformation.\footnote{This transformation (usually preceded by centering) 
is a higher-dimensional version of {\em standardizing} of random variables, which enforces zero mean and unit variance.}
This allows us to mostly focus on studying isotropic random vectors in the future.

\begin{lemma}					\label{norm isotropic}
  Let $X,Y$ be independent isotropic random vectors in $\R^n$. Then
  $\E \|X\|_2^2 = n$ and $\E \< X,Y\> ^2 = n$.
\end{lemma}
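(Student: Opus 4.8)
The plan is to compute each expectation by expanding in coordinates and invoking the isotropy condition \eqref{isotropy}. First I would handle $\E\|X\|_2^2$. Writing $\|X\|_2^2 = \sum_{i=1}^n X_i^2 = \sum_{i=1}^n \langle X, e_i\rangle^2$ where $e_1,\ldots,e_n$ is the standard basis, linearity of expectation gives $\E\|X\|_2^2 = \sum_{i=1}^n \E\langle X, e_i\rangle^2 = \sum_{i=1}^n \|e_i\|_2^2 = n$, using \eqref{isotropy} with $x = e_i$. (Equivalently, one can observe $\E\|X\|_2^2 = \E\tr(X\otimes X) = \tr\Sigma(X) = \tr I = n$.)

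Next I would handle $\E\langle X, Y\rangle^2$. The natural approach is to condition on $Y$: for a fixed vector $y \in \R^n$, the isotropy of $X$ gives $\E_X \langle X, y\rangle^2 = \|y\|_2^2$. Since $X$ and $Y$ are independent, taking the expectation over $Y$ yields
$$
\E \langle X, Y\rangle^2 = \E_Y\big[ \E_X \langle X, Y\rangle^2 \big] = \E_Y \|Y\|_2^2 = n,
$$
where the last equality is the first part of the lemma applied to $Y$. Alternatively, one can expand $\langle X, Y\rangle^2 = \sum_{i,j} X_i X_j Y_i Y_j$, take expectations using independence to get $\sum_{i,j} \E[X_i X_j]\,\E[Y_i Y_j] = \sum_{i,j} \Sigma_{ij}(X)\,\Sigma_{ij}(Y) = \sum_{i,j}\delta_{ij}\delta_{ij} = n$.

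There is no real obstacle here; the only point requiring a little care is the conditioning step, where one must note that \eqref{isotropy} is a pointwise identity valid for \emph{every} fixed $x$, so it may legitimately be applied with $x$ replaced by the random vector $Y$ once we condition on $Y$ (using independence so that $X$'s distribution is unaffected). This is the step I would state most explicitly in the write-up, but it is entirely routine.
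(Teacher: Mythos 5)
Your proof is correct and follows essentially the same route as the paper: the second part is identical (condition on $Y$, apply isotropy of $X$, then apply the first part to $Y$), and for the first part your coordinate expansion $\sum_i \E\langle X, e_i\rangle^2$ is a minor cosmetic variant of the paper's one-line trace computation $\E\tr(X\otimes X) = \tr(\E X\otimes X) = \tr I = n$, which you also note parenthetically. No gaps.
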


\begin{proof}
The first part follows from 
$\E \|X\|_2^2 = \E \tr(X \otimes X) = \tr(\E X \otimes X) = \tr(I) = n$. 
The second part follows by conditioning on $Y$, using isotropy of $X$ and using the first part for $Y$:
this way we obtain $\E \< X,Y\> ^2 = \E \|Y\|_2^2 = n$.
\end{proof}

\begin{example}								\label{random vectors}
\begin{enumerate}
  \item {\bf (Gaussian):} 
  The (standard) {\em Gaussian random vector} \index{Gaussian!random vectors} $X$ in $\R^n$ chosen according to the 
  standard normal distribution $N(0, I)$ is isotropic. The coordinates of $X$ 
  are independent standard normal random variables.
  
  \item {\bf (Bernoulli):} \index{Bernoulli!random vectors} A similar example of a discrete isotropic distribution is 
  given by a {\em Bernoulli random vector} $X$ in $\R^n$ whose
  coordinates are independent symmetric Bernoulli random variables.
  
  \item {\bf (Product distributions):} More generally, consider a random vector $X$
  in $\R^n$ whose coordinates are independent random variables with zero mean and unit variance. 
  Then clearly $X$ is an isotropic vector in $\R^n$. 
  
  \item {\bf (Coordinate):} \index{Coordinate random vectors}
  Consider a {\em coordinate random vector} $X$, which is  
  uniformly distributed in the set $\{ \sqrt{n} \, e_i \}_{i=1}^n$ 
  where $\{e_i \}_{i=1}^n$ is the canonical basis of $\R^n$. 
  Clearly $X$ is an isotropic random vector in $\R^n$.\footnote{The examples of Gaussian 
    and coordinate random vectors are somewhat opposite --
    one is very continuous and the other is very discrete. They may be used as test 
    cases in our study of random matrices.} 
  
  \item {\bf (Frame):} \index{Frames} This is a more general version of the coordinate random vector. 
  A {\em frame} is a set of vectors $\{u_i\}_{i=1}^M$ in $\R^n$ 
  which obeys an approximate Parseval's identity, i.e. there exist numbers $A,B>0$ called {\em frame bounds}
  such that  
  $$
  A\|x\|_2^2 \le \sum_{i=1}^M \< u_i, x \> ^2 \le B\|x\|_2^2		\quad \text{for all } x \in \R^n.
  $$
  If $A=B$ the set is called a {\em tight frame}.
  Thus, tight frames are generalizations of orthogonal bases without linear independence. 
  Given a tight frame $\{u_i\}_{i=1}^M$ with bounds $A=B=M$, 
  the random vector $X$ uniformly distributed in the set 
  $\{u_i \}_{i=1}^M$ is clearly isotropic in $\R^n$.\footnote{There is clearly a reverse implication, too, 
  which shows that the class of tight frames can be identified with the class of discrete isotropic random vectors.}
  
  \item{\bf (Spherical):} \index{Spherical random vector}
  Consider a random vector $X$ uniformly distributed on the unit Euclidean
  sphere in $\R^n$ with center at the origin and radius $\sqrt{n}$. Then $X$ is isotropic. 
  Indeed, by rotation invariance $\E \< X,x\> ^2$ is proportional to $\|x\|_2^2$; the correct normalization 
  $\sqrt{n}$ is derived from Lemma~\ref{norm isotropic}.
  
  \item {\bf (Uniform on a convex set):} In convex geometry, a convex set $K$ in $\R^n$
  is called isotropic if a random vector $X$ chosen uniformly from $K$ according 
  to the volume is isotropic. As we noted, every full dimensional convex set can be made into an isotropic one by 
  an affine transformation. Isotropic convex sets look ``well conditioned'', 
  which is advantageous in geometric algorithms (e.g. volume computations). 
\end{enumerate}
\end{example}

We generalize the concepts of sub-gaussian random variables to higher dimensions 
using one-dimensional marginals. 

\begin{definition}[Sub-gaussian random vectors] \index{Sub-gaussian!random vectors}			\label{d: sub-gaussian}
  We say that a random vector $X$ in $\R^n$ is {\em sub-gaussian}
  if the one-dimensional marginals $\< X, x\> $ are sub-gaussian random variables for all $x \in \R^n$.  
  The {\em sub-gaussian norm} \index{Sub-gaussian!norm} of $X$ is defined as 
  $$
  \|X\|_\psitwo = \sup_{x \in S^{n-1}} \|\< X, x\> \|_\psitwo.
  $$
\end{definition}

\begin{remark}[Properties of high-dimensional distributions]
  The definitions of isotropic and sub-gaussian distributions suggest that more generally,
  natural properties of high-dimensional distributions may be defined via one-dimensional marginals. 
  This is a natural way to generalize properties of random variables to random vectors. 
  For example, we shall call a random vector sub-exponential if all of its one-dimensional 
  marginals are sub-exponential random variables, etc.
\end{remark}

One simple way to create sub-gaussian distributions in $\R^n$ is by taking a product
of $n$ sub-gaussian distributions on the line: 

\begin{lemma}[Product of sub-gaussian distributions]		\label{sub-gaussian products}
  Let $X_1,\ldots,X_n$ be independent centered sub-gaussian random variables. 
  Then $X = (X_1,\ldots,X_n)$ is a centered sub-gaussian random vector in $\R^n$, and
  $$
  \|X\|_\psitwo \le C \max_{i \le n} \|X_i\|_\psitwo
  $$
  where $C$ is an absolute constant.
\end{lemma}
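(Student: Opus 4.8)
The plan is to reduce the $n$-dimensional statement to the one-dimensional rotation invariance already established in Lemma~\ref{rotation invariance}. Fix a test direction $x \in S^{n-1}$ and write $x = (x_1, \ldots, x_n)$. By definition of the inner product, $\< X, x\> = \sum_{i=1}^n x_i X_i$, which is a sum of independent centered sub-gaussian random variables (the $x_i X_i$ are sub-gaussian with $\|x_i X_i\|_\psitwo = |x_i| \, \|X_i\|_\psitwo$). So Lemma~\ref{rotation invariance} applies directly and gives
$$
\big\| \< X, x\> \big\|_\psitwo^2 \le C \sum_{i=1}^n x_i^2 \, \|X_i\|_\psitwo^2.
$$

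The next step is to bound the right-hand side uniformly in $x$. Let $M = \max_{i \le n} \|X_i\|_\psitwo$. Then $\sum_i x_i^2 \|X_i\|_\psitwo^2 \le M^2 \sum_i x_i^2 = M^2 \|x\|_2^2 = M^2$ since $x \in S^{n-1}$. Therefore $\|\< X, x\>\|_\psitwo \le \sqrt{C}\, M$ for every $x \in S^{n-1}$, and taking the supremum over the sphere yields $\|X\|_\psitwo \le \sqrt{C}\, M = C' \max_{i \le n} \|X_i\|_\psitwo$ by the definition of the sub-gaussian norm of a random vector. That the vector is centered is immediate since each coordinate has zero mean.

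There is essentially no obstacle here: the whole content is that the one-dimensional marginal in an arbitrary direction is just a weighted sum of the coordinate variables, so rotation invariance does all the work, and the normalization $\|x\|_2 = 1$ exactly absorbs the weights. The only thing to be slightly careful about is bookkeeping the absolute constant—the $C$ coming out of Lemma~\ref{rotation invariance} becomes $\sqrt{C}$ after taking square roots—but this is routine and the statement only claims an absolute constant $C$. I would present it in three or four lines exactly along these steps.
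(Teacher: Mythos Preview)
Your proof is correct and follows exactly the same route as the paper: reduce to one-dimensional marginals $\langle X,x\rangle = \sum_i x_i X_i$, apply the rotation invariance Lemma~\ref{rotation invariance}, and use $\sum_i x_i^2 = 1$ to bound by $\max_i \|X_i\|_\psitwo$. If anything, your bookkeeping of the squares and the constant is slightly more careful than the paper's brief presentation.
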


\begin{proof}
This is a direct consequence of the rotation invariance principle, Lemma~\ref{rotation invariance}.
Indeed, for every $x = (x_1,\ldots,x_n) \in S^{n-1}$ we have
$$
\|\< X, x\> \|_\psitwo 
= \Big\|  \sum_{i=1}^n x_i X_i \Big\|_\psitwo
\le C \sum_{i=1}^n x_i^2 \|X_i\|_\psitwo^2 
\le C \max_{i \le n} \|X_i\|_\psitwo
$$
where we used that $\sum_{i=1}^n x_i^2 = 1$. 
This completes the proof.
\end{proof}

\begin{example}				\label{random vectors sub-gaussian}
Let us analyze the basic examples of random vectors introduced earlier 
in Example~\ref{random vectors}.  
\begin{enumerate}
  \item {\bf (Gaussian, Bernoulli):} \index{Gaussian!random vectors} \index{Bernoulli!random vectors}
  Gaussian and Bernoulli random vectors are sub-gaussian; 
  their sub-gaussian norms are bounded by an absolute constant. 
  These are particular cases of Lemma~\ref{sub-gaussian products}. 
  
  \item {\bf (Spherical):} \index{Spherical random vector} A spherical random vector is also sub-gaussian; 
  its sub-gaussian norm is bounded by an absolute constant.
  Unfortunately, this does not follow from Lemma~\ref{sub-gaussian products} because the coordinates
  of the spherical vector are not independent. Instead, by rotation invariance, the claim 
  clearly follows from the following geometric fact.  
  For every $\e \ge 0$, the spherical cap $\{ x \in S^{n-1}:\; x_1 > \e\}$ makes up 
  at most $\exp(-\e^2 n/2)$ proportion of the total area on the sphere.\footnote{This
  fact about spherical caps may seem counter-intuitive. For example, for $\e = 0.1$ the
  cap looks similar to a hemisphere, but the proportion of its area goes to zero 
  very fast as dimension $n$ increases. 
  This is a starting point of the study of the {\em concentration of measure phenomenon},
  see \cite{Ledoux}.}
  This can be proved directly by integration, and also by elementary geometric considerations \cite[Lemma~2.2]{Ball}.
  
  \item {\bf (Coordinate):} \index{Coordinate random vectors} 
  Although the coordinate random vector $X$ is formally sub-gaussian 
  as its support is finite, its sub-gaussian norm is too big: $\|X\|_\psitwo = \sqrt{n} \gg 1$.
  So we would not think of $X$ as a sub-gaussian random vector. 
 
  \item {\bf (Uniform on a convex set):} For many isotropic convex sets $K$ (called $\psi_2$ bodies), 
  a random vector $X$ uniformly distributed in $K$ is sub-gaussian with $\|X\|_\psitwo = O(1)$.
  For example, the cube $[-1,1]^n$ is a $\psi_2$ 
  body by Lemma~\ref{sub-gaussian products}, while the appropriately 
  normalized cross-polytope $\{ x \in \R^n:\; \|x\|_1 \le M \}$ is not. 
  Nevertheless, Borell's lemma (which is a consequence of Brunn-Minkowski inequality) 
  implies a weaker property, that $X$ is always {\em sub-exponential},
  and $\|X\|_\psione = \sup_{x \in S^{n-1}} \|\< X, x\> \|_\psione$ is bounded by absolute constant. 
  See \cite[Section~2.2.b$_3$]{GiMi} for a proof and discussion of these ideas.     
\end{enumerate}
\end{example}

\subsection{Sums of independent random matrices}				\label{s: sums matrices}

In this section, we mention without proof some results of classical probability theory 
in which scalars can be replaced by matrices. 
Such results are useful in particular for problems on random matrices,
since we can view a random matrix as a generalization of a random variable. 
One such remarkable generalization is valid for Khintchine inequality,
Corollary~\ref{Khintchine}. The scalars $a_i$ can be replaced by matrices, and the absolute value 
by the {\em Schatten norm}. \index{Schatten norm}
Recall that for $1 \le p \le \infty$, the $p$-Schatten norm of an $n \times n$ matrix $A$ is defined as
the $\ell_p$ norm of the sequence of its singular values:  
$$
\|A\|_{C_p^n} = \| (s_i(A))_{i=1}^n\|_p = \big( \sum_{i=1}^n s_i(A)^p \big)^{1/p}.
$$
For $p=\infty$, the Schatten norm equals the spectral norm $\|A\| = \max_{i \le n} s_i(A)$. 
Using this one can quickly check that already for $p = \log n$ the Schatten and spectral 
norms are equivalent: $\|A\|_{C_p^n} \le \|A\| \le e \|A\|_{C_p^n}$.

\begin{theorem}[Non-commutative Khintchine inequality, see \cite{Pisier operator} Section 9.8]		
					\index{Khinchine inequality!non-commutative}	\label{non-commutative Khintchine}
  \hfill Let $A_1, \ldots, A_N$ be self-adjoint $n \times n$ matrices and 
  $\e_1, \ldots, \e_N$ be independent symmetric Bernoulli random variables. 
  Then, for every $2 \le p < \infty$, we have
  $$
  \Big\| \Big( \sum_{i=1}^N A_i^2 \Big)^{1/2} \Big\|_{C_p^n}
  \le \Big( \E \Big\| \sum_{i=1}^N \e_i A_i \Big\|_{C_p^n}^p \Big)^{1/p}
  \le C \sqrt{p} \, \Big\| \Big( \sum_{i=1}^N A_i^2 \Big)^{1/2} \Big\|_{C_p^n}
  $$
  where $C$ is an absolute constant. 
\end{theorem}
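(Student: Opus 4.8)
The plan is to establish the two inequalities separately, both anchored at the exact identity that holds when $p=2$. Indeed $\|\cdot\|_{C_2^n}$ is Hilbertian, $\|B\|_{C_2^n}^2=\tr(B^*B)$, so for \emph{any} matrices $B_i$ one has $\E\big\|\sum_i\e_iB_i\big\|_{C_2^n}^2=\sum_{i,j}\E[\e_i\e_j]\,\tr(B_i^*B_j)=\sum_i\|B_i\|_{C_2^n}^2$ by independence and $\E\e_i\e_j=\delta_{ij}$; for self-adjoint $B_i=A_i$ the right side is $\tr\big(\sum_iA_i^2\big)=\big\|\big(\sum_iA_i^2\big)^{1/2}\big\|_{C_2^n}^2$. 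Thus at $p=2$ both displayed inequalities are in fact equalities, and it remains to understand the growth of the gap as $p$ increases.

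For the \emph{lower bound} (any $2\le p<\infty$) I would use trace duality for Schatten norms. Put $S=\sum_i\e_iA_i$ and $q=2p/(p-2)$, so $\tfrac1p+\tfrac1q=\tfrac12$. By the multiplicative H\"older inequality for Schatten classes, $\|SY^{1/2}\|_{C_2^n}\le\|S\|_{C_p^n}\,\|Y^{1/2}\|_{C_q^n}=\|S\|_{C_p^n}\,\|Y\|_{C_{p/(p-2)}^n}^{1/2}$ for any positive $Y$. Hence, applying the $p=2$ identity above to $B_i=A_iY^{1/2}$,
\[
\tr\Big(\big(\textstyle\sum_iA_i^2\big)Y\Big)=\sum_i\|A_iY^{1/2}\|_{C_2^n}^2=\E\|SY^{1/2}\|_{C_2^n}^2\le\E\|S\|_{C_p^n}^2\,\|Y\|_{C_{p/(p-2)}^n}\le\big(\E\|S\|_{C_p^n}^p\big)^{2/p}\,\|Y\|_{C_{p/(p-2)}^n},
\]
the last step being Lyapunov's inequality ($p\ge2$). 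Taking the supremum over positive $Y$ with $\|Y\|_{C_{p/(p-2)}^n}\le1$ and invoking trace duality $\big\|\sum_iA_i^2\big\|_{C_{p/2}^n}=\sup_Y\tr\big((\sum_iA_i^2)Y\big)$ yields $\big\|(\sum_iA_i^2)^{1/2}\big\|_{C_p^n}^2=\big\|\sum_iA_i^2\big\|_{C_{p/2}^n}\le\big(\E\|S\|_{C_p^n}^p\big)^{2/p}$, i.e.\ the lower bound with constant $1$.

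For the \emph{upper bound} I would first handle an even integer $p=2k$ by the moment method and then interpolate. Since $S$ is self-adjoint, $\E\|S\|_{C_{2k}^n}^{2k}=\E\tr(S^{2k})=\sum_{i_1,\dots,i_{2k}}\E[\e_{i_1}\cdots\e_{i_{2k}}]\,\tr(A_{i_1}\cdots A_{i_{2k}})$, and only the index tuples in which every value occurs an even number of times contribute. Grouping tuples by their coincidence pattern, the leading contribution comes from the $(2k-1)!!$ pair patterns, and the core estimate is that the sum of traces over each such pattern is, in absolute value, at most $\tr\big((\sum_iA_i^2)^k\big)=\big\|(\sum_iA_i^2)^{1/2}\big\|_{C_{2k}^n}^{2k}$ (an equality for the ``consecutive'' pattern, and at most this for crossing ones), while patterns with a block of size $\ge4$ give only a further factor $C^k$. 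This produces $\E\|S\|_{C_{2k}^n}^{2k}\le C^k(2k-1)!!\,\big\|(\sum_iA_i^2)^{1/2}\big\|_{C_{2k}^n}^{2k}$, and since $((2k-1)!!)^{1/2k}$ is of order $\sqrt{k}$ this is the claimed bound at $p=2k$. For general $p\ge2$, pick the least even integer $2k\ge p$ and interpolate the inequality between the cases $p=2$ (constant $1$) and $p=2k$ (constant of order $\sqrt{k}\lesssim\sqrt{p}$): in the scale of non-commutative $L_p$ spaces, both $L_p(\Omega;C_p^n)$ and the ``column'' space carrying the norm $\|(\sum_iA_i^2)^{1/2}\|_{C_p^n}$ form complex interpolation scales, so the constant at $p$ is at most $1^{1-\theta}(C\sqrt{2k})^{\theta}\le C\sqrt{p}$.

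The main obstacle is the trace-rearrangement inequality controlling the crossing patterns in the $2k$-th moment expansion: unlike in the scalar Khintchine inequality, $\tr(A_{i_1}\cdots A_{i_{2k}})$ is not a product, and showing that each pairing contributes at most $\tr\big((\sum_iA_i^2)^k\big)$ is a genuine operator-theoretic statement; the accompanying interpolation step likewise needs operator-space machinery. This is precisely why the cited treatments of Lust-Piquard and Pisier run the argument through vector-valued harmonic analysis and operator-space duality rather than the bare moment computation; a clean alternative that also closes the proof is to deduce the $p\ge2$ upper bound from the (harder) $p\le2$ lower bound for the conjugate exponent via Schatten-norm duality.
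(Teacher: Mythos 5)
The paper does not contain a proof of this theorem. Section 5.2.5 opens with the sentence ``In this section, we mention without proof some results of classical probability theory in which scalars can be replaced by matrices,'' and Theorem~\ref{non-commutative Khintchine} is stated purely as a citation to Lust-Piquard and Pisier (with attribution of the sharp $C\sqrt{p}$ constant to Buchholz in the Notes). So there is no internal argument to compare yours against; what I can do is assess your sketch on its own terms.

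Your \emph{lower bound} argument is complete and correct. Anchoring at the $p=2$ Pythagorean identity for $B_i = A_i Y^{1/2}$, applying the Schatten--H\"older inequality $\|SY^{1/2}\|_{C_2}\le\|S\|_{C_p}\|Y^{1/2}\|_{C_q}$ with $\tfrac1p+\tfrac1q=\tfrac12$, Lyapunov, and then trace duality for $C_{p/2}^n$ against positive $Y$ gives the left-hand inequality with constant $1$. This is a clean, self-contained derivation.

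Your \emph{upper bound}, however, is only an outline, and the two places you label as obstacles really are gaps rather than routine verifications. First, the assertion that for every pair partition $\pi$ of $\{1,\dots,2k\}$ one has $\big|\sum_{\text{tuples matching }\pi}\tr(A_{i_1}\cdots A_{i_{2k}})\big|\le\tr\big((\sum_i A_i^2)^k\big)$ is precisely Buchholz's key combinatorial trace inequality; it is true, but proving it requires a careful multilinear H\"older-type argument for traces and is the heart of the matter --- it cannot be waved through. (You verified it by hand for the single crossing at $k=2$, which is encouraging but far from the general statement.) Second, your dismissal of partitions with blocks of size $\ge4$ as ``only a further factor $C^k$'' needs its own estimate: those terms are fewer but the traces are not products, so one must actually bound them against $\tr((\sum A_i^2)^k)$ as well. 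Third, the interpolation step from even integers to general $p\ge2$ is genuinely delicate: interpolating the probabilistic side $L_p(\Omega;C_p^n)$ is standard, but showing that the right-hand ``column'' functional $A\mapsto\|(\sum A_i^2)^{1/2}\|_{C_p^n}$ interpolates along the Schatten scale is an operator-space fact (essentially that $R_p\cap C_p$ forms a complex interpolation scale), and this is exactly the machinery the paper's Notes point to in Lust-Piquard--Pisier and Pisier's book. In short: your plan is the right one and reproduces the Buchholz route, the lower bound is done, but the upper bound as written names the hard lemmas rather than proving them, so it is not yet a proof.
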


\begin{remark}
\begin{enumerate}
  \item The scalar case of this result, for $n=1$, recovers the classical Khintchine inequality, 
  Corollary~\ref{Khintchine}, for $X_i = \e_i$.

  \item By the equivalence of Schatten and spectral norms for $p=\log n$, 
  a version of non-commutative Khintchine inequality holds for the spectral norm:
  \begin{equation}										\label{Khintchine operator norm}
  \E \Big\| \sum_{i=1}^N \e_i A_i \Big\|
  \le C_1 \sqrt{\log n} \, \Big\| \Big( \sum_{i=1}^N A_i^2 \Big)^{1/2} \Big\|
  \end{equation}
  where $C_1$ is an absolute constant. The logarithmic factor is unfortunately essential; 
  it role will be clear when we discuss applications of this result to random matrices in the next sections. 
\end{enumerate}
\end{remark}

\begin{corollary}[Rudelson's inequality \cite{Rudelson isotropic}]	\index{Rudelson's inequality}	\label{Rudelson}
  Let $x_1, \ldots, x_N$ be vectors in $\R^n$ and
  $\e_1, \ldots, \e_N$ be independent symmetric Bernoulli random variables. 
  Then 
  $$
  \E \Big\| \sum_{i=1}^N \e_i x_i \otimes x_i \Big\|
  \le C \sqrt{\log \min(N,n)} \cdot \max_{i \le N} \|x_i\|_2 \cdot \Big\| \sum_{i=1}^N x_i \otimes x_i \Big\|^{1/2}
  $$ 
  where $C$ is an absolute constant.
\end{corollary}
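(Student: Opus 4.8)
The strategy is to deduce Rudelson's inequality from the non-commutative Khintchine inequality in the form \eqref{Khintchine operator norm}, applied to the self-adjoint matrices $A_i = x_i \otimes x_i$. The first step is to write down what \eqref{Khintchine operator norm} gives: since each $A_i$ is a rank-one positive-semidefinite matrix, $A_i^2 = \|x_i\|_2^2 \, x_i \otimes x_i$, and therefore $\sum_i A_i^2 = \sum_i \|x_i\|_2^2 \, x_i \otimes x_i$, which is dominated in the positive-semidefinite order by $(\max_i \|x_i\|_2^2) \sum_i x_i \otimes x_i$. Taking spectral norms and then square roots converts \eqref{Khintchine operator norm} into
$$
\E \Big\| \sum_{i=1}^N \e_i \, x_i \otimes x_i \Big\|
\le C_1 \sqrt{\log n} \cdot \max_{i \le N} \|x_i\|_2 \cdot \Big\| \sum_{i=1}^N x_i \otimes x_i \Big\|^{1/2}.
$$
This is already the claimed bound except that the logarithmic factor reads $\sqrt{\log n}$ rather than $\sqrt{\log \min(N,n)}$.

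**Reducing $n$ to $\min(N,n)$.** The remaining point is that the problem is intrinsically at most $N$-dimensional: the quantity $\|\sum_i \e_i x_i \otimes x_i\|$ depends only on the restriction of these operators to the subspace $E = \Span(x_1,\ldots,x_N)$, which has dimension $r := \dim E \le \min(N,n)$. So one should pass to an orthonormal basis of $E$, identify each $x_i$ with a vector $\tilde x_i \in \R^r$ having the same Euclidean norm and the same pairwise inner products, and observe that $\|\sum_i \e_i x_i \otimes x_i\|_{\R^n \to \R^n} = \|\sum_i \e_i \tilde x_i \otimes \tilde x_i\|_{\R^r \to \R^r}$ and likewise for $\|\sum_i x_i \otimes x_i\|$. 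Applying the estimate of the previous paragraph in $\R^r$ replaces $\sqrt{\log n}$ by $\sqrt{\log r} \le \sqrt{\log \min(N,n)}$, which finishes the proof.

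**Main obstacle.** None of the steps is genuinely hard, but two minor points deserve care. First, one must handle the degenerate cases $r = 0$ (all $x_i = 0$, trivial) and $r = 1$ (where $\log r = 0$): for $r = 1$ one can simply absorb the constant, or note that $\log\min(N,n) \ge \log 2 > 0$ whenever there is anything to prove and the bound \eqref{Khintchine operator norm} in dimension one already carries a $\sqrt{\log 2}$-sized implicit constant via the scalar Khintchine inequality, so the stated form with an absolute $C$ is fine. Second, one must make sure the isometric identification $x_i \mapsto \tilde x_i$ really preserves the relevant operator norms — this is immediate once one notes that $\sum_i \e_i x_i \otimes x_i$ annihilates $E^\perp$ and maps $E$ into $E$, so its spectral norm is computed entirely inside $E \cong \R^r$. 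With these caveats dispatched, the corollary follows directly from \eqref{Khintchine operator norm}.
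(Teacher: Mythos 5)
Your proposal is correct and takes essentially the same route as the paper: both apply the operator-norm form of non-commutative Khintchine to $A_i = x_i \otimes x_i$, use the identity $\sum_i (x_i \otimes x_i)^2 = \sum_i \|x_i\|_2^2\, x_i \otimes x_i$ together with the bound by $\max_i\|x_i\|_2^2$, and replace $\R^n$ by the span of $\{x_1,\ldots,x_N\}$ to reduce $\sqrt{\log n}$ to $\sqrt{\log\min(N,n)}$. The only cosmetic difference is that you perform the dimension reduction at the end and spell out the degenerate cases, whereas the paper does the reduction first and leaves the trivial checks implicit.
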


\begin{proof}
One can assume that $n \le N$ by replacing $\R^n$ with the linear span of $\{x_1,\ldots,x_N\}$
if necessary. The claim then follows from \eqref{Khintchine operator norm}, since 
$$
\Big\| \Big( \sum_{i=1}^N (x_i \otimes x_i)^2 \Big)^{1/2} \Big\|
= \Big\| \sum_{i=1}^N \|x_i\|_2^2 \; x_i \otimes x_i \Big\|^{1/2}
\le \max_{i \le N} \|x_i\|_2 \Big\| \sum_{i=1}^N x_i \otimes x_i \Big\|^{1/2}. \qedhere
$$
\end{proof}

Ahlswede and Winter \cite{AW} pioneered a different approach to matrix-valued 
inequalities in probability theory, which was based on trace inequalities like Golden-Thompson 
inequality. A development of this idea leads to remarkably sharp results. We quote one such inequality 
from \cite{Tropp}:

\begin{theorem}[Non-commutative Bernstein-type inequality \cite{Tropp}]	
  \index{Bernstein-type inequality!non-commutative}			\label{matrix Bernstein}
  Consider a finite sequence $X_i$ of independent centered self-adjoint random $n \times n$ matrices. 
  Assume we have for some numbers $K$ and $\s$ that  
  $$
  \|X_i\| \le K \text{ almost surely}, \quad \big\| \sum_i \E X_i^2 \big\| \le \s^2.
  $$
  Then, for every $t \ge 0$ we have
  \begin{equation}							\label{eq matrix Bernstein}
  \P \Big\{ \big\| \sum_i X_i \big\| \ge t \Big\} \le 2 n \cdot \exp \Big( \frac{-t^2/2}{\s^2 + Kt/3} \Big).
  \end{equation}
\end{theorem}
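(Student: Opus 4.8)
The plan is to follow the classical Ahlswede--Winter / Tropp scheme: bound the tail of the spectral norm $\|\sum_i X_i\|$ by controlling the trace of the matrix moment generating function $\E \tr \exp(\lambda \sum_i X_i)$, then optimize in $\lambda > 0$. First I would reduce to a one-sided estimate: since $\|Y\| \ge t$ for a self-adjoint matrix $Y$ means either $\lambda_{\max}(Y) \ge t$ or $\lambda_{\max}(-Y) \ge t$, and the hypotheses are symmetric under $X_i \mapsto -X_i$, it suffices to prove $\P\{\lambda_{\max}(\sum_i X_i) \ge t\} \le n \exp(-t^2/2 \,/\, (\sigma^2 + Kt/3))$ and double it. For the one-sided bound, apply the matrix Laplace transform method: for any $\lambda > 0$,
$$
\P\Big\{ \lambda_{\max}\Big(\sum_i X_i\Big) \ge t \Big\} \le e^{-\lambda t}\, \E \tr \exp\Big( \lambda \sum_i X_i \Big),
$$
using $\lambda_{\max}(Y) \ge t \iff \lambda_{\max}(e^{\lambda Y}) \ge e^{\lambda t}$, the bound $\lambda_{\max}(e^{\lambda Y}) \le \tr e^{\lambda Y}$, and Markov's inequality.

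The heart of the argument is to peel off the independent summands inside $\E \tr \exp(\lambda \sum_i X_i)$. Here one invokes the Golden--Thompson inequality $\tr e^{A+B} \le \tr(e^A e^B)$ together with independence and a subadditivity/monotonicity argument (this is precisely the Ahlswede--Winter iteration, refined by Tropp via Lieb's concavity theorem to avoid losing constants) to obtain
$$
\E \tr \exp\Big( \lambda \sum_i X_i \Big) \le n \cdot \exp\Big( \lambda_{\max}\Big( \sum_i \log \E e^{\lambda X_i} \Big) \Big).
$$
Then one needs a scalar-type estimate on each factor $\E e^{\lambda X_i}$. Writing $g(\lambda) = (e^{\lambda} - 1 - \lambda)/\lambda^2$ one checks, using $\|X_i\| \le K$ and $\E X_i = 0$, that $\E e^{\lambda X_i} \preceq \exp\big( g(\lambda K)\,\lambda^2 \,\E X_i^2 \big)$ in the semidefinite order (this follows from the scalar inequality $e^x \le 1 + x + g(c)x^2$ valid for $x \le c$, applied to eigenvalues), and hence $\sum_i \log \E e^{\lambda X_i} \preceq g(\lambda K)\lambda^2 \sum_i \E X_i^2$, so that $\lambda_{\max}(\sum_i \log \E e^{\lambda X_i}) \le g(\lambda K)\lambda^2 \sigma^2$ by the hypothesis $\|\sum_i \E X_i^2\| \le \sigma^2$.

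Combining the last three displays gives $\P\{\lambda_{\max}(\sum_i X_i) \ge t\} \le n \exp(-\lambda t + g(\lambda K)\lambda^2 \sigma^2)$ for all $\lambda > 0$. The final step is elementary calculus: choose $\lambda = \frac{1}{K}\log\big(1 + \frac{Kt}{\sigma^2}\big)$ (the exact minimizer) and simplify, or more cheaply use the bound $g(x) \le \frac{1/2}{1 - x/3}$ for $0 \le x < 3$, which converts the exponent into $-\lambda t + \frac{\lambda^2 \sigma^2/2}{1 - \lambda K/3}$; optimizing this over $\lambda$ yields exactly $\frac{-t^2/2}{\sigma^2 + Kt/3}$. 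The main obstacle is the matrix step: Golden--Thompson alone only handles two factors cleanly, so the passage to $N$ summands with the sharp constant $1/3$ genuinely needs Lieb's concavity theorem (convexity of $\lambda \mapsto \tr\exp(H + \log \lambda)$-type statements), which is the one nontrivial input that cannot be replaced by a soft argument — everything else is scalar calculus and the spectral mapping theorem. Since the excerpt states this theorem is quoted from \cite{Tropp}, I would in practice simply cite that source for the matrix inequalities and carry out only the scalar optimization explicitly.
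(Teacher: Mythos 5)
The paper does not prove this theorem at all: it is quoted verbatim from \cite{Tropp} with only the remark that the line of proof originates in Ahlswede--Winter \cite{AW} and is based on trace inequalities such as Golden--Thompson. So there is no ``paper's own proof'' to compare against. Your sketch is a faithful and correct summary of Tropp's argument: the matrix Laplace transform $\P\{\lambda_{\max}(Y)\ge t\}\le e^{-\lambda t}\E\tr e^{\lambda Y}$, the master subadditivity bound $\E\tr\exp(\lambda\sum_i X_i)\le n\exp\bigl(\lambda_{\max}\bigl(\sum_i\log\E e^{\lambda X_i}\bigr)\bigr)$ via Lieb's concavity theorem, the semidefinite CGF bound $\log\E e^{\lambda X_i}\preceq g(\lambda K)\lambda^2\,\E X_i^2$ obtained from the monotonicity of $x\mapsto(e^x-1-x)/x^2$ and operator monotonicity of the logarithm, and finally the calculus step with $g(x)\le\frac{1}{2(1-x/3)}$ and $\lambda=t/(\sigma^2+Kt/3)$, which does give exponent $-\tfrac{1}{2}t^2/(\sigma^2+Kt/3)$. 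One small correction of emphasis: the sharp constant $1/3$ is a purely scalar fact and is equally available in the Ahlswede--Winter/Golden--Thompson route; what Lieb's theorem buys is not the constant but the \emph{variance parameter} $\sigma^2=\|\sum_i\E X_i^2\|$ in place of the potentially much larger $\sum_i\|\E X_i^2\|$ that the naive Golden--Thompson iteration produces. With that caveat, your outline is sound, and citing \cite{Tropp} for the matrix-analytic inputs (as the paper itself does) is the appropriate choice.
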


\begin{remark}				\label{mixed tail}
  This is a direct matrix generalization of a classical Bernstein's inequality for bounded random variables. 
  To compare it with our version of Bernstein's inequality for sub-exponentials, 
  Proposition~\ref{sub-exponential large deviations},
  note that the probability bound in \eqref{eq matrix Bernstein} is equivalent to  
  $2n \cdot \exp \big[ -c \min \big( \frac{t^2}{\s^2}, \frac{t}{K} \big) \big]$ where $c>0$ is an absolute constant. 
  In both results we see a mixture of gaussian and exponential tails.
\end{remark}

\section{Random matrices with independent entries}				\label{s: entries}

We are ready to study the extreme singular values of random matrices. 
In this section, we consider the classical model of random matrices whose entries are independent and centered
random variables. Later we will study the more difficult models where 
only the rows or the columns are independent. 

The reader may keep in mind some classical examples of $N \times n$ random matrices with independent entries.
The most classical example is the {\em Gaussian random matrix} $A$ \index{Gaussian!random matrices}
whose entries are independent standard normal random variables. In this case, 
the $n \times n$ symmetric matrix $A^*A$ is called Wishart matrix; it is a higher-dimensional version of 
chi-square distributed random variables.

The simplest example of discrete random matrices is the {\em Bernoulli random matrix} $A$ 
\index{Bernoulli!random matrices} whose entries 
are independent symmetric Bernoulli random variables. In other words, Bernoulli random matrices are distributed
uniformly in the set of all $N \times n$ matrices with $\pm 1$ entries.

\subsection{Limit laws and Gaussian matrices}

Consider an $N \times n$ random matrix $A$ whose entries are independent centered identically distributed
random variables.  By now, the {\em limiting behavior} of the extreme singular values of $A$, 
as the dimensions $N, n \to \infty$, is well understood:

\begin{theorem}[Bai-Yin's law, see \cite{Bai-Yin}]      \index{Bai-Yin's law}   \label{Bai-Yin}
  Let $A = A_{N,n}$ be an $N \times n$ random matrix whose entries
  are independent copies of a random variable with zero mean, unit variance,
  and finite fourth moment. Suppose that the dimensions $N$ and $n$ grow to infinity
  while the aspect ratio $n/N$ converges to a constant in $[0,1]$.
  Then
  $$
  \smin(A) = \sqrt{N} - \sqrt{n} + o(\sqrt{n}), \quad
  \smax(A) = \sqrt{N} + \sqrt{n} + o(\sqrt{n}) \quad
  \text{almost surely}.
  $$
\end{theorem}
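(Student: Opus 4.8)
The plan is to pass from the singular values of $A$ to the eigenvalues of the sample covariance matrix and then attack the two spectral edges separately. Put $y = \lim n/N \in [0,1]$ and $W = \frac{1}{N} A^*A$, so that $W$ is $n \times n$, symmetric, positive-semidefinite, and $\smax(A) = \sqrt{N}\,\lambda_{\max}(W)^{1/2}$, $\smin(A) = \sqrt{N}\,\lambda_{\min}(W)^{1/2}$. Since $(\sqrt N \pm \sqrt n)^2 = N(1 \pm \sqrt{n/N})^2$, the theorem reduces to the almost sure convergences $\lambda_{\max}(W) \to (1+\sqrt y)^2$ and, for $y<1$, $\lambda_{\min}(W) \to (1-\sqrt y)^2$ (for $y=1$ one just has $\lambda_{\min}(W)\to 0$, which is trivial and needs no hard-edge analysis); tracking the dependence on $n/N$ in the estimates below upgrades $o(\sqrt N)$ errors to the stated $o(\sqrt n)$. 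A preliminary reduction disposes of the ``only a fourth moment'' hypothesis: truncate each entry at level $\delta_N\sqrt N$ with $\delta_N \to 0$ slowly, then recenter and rescale; a Frobenius-norm/rank estimate combined with the finite fourth moment shows this perturbs every $\lambda_i(W)$ by $o(1)$ almost surely, so from now on we may assume the entries are bounded by $\delta_N\sqrt N$ and have all moments.

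For the \emph{upper edge} I would run the moment method. Fix $k = k_N \to \infty$ with $k_N$ growing slowly (a small power of $N$), and estimate $\E \tr W^k = N^{-k}\sum \E[a_{i_1 j_1} a_{i_2 j_1} a_{i_2 j_2}\cdots]$, the sum being over closed walks of length $2k$ alternating between the row-index set and the column-index set. Group the walks by the isomorphism type of the multigraph they trace: a walk contributes $0$ unless every edge is used at least twice (centering), the dominant contribution comes from walks whose graph is a tree with every edge used exactly twice, and counting these with the appropriate powers of $n$ and $N$ reproduces exactly the $k$-th moment of the Marchenko--Pastur law with parameter $y$, while every other graph type is lower order thanks to the truncation bound. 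This yields $\E \tr W^k \le (1+\sqrt y)^{2k}(1+o(1))$ uniformly over the admissible range of $k$; Markov's inequality gives $\P\{\lambda_{\max}(W) > (1+\sqrt y)^2 + \varepsilon\} \le \big((1+\sqrt y)^2+\varepsilon\big)^{-k}\,\E\tr W^k$, which is summable in $N$ for a suitable $k_N$, so Borel--Cantelli gives $\limsup_N \lambda_{\max}(W) \le (1+\sqrt y)^2$ a.s. The matching lower bound is free once the empirical spectral distribution of $W$ is known to converge weakly (a.s.) to Marchenko--Pastur --- the fixed-$k$ version of the same computation plus a variance bound and Borel--Cantelli --- since that limiting law has right edge exactly $(1+\sqrt y)^2$.

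For the \emph{lower edge} with $y<1$, weak convergence to Marchenko--Pastur already gives $\liminf_N \lambda_{\min}(W) \le (1-\sqrt y)^2$, so the real content is showing $\liminf_N \lambda_{\min}(W) \ge (1-\sqrt y)^2$, i.e. that no eigenvalue of $W$ escapes below the left edge of the bulk. I expect this to be the main obstacle, because it is invisible to the moment method applied to $W$: positive powers of $W$ are dominated by the bulk, not by its smallest eigenvalue. Two workable routes are (a) Bai--Yin's original combinatorial argument, which controls $\lambda_{\min}(W)$ through a delicate recursion on the moments of $W$ together with a bordering / rank-one-update (Cauchy interlacing) argument relating the spectra of the $n\times n$ and $(n-1)\times(n-1)$ principal minors; or (b) the resolvent method: establish $s_N(z) = \frac1n \tr (W-z)^{-1} \to s(z)$, the Stieltjes transform of Marchenko--Pastur, for $\Im z > 0$ via a Schur-complement self-consistent equation and concentration, then push the control to spectral parameters with real part just below $(1-\sqrt y)^2$ and imaginary part of polynomial-in-$N$ smallness and read off that the eigenvalue counting function there is $o(1)$.

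Finally I would assemble the pieces: $\lambda_{\max}(W)\to(1+\sqrt y)^2$ and $\lambda_{\min}(W)\to(1-\sqrt y)^2$ almost surely, undo the truncation (which cost only $o(1)$), take square roots and multiply by $\sqrt N$, and use $n/N \to y$ to convert the $N(1\pm\sqrt{n/N})^2$ normalizations into $(\sqrt N \pm \sqrt n)^2$ with $o(\sqrt n)$ error, which is the claimed statement.
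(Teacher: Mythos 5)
The paper does not prove Bai--Yin's law; it is stated as a known result with a citation, and the bibliography notes section traces the history (Geman and Yin--Bai--Krishnaiah for $\smax$, Silverstein for $\smin$ in the Gaussian case, Bai--Yin for the general unified treatment). So there is no ``paper's own proof'' to compare against. What you have written is a plausible high-level reconstruction of the proof in the cited literature, and it does identify the standard ingredients in the right order: reduce to $\lambda_{\max}$ and $\lambda_{\min}$ of $W = N^{-1}A^*A$; truncate at $\delta_N\sqrt N$ using the fourth moment (the $o(1)$ perturbation via Weyl's inequality and a Frobenius-norm bound, using $\E |a_{11}|^4\mathbf 1_{|a_{11}|>\delta_N\sqrt N} \to 0$, is essentially the argument in Bai--Silverstein's book); and the Geman/F\"uredi--Koml\'os moment method with $k_N\to\infty$ slowly plus Borel--Cantelli for the upper edge.

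Two cautions. First, and most importantly, the lower edge is where virtually all of the technical content of the theorem lives, and your sketch leaves it as a black box, naming two possible routes without executing either. You are right that direct moments of $W$ see only the bulk, but your description of Bai--Yin's actual argument as ``interlacing plus recursion'' is off: their key device is a \emph{shifted} moment method, estimating high even moments of a recentered matrix of the form $\sqrt{n}\,(W - (1+n/N)I)$ so that both spectral edges show up as the extreme eigenvalue of a single self-adjoint matrix, and then running a walk-counting argument similar in spirit to (but combinatorially considerably harder than) the upper-edge computation. The resolvent route (b) you mention would also work but is a more recent, heavier technology than is needed. Second, when you deduce the easy direction at the lower edge from weak convergence of the empirical spectral distribution, what you get is $\limsup_N \lambda_{\min}(W) \le (1-\sqrt y)^2$, not a bound on $\liminf$; the hard direction is indeed the one you flag, $\liminf_N\lambda_{\min}(W)\ge(1-\sqrt y)^2$. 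As a sketch of what one would find in the references, your proposal is on the right track, but it should not be mistaken for a proof: the part you defer is the part that makes the theorem nontrivial.
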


As we pointed out in the introduction, our program is to find non-asymptotic 
versions of Bai-Yin's law. There is precisely one model of random matrices, namely Gaussian, 
where an {\em exact} non-asymptotic result is known: 

\begin{theorem}[Gordon's theorem for Gaussian matrices] \index{Gordon's theorem} \label{Gaussian}
  Let $A$ be an $N \times n$ matrix whose entries
  are independent standard normal random variables. Then
  $$
  \sqrt{N} - \sqrt{n} \le \E \smin(A) \le \E \smax(A) \le \sqrt{N} + \sqrt{n}.
  $$
\end{theorem}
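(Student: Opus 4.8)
The plan is to prove both inequalities in Gordon's theorem by realizing the extreme singular values as a min-max of a Gaussian process and then comparing that process with a simpler one. Recall that
$$
\smax(A) = \max_{x \in S^{n-1}} \max_{y \in S^{N-1}} \langle Ay, x\rangle, \qquad
\smin(A) = \min_{x \in S^{n-1}} \max_{y \in S^{N-1}} \langle Ay, x\rangle,
$$
where in both cases the inner maximum picks out $\|Ax\|_2$. For an $N\times n$ Gaussian matrix $A$, the process $G_{x,y} := \langle Ax, y\rangle$ indexed by $(x,y) \in S^{n-1}\times S^{N-1}$ is a centered Gaussian process with covariance $\E G_{x,y}G_{x',y'} = \langle x,x'\rangle\langle y,y'\rangle$. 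The comparison process I would use is $H_{x,y} := \langle g, x\rangle + \langle h, y\rangle$, where $g \in \R^n$ and $h\in\R^N$ are independent standard Gaussian vectors; one checks directly that its increments dominate (for $\smax$) or are dominated by (for $\smin$) those of $G$ in the appropriate sense needed for the Slepian/Gordon comparison inequalities.

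The key steps, in order: (1) Compute the increment variances. For two pairs $(x,y)$ and $(x',y')$ on the product of spheres, $\E|G_{x,y}-G_{x',y'}|^2 = 2 - 2\langle x,x'\rangle\langle y,y'\rangle$ and $\E|H_{x,y}-H_{x',y'}|^2 = \|x-x'\|_2^2 + \|y-y'\|_2^2 = 4 - 2\langle x,x'\rangle - 2\langle y,y'\rangle$. A short algebraic check shows $\E|G_{x,y}-G_{x',y'}|^2 \le \E|H_{x,y}-H_{x',y'}|^2$ always, with equality when $x = x'$. (2) Apply Gordon's comparison inequality (a two-parameter refinement of Slepian's lemma): under exactly these covariance conditions one gets $\E \min_x\max_y G_{x,y} \ge \E\min_x\max_y H_{x,y}$ and $\E\max_x\max_y G_{x,y} \le \E\max_x\max_y H_{x,y}$. (3) Evaluate the comparison quantity: $\max_{x\in S^{n-1}}\max_{y\in S^{N-1}} H_{x,y} = \max_x \langle g,x\rangle + \max_y\langle h,y\rangle = \|g\|_2 + \|h\|_2$, and similarly $\min_x\max_y H_{x,y} = \max_y\langle h,y\rangle - \max_x(-\langle g,x\rangle)$... more carefully, $\min_x\max_y H_{x,y} = \|h\|_2 + \min_x\langle g,x\rangle = \|h\|_2 - \|g\|_2$. (4) Finally bound $\E\|g\|_2 \le (\E\|g\|_2^2)^{1/2} = \sqrt{n}$ and $\E\|h\|_2 \le \sqrt{N}$ by Jensen for the upper bound on $\smax$; for the lower bound on $\smin$ one needs $\E\|h\|_2 \ge$ something and $\E\|g\|_2 \le \sqrt{n}$, and in fact the clean statement $\sqrt{N}-\sqrt{n}$ follows because $\E\|h\|_2$ and $\E\|g\|_2$ appear with the right signs once one is slightly more careful, using $\E\|h\|_2 \ge \sqrt{N} - $ (a correction) — but the stated inequality is exactly $\E\smin(A) \ge \E\|h\|_2 - \E\|g\|_2$ together with the sharp facts $\E\|g\|_2 \le \sqrt{n}$ and $\E\|h\|_2 \ge \sqrt{N}$; the latter, however, is false, so instead one invokes that $\E\|h\|_2 \ge \sqrt{N}\cdot(1-o(1))$ is not enough and the actual route is Gordon's inequality in its sharp form which directly yields $\E\smin(A)\ge\sqrt{N}-\sqrt{n}$ after the $\chi$-moment computation.

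I expect the main obstacle to be step (2): stating and invoking the correct form of Gordon's min-max comparison inequality, since it is strictly stronger than Slepian's lemma and the direction of the inequalities must be matched precisely to the covariance structure. A secondary subtlety is step (4): the naive Jensen bound $\E\|g\|_2 \le \sqrt{n}$ is exactly what is needed for the $\smax$ side, but on the $\smin$ side one must be careful not to lose the constant — the clean bound $\E\smin(A) \ge \sqrt N - \sqrt n$ comes out because in the comparison the vector $h \in \R^N$ contributes its full length $\|h\|_2$ whose expectation satisfies $\E\|h\|_2 = \sqrt{2}\,\Gamma((N+1)/2)/\Gamma(N/2)$, and the relevant inequality is $\E\|h\|_2 - \E\|g\|_2 \ge \sqrt N - \sqrt n$, which reduces to monotonicity of $\sqrt{m} - \E\chi_m$ in $m$, a standard one-variable estimate. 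So the skeleton is: express the singular values as Gaussian min-max processes, compare with the decoupled process $\langle g,x\rangle + \langle h,y\rangle$ via Gordon's inequality, evaluate the latter explicitly, and finish with elementary $\chi$-distribution moment bounds.
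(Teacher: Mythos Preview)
Your approach is correct and is essentially the same as the paper's: express the extreme singular values as min-max of the Gaussian process $\langle Ax,y\rangle$, compare with the decoupled process $\langle g,x\rangle+\langle h,y\rangle$ via Slepian's inequality (for $\smax$) and Gordon's min-max extension (for $\smin$), and evaluate. The paper only sketches the $\smin$ direction by citing Gordon's inequality; your final identification of the missing step---that $\E\|h\|_2-\E\|g\|_2\ge\sqrt{N}-\sqrt{n}$ follows from the monotone decrease of $m\mapsto\sqrt{m}-\E\chi_m$---is exactly the right way to close the gap, even though your step~(4) wanders before getting there (and note the harmless typo $\langle Ay,x\rangle$ for $\langle Ax,y\rangle$ in your opening display).
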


The proof of the upper bound, which we borrowed from \cite{DS}, is based 
on Slepian's comparison inequality for Gaussian processes.\footnote{Recall that a Gaussian process $(X_t)_{t \in T}$
is a collection of centered normal random variables $X_t$ on the same probability space, indexed by
points $t$ in an abstract set $T$.}

\begin{lemma}[Slepian's inequality, see \cite{Ledoux-Talagrand} Section 3.3]	\index{Slepian's inequality} \label{Slepian}
  Consider two Gaussian processes $(X_t)_{t \in T}$ and $(Y_t)_{t \in T}$
  whose increments satisfy the inequality
  $\E |X_s - X_t|^2 \le \E |Y_s - Y_t|^2$ for all $s,t \in T$. 
  Then
  $\E \sup_{t \in T} X_t \le \E \sup_{t \in T} Y_t$.
\end{lemma}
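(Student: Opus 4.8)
The plan is to prove the (seemingly stronger, but equivalent) statement for a \emph{finite} index set $T=\{1,\dots,m\}$ and then recover the general case by approximation; the finite case is handled by a Gaussian interpolation argument. First I would arrange the two processes on a common probability space and make them \emph{independent} of each other (replace $(Y_t)$ by an independent copy, which changes neither side of the claimed inequality), and I would replace $\max_i x_i$ by the smooth convex approximation $F_\beta(x)=\beta^{-1}\log\big(\sum_{i=1}^m e^{\beta x_i}\big)$, which obeys $\max_i x_i\le F_\beta(x)\le \max_i x_i+\beta^{-1}\log m$. The heart of the matter is the Gaussian path $Z(u)=\sqrt{1-u}\,X+\sqrt{u}\,Y$, $u\in[0,1]$, interpolating between $Z(0)=X$ and $Z(1)=Y$; I want to show $\varphi(u):=\E F_\beta(Z(u))$ is nondecreasing in $u$.

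To compute $\varphi'(u)$ I differentiate under the expectation to get $\varphi'(u)=\E\big[\sum_i \partial_i F_\beta(Z(u))\,\dot Z_i(u)\big]$ with $\dot Z_i(u)=\tfrac12\big(Y_i/\sqrt{u}-X_i/\sqrt{1-u}\big)$, and then apply the Gaussian regression (integration-by-parts) identity: since $\dot Z_i(u)$ and the centered Gaussian vector $Z(u)$ are jointly Gaussian, $\E[\dot Z_i(u)\,g(Z(u))]=\sum_j \E[\dot Z_i(u)Z_j(u)]\,\E[\partial_j g(Z(u))]$ for smooth $g$, here with $g=\partial_i F_\beta$. Using independence of $X$ and $Y$ one checks $\E[\dot Z_i(u)Z_j(u)]=\tfrac12(\E Y_iY_j-\E X_iX_j)=:\tfrac12 a_{ij}$, so that $\varphi'(u)=\tfrac12\sum_{i,j}a_{ij}\,\E[\partial^2_{ij}F_\beta(Z(u))]$. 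A direct computation with $p_i=e^{\beta x_i}/\sum_k e^{\beta x_k}$ gives $\partial^2_{ij}F_\beta=\beta(p_i\delta_{ij}-p_ip_j)$, so the off-diagonal entries are $\le 0$ and each row sums to zero. Exploiting $\sum_j\partial^2_{ij}F_\beta=0$, I may replace $a_{ij}$ in the sum by $a_{ij}-\tfrac12(a_{ii}+a_{jj})=-\tfrac12\big(\E|Y_i-Y_j|^2-\E|X_i-X_j|^2\big)\le 0$ (the diagonal terms vanish). Hence $\varphi'(u)$ is, up to a positive factor, a sum over $i\ne j$ of products of two nonpositive quantities, so $\varphi'(u)\ge 0$. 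Therefore $\E F_\beta(X)=\varphi(0)\le\varphi(1)=\E F_\beta(Y)$, and combining with the two-sided bound on $F_\beta$ and sending $\beta\to\infty$ yields $\E\max_i X_i\le\E\max_i Y_i$.

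Finally, for a general index set $T$ I would write $\E\sup_{t\in T}X_t$ as the supremum, over finite subsets $F\subseteq T$, of $\E\sup_{t\in F}X_t$ (legitimate after the usual separability and monotone-convergence reductions), apply the finite case to each $F$, and take the supremum on both sides; the increments hypothesis is inherited by every restriction. The main obstacle I anticipate is not the sign computation — which is the clean algebraic core — but the careful justification of the analytic steps: differentiating $\varphi$ under the expectation (uniform integrability of the derivatives, using a mild truncation near $u=0,1$ where $\dot Z(u)$ blows up) and the validity of the Gaussian regression identity for the functions at hand.
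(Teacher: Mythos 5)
The paper does not prove Lemma~\ref{Slepian}; it cites it as a known result from \cite{Ledoux-Talagrand}, Section~3.3, so there is no in-text argument to compare against. Your proof is correct and is in fact the canonical Gaussian interpolation argument for Slepian's inequality (which is essentially what the cited reference gives): take $X$ and $Y$ independent, smooth the maximum via the softmax $F_\beta$, interpolate along $Z(u)=\sqrt{1-u}\,X+\sqrt{u}\,Y$, compute $\varphi'(u)=\tfrac12\sum_{i,j}a_{ij}\E[\partial^2_{ij}F_\beta(Z(u))]$ with $a_{ij}=\E Y_iY_j-\E X_iX_j$ by Gaussian integration by parts, use the zero row/column sums of the Hessian of $F_\beta$ to recenter $a_{ij}$ to $-\tfrac12\big(\E|Y_i-Y_j|^2-\E|X_i-X_j|^2\big)\le 0$, and pair it with the sign $\partial^2_{ij}F_\beta=-\beta p_ip_j\le 0$ off the diagonal. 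All the key identities check out, the reduction to finite $T$ is stated with the right caveats, and the one step you correctly flag as requiring care (differentiation under the expectation near $u=0,1$) is indeed the only technical annoyance but is routine. One small point worth stating explicitly rather than tacitly: the paper's convention (stated in a footnote) is that a Gaussian process has centered marginals, and your integration-by-parts step and the identity $\E[\dot Z_iZ_j]=\tfrac12(\E Y_iY_j-\E X_iX_j)$ rely on that centering; without it Slepian's inequality in the form stated is simply false (one would need to also match the means).
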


\begin{proof}[Proof of Theorem~\ref{Gaussian}]
We recognize
$\smax(A) = \max_{u \in S^{n-1}, \; v \in S^{N-1}} \< Au, v\> $
to be the supremum of the Gaussian process $X_{u,v} = \< Au, v\> $ indexed by the pairs
of vectors $(u,v) \in S^{n-1} \times S^{N-1}$. We shall compare this process to the
following one whose supremum is easier to estimate:
$Y_{u,v} = \< g, u\>  + \< h, v\> $ where $g \in \R^n$ and $h \in \R^N$
are independent standard Gaussian random vectors.
The rotation invariance of Gaussian measures makes it easy to compare
the increments of these processes. For every $(u,v), (u',v') \in S^{n-1} \times S^{N-1}$,
one can check that 
$$
\E |X_{u,v} - X_{u',v'}|^2
= \sum_{i=1}^n \sum_{j=1}^N |u_i v_j - u'_i v'_j|^2
\le \|u - u'\|_2^2 + \|v - v'\|_2^2
= \E |Y_{u,v} - Y_{u',v'}|^2.
$$
Therefore Lemma~\ref{Slepian} applies, and it yields
the required bound
$$
\E \smax(A) = \E \max_{(u,v)} X_{u,v}
\le \E \max_{(u,v)}Y_{u,v}
= \E \|g\|_2 + \E \|h\|_2
\le \sqrt{N} + \sqrt{n}.
$$
Similar ideas are used to estimate
$\E \smin(A) = \E \max_{v \in S^{N-1}} \min_{u \in S^{n-1}} \< Au, v\> $,
see \cite{DS}.
One uses in this case Gordon's generalization of Slepian's
inequality for minimax of
Gaussian processes \cite{Gordon 84, Gordon 85, Gordon 92}, see \cite[Section 3.3]{Ledoux-Talagrand}.
\end{proof}

While Theorem~\ref{Gaussian} is about the expectation of singular values, it also yields
a large deviation inequality for them. 
It can be deduced formally by using the {\em concentration of measure} 
in the Gauss space.

\begin{proposition}[Concentration in Gauss space, see \cite{Ledoux}]
  \index{Concentration of meaure}		\label{Gaussian concentration}
  Let $f$ be a real valued Lipschitz function on $\R^n$ with Lipschitz constant $K$, i.e. 
  $|f(x)-f(y)| \le K \|x-y\|_2$ for all $x,y \in \R^n$ (such functions are also called $K$-Lipschitz). 
  Let $X$ be the standard normal random 
  vector in $\R^n$. Then for every $t \ge 0$ one has
  $$
  \P \{ f(X) - \E f(X) > t \} \le \exp(-t^2/2K^2).
  $$
\end{proposition}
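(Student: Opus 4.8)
The plan is to reduce the tail bound to a sharp exponential-moment estimate and then finish with the exponential Markov inequality, exactly as in the proof of ``$4\Rightarrow 1$'' in Lemma~\ref{sub-gaussian properties} and in Proposition~\ref{sub-gaussian large deviations}. Concretely, I would first establish
$$\E \exp\big(\lambda(f(X) - \E f(X))\big) \le \exp(\lambda^2 K^2/2) \qquad \text{for all } \lambda \ge 0 ;$$
then for $t \ge 0$ one gets $\P\{f(X) - \E f(X) > t\} = \P\{e^{\lambda(f(X)-\E f(X))} > e^{\lambda t}\} \le e^{-\lambda t}\,\E e^{\lambda(f(X)-\E f(X))} \le \exp(-\lambda t + \lambda^2 K^2/2)$, and the choice $\lambda = t/K^2$ yields the asserted bound $\exp(-t^2/2K^2)$. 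Rescaling $f \mapsto f/K$, it suffices to treat $K=1$, so the whole problem becomes the sharp Gaussian moment-generating-function bound $\E e^{\lambda f(X)} \le \exp(\lambda \E f(X) + \lambda^2/2)$ for a $1$-Lipschitz $f$.

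To prove this I would run a semigroup interpolation along the Ornstein--Uhlenbeck flow. Let $P_t f(x) = \E f(e^{-t}x + \sqrt{1-e^{-2t}}\,Y)$ with $Y$ a standard Gaussian vector in $\R^n$ (Mehler's formula). Three standard facts are needed: (i) $P_0 f = f$ and $P_t f(x) \to \E f(X)$ as $t \to \infty$, so $P_t$ interpolates between $f$ and the constant $\E f(X)$; (ii) differentiating Mehler's formula under the expectation gives $\nabla P_t f(x) = e^{-t}\,\E[\nabla f(e^{-t}x + \sqrt{1-e^{-2t}}\,Y)]$, whence $\|\nabla P_t f(x)\|_2 \le e^{-t}\,\E\|\nabla f(\cdots)\|_2 \le e^{-t}$, using $\|\nabla f\|_2 \le 1$ a.e.; (iii) $\partial_t P_t f = L P_t f$ for the generator $L u = \Delta u - \langle x, \nabla u\rangle$, together with the Gaussian integration-by-parts identity $\E[(Lu)(X)\,v(X)] = -\E[\langle \nabla u(X), \nabla v(X)\rangle]$, obtained by integrating each $\partial_i$ against the Gaussian density.

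The computation is then short. Set $\Psi(t) = \E \exp(\lambda P_t f(X))$, so that $\Psi(0) = \E e^{\lambda f(X)}$ and $\Psi(\infty) = e^{\lambda \E f(X)}$; the goal is $\Psi(0) \le \Psi(\infty)\,e^{\lambda^2/2}$. Using (iii) with $u = P_t f$ and $v = e^{\lambda P_t f}$ (so $\nabla v = \lambda (\nabla P_t f)\,e^{\lambda P_t f}$) and then (ii), one finds
$$\Psi'(t) = \lambda\,\E\big[(LP_tf)(X)\,e^{\lambda P_t f(X)}\big] = -\lambda^2\,\E\big[\|\nabla P_t f(X)\|_2^2\,e^{\lambda P_t f(X)}\big] \ge -\lambda^2 e^{-2t}\,\Psi(t) .$$
Hence $(\log \Psi)'(t) \ge -\lambda^2 e^{-2t}$, and integrating over $t \in [0,\infty)$ gives $\log \Psi(\infty) - \log \Psi(0) \ge -\lambda^2\int_0^\infty e^{-2t}\,dt = -\lambda^2/2$, i.e. $\Psi(0) \le \Psi(\infty)\,e^{\lambda^2/2}$, which is exactly the required moment bound with the sharp constant.

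The only genuine difficulty --- and the reason the text simply cites \cite{Ledoux} rather than proving this --- is regularity: a Lipschitz function need not be smooth, and the displayed manipulations tacitly assume $f$ is smooth and bounded (so that $\Psi(t)$ is finite and one may differentiate under the integral). This is handled by routine approximation I would not belabor: truncate $f$ to the range $[-M,M]$, then smooth it by applying $P_\varepsilon$ for small $\varepsilon > 0$ (which keeps the function $1$-Lipschitz, since $\|\nabla P_\varepsilon g\|_2 \le e^{-\varepsilon} \le 1$, and makes it $C^\infty$ and bounded), prove the inequality for the resulting functions, and let $\varepsilon \to 0$ and $M \to \infty$ using the uniform sub-Gaussian tail they enjoy. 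An alternative, and the classical, route --- the one \cite{Ledoux} follows --- deduces the statement from the Gaussian isoperimetric inequality, or from the Gaussian logarithmic Sobolev inequality via Herbst's argument; both give the same sharp constant $1/2$. I expect this approximation bookkeeping, not any conceptual step, to be the sole obstacle.
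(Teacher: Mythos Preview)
Your proof is correct, and you have already identified the key point: the paper does not prove this proposition at all---it is stated as a black box with a reference to \cite{Ledoux}. So there is no ``paper's own proof'' to compare against. Your Ornstein--Uhlenbeck semigroup argument is one of the standard routes to the sharp constant $1/2$; the commutation $\nabla P_t f = e^{-t} P_t(\nabla f)$ and the Gaussian integration by parts are exactly what drive it, and your computation of $(\log\Psi)'$ is correct. The alternatives you mention (Gaussian isoperimetry, or log-Sobolev plus Herbst) are indeed the other standard proofs in \cite{Ledoux}; all three yield the same sharp bound. The smoothing/truncation bookkeeping you flag is routine and your sketch of it is adequate.
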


\begin{corollary}[Gaussian matrices, deviation; see \cite{DS}] 	\index{Gaussian!random matrices} \label{Gaussian deviation}
  Let $A$ be an $N \times n$ matrix whose entries
  are independent standard normal random variables. 
  Then for every $t \ge 0$, with probability at least $1 - 2 \exp(-t^2/2)$ one has
  $$
  \sqrt{N} - \sqrt{n} - t \le \smin(A) \le \smax(A) \le
  \sqrt{N} + \sqrt{n} + t. 
  $$
\end{corollary}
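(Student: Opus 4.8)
The plan is to deduce Corollary~\ref{Gaussian deviation} from Theorem~\ref{Gaussian} by applying the Gaussian concentration inequality, Proposition~\ref{Gaussian concentration}, to the functions $s_{\max}$ and $s_{\min}$ viewed as functions of the $Nn$ independent standard normal entries of $A$. The key observation is that both $s_{\max}(\cdot)$ and $s_{\min}(\cdot)$ are $1$-Lipschitz as functions on $\R^{N \times n} \cong \R^{Nn}$ equipped with the Euclidean (Hilbert--Schmidt) norm on matrices.

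First I would verify the Lipschitz claim. For $s_{\max} = \|\cdot\|$ this is immediate: for matrices $A, B$ one has $\big| \|A\| - \|B\| \big| \le \|A - B\| \le \|A-B\|_{C_2^n}$, where the last quantity is exactly the Euclidean norm on the entries. For $s_{\min}$ one uses the variational description \eqref{mM}, or equivalently the fact that for every unit vector $x$ the map $A \mapsto \|Ax\|_2$ is $1$-Lipschitz in the Hilbert--Schmidt norm (since $\|Ax\|_2 - \|Bx\|_2 \le \|(A-B)x\|_2 \le \|A-B\|_{C_2^n}$), and $s_{\min}(A) = \min_{x \in S^{n-1}} \|Ax\|_2$ is a minimum of $1$-Lipschitz functions, hence $1$-Lipschitz. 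So both functions satisfy the hypothesis of Proposition~\ref{Gaussian concentration} with $K = 1$.

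Next I would apply Proposition~\ref{Gaussian concentration} to $f = s_{\max}$: for every $t \ge 0$,
$$
\P \{ s_{\max}(A) > \E s_{\max}(A) + t \} \le \exp(-t^2/2).
$$
Combining with the upper bound $\E s_{\max}(A) \le \sqrt{N} + \sqrt{n}$ from Theorem~\ref{Gaussian} gives $\P\{s_{\max}(A) > \sqrt{N} + \sqrt{n} + t\} \le \exp(-t^2/2)$. Similarly, applying the proposition to $f = -s_{\min}$ (also $1$-Lipschitz) yields $\P \{ s_{\min}(A) < \E s_{\min}(A) - t \} \le \exp(-t^2/2)$, and combining with $\E s_{\min}(A) \ge \sqrt{N} - \sqrt{n}$ gives $\P\{s_{\min}(A) < \sqrt{N} - \sqrt{n} - t\} \le \exp(-t^2/2)$. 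A union bound over these two events shows that with probability at least $1 - 2\exp(-t^2/2)$ both $s_{\min}(A) \ge \sqrt{N} - \sqrt{n} - t$ and $s_{\max}(A) \le \sqrt{N} + \sqrt{n} + t$ hold simultaneously, which is the claim.

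I do not anticipate a serious obstacle: the only point requiring care is the Lipschitz estimate for $s_{\min}$, where one must remember to express $s_{\min}$ as a minimum (not a supremum) of $1$-Lipschitz functions and invoke that a min/max of $1$-Lipschitz functions is $1$-Lipschitz. Everything else is a direct invocation of Theorem~\ref{Gaussian}, Proposition~\ref{Gaussian concentration}, and a union bound.
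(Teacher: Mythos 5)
Your proof is correct and follows exactly the paper's approach: both argue that $s_{\min}$ and $s_{\max}$ are $1$-Lipschitz functions of the matrix entries viewed in $\R^{Nn}$, combine Gordon's bounds on the expectations (Theorem~\ref{Gaussian}) with Gaussian concentration (Proposition~\ref{Gaussian concentration}), and take a union bound. You have merely spelled out the Lipschitz verification and the one-sided-to-two-sided step, which the paper states without detail.
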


\begin{proof}
Note that $\smin(A)$, $\smax(A)$ are $1$-Lipschitz functions of matrices $A$ considered
as vectors in $\R^{Nn}$. The conclusion now follows from the estimates on the expectation
(Theorem~\ref{Gaussian}) and Gaussian concentration (Proposition~\ref{Gaussian concentration}). 
\end{proof}

Later in these notes, we find it more convenient to work with the $n \times n$ 
positive-definite symmetric matrix $A^*A$ rather than with the original $N \times n$ matrix $A$.
Observe that the normalized matrix $\bar{A} = \frac{1}{\sqrt{N}} A$ is an approximate isometry 
(which is our goal) if and only if $\bar{A}^*\bar{A}$ is an approximate identity:

\begin{lemma}[Approximate isometries]	\index{Approximate isometries} \label{approximate isometries}
  Consider a matrix $B$ that satisfies 
  \begin{equation}							\label{B*B}
  \|B^*B - I\| \le \max(\d,\d^2)
  \end{equation}
  for some $\d > 0$. Then 
  \begin{equation}							\label{smin smax B}
  1-\d \le \smin(B) \le \smax(B) \le 1+\d.
  \end{equation}
  Conversely, if $B$ satisfies \eqref{smin smax B} for some $\d > 0$ then
  $\|B^*B - I\| \le 3 \max(\d,\d^2)$.
\end{lemma}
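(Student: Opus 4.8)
The plan is to exploit the elementary relation between the operator norm of $B^*B - I$ and the quantities $\|Bx\|_2^2 - \|x\|_2^2$ on the unit sphere. Since $B^*B - I$ is symmetric, Lemma~\ref{norm on net} is not even needed here: we simply use that for a symmetric matrix $M$, $\|M\| = \sup_{x \in S^{n-1}} |\langle Mx, x\rangle|$. Applying this with $M = B^*B - I$ gives $\|B^*B - I\| = \sup_{x \in S^{n-1}} \big| \|Bx\|_2^2 - \|x\|_2^2 \big| = \sup_{x \in S^{n-1}} \big| \|Bx\|_2^2 - 1 \big|$, so the hypothesis \eqref{B*B} says precisely that $\big| \|Bx\|_2^2 - 1 \big| \le \max(\d,\d^2)$ for all $x \in S^{n-1}$.

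For the forward implication, fix $x \in S^{n-1}$. From the displayed bound, $1 - \max(\d,\d^2) \le \|Bx\|_2^2 \le 1 + \max(\d,\d^2)$. I would then check the two numerical inequalities $1 - \max(\d,\d^2) \ge (1-\d)^2$ and $1 + \max(\d,\d^2) \le (1+\d)^2$. The second is immediate since $(1+\d)^2 = 1 + 2\d + \d^2 \ge 1 + \max(\d,\d^2)$ for $\d > 0$. For the first: if $\d \le 1$ then $\max(\d,\d^2) = \d$ and $(1-\d)^2 = 1 - 2\d + \d^2 \le 1 - \d$; if $\d > 1$ then $(1-\d)^2 = (\d-1)^2 \ge 0 \ge 1 - \d^2 = 1 - \max(\d,\d^2)$. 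Hence $(1-\d)^2 \le \|Bx\|_2^2 \le (1+\d)^2$ for every unit $x$, and taking square roots and then the infimum and supremum over $S^{n-1}$ yields \eqref{smin smax B} via the characterization \eqref{mM} of $\smin(B)$ and $\smax(B)$.

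For the converse, assume \eqref{smin smax B}. Then for every $x \in S^{n-1}$ we have $(1-\d)^2 \le \|Bx\|_2^2 \le (1+\d)^2$, so $\big| \|Bx\|_2^2 - 1 \big| \le \max\{ (1+\d)^2 - 1, \; 1 - (1-\d)^2 \} = \max\{ 2\d + \d^2, \; 2\d - \d^2 \} = 2\d + \d^2$ (the first term is always the larger one for $\d > 0$). I would then bound $2\d + \d^2 \le 3\max(\d,\d^2)$: if $\d \le 1$ then $\d^2 \le \d$ so $2\d + \d^2 \le 3\d = 3\max(\d,\d^2)$, and if $\d \ge 1$ then $\d \le \d^2$ so $2\d + \d^2 \le 3\d^2 = 3\max(\d,\d^2)$. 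Taking the supremum over $S^{n-1}$ and using the symmetric-matrix norm formula again gives $\|B^*B - I\| \le 3\max(\d,\d^2)$.

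There is no real obstacle here — the only thing to be careful about is not to silently assume $\d \le 1$, since the statement is phrased for all $\d > 0$ and that is exactly why the awkward $\max(\d,\d^2)$ appears instead of just $\d$. Each of the four numerical comparisons above splits cleanly at $\d = 1$, so the bookkeeping is the entire content of the proof.
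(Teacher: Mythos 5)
Your reduction to the scalar statement $\bigl|\|Bx\|_2^2-1\bigr|\le\max(\d,\d^2)$ for $x\in S^{n-1}$ via the symmetric-matrix norm formula is exactly the paper's starting point; the paper then cites the elementary inequality $\max(|z-1|,|z-1|^2)\le|z^2-1|\le 3\max(|z-1|,|z-1|^2)$ for $z\ge 0$, which your case-by-case bookkeeping at $\d=1$ is essentially re-deriving.

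There is, however, a real error in your forward direction. You announce that you will prove $1-\max(\d,\d^2)\ge(1-\d)^2$, but your own treatment of the case $\d>1$ establishes the \emph{reverse} inequality, $(1-\d)^2\ge 0\ge 1-\d^2=1-\max(\d,\d^2)$, and the announced inequality really is false for $\d>1$ (take $\d=2$: the left side is $-3$, the right side is $1$). The subsequent ``Hence $(1-\d)^2\le\|Bx\|_2^2\le(1+\d)^2$'' therefore does not follow, and your route to $\smin(B)\ge 1-\d$ breaks. The correct observation is simpler than what you attempted: for $\d\ge 1$ the lower bound $\smin(B)\ge 1-\d$ is vacuous because $\smin(B)\ge 0\ge 1-\d$, so only the regime $\d<1$ carries content, and there your comparison $(1-\d)^2\le 1-\d$ is right. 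The converse direction contains the same unsupported intermediate claim $(1-\d)^2\le\|Bx\|_2^2$ when $\d>1$ (where $\smin(B)\ge 1-\d$ imposes no constraint on $\|Bx\|_2$); the final bound $|\|Bx\|_2^2-1|\le 2\d+\d^2$ still holds because $\|Bx\|_2^2\ge 0$ already gives $1-\|Bx\|_2^2\le 1<2\d+\d^2$ for $\d>1$, but that is the argument that should appear in place of the false step.
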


\begin{proof}
Inequality \eqref{B*B} holds if and only if 
$\big| \|Bx\|_2^2 - 1 \big| \le \max(\d,\d^2)$ for all $x \in S^{n-1}$. 
Similarly, \eqref{smin smax B} holds if and only if 
$\big| \|Bx\|_2 - 1 \big| \le \d$ for all $x \in S^{n-1}$.
The conclusion then follows from the elementary inequality
$$
\max(|z-1|, |z-1|^2) \le |z^2-1| \le 3 \max(|z-1|, |z-1|^2) \quad \text{for all } z \ge 0.  \qedhere
$$ 
\end{proof}

Lemma~\ref{approximate isometries} reduces our task of proving 
inequalities \eqref{heuristic} to showing an equivalent (but often more convenient) 
bound 
$$
\big\| \frac{1}{N} A^*A-I \big\| \le \max(\d, \d^2)
\quad \text{where } \d = O(\sqrt{n/N}).
$$

\subsection{General random matrices with independent entries}

Now we pass to a more general model of random matrices whose entries
are independent centered random variables with some general distribution 
(not necessarily normal). The largest singular value (the spectral norm) 
can be estimated by Latala's theorem for general random matrices with non-identically
distributed entries:

\begin{theorem}[Latala's theorem \cite{Latala}]     \index{Latala's theorem}           \label{Latala}
Let $A$ be a random matrix whose entries $a_{ij}$ are independent centered
random variables with finite fourth moment. Then
$$
\E \smax(A) \le C \Big[ \max_i \big( \sum_j \E a_{ij}^2 \big)^{1/2}
  +  \max_j \big( \sum_i \E a_{ij}^2 \big)^{1/2}
  +  \big( \sum_{i,j} \E a_{ij}^4 \big)^{1/4}  \Big].
$$
\end{theorem}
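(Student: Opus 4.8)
The plan is to bound $\E\smax(A)=\E\|A\|$ by the moment (trace) method, after a symmetrization step. Write $\a=\max_i\big(\sum_j\E a_{ij}^2\big)^{1/2}$, $\b=\max_j\big(\sum_i\E a_{ij}^2\big)^{1/2}$ and $\g=\big(\sum_{i,j}\E a_{ij}^4\big)^{1/4}$, so that the goal becomes $\E\|A\|\le C(\a+\b+\g)$. Since the entries are independent and centered, a standard symmetrization (introducing an independent copy of $A$ and independent symmetric Bernoulli signs $\e_{ij}$) gives $\E\|A\|\le 2\,\E\|A_\e\|$, where $A_\e$ is the matrix with entries $\e_{ij}a_{ij}$; the three quantities $\a,\b,\g$ depend only on the second and fourth moments of the entries and so are unchanged. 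Hence I may assume from now on that the entries are symmetric. Finally, for every integer $m\ge1$ one has $\tr\big((AA^*)^m\big)=\sum_i s_i(A)^{2m}\ge\|A\|^{2m}$, so by Jensen $\E\|A\|\le\big(\E\tr((AA^*)^m)\big)^{1/(2m)}$, and everything reduces to estimating this trace moment.

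First I would expand the trace as
$$
\E\tr\big((AA^*)^m\big)=\sum\E\big[a_{i_1 j_1}a_{i_2 j_1}a_{i_2 j_2}a_{i_3 j_2}\cdots a_{i_m j_m}a_{i_1 j_m}\big],
$$
a sum over closed walks of length $2m$ alternating between the $N$ ``row'' vertices and the $n$ ``column'' vertices of the complete bipartite graph. By independence and symmetry of the entries, a walk contributes nothing unless each of its edges is traversed an even number of times, at least twice. To control the higher moments that appear, I would introduce a truncation level $M$, to be chosen comparable to $\a+\b+\g$, and split each entry into a bounded part $a_{ij}\one_{\{|a_{ij}|\le M\}}$ and a tail part $a_{ij}\one_{\{|a_{ij}|>M\}}$. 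An edge carrying only bounded factors with total multiplicity $k\ge2$ contributes at most $(\E a_{ij}^2)M^{k-2}$, while any contribution involving a tail factor is absorbed into fourth moments using $\E a_{ij}^2\one_{\{|a_{ij}|>M\}}\le M^{-2}\E a_{ij}^4$ and similar bounds for higher tail moments. Each surviving walk thus acquires a weight that is a product of variances $\E a_{ij}^2$ (at most one per distinct edge), of factors $M^{-2}\E a_{ij}^4$ for the tail edges, and of powers of $M$ accounting for the excess multiplicities.

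It then remains to sum these weights over all admissible walks, which I would organize by the combinatorial ``shape'' of a walk: the number of distinct row- and column-vertices, the number of distinct edges, and the multiplicity pattern. Because every edge is used at least twice, a walk of length $2m$ has at most $m$ distinct edges, and the graph it spans is connected and hence close to a tree; this bounds the number of walks of a given shape by a product of factors $N$ and $n$, one per distinct vertex, times a purely combinatorial count. When the sums over vertex labels are carried out, each such factor is compensated: summing over a new column-vertex reached from a fixed row produces $\sum_j\E a_{ij}^2\le\a^2$, summing over a new row-vertex produces $\sum_i\E a_{ij}^2\le\b^2$, and each fourth-moment factor summed over its free endpoints produces $\sum_{i,j}\E a_{ij}^4=\g^4$. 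Choosing $M\asymp\a+\b+\g$ so that the surplus powers of $M$ balance the variances and fourth moments, this bookkeeping should yield $\E\tr\big((AA^*)^m\big)\le\big(C(\a+\b+\g)\big)^{2m}\cdot(\min(N,n))^{O(1)}$ with an absolute exponent. Since $\tr((AA^*)^m)\le\min(N,n)\,\|A\|^{2m}$ in any case, such a prefactor is unavoidable; but taking $m\asymp\log\min(N,n)$ makes its $2m$-th root $O(1)$, and the $2m$-th root of the whole bound gives $\E\|A\|\le C'(\a+\b+\g)$.

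The main obstacle is the combinatorial step of the third paragraph: classifying the even closed walks and verifying that the counts of distinct vertices are always exactly absorbed by the sums defining $\a,\b,\g$, with the truncation level $M$ chosen so that the excess edge multiplicities cost no more than constant factors. This is where one must be careful enough to keep any logarithmic factor in $N$ or $n$ out of the final constant --- which is essentially the entire content of Latala's theorem --- whereas the symmetrization, the trace expansion, and the reduction to even walks are routine.
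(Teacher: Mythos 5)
The paper does not actually prove Latala's theorem; it is stated as a cited result from \cite{Latala}, so there is no proof in the text to compare against. I will therefore evaluate your outline on its own merits against Latala's original argument.

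Your framework --- symmetrize, bound $\E\|A\|$ via $\big(\E\tr((AA^*)^m)\big)^{1/(2m)}$, expand the trace as a sum over even closed walks in the complete bipartite graph, truncate the entries at a level $M\asymp\a+\b+\g$, and count walk shapes --- is indeed the method Latala uses, and the preliminary reductions (symmetrization preserving $\a,\b,\g$; each edge traversed an even number of times; $\tr((AA^*)^m)\le\min(N,n)\|A\|^{2m}$) are all correct. However, the proposal stops short of a proof at precisely the step that carries all the content of the theorem. The claim in the third paragraph, that the bookkeeping ``should yield'' $\E\tr((AA^*)^m)\le (C(\a+\b+\g))^{2m}(\min(N,n))^{O(1)}$ with $C$ \emph{absolute and independent of $m$}, is asserted rather than established, and a naive count will not give it. Two concrete issues: (i) the number of distinct ``shapes'' of non-tree walks of length $2m$ grows super-exponentially in $m$, and each edge can carry moments of order up to $2m$; if these are estimated crudely, the constant $C$ picks up a factor growing in $m$, which after taking $m\sim\log\min(N,n)$ and the $(2m)$-th root reinstates exactly the $\sqrt{\log n}$ loss that the theorem is designed to avoid; (ii) you must decide, per distinct edge of each walk, whether to absorb its vertex-label sum into $\a^2$, $\b^2$, or $\g^4$, and this choice has to be made globally consistent with the walk's structure --- the argument that this can always be done without double-counting or leftover free sums is the delicate combinatorial heart of Latala's paper and is not sketched here. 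You honestly flag this gap in your last paragraph, which is to your credit, but as written the proposal is a roadmap pointing at the right method rather than a proof.
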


If the variance and the fourth moments of the entries are uniformly bounded,
then Latala's result yields
$\smax(A) = O(\sqrt{N} + \sqrt{n})$. This is slightly weaker than our goal \eqref{heuristic},
which is $\smax(A) = \sqrt{N} + O(\sqrt{n})$ but still satisfactory for most applications.
Results of the latter type will appear later in the more general model of random matrices
with independent rows or columns.

Similarly, our goal \eqref{heuristic} for the smallest singular value is $\smin(A) \ge \sqrt{N} - O(\sqrt{n})$. 
Since the singular values are non-negative anyway, such inequality would only be useful 
for sufficiently tall matrices, $N \gg n$. For almost square and square matrices, estimating 
the smallest singular value (known also as the {\em hard edge} of spectrum) is considerably more difficult. 
The progress on estimating the hard edge is summarized in \cite{RV ICM}.
If $A$ has independent entries, then indeed $\smin(A) \ge c (\sqrt{N} - \sqrt{n})$,
and the following is an optimal probability bound:

\begin{theorem}[Independent entries, hard edge \cite{RV rectangular}]	\index{Hard edge of spectrum}	
  \label{RV rectangular}
  Let $A$ be an $n \times n$ random matrix whose entries are independent
  identically distributed subgaussian random variables
  with zero mean and unit variance.
  Then for $\e \ge 0$,
  $$
  \P \big( \smin(A) \le \e (\sqrt{N} - \sqrt{n-1}) \big) \le (C\e)^{N-n+1} +  c^N  
  $$
  where $C > 0$  and $c \in (0,1)$ depend only on the subgaussian
  norm of the entries.
\end{theorem}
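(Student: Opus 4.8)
The plan is to reduce the square case to a rectangular one and then attack the two tails separately, following the Littlewood--Offord / small-ball philosophy. Write $\smin(A) = \inf_{x \in S^{n-1}} \|Ax\|_2$. The key dichotomy is to split the sphere into \emph{compressible} vectors (those close to a sparse vector) and \emph{incompressible} vectors (the rest); this decomposition is standard in hard-edge arguments. For compressible $x$, supported essentially on a set of size $\delta n$ for small $\delta$, one uses an $\e$-net of the union of low-dimensional spheres (there are $\binom{n}{\delta n}$ coordinate patterns, each contributing a net of size $(C/\e)^{\delta n}$), together with the sub-gaussian tail bound for $\|Ax\|_2$ around its mean $\approx \sqrt{N}$ coming from Proposition~\ref{sub-gaussian large deviations} applied rowwise; a union bound over the net shows $\|Ax\|_2 \gtrsim \sqrt{N}$ with probability $1 - c^N$ provided $\delta$ is a small absolute constant. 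This handles the $c^N$ term.

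For incompressible vectors the net argument is too lossy, so one instead uses the geometric observation that if $x$ is incompressible then $\|Ax\|_2 \ge c\,\|x\|_2 \cdot \mathrm{dist}(A_k, H_k)/\sqrt{n}$ averaged over a positive proportion of columns $k$, where $A_k$ is the $k$-th column of $A$ and $H_k = \mathrm{span}(A_j : j \ne k)$. More precisely, for incompressible $x$, a constant fraction of coordinates $x_k$ satisfy $|x_k| \asymp 1/\sqrt{n}$, and then $\|Ax\|_2 \ge |x_k|\,\mathrm{dist}(A_k, H_k)$. Hence it suffices to lower-bound $\mathrm{dist}(A_k, H_k)$ by $c(\sqrt{N}-\sqrt{n-1})$ with the claimed probability. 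Conditioning on the other columns, $H_k$ is a fixed subspace of dimension at most $n-1$, and $\mathrm{dist}(A_k, H_k) = \|P_{H_k^\perp} A_k\|_2$ where $P_{H_k^\perp}$ projects onto a subspace of dimension at least $N - n + 1$. This is a sum of independent sub-gaussian terms, so by a Hoeffding-type deviation (Proposition~\ref{sub-gaussian large deviations}) together with a small-ball (anti-concentration) estimate for linear combinations of the independent entries of $A_k$, one gets $\mathrm{dist}(A_k, H_k) \ge \e\sqrt{N-n+1}$ with probability $1 - (C\e)^{N-n+1}$. Combining with a union bound over $k$ and a covering argument on the (now genuinely $n$-dimensional but controlled) incompressible part yields the $(C\e)^{N-n+1}$ term.

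The main obstacle is the \textbf{small-ball probability estimate} for $\mathrm{dist}(A_k,H_k)$ at the optimal exponent $N-n+1$: one needs that the projection of a random sub-gaussian vector onto a codimension-$(n-1)$ subspace has no atom of mass larger than $(C\e)^{N-n+1}$ near a point, \emph{uniformly over the subspace}. Naively applying a one-dimensional Littlewood--Offord bound loses too much; the right tool is a tensorization lemma (if each $Y_i$ has small-ball profile $\P(|Y_i| \le \e) \le C\e$, then $\P(\sum Y_i^2 \le \e^2 m) \le (C'\e)^m$) combined with the fact that the coordinates of $P_{H_k^\perp}A_k$ in an orthonormal basis of $H_k^\perp$ are linear functionals of the i.i.d.\ entries with controlled $\ell_2$ norms. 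Making the small-ball constant depend only on the sub-gaussian norm (and not on the subspace) requires a uniform anti-concentration inequality for sub-gaussian entries, which is where the bulk of the technical work lies; the covering/net step over incompressible vectors is comparatively routine once this is in place. (Note also a minor bookkeeping point: the statement as written mixes $n\times n$ with $N$; one reads $N=n$ in the square case, so $N-n+1 = 1$ and the bound reads $\P(\smin(A)\le \e) \le C\e + c^n$, the genuinely interesting square-matrix statement, while the displayed form is the rectangular generalization proved en route.)
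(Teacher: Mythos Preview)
The paper does not prove this theorem; it is quoted from \cite{RV rectangular} without proof, so there is no in-text argument to compare against. Your outline is in fact the architecture of the proof in \cite{RV rectangular}: the compressible/incompressible decomposition of the sphere, a net-plus-concentration argument on compressible vectors producing the additive $c^N$ term, and the ``invertibility via distance'' reduction $\|Ax\|_2 \ge |x_k|\,\mathrm{dist}(A_k,H_k)$ on incompressible vectors, followed by a small-ball bound for $\mathrm{dist}(A_k,H_k)$. You also correctly flag the $n\times n$ versus $N$ slip in the statement.

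There is, however, a genuine gap in your small-ball step. You propose to apply the tensorization lemma to $Y_j=\langle v_j,A_k\rangle$, $j=1,\dots,N-n+1$, where $(v_j)$ is an orthonormal basis of $H_k^\perp$. Each $Y_j$ is indeed a unit-norm linear form in the i.i.d.\ entries of $A_k$ and hence enjoys a one-dimensional small-ball bound; but the $Y_j$ are \emph{not independent} (they are merely uncorrelated), and the tensorization lemma you quote requires independence. For Bernoulli entries, say, orthogonality of the $v_j$ does not decouple the $Y_j$, and the product structure the lemma exploits is simply absent. The fix in \cite{RV rectangular} does not tensorize in the basis of $H_k^\perp$; instead one tensorizes over a well-chosen subset of the \emph{original coordinates} of $A_k$ (which are genuinely independent). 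Concretely, one first uses the randomness of $H_k$ to show that the projection $P_{H_k^\perp}$ is ``spread'' with probability $1-e^{-cN}$, and then selects $\sigma\subset[N]$ with $|\sigma|\asymp N-n+1$ on which the restriction of the projection is well-conditioned (a restricted-invertibility type step), so that $\mathrm{dist}(A_k,H_k)$ is bounded below by a quantity depending only on the independent variables $((A_k)_i)_{i\in\sigma}$; tensorization then applies legitimately. This coordinate-selection step is precisely the ``bulk of the technical work'' you allude to, but the mechanism you sketched for it does not go through as written.
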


This result gives an optimal bound for square matrices as well ($N=n$).

\section{Random matrices with independent rows}				\label{s: rows}

In this section, we focus on a more general model of random matrices, where we only 
assume independence of the rows rather than all entries. 
Such matrices are naturally
{\em generated by high-dimensional distributions}.
Indeed, given an arbitrary probability distribution in $\R^n$, one takes
a sample of $N$ independent points and arranges them as the rows of an $N \times n$ matrix $A$.
By studying spectral properties of $A$ one should be able to learn something useful about the 
underlying distribution. For example, as we will see in Section~\ref{s: covariance}, 
the extreme singular values of $A$ would tell us whether the covariance matrix 
of the distribution can be estimated from a sample of size $N$.

The picture will vary slightly depending on whether the rows of $A$ 
are sub-gaussian or have arbitrary distribution. For heavy-tailed distributions, an extra 
logarithmic factor has to appear in our desired inequality \eqref{heuristic}. 
The analysis of sub-gaussian and heavy-tailed matrices will be completely different.

There is an abundance of examples where the results of 
this section may be useful. They include all matrices with independent entries,
whether sub-gaussian such as Gaussian and Bernoulli, or completely general
distributions with mean zero and unit variance. In the latter case
one is able to surpass the fourth moment assumption which is 
necessary in Bai-Yin's law, Theorem~\ref{Bai-Yin}.

Other examples of interest come from non-product distributions, some of which we saw
in Example~\ref{random vectors}. Sampling from discrete objects (matrices and frames)
fits well in this framework, too. Given a deterministic matrix $B$, one puts a uniform distribution on 
the set of the rows of $B$ and creates 
a random matrix $A$ as before -- by sampling some $N$ random rows from $B$. 
Applications to sampling will be discussed in Section~\ref{s: sub-matrices}.

\subsection{Sub-gaussian rows}	  
\index{Sub-gaussian!random matrices with independent rows}		\label{s: sub-gaussian rows}

The following result goes in the direction of our goal \eqref{heuristic} for 
random matrices with independent sub-gaussian rows. 

\begin{theorem}[Sub-gaussian rows]		\label{sub-gaussian rows}
  Let $A$ be an $N \times n$ matrix whose rows $A_i$ are independent
  sub-gaussian isotropic random vectors in $\R^n$.
  Then for every $t \ge 0$, with probability at least $1 - 2\exp(-ct^2)$ one has
  \begin{equation}							\label{smin smax rectangular}
  \sqrt{N} - C \sqrt{n} - t \le \smin(A) \le \smax(A) \le \sqrt{N} + C \sqrt{n} + t.
  \end{equation}
  Here $C = C_K$, $c = c_K > 0$ depend only on the subgaussian norm 
  $K = \max_i \|A_i\|_\psitwo$ of the rows.
\end{theorem}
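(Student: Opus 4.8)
The plan is to reduce the estimate on the extreme singular values to an estimate on $\|\frac{1}{N}A^*A - I\|$, and then to control this operator norm via an $\e$-net argument on the sphere $S^{n-1}$. By Lemma~\ref{approximate isometries} applied to $B = \frac{1}{\sqrt{N}}A$, it suffices to show that, with the stated probability, $\|\frac{1}{N}A^*A - I\| \le \max(\d, \d^2)$ where $\d = C_1 \sqrt{n/N} + t/\sqrt{N}$ for a suitable constant $C_1 = C_1(K)$; this yields $1 - \d \le \smin(\frac{1}{\sqrt{N}}A) \le \smax(\frac{1}{\sqrt{N}}A) \le 1 + \d$, which after multiplying through by $\sqrt{N}$ is exactly \eqref{smin smax rectangular} (after adjusting constants and absorbing the $\d^2$ case, which only helps when $\d$ is large). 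So the whole problem becomes: bound $\|\frac{1}{N}A^*A - I\|$.

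First I would fix a vector $x \in S^{n-1}$ and observe that $\frac{1}{N}\|Ax\|_2^2 = \frac{1}{N}\sum_{i=1}^N \langle A_i, x\rangle^2$. Since each $A_i$ is isotropic, $\E \langle A_i, x\rangle^2 = 1$, so $\frac{1}{N}\langle A^*A x, x\rangle - 1$ is an average of $N$ independent centered random variables $Z_i := \langle A_i, x\rangle^2 - 1$. Because $A_i$ is sub-gaussian with $\|A_i\|_{\psitwo} \le K$, the marginal $\langle A_i, x\rangle$ is a sub-gaussian random variable with $\psitwo$-norm at most $K$, hence by Lemma~\ref{sub-exponential squared} the variable $\langle A_i, x\rangle^2$ is sub-exponential with $\psione$-norm at most $2K^2$, and then so is $Z_i$ after centering (Remark~\ref{centering}), with $\|Z_i\|_{\psione} \le 4K^2 =: C_2$. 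Now Corollary~\ref{average sub-exponentials} (Bernstein-type inequality for averages of sub-exponentials) gives, for any $\e \ge 0$,
$$
\P\Big\{ \Big| \frac{1}{N}\|Ax\|_2^2 - 1 \Big| \ge \e \Big\} \le 2\exp\Big[ -\frac{c_1}{C_2^2} \min(\e^2, \e)\, N \Big].
$$
This is the pointwise control.

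Next I would upgrade this pointwise bound to a uniform bound over the sphere using the covering argument. Choose a $\frac{1}{4}$-net $\NN$ of $S^{n-1}$ with $|\NN| \le 9^n$ (Lemma~\ref{net cardinality}). By Lemma~\ref{norm on net} applied to the symmetric matrix $\frac{1}{N}A^*A - I$, we have $\|\frac{1}{N}A^*A - I\| \le 2 \max_{x \in \NN} |\langle (\frac{1}{N}A^*A - I)x, x\rangle| = 2\max_{x\in\NN} |\frac{1}{N}\|Ax\|_2^2 - 1|$. Setting $\e = \d/2$ where $\d := C_1\sqrt{n/N} + t/\sqrt{N}$ and taking a union bound over the net, the failure probability is at most $9^n \cdot 2\exp[-(c_1/C_2^2)\min(\e^2,\e)N]$; choosing $C_1$ large enough (depending on $K$ through $C_2$) makes the exponent dominate the $n\log 9$ term — in the relevant regime $\e \le 1$ one has $\min(\e^2,\e) = \e^2 \gtrsim (C_1^2 n + t^2)/N$, so the bound becomes $2\exp(-c_K t^2)$ after absorbing the $n$-term. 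The one genuine subtlety — and the step I expect to be the main obstacle to state cleanly rather than technically hard — is the bookkeeping around the two-regime ($\d$ vs.\ $\d^2$) structure of Lemma~\ref{approximate isometries} together with the gaussian-vs-exponential split in the Bernstein bound: one has to check that when $\d$ is large (so we are comparing $\|\frac1N A^*A - I\|$ against $\d^2$ rather than $\d$, and the sub-exponential bound is in its linear rather than quadratic regime), the two effects match up and still deliver $e^{-c t^2}$; this is a matter of careful case analysis on whether $C\sqrt{n} + t$ is smaller or larger than $\sqrt{N}$, but it is the place where the argument could look sloppy if rushed.
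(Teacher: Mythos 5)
Your proposal matches the paper's proof essentially step for step: reduce via Lemma~\ref{approximate isometries} to bounding $\big\|\frac{1}{N}A^*A-I\big\|$ by $\max(\delta,\delta^2)$, control the quadratic form on a $\frac14$-net of size $9^n$ via Lemma~\ref{norm on net}, get pointwise concentration from Corollary~\ref{average sub-exponentials} applied to the centered sub-exponentials $\langle A_i,x\rangle^2-1$ (with $\psi_1$-norm $\le 4K^2$), and finish with a union bound. The one subtlety you flag at the end is resolved in the paper without any case split: writing $\varepsilon = \max(\delta,\delta^2)$ and thresholding at $\varepsilon/2$ (not $\delta/2$), one always has $\min(\varepsilon^2,\varepsilon) = \delta^2$ whether $\delta\le 1$ or $\delta>1$, so the Bernstein exponent is $\delta^2 N \ge C^2 n + t^2$ in both regimes and the $9^n$ factor is absorbed by choosing $C=C_K$ large.
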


This result is a general version of Corollary~\ref{Gaussian deviation} (up to absolute constants); 
instead of independent Gaussian entries we allow independent sub-gaussian rows. 
This of course covers all matrices with independent sub-gaussian entries
such as Gaussian and Bernoulli. It also applies to some natural matrices whose entries
are not independent. One such example is a matrix whose rows are independent spherical 
random vectors (Example~\ref{random vectors sub-gaussian}).

\begin{proof}
The proof is a basic version of a {\em covering argument}, \index{Covering argument} and it has three steps. 
We need to control $\|Ax\|_2$ for all
vectors $x$ on the unit sphere $S^{n-1}$. To this end, we discretize the sphere using a net $\NN$
(the approximation step), establish a tight control of $\|Ax\|_2$ for every fixed vector $x \in \NN$ 
with high probability (the concentration step), and finish off by taking a union bound over all
$x$ in the net. The concentration step will be based on the deviation inequality 
for sub-exponential random variables, Corollary~\ref{average sub-exponentials}.

{\bf Step 1: Approximation.}
Recalling Lemma~\ref{approximate isometries} for the matrix $B=A/\sqrt{N}$ we see 
that the conclusion of the theorem is equivalent to 
\begin{equation}							\label{A*A rows}
\big\| \frac{1}{N}A^*A-I \big\| \le \max(\d, \d^2) =:\e
\quad \text{where} \quad
\d = C \sqrt{\frac{n}{N}} + \frac{t}{\sqrt{N}}.
\end{equation}
Using Lemma~\ref{norm on net}, we can evaluate the operator norm in \eqref{A*A rows}
on a $\frac{1}{4}$-net $\NN$ of the unit sphere $S^{n-1}$:
$$
\big\| \frac{1}{N}A^*A-I \big\|
\le 2 \max_{x \in \NN} \big| \big\langle (\frac{1}{N}A^*A-I)x, x \big\rangle \big|
= 2 \max_{x \in \NN} \big| \frac{1}{N} \|Ax\|_2^2 - 1 \big|.
$$
So to complete the proof it suffices to show that, with the required probability, 
$$
\max_{x \in \NN} \big| \frac{1}{N} \|Ax\|_2^2 - 1 \big| \le  \frac{\e}{2}.
$$
By Lemma~\ref{net cardinality}, we can choose the net $\NN$ so that it has cardinality
$|\NN| \le 9^n$.

{\bf Step 2: Concentration.} 
Let us fix any vector $x \in S^{n-1}$. We can express $\|Ax\|_2^2$ as a sum of independent 
random variables 
\begin{equation}							\label{Ax as sum}
\|Ax\|_2^2 = \sum_{i=1}^N \< A_i, x\> ^2 =: \sum_{i=1}^N Z_i^2
\end{equation}
where $A_i$ denote the rows of the matrix $A$.
By assumption, $Z_i = \< A_i, x\> $ are independent sub-gaussian random variables 
with $\E Z_i^2 = 1$ and $\|Z_i\|_\psitwo \le K$. 
Therefore, by Remark~\ref{centering} and Lemma~\ref{sub-exponential squared},
$Z_i^2 - 1$ are independent centered sub-exponential random variables with 
$\|Z_i^2-1\|_\psione \le 2\|Z_i^2\|_\psione \le 4 \|Z_i\|_\psitwo^2 \le 4 K^2$.

We can therefore use an exponential deviation inequality, Corollary~\ref{average sub-exponentials}, 
to control the sum \eqref{Ax as sum}. Since 
$K \ge \|Z_i\|_\psitwo \ge \frac{1}{\sqrt{2}} (\E|Z_i|^2)^{1/2} = \frac{1}{\sqrt{2}}$, this gives 
\begin{align*}
\P \Big\{ \big| \frac{1}{N} \|Ax\|_2^2 - 1 \big| \ge \frac{\e}{2} \Big\}
  &= \P \Big\{ \big| \frac{1}{N}\sum_{i=1}^N Z_i^2 - 1 \big| \ge \frac{\e}{2} \Big\} 
  \le 2 \exp \Big[ - \frac{c_1}{K^4} \min(\e^2, \e) N \Big] \\
  &= 2 \exp \Big[ - \frac{c_1}{K^4} \d^2 N \Big]
  \le 2 \exp \Big[ - \frac{c_1}{K^4} (C^2 n + t^2) \Big]
\end{align*}
where the last inequality follows by the definition of $\d$ 
and using the inequality $(a+b)^2 \ge a^2 + b^2$ for $a,b \ge 0$.

{\bf Step 3: Union bound.}
Taking the union bound over all vectors $x$ in the net $\NN$ of cardinality $|\NN| \le 9^n$,
we obtain 
$$
\P \Big\{ \max_{x \in \NN} \big| \frac{1}{N} \|Ax\|_2^2 - 1 \big| \ge \frac{\e}{2} \Big\}
\le 9^n \cdot 2 \exp \Big[ - \frac{c_1}{K^4} (C^2 n + t^2) \Big]
\le 2 \exp \Big( - \frac{c_1 t^2}{K^4} \Big)
$$
where the second inequality follows for $C = C_K$ sufficiently large, 
e.g. $C = K^2 \sqrt{\ln 9/c_1}$.
As we noted in Step~1, this completes the proof of the theorem.
\end{proof}

\begin{remark}[Non-isotropic distributions]				\label{r: non-isotropic}		
\begin{enumerate}		
\item A version of Theorem~\ref{sub-gaussian rows} holds for general,
  non-isotropic sub-gaussian distributions. 
  Assume that $A$ is an $N \times n$ matrix whose rows $A_i$ are independent
  sub-gaussian random vectors in $\R^n$ with second moment matrix $\Sigma$. 
  Then for every $t \ge 0$, the following inequality holds with probability at least $1 - 2\exp(-ct^2)$:
  \begin{equation}							\label{A*A rows non-isotropic}
  \big\| \frac{1}{N}A^*A-\Sigma \big\| \le \max(\d, \d^2)
  \quad \text{where} \quad
  \d = C \sqrt{\frac{n}{N}} + \frac{t}{\sqrt{N}}.
  \end{equation}
  Here as before $C = C_K$, $c = c_K > 0$ depend only on the subgaussian norm 
  $K = \max_i \|A_i\|_\psitwo$ of the rows. This result is a general version of \eqref{A*A rows}.
  It follows by a straighforward modification of the argument of Theorem~\ref{sub-gaussian rows}.
\item A more natural, multiplicative form of \eqref{A*A rows non-isotropic} is the following. 
  Assume that $\Sigma^{-1/2} A_i$ are isotropic sub-gaussian random vectors, and let $K$
  be the maximum of their sub-gaussian norms. Then 
  for every $t \ge 0$, the following inequality holds with probability at least $1 - 2\exp(-ct^2)$:
  \begin{equation}							\label{A*A rows non-isotropic multiplicative}
  \big\| \frac{1}{N}A^*A-\Sigma \big\| \le \max(\d, \d^2) \, \|\Sigma\|
  \quad \text{where} \quad
  \d = C \sqrt{\frac{n}{N}} + \frac{t}{\sqrt{N}}
  \end{equation}
  Here again $C = C_K$, $c = c_K > 0$. This result follows from Theorem~\ref{sub-gaussian rows}
  applied to the isotropic random vectors $\Sigma^{-1/2} A_i$.
\end{enumerate}
\end{remark}

\subsection{Heavy-tailed rows}
  \index{Heavy-tailed!random matrices with independent rows}			\label{s: heavy-tailed rows}

The class of sub-gaussian random variables in Theorem~\ref{sub-gaussian rows} may sometimes
be too restrictive in applications. For example, if the rows of $A$ 
are independent coordinate or frame random vectors 
(Examples~\ref{random vectors} and \ref{random vectors sub-gaussian}), 
they are poorly sub-gaussian and Theorem~\ref{sub-gaussian rows} is too weak.
In such cases, one would use the following result instead, which operates in remarkable generality.

\begin{theorem}[Heavy-tailed rows]	  \label{heavy-tailed rows}
  Let $A$ be an $N \times n$ matrix whose rows $A_i$ are independent
  isotropic random vectors in $\R^n$. Let $m$ be a number such that 
  $\|A_i\|_2 \le \sqrt{m}$ almost surely for all $i$. 
  Then for every $t \ge 0$, one has
  \begin{equation}							\label{eq heavy-tailed rows}
  \sqrt{N} - t \sqrt{m} \le \smin(A) \le \smax(A) \le \sqrt{N} + t \sqrt{m}
  \end{equation}
  with probability at least $1 - 2 n \cdot \exp(-ct^2)$, 
  where $c>0$ is an absolute constant.
\end{theorem}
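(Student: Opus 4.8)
The plan is to reduce the statement, as in Theorem~\ref{sub-gaussian rows}, to controlling $\big\| \frac{1}{N}A^*A - I \big\|$ via Lemma~\ref{approximate isometries} applied to $B = A/\sqrt{N}$: it suffices to show $\big\| \frac{1}{N}A^*A - I \big\| \le \max(\d,\d^2)$ with $\d = t\sqrt{m/N}$, with the stated probability. Writing $A^*A = \sum_{i=1}^N A_i \otimes A_i$, note that isotropy of the rows gives $\E(A_i \otimes A_i) = I$, so $\frac{1}{N}A^*A - I = \frac{1}{N}\sum_{i=1}^N (A_i \otimes A_i - I)$ is a normalized sum of independent centered self-adjoint $n\times n$ random matrices. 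This is precisely the setting of the non-commutative Bernstein-type inequality, Theorem~\ref{matrix Bernstein}; the whole argument is essentially one application of that theorem, in contrast to the covering argument used for sub-gaussian rows.

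The concrete steps are as follows. Set $X_i = \frac{1}{N}(A_i \otimes A_i - I)$. First I would bound $\|X_i\|$ almost surely: since $\|A_i \otimes A_i\| = \|A_i\|_2^2 \le m$ and $m \ge \E\|A_i\|_2^2 = n \ge 1$, we get $\|X_i\| \le \frac{1}{N}(m + 1) \le \frac{2m}{N} =: K$ (one should keep track of whether the cleaner constant is $m/N$ or $2m/N$; both are fine up to adjusting $c$). Second, I would bound the variance proxy $\big\| \sum_i \E X_i^2 \big\|$. Here $\E X_i^2 = \frac{1}{N^2}\E\big[(A_i\otimes A_i - I)^2\big] = \frac{1}{N^2}\big(\E\|A_i\|_2^2\, A_i\otimes A_i - 2\E(A_i\otimes A_i) + I\big)$ in expectation; using $\|A_i\|_2^2 \le m$ and isotropy, $\E\big[\|A_i\|_2^2 A_i\otimes A_i\big] \preceq m\, \E(A_i\otimes A_i) = mI$, so $\E X_i^2 \preceq \frac{1}{N^2}(m-1)I \preceq \frac{m}{N^2} I$, hence $\big\|\sum_{i=1}^N \E X_i^2\big\| \le \frac{m}{N} =: \s^2$. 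Third, I would feed $K = O(m/N)$ and $\s^2 = m/N$ into \eqref{eq matrix Bernstein} with threshold $\e = \max(\d,\d^2)$, obtaining
$$
\P\Big\{ \big\|\tfrac1N A^*A - I\big\| \ge \e \Big\} \le 2n\cdot \exp\Big(\frac{-\e^2/2}{\s^2 + K\e/3}\Big) = 2n\cdot\exp\Big(-c'\min\big(\tfrac{\e^2}{\s^2},\tfrac{\e}{K}\big)\Big).
$$
With $\e = \max(\d,\d^2)$ and $\d = t\sqrt{m/N}$, the minimum equals $c'' t^2$ in both regimes $\d\le 1$ and $\d>1$ (since $\d^2/\s^2 = t^2$ and $\d^2/K \gtrsim t^2$, while for $\d>1$ one uses $\e=\d^2$ and $\e^2/\s^2 \ge \d^2/\s^2 = t^2$), so the bound becomes $2n\exp(-ct^2)$, as required.

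The main obstacle is the variance computation: one has to resist the urge to use $\E X_i^2$ naively and instead exploit the almost-sure bound $\|A_i\|_2 \le \sqrt m$ together with isotropy to get the operator-inequality $\E\big[\|A_i\|_2^2\, A_i\otimes A_i\big] \preceq m I$ — this is what keeps $\s^2$ at the right order $m/N$ rather than something larger. A secondary point of care is bookkeeping in the final step: verifying that $\min(\e^2/\s^2,\,\e/K) \gtrsim t^2$ uniformly in both the $\d \le 1$ and $\d > 1$ cases, and checking that the factor $n$ out front is absorbed because $t^2$ can be taken $\gtrsim \log n$ in the regime where the statement is non-trivial (for small $t$ the probability bound is vacuous, so nothing is lost). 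Everything else is routine.
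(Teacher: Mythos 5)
Your proposal is correct and follows essentially the same route as the paper: reduce via Lemma~\ref{approximate isometries} to bounding $\|\frac{1}{N}A^*A - I\|$, write it as $\sum_i X_i$ with $X_i = \frac{1}{N}(A_i\otimes A_i - I)$, bound $\|X_i\|$ and $\|\sum_i \E X_i^2\|$ by $O(m/N)$, and feed these into the non-commutative Bernstein inequality (Theorem~\ref{matrix Bernstein}), with the identity $\min(\e^2,\e)=\d^2$ handling both regimes. The only superfluous remark is the worry about "absorbing" the factor $n$ — the theorem retains $2n$ in the probability bound, so nothing needs to be absorbed.
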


Recall that $(\E \|A_i\|_2^2)^{1/2} = \sqrt{n}$ by Lemma~\ref{norm isotropic}. 
This indicates that one would typically use Theorem~\ref{heavy-tailed rows} with $m = O(n)$. 
In this case the result takes the form 
\begin{equation}							\label{heavy-tailed m=n}
\sqrt{N} - t \sqrt{n} \le \smin(A) \le \smax(A) \le \sqrt{N} + t \sqrt{n}
\end{equation}
with probability at least $1 - 2n \cdot \exp(-c't^2)$. This is a form of our desired inequality
\eqref{heuristic} for heavy-tailed matrices. We shall discuss this more after the proof.

\begin{proof}
We shall use the non-commutative Bernstein's inequality, Theorem~\ref{matrix Bernstein}.

{\bf Step 1: Reduction to a sum of independent random matrices.}
We first note that $m \ge n \ge 1$ since by Lemma~\ref{norm isotropic} we have $\E \|A_i\|_2^2 = n$.
Now we start an argument parallel to Step~1 of Theorem~\ref{sub-gaussian rows}. 
Recalling Lemma~\ref{approximate isometries} for the matrix $B=A/\sqrt{N}$ we see 
that the desired inequalities \eqref{eq heavy-tailed rows} are equivalent to 
\begin{equation}							\label{A*A heavy-tailed}
\big\| \frac{1}{N}A^*A-I \big\| \le \max(\d, \d^2) =:\e
\quad \text{where} \quad
\d = t \sqrt{\frac{m}{N}}.
\end{equation}
We express this random matrix as a sum of independent random matrices:
$$
\frac{1}{N} A^*A - I = \frac{1}{N} \sum_{i=1}^N A_i \otimes A_i - I  
= \sum_{i=1}^N X_i, 
\quad \text{where } X_i := \frac{1}{N} (A_i \otimes A_i - I);
$$
note that $X_i$ are independent centered $n \times n$ random matrices.

{\bf Step 2: Estimating the mean, range and variance.}
We are going to apply the non-commutative Bernstein inequality, Theorem~\ref{matrix Bernstein}, for the sum $\sum_i X_i$.
Since $A_i$ are isotropic random vectors, we have $\E A_i \otimes A_i = I$
which implies that $\E X_i = 0$ as required in the non-commutative Bernstein inequality.

We estimate the range of $X_i$ using that $\|A_i\|_2 \le \sqrt{m}$ and $m \ge 1$:
$$
\|X_i\| 
\le \frac{1}{N} ( \|A_i \otimes A_i\| + 1)
= \frac{1}{N} (\|A_i\|_2^2 + 1) 
\le \frac{1}{N} (m + 1)
\le \frac{2 m}{N}
=: K
$$
To estimate the total variance $\|\sum_i \E X_i^2\|$, we first compute
$$
X_i^2 = \frac{1}{N^2} \big[ (A_i \otimes A_i)^2 - 2(A_i \otimes A_i) + I \big],
$$
so using that the isotropy assumption $\E A_i \otimes A_i = I$ we obtain
\begin{equation}							\label{Xi squared}
\E X_i^2 = \frac{1}{N^2} \big[ \E (A_i \otimes A_i)^2 - I \big].
\end{equation}
Since $(A_i \otimes A_i)^2 = \|A_i\|_2^2 \, A_i \otimes A_i$ is a positive semi-definite matrix 
and $\|A_i\|_2^2 \le m$ by assumption, we have
$\big\| \E (A_i \otimes A_i)^2 \big\| \le m \cdot \| \E A_i \otimes A_i \| = m$.
Putting this into \eqref{Xi squared} we obtain
$$
\| \E X_i^2 \| \le \frac{1}{N^2} (m + 1) \le \frac{2 m}{N^2}
$$
where we again used that $m \ge 1$. 
This yields\footnote{Here the seemingly crude application of triangle inequality is actually not 
so loose. If the rows $A_i$ are identically distributed, then so are $X_i^2$, 
which makes the triangle inequality above into an equality.}
$$
\Big\| \sum_{i=1}^N \E X_i^2 \Big\| 
\le N \cdot \max_i \| \E X_i^2 \|
= \frac{2m}{N}
=: \s^2.
$$

{\bf Step 3: Application of the non-commutative Bernstein's inequality.}
\index{Bernstein-type inequality!non-commutative}
Applying Theorem~\ref{matrix Bernstein} (see Remark~\ref{mixed tail})
and recalling the definitions of $\e$ and $\d$ in \eqref{A*A heavy-tailed}, we 
we bound the probability in question as 
\begin{align*}
\P &\Big\{ \Big\| \frac{1}{N} A^*A - I \Big\| \ge \e \Big\}
= \P \Big\{ \Big\| \sum_{i=1}^N X_i \Big\| \ge \e \Big\} 
\le 2n \cdot \exp \Big[ -c \min \Big( \frac{\e^2}{\s^2}, \frac{\e}{K} \Big) \Big] \\
&\le 2n \cdot \exp \Big[ -c \min(\e^2,\e) \cdot \frac{N}{2m} \Big] 
= 2n \cdot \exp \Big( - \frac{c \d^2 N}{2m} \Big)
= 2n \cdot \exp(-ct^2/2).
\end{align*}
This completes the proof. 
\end{proof}

Theorem~\ref{heavy-tailed rows} for heavy-tailed rows is different from 
Theorem~\ref{sub-gaussian rows} for sub-gaussian rows in two ways: 
the boundedness assumption\footnote{Going a little 
  ahead, we would like to point out that the almost sure boundedness can be relaxed to 
  the bound in expectation $\E \max_i \|A_i\|_2^2 \le m$,
  see Theorem~\ref{heavy-tailed rows exp si}.}  
$\|A_i\|_2^2 \le m$ appears, and the probability bound is weaker. 
We will now comment on both differences. 

\begin{remark}[Boundedness assumption]				\label{r: boundedness}
  Observe that some boundendess assumption on the distribution
  is needed in Theorem~\ref{heavy-tailed rows}.
  Let us see this on the following example. Choose $\d \ge 0$ arbitrarily small, and 
  consider a random vector $X = \d^{-1/2} \xi Y$ in $\R^n$
  where $\xi$ is a $\{0,1\}$-valued random variable with $\E \xi = \d$ (a ``selector'')
  and $Y$ is an independent isotropic random vector in $\R^n$ with an arbitrary distribution.
  Then $X$ is also an isotropic random vector. 
  Consider an $N \times n$ random matrix $A$ whose rows $A_i$ are independent copies of $X$.  
  However, if $\d \ge 0$ is suitably small then $A = 0$ with high probability,
  hence no nontrivial lower bound on $\smin(A)$ is possible. 
\end{remark}

Inequality \eqref{heavy-tailed m=n} fits our goal \eqref{heuristic}, but not quite. The reason is that the probability 
bound is only non-trivial if $t \ge C \sqrt{\log n}$. Therefore, in reality Theorem~\ref{heavy-tailed rows} 
asserts that 
\begin{equation}							\label{goal log}
\sqrt{N} - C\sqrt{n \log n} \le \smin(A) \le \smax(A) \le \sqrt{N} + C\sqrt{n \log n}
\end{equation}
with probability, say $0.9$. This achieves our goal \eqref{heuristic} up to a logarithmic factor. 

\begin{remark}[Logarithmic factor] 
  The logarithmic factor can not be removed from \eqref{goal log} for some heavy-tailed distributions.
  Consider for instance the coordinate distribution introduced in Example~\ref{random vectors}. 
  In order that $\smin(A) > 0$ there must be no zero columns in $A$. Equivalently, each coordinate vector
  $e_1, \ldots,e_n$ \index{Coordinate random vectors} must be picked at least once in $N$ independent trials 
  (each row of $A$ picks an independent coordinate vector). 
  Recalling the classical coupon collector's problem, one must make at least $N \ge C n \log n$ trials to make this occur
  with high probability. Thus the logarithm is necessary in the left hand side of \eqref{goal log}.\footnote{This argument 
  moreover shows the optimality of the probability bound in Theorem~\ref{heavy-tailed rows}.
  For example, for $t = \sqrt{N}/2\sqrt{n}$ the conclusion \eqref{heavy-tailed m=n} implies 
  that $A$ is well conditioned (i.e. $\sqrt{N}/2 \le \smin(A) \le \smax(A) \le 2 \sqrt{N}$) 
  with probability $1 - n \cdot \exp(-cN/n)$. 
  On the other hand, by the coupon collector's problem we estimate the probability that $\smin(A) > 0$ as
  $1 - n \cdot (1- \frac{1}{n})^N \approx 1 - n \cdot \exp(-N/n)$.}
\end{remark}

A version of Theorem~\ref{heavy-tailed rows} holds for general, non-isotropic distributions.
It is convenient to state it in terms of the equivalent estimate \eqref{A*A heavy-tailed}:

\begin{theorem}[Heavy-tailed rows, non-isotropic]					\label{heavy-tailed rows non-isotropic}
  Let $A$ be an $N \times n$ matrix whose rows $A_i$ are independent
  random vectors in $\R^n$ with the common second moment matrix $\Sigma = \E A_i \otimes A_i$. 
  Let $m$ be a number such that $\|A_i\|_2 \le \sqrt{m}$ almost surely for all $i$. 
  Then for every $t \ge 0$, the following inequality holds with probability at least $1 - n \cdot \exp(-ct^2)$:
  \begin{equation}							\label{A*A heavy-tailed rows non-isotropic}
  \big\| \frac{1}{N}A^*A-\Sigma \big\| \le \max(\|\Sigma\|^{1/2}\d, \d^2)
  \quad \text{where} \quad
  \d = t \sqrt{\frac{m}{N}}.
  \end{equation}
  Here $c>0$ is an absolute constant.
  In particular, this inequality yields
  \begin{equation}							\label{A heavy-tailed rows non-isotropic}
  \|A\| \le \|\Sigma\|^{1/2} \sqrt{N} + t \sqrt{m}.
  \end{equation}
\end{theorem}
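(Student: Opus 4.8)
The plan is to repeat the proof of Theorem~\ref{heavy-tailed rows} almost verbatim, carrying $\Sigma$ in place of $I$ throughout, and to feed the resulting matrix sum into the non-commutative Bernstein inequality, Theorem~\ref{matrix Bernstein}. Write
$$
\frac{1}{N}A^*A - \Sigma = \sum_{i=1}^N X_i, \qquad X_i := \frac{1}{N}\big( A_i \otimes A_i - \Sigma \big),
$$
a sum of independent centered $n \times n$ random matrices ($\E X_i = 0$ since $\Sigma = \E A_i \otimes A_i$). First I record the elementary bound $\|\Sigma\| = \sup_{x \in S^{n-1}} \E \< A_i, x\>^2 \le \E\|A_i\|_2^2 \le m$, which I will use repeatedly. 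For the range, $\|X_i\| \le \frac1N\big(\|A_i\|_2^2 + \|\Sigma\|\big) \le \frac{2m}{N} =: K$. For the variance, expanding the square and using $\E A_i \otimes A_i = \Sigma$ to cancel the cross terms gives $\E X_i^2 = \frac{1}{N^2}\big[\E(A_i\otimes A_i)^2 - \Sigma^2\big]$; since $\E X_i^2 \succeq 0$ and $\Sigma^2 \succeq 0$, this matrix lies between $0$ and $\frac{1}{N^2}\E(A_i\otimes A_i)^2$, and because $(A_i\otimes A_i)^2 = \|A_i\|_2^2\,(A_i\otimes A_i)$ is positive semidefinite with $\|A_i\|_2^2 \le m$, I get $\|\E(A_i\otimes A_i)^2\| \le m\|\Sigma\|$ and hence $\big\|\sum_{i=1}^N \E X_i^2\big\| \le \frac{m\|\Sigma\|}{N} =: \s^2$.

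With $K$ and $\s^2$ in hand, Theorem~\ref{matrix Bernstein} (in the form recalled in Remark~\ref{mixed tail}) yields, for every $\e \ge 0$,
$$
\P\Big\{ \Big\|\tfrac1N A^*A - \Sigma\Big\| \ge \e \Big\} \le 2n\cdot \exp\Big[-c\min\big(\e^2/\s^2,\ \e/K\big)\Big].
$$
It then remains to substitute $\e = \max(\|\Sigma\|^{1/2}\d, \d^2)$ with $\d = t\sqrt{m/N}$ and check that the exponent is at least $c t^2$ up to an absolute constant. This is a short two-case verification. If $\d \le \|\Sigma\|^{1/2}$ then $\e = \|\Sigma\|^{1/2}\d$, so $\e^2/\s^2 = \d^2 N/m = t^2$ exactly, while $\e/K = \tfrac12\|\Sigma\|^{1/2}\d N/m = \tfrac12 t\,\|\Sigma\|^{1/2}\sqrt{N/m} \ge \tfrac12 t^2$, since the hypothesis $\d \le \|\Sigma\|^{1/2}$ is precisely $t \le \|\Sigma\|^{1/2}\sqrt{N/m}$. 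If instead $\d > \|\Sigma\|^{1/2}$ then $\e = \d^2 > \|\Sigma\|$, so $\e^2/\s^2 = \d^4 N/(m\|\Sigma\|) > \d^2 N/m = t^2$ and $\e/K = \d^2 N/(2m) = \tfrac12 t^2$. In either case the exponent is at least $\tfrac12 t^2$, and absorbing the factors $2$ and $\tfrac12$ into the absolute constant $c$ gives the probability bound $1 - n\exp(-ct^2)$ claimed in \eqref{A*A heavy-tailed rows non-isotropic} (vacuously so when the right-hand side is nonpositive).

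Finally, the spectral-norm estimate \eqref{A heavy-tailed rows non-isotropic} follows from \eqref{A*A heavy-tailed rows non-isotropic} by one line of perturbation, exactly as in Lemma~\ref{approximate isometries}: on the event just controlled,
$$
\Big\|\tfrac1N A^*A\Big\| \le \|\Sigma\| + \|\Sigma\|^{1/2}\d + \d^2 = \big(\|\Sigma\|^{1/2} + \d\big)^2,
$$
so $\|A\| = \sqrt{N}\,\big\|\tfrac1N A^*A\big\|^{1/2} \le \sqrt{N}\big(\|\Sigma\|^{1/2} + \d\big) = \|\Sigma\|^{1/2}\sqrt{N} + t\sqrt{m}$. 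The only place that requires genuine (if minor) care beyond transcribing the isotropic proof is the variance bound: with $\Sigma \ne I$ one must notice that $0 \preceq \E X_i^2 \preceq \frac{1}{N^2}\E(A_i\otimes A_i)^2$ and combine this with $\|\Sigma\| \le m$ to keep the variance proxy at $\s^2 = m\|\Sigma\|/N$ rather than something cruder; the two-case check matching the shape $\max(\|\Sigma\|^{1/2}\d,\d^2)$ is the other spot needing attention, and neither presents a real obstacle.
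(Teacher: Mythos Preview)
Your proof is correct and follows exactly the approach the paper intends: the paper's own proof of this theorem is just ``$m \ge \|\Sigma\|$ because $\|\Sigma\| \le \E\|A_i\|_2^2 \le m$; then \eqref{A*A heavy-tailed rows non-isotropic} follows by a straightforward modification of the argument of Theorem~\ref{heavy-tailed rows},'' and you have carried out that modification faithfully, including the same triangle-inequality derivation of \eqref{A heavy-tailed rows non-isotropic}. Your variance bound via the PSD ordering $0 \preceq \E X_i^2 \preceq \frac{1}{N^2}\E(A_i\otimes A_i)^2 \preceq \frac{m}{N^2}\Sigma$ is in fact slightly cleaner than the triangle-inequality bound the paper uses in the isotropic case, and the two-case check matching $\e = \max(\|\Sigma\|^{1/2}\d,\d^2)$ to the mixed Bernstein tail is exactly the right computation.
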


\begin{proof}
We note that $m \ge \|\Sigma\|$ because 
$\|\Sigma\| = \|\E A_i \otimes A_i\| \le \E \|A_i \otimes A_i\| = \E \|A_i\|_2^2 \le m$. 
Then \eqref{A*A heavy-tailed rows non-isotropic} follows by a straightforward modification of the argument
of Theorem~\ref{heavy-tailed rows}. Furthermore, if \eqref{A*A heavy-tailed rows non-isotropic} holds then
by triangle inequality
\begin{align*}
\frac{1}{N} \|A\|^2 
&= \big\| \frac{1}{N} A^*A \big\|
\le \|\Sigma\| + \big\| \frac{1}{N}A^*A-\Sigma \big\| \\
&\le \|\Sigma\| + \|\Sigma\|^{1/2}\d + \d^2 
\le (\|\Sigma\|^{1/2} + \d)^2. 
\end{align*}
Taking square roots and multiplying both sides by $\sqrt{N}$, we obtain \eqref{A heavy-tailed rows non-isotropic}.
\end{proof}

\bigskip

The {\em almost sure} boundedness requirement in Theorem~\ref{heavy-tailed rows} may sometimes be too
restrictive in applications, and it can be relaxed to a bound {\em in expectation}:

\begin{theorem}[Heavy-tailed rows; expected singular values]					\label{heavy-tailed rows exp si}
  Let $A$ be an $N \times n$ matrix whose rows $A_i$ are independent
  isotropic random vectors in $\R^n$. Let 
  $m := \E \max_{i \le N} \|A_i\|_2^2$. Then
  $$
  \E \max_{j \le n} |s_j(A) - \sqrt{N}|
  \le C \sqrt{m \log \min(N,n)}
  $$
  where $C$ is an absolute constant.
\end{theorem}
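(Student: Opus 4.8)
The plan is to reduce everything to controlling the single random quantity $\Delta := \big\| \frac{1}{N} A^*A - I \big\|$ in expectation, and then to run a symmetrization--Rudelson argument that bootstraps on itself. First I would record the reduction: writing $z_j := s_j(A)/\sqrt{N} \ge 0$, the elementary inequality $|z-1| \le |z^2-1|$, valid for all $z \ge 0$ (it appeared inside the proof of Lemma~\ref{approximate isometries}), gives
$$
\max_{j \le n} |s_j(A) - \sqrt{N}|
= \sqrt{N} \max_{j \le n} |z_j - 1|
\le \sqrt{N} \max_{j \le n} |z_j^2 - 1|
= \sqrt{N}\, \Delta ,
$$
since the eigenvalues of $\frac{1}{N}A^*A$ are exactly the $z_j^2$. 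So it suffices to estimate $E := \E \Delta$; note $E < \infty$ because $\E\|A^*A\| \le \sum_i \E\|A_i\|_2^2 \le Nm$, and we may assume $m < \infty$.

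For the core estimate, write $\frac{1}{N}A^*A - I = \frac{1}{N}\sum_{i=1}^N (A_i \otimes A_i - \E A_i \otimes A_i)$, using isotropy in the form $\E A_i \otimes A_i = I$. The standard symmetrization inequality for sums of independent operator-valued summands gives $E \le \frac{2}{N}\, \E \big\| \sum_i \e_i A_i \otimes A_i \big\|$ with $\e_i$ independent symmetric Bernoulli. Conditioning on $A$ and applying Rudelson's inequality (Corollary~\ref{Rudelson}) to the vectors $A_1,\dots,A_N$,
$$
\E_\e \Big\| \sum_i \e_i A_i \otimes A_i \Big\|
\le C \sqrt{\log \min(N,n)} \cdot \max_i \|A_i\|_2 \cdot \|A^*A\|^{1/2} .
$$
Taking expectation over $A$ and applying Cauchy--Schwarz, together with $\E \max_i \|A_i\|_2^2 = m$, turns this into $\E \big\| \sum_i \e_i A_i \otimes A_i \big\| \le C \sqrt{m \log \min(N,n)}\; (\E\|A^*A\|)^{1/2}$.

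Now I close the loop. By the triangle inequality $\E\|A^*A\| \le N\|I\| + \E\|A^*A - NI\| = N(1+E)$. Combining this with the previous two displays and setting $a := \frac{2C}{\sqrt{N}}\sqrt{m\log\min(N,n)}$, we obtain the self-improving inequality $E \le a\sqrt{1+E}$. If $E \le 1$, this already yields $\sqrt{N}\,E \le \sqrt{2}\,\sqrt{N}\,a = 2\sqrt{2}\,C\sqrt{m\log\min(N,n)}$, which together with the reduction $\max_{j}|s_j(A)-\sqrt N| \le \sqrt N \Delta$ proves the theorem. If $E > 1$, then $1+E \le 2E$ forces $E \le 2a^2$ and hence $a \ge 1/\sqrt{2}$; in this regime I would use the crude bound $\max_j |s_j(A) - \sqrt{N}| \le \smax(A) + \sqrt{N}$, estimate $\E \smax(A) = \E \|A^*A\|^{1/2} \le (\E\|A^*A\|)^{1/2} \le (2NE)^{1/2} \le 2\sqrt{N}\,a = 4C\sqrt{m\log\min(N,n)}$ by Jensen, and note $\sqrt{N} \le \sqrt{2}\,\sqrt{N}\,a$, so the sum is again $O(\sqrt{m\log\min(N,n)})$.

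The main obstacle is the middle step: coaxing Rudelson's inequality into producing the factor $\|A^*A\|^{1/2}$ on the right and then feeding back $\E\|A^*A\| \le N(1+E)$ so the estimate becomes self-referential, and then checking that $E \le a\sqrt{1+E}$ can be solved in both regimes of $a$ without losing the $\sqrt{\log}$ dependence. A secondary point of care is that the symmetrization inequality for operator-valued sums is not among the results recalled in this excerpt, so it must be supplied or cited; the remainder is routine bookkeeping.
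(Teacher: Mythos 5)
Your proof is correct and rests on the same engine the paper uses: symmetrize $\frac{1}{N}A^*A-I$, condition on $A$ and apply Rudelson's inequality (Corollary~\ref{Rudelson}), then Cauchy--Schwarz and the triangle inequality to obtain the self-improving bound $E \le a\sqrt{1+E}$, which is solved by case analysis. Two small remarks. First, the symmetrization inequality you flag as ``must be supplied or cited'' is in fact recalled in the text as Lemma~\ref{symmetrization}, so there is no gap there. Second, the only genuine deviation from the paper is your handling of the final passage from $E = \E\|\tfrac{1}{N}A^*A - I\|$ to $\E\max_j|s_j(A)-\sqrt N|$: the paper diagonalizes and then invokes Lemma~\ref{deviation from 1} (``deviation from one''), namely $\E|Z^2-1| \ge \max\bigl(\E|Z-1|,(\E|Z-1|)^2\bigr)$, which extracts the needed square root uniformly across the two regimes $\delta\le 1$ and $\delta>1$; you instead use the pointwise bound $|z-1|\le|z^2-1|$, which gives the right order only when $E\le 1$, and patch the regime $E>1$ by the crude estimate $\E\max_j|s_j(A)-\sqrt N|\le \E\smax(A)+\sqrt N$ together with $\E\smax(A)\le(\E\|A^*A\|)^{1/2}\le 2\sqrt N a$ and $\sqrt N\le\sqrt 2\, a\sqrt N$. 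Both routes give the same conclusion with absolute constants; the paper's lemma is slightly slicker and avoids the case split, while your version is more hands-on and does not rely on that auxiliary lemma.
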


The proof of this result is similar to that of Theorem~\ref{heavy-tailed rows}, except that this time
we will use Rudelson's Corollary~\ref{Rudelson} instead of matrix Bernstein's inequality.
To this end, we need a link to symmetric Bernoulli random variables. This is provided by 
a general {\em symmetrization argument}: 

\begin{lemma}[Symmetrization] \index{Symmetrization}				\label{symmetrization}
  Let $(X_i)$ be a finite sequence of independent random vectors valued in some Banach space,
  and $(\e_i)$ be independent symmetric Bernoulli random variables. 
  Then 
  \begin{equation}							\label{eq symmetrization}
  \E \Big\| \sum_i (X_i - \E X_i) \Big\|
  \le 2 \E \Big\| \sum_i \e_i X_i \Big\|.
  \end{equation}
\end{lemma}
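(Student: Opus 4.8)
The plan is to use the classical symmetrization trick: introduce an independent copy of the sequence and then insert random signs. First I would let $(X_i')$ be an independent copy of $(X_i)$, defined on a (possibly enlarged) probability space, and write $\E'$ for the expectation taken over the primed variables only. Since $\E X_i = \E' X_i'$, we have $\sum_i (X_i - \E X_i) = \E' \sum_i (X_i - X_i')$ pointwise in the unprimed variables. Applying Jensen's inequality to the convex function $\|\cdot\|$ then gives
$\E \big\| \sum_i (X_i - \E X_i) \big\| \le \E \E' \big\| \sum_i (X_i - X_i') \big\|$.

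Next I would exploit the symmetry of the differences $Y_i := X_i - X_i'$. For any fixed choice of signs $\eta_i \in \{ -1, +1 \}$, the sequence $(\eta_i Y_i)_i$ has the same joint distribution as $(Y_i)_i$: each $Y_i$ is symmetric (interchanging $X_i$ and $X_i'$, which are i.i.d., negates $Y_i$ without changing its law), and the $Y_i$ are independent across $i$, so the signs may be flipped coordinate by coordinate. Consequently, introducing independent symmetric Bernoulli variables $\e_i$ independent of everything else and averaging over them changes nothing:
$\E \E' \big\| \sum_i Y_i \big\| = \E \E' \E_\e \big\| \sum_i \e_i Y_i \big\|$.

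Finally, the triangle inequality gives $\big\| \sum_i \e_i Y_i \big\| \le \big\| \sum_i \e_i X_i \big\| + \big\| \sum_i \e_i X_i' \big\|$; taking expectations and using that $(X_i)$ and $(X_i')$ are identically distributed makes the two resulting terms equal, which produces the factor $2$ and completes the argument.

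The only step needing a little care — and the closest thing to an obstacle here — is the distributional identity $(\eta_i Y_i)_i \overset{d}{=} (Y_i)_i$; it rests on the symmetry of each $Y_i$ together with independence across $i$, which is precisely what allows the deterministic sign pattern to be replaced by the random $\e_i$. Integrability is not a genuine issue: if $\E \big\| \sum_i \e_i X_i \big\| = \infty$ the inequality is trivial, and otherwise all expectations appearing above are finite, so Fubini's theorem justifies the interchange of $\E$, $\E'$, and $\E_\e$.
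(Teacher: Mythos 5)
Your argument is correct and follows essentially the same route as the paper: introduce an independent copy $(X_i')$, use Jensen's inequality to pass from $\E X_i$ to $X_i'$, exploit the symmetry of the differences $X_i - X_i'$ to insert random signs, and finish with the triangle inequality. You are a bit more explicit than the paper about why the random signs can be inserted (the distributional identity $(\eta_i Y_i)_i \overset{d}{=} (Y_i)_i$ for deterministic signs, followed by averaging) and about the integrability caveat, but the underlying idea is identical.
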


\begin{proof}
We define random variables $\tilde{X}_i = X_i - X_i'$ 
where $(X_i')$ is an independent copy of the sequence
$(X_i)$. 
Then $\tilde{X}_i$ are independent symmetric random variables, i.e. the sequence 
$(\tilde{X}_i)$ is distributed
identically with $(-\tilde{X}_i)$ and thus also with $(\e_i \tilde{X}_i)$.
Replacing $\E X_i$ by $\E X_i'$ in \eqref{eq symmetrization} and using 
Jensen's inequality, symmetry, and triangle inequality, we obtain the required inequality
\begin{align*}
\E \Big\| \sum_i (X_i - \E X_i) \Big\|
  &\le \E \Big\| \sum_i \tilde{X}_i \Big\| 
  = \E \Big\| \sum_i \e_i \tilde{X}_i \Big\| \\
  &\le \E \Big\| \sum_i \e_i X_i \Big\| + \E \Big\| \sum_i \e_i X_i' \Big\| 
  = 2 \E \Big\| \sum_i \e_i X_i \Big\|. 	\qedhere
\end{align*}
\end{proof}

We will also need a probabilistic version of Lemma~\ref{approximate isometries} on approximate isometries.
The proof of that lemma was based on the elementary inequality
$|z^2-1| \ge \max( |z-1|, |z-1|^2 )$ for $z \ge 0$. Here is a probabilistic version:

\begin{lemma}				\label{deviation from 1}
  Let $Z$ be a non-negative random variable. 
  Then $\E|Z^2-1| \ge \max( \E|Z-1|, (\E|Z-1|)^2 )$.
\end{lemma}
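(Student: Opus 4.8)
The plan is to reduce everything to the pointwise inequality $|z^2-1| = |z-1|\,|z+1| \ge |z-1|$, valid for all $z \ge 0$ because $z+1 \ge 1$ there, together with a single application of Jensen's inequality. This is essentially the probabilistic upgrade of the elementary estimate $\max(|z-1|,|z-1|^2) \le |z^2-1|$ used in the proof of Lemma~\ref{approximate isometries}, so I do not expect any genuine obstacle; the only thing to be careful about is which moment inequality to invoke for the squared term.

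First I would handle the linear term. Since $z+1 \ge 1$ for every $z \ge 0$, we have $|z^2-1| \ge |z-1|$ pointwise, hence $\E|Z^2-1| \ge \E|Z-1|$ by monotonicity of expectation. This gives one of the two lower bounds in the $\max$.

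Next I would handle the quadratic term. For $z \ge 0$ one has $|z+1| = z+1 \ge |z-1|$, so $|z^2-1| = |z-1|\,|z+1| \ge |z-1|^2 = (z-1)^2$ pointwise; taking expectations, $\E|Z^2-1| \ge \E(Z-1)^2$. Now apply Jensen's inequality to the convex function $x \mapsto x^2$ (equivalently, Cauchy--Schwarz) to get $\E(Z-1)^2 = \E|Z-1|^2 \ge (\E|Z-1|)^2$. Chaining these, $\E|Z^2-1| \ge (\E|Z-1|)^2$, which is the second lower bound. Combining the two displays yields $\E|Z^2-1| \ge \max(\E|Z-1|,\,(\E|Z-1|)^2)$ as claimed. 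The only ``step'' that is not a one-line pointwise manipulation is the Jensen bound, and that is entirely routine.
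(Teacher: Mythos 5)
Your proof is correct and follows essentially the same route as the paper: both rest on the two pointwise inequalities $|z-1| \le |z^2-1|$ and $|z-1|^2 \le |z^2-1|$ for $z \ge 0$, plus one application of Jensen. The only cosmetic difference is where Jensen enters: you apply it to the convex map $x \mapsto x^2$, writing $\E(Z-1)^2 \ge (\E|Z-1|)^2$, while the paper first takes pointwise square roots and applies Jensen to the concave map $x \mapsto \sqrt{x}$, getting $\E|Z-1| \le (\E|Z^2-1|)^{1/2}$; the two are equivalent.
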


\begin{proof}
Since $|Z-1| \le |Z^2-1|$ pointwise, we have $\E |Z-1| \le \E |Z^2-1|$.
Next, since $|Z-1|^2 \le |Z^2-1|$ pointwise,  
taking square roots and expectations we obtain
$\E|Z-1| \le \E|Z^2-1|^{1/2} \le (\E|Z^2-1|)^{1/2}$, where the last bound follows by Jensen's inequality.
Squaring both sides 
completes the proof.
\end{proof}

\begin{proof}[Proof of Theorem~\ref{heavy-tailed rows exp si}]
{\bf Step 1: Application of Rudelson's inequality.} \index{Rudelson's inequality}
As in the proof of Theorem~\ref{heavy-tailed rows}, we are going to control
$$
E := \E \big\| \frac{1}{N} A^*A - I \big\| 
= \E \Big\| \frac{1}{N} \sum_{i=1}^N A_i \otimes A_i - I \Big\|
\le  \frac{2}{N} \, \E \Big\| \sum_{i=1}^N \e_i A_i \otimes A_i \Big\|
$$
where we used Symmetrization Lemma~\ref{symmetrization}
with independent symmetric Bernoulli random variables $\e_i$ 
(which are independent of $A$ as well). The expectation in the right hand side is taken both with respect 
to the random matrix $A$ and the signs $(\e_i)$. Taking first the expectation with respect to $(\e_i)$
(conditionally on $A$) and afterwards the expectation with respect to $A$,
we obtain by Rudelson's inequality (Corollary~\ref{Rudelson}) that
$$
E \le \frac{C \sqrt{l}}{N} \, 
  \E \Big( \max_{i \le N} \|A_i\|_2 \cdot \Big\| \sum_{i=1}^N A_i \otimes A_i \Big\|^{1/2} \Big)
$$
where $l = \log \min(N,n)$.
We now apply the Cauchy-Schwarz inequality. Since by the triangle inequality 
$\E \big\| \frac{1}{N} \sum_{i=1}^N A_i \otimes A_i \big\| = \E \big\| \frac{1}{N} A^*A \big\| \le E + 1$,
it follows that
$$
E \le C \sqrt{\frac{ml}{N}} (E+1)^{1/2}.
$$
This inequality is easy to solve in $E$. Indeed, 
considering the cases $E \le 1$ and $E > 1$ separately, we conclude that
$$
E = \E \big\| \frac{1}{N} A^*A - I \big\| \le \max(\d, \d^2)
\quad \text{where } \d := C \sqrt{\frac{2ml}{N}}.
$$

{\bf Step 2: Diagonalization.}
Diagonalizing the matrix $A^*A$ one checks that 
$$
\big\| \frac{1}{N} A^*A - I \big\|
= \max_{j \le n} \big| \frac{s_j(A)^2}{N} - 1 \big|
= \max \Big( \big| \frac{\smin(A)^2}{N} - 1 \big|,
  \big| \frac{\smax(A)^2}{N} - 1 \big| \Big).
$$
It follows that
$$
\max \Big( \E \big| \frac{\smin(A)^2}{N} - 1 \big|,
  \E \big| \frac{\smax(A)^2}{N} - 1 \big| \Big)
\le \max(\d,\d^2).
$$
(we replaced the expectation of maximum by the maximum of expectations). 
Using Lemma~\ref{deviation from 1} separately for the two terms on the left hand side, we obtain
$$
\max \Big( \E \big| \frac{\smin(A)}{\sqrt{N}} - 1 \big|,
  \E \big| \frac{\smax(A)}{\sqrt{N}} - 1 \big| \Big)
\le \d.
$$
Therefore 
\begin{align*}
\E \max_{j \le n} \big| \frac{s_j(A)}{\sqrt{N}}-1 \big|
&= \E \max \Big( \big| \frac{\smin(A)}{\sqrt{N}} - 1 \big|,
  \Big| \frac{\smax(A)}{\sqrt{N}} - 1 \Big| \Big) \\
&\le \E \Big( \big| \frac{\smin(A)}{\sqrt{N}} - 1 \big|
  + \big| \frac{\smax(A)}{\sqrt{N}} - 1 \big| \Big)
\le 2\d.
\end{align*}
Multiplying both sides by $\sqrt{N}$ completes the proof.
\end{proof}

In a way similar to Theorem~\ref{heavy-tailed rows non-isotropic} we note that a version 
of Theorem~\ref{heavy-tailed rows exp si} holds for general, non-isotropic distributions.

\begin{theorem}[Heavy-tailed rows, non-isotropic, expectation]			\label{heavy-tailed rows exp si non-isotropic}
  Let $A$ be an $N \times n$ matrix whose rows $A_i$ are independent
  random vectors in $\R^n$ with the common second moment matrix $\Sigma = \E A_i \otimes A_i$. 
  Let $m := \E \max_{i \le N} \|A_i\|_2^2$.  
  Then
  $$
  \E \big\| \frac{1}{N}A^*A-\Sigma \big\| \le \max(\|\Sigma\|^{1/2}\d, \d^2)
  \quad \text{where} \quad
  \d = C \sqrt{\frac{m \log \min(N,n)}{N}}.
  $$
  Here $C$ is an absolute constant.
  In particular, this inequality yields
  $$  
  \big( \E \|A\|^2 \big)^{1/2} \le \|\Sigma\|^{1/2} \sqrt{N} + C \sqrt{m \log \min(N,n)}.
  $$
\end{theorem}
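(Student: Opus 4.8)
The plan is to mimic the proof of Theorem~\ref{heavy-tailed rows exp si} almost verbatim, replacing the identity $I$ by the second moment matrix $\Sigma$ throughout, exactly as Theorem~\ref{heavy-tailed rows non-isotropic} is obtained from Theorem~\ref{heavy-tailed rows}. The diagonalization step of Theorem~\ref{heavy-tailed rows exp si} (together with Lemmas~\ref{approximate isometries} and~\ref{deviation from 1}) is not needed here, since the conclusion is stated directly for $\frac{1}{N}A^*A - \Sigma$ and for $\|A\|$ rather than for the individual singular values. As a preliminary I would record the elementary bound $m \ge \|\Sigma\|$, which holds because $\|\Sigma\| = \|\E A_i \otimes A_i\| \le \E\|A_i \otimes A_i\| = \E\|A_i\|_2^2 \le m$, just as in Theorem~\ref{heavy-tailed rows non-isotropic}.

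The heart of the argument is to estimate $E := \E\big\| \frac{1}{N}A^*A - \Sigma \big\|$. Writing $\frac{1}{N}A^*A - \Sigma = \frac{1}{N}\sum_{i=1}^N (A_i \otimes A_i - \Sigma)$ and using $\E(A_i \otimes A_i) = \Sigma$, the Symmetrization Lemma~\ref{symmetrization}, applied in the Banach space of $n \times n$ matrices with the operator norm, gives $E \le \frac{2}{N}\,\E\big\| \sum_i \e_i\, A_i \otimes A_i \big\|$ for independent symmetric Bernoulli signs $(\e_i)$ independent of $A$. Conditioning on $A$ and taking the expectation over $(\e_i)$ first, Rudelson's inequality (Corollary~\ref{Rudelson}) yields $E \le \frac{C\sqrt{l}}{N}\,\E\big( \max_{i \le N}\|A_i\|_2 \cdot \big\| \sum_i A_i \otimes A_i \big\|^{1/2} \big)$ with $l = \log\min(N,n)$. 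Applying the Cauchy--Schwarz inequality to this product, using $\E\max_i\|A_i\|_2^2 = m$ and $\big\| \sum_i A_i \otimes A_i \big\| = N\big\| \frac{1}{N}A^*A \big\|$, and then the triangle inequality $\E\big\| \frac{1}{N}A^*A \big\| \le \|\Sigma\| + E$, produces the self-referential estimate $E \le C\sqrt{ml/N}\,(\|\Sigma\| + E)^{1/2}$.

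It remains to solve this inequality in $E$, which I would do by the same case analysis as in Theorem~\ref{heavy-tailed rows exp si}. Setting $a := C\sqrt{ml/N}$: if $E \le \|\Sigma\|$ then $E \le a(2\|\Sigma\|)^{1/2}$, while if $E > \|\Sigma\|$ then $E \le a(2E)^{1/2}$, i.e. $E \le 2a^2$. Hence with $\d := 2a = C'\sqrt{m\log\min(N,n)/N}$ one gets $E \le \max(\|\Sigma\|^{1/2}\d,\, \d^2)$, which is the first assertion. For the spectral-norm bound, the triangle inequality gives $\frac{1}{N}\E\|A\|^2 = \E\big\| \frac{1}{N}A^*A \big\| \le \|\Sigma\| + E \le \|\Sigma\| + \|\Sigma\|^{1/2}\d + \d^2 \le (\|\Sigma\|^{1/2} + \d)^2$; taking square roots and multiplying by $\sqrt{N}$ yields $(\E\|A\|^2)^{1/2} \le \|\Sigma\|^{1/2}\sqrt{N} + \sqrt{N}\,\d = \|\Sigma\|^{1/2}\sqrt{N} + C\sqrt{m\log\min(N,n)}$.

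The step I expect to require the most care is not conceptual but structural: one must make sure the conditioning in the Rudelson step is legitimate, i.e. that $(\e_i)$ are independent of $A$ so that Corollary~\ref{Rudelson} can be applied pointwise in the realization of $A$ and the bound then averaged, and one must keep track of absolute constants when solving the quadratic inequality and converting between $a$ and $\d$. The reassuring point is that neither Symmetrization nor Rudelson's inequality uses isotropy — isotropy entered Theorem~\ref{heavy-tailed rows exp si} only through the centering $\E A_i \otimes A_i = I$, here replaced by $\E A_i \otimes A_i = \Sigma$ — so the passage to the non-isotropic setting is essentially cosmetic.
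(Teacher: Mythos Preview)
Your proposal is correct and follows essentially the same route as the paper: the paper's proof merely says that the first part is ``a simple modification of the proof of Theorem~\ref{heavy-tailed rows exp si}'' and the second part ``follows from the first like in Theorem~\ref{heavy-tailed rows non-isotropic},'' and you have spelled out precisely those modifications (replacing $I$ by $\Sigma$ in the symmetrization/Rudelson/Cauchy--Schwarz chain, solving $E \le a(\|\Sigma\|+E)^{1/2}$ by the two-case argument, and deducing the $\|A\|$ bound via the triangle-inequality computation of Theorem~\ref{heavy-tailed rows non-isotropic}). Your observation that the diagonalization step is unnecessary here is also on target.
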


\begin{proof}
The first part follows by a simple modification of the proof of Theorem~\ref{heavy-tailed rows exp si}.
The second part follows from the first like in Theorem~\ref{heavy-tailed rows non-isotropic}.
\end{proof}

\begin{remark}[Non-identical second moments]						\label{r: different second moments}
  The assumption that the rows $A_i$ have a common
  second moment matrix $\S$ is not essential in Theorems~\ref{heavy-tailed rows non-isotropic}
  and \ref{heavy-tailed rows exp si non-isotropic}. 
  The reader will be able to formulate more general versions of these results. 
  For example, if $A_i$ have arbitrary second moment matrices $\Sigma_i = \E A_i \otimes A_i$ 
  then the conclusion of Theorem~\ref{heavy-tailed rows exp si non-isotropic} 
  holds with $\S = \frac{1}{N} \sum_{i=1}^N \Sigma_i$. 
\end{remark}

\subsection{Applications to estimating covariance matrices}			
\index{Covariance matrix!estimation}\label{s: covariance}

One immediate application of our analysis of random matrices is in statistics, 
for the fundamental problem of {\em estimating covariance matrices}. 
Let $X$ be a random vector in $\R^n$; for simplicity 
we assume that $X$ is centered,\footnote{More generally, in this section we estimate 
  the {\em second moment matrix} $\E X \otimes X$ of an arbitrary random vector $X$ 
  (not necessarily centered).}
$\E X = 0$. Recall that the covariance matrix of 
$X$ is the $n \times n$ matrix $\Sigma = \E X \otimes X$, see Section~\ref{s: isotropic}.

The simplest way to estimate $\Sigma$ is to take some $N$ independent 
samples $X_i$ from the distribution and form the {\em sample covariance matrix} \index{Sample covariance matrix}
$\Sigma_N = \frac{1}{N} \sum_{i=1}^N X_i \otimes X_i$.
By the law of large numbers, $\Sigma_N \to \Sigma$ almost surely as $N \to \infty$.
So, taking sufficiently many samples we are guaranteed to estimate the covariance matrix 
as well as we want. This, however, does not address the quantitative aspect: 
what is the minimal {\em sample size} $N$ that guarantees approximation with a given accuracy? 

The relation of this question to random matrix theory becomes clear when we 
arrange the samples $X_i =: A_i$ as rows of the $N \times n$ random matrix $A$.
Then the sample covariance matrix is expressed as $\Sigma_N  = \frac{1}{N}A^*A$. 
Note that $A$ is a matrix with independent rows but usually not independent entries (unless 
we sample from a product distribution). We worked out the analysis of such matrices
in Section~\ref{s: rows}, separately for sub-gaussian and general distributions.
As an immediate consequence of Theorem~\ref{sub-gaussian rows}, we obtain:

\begin{corollary}[Covariance estimation for sub-gaussian distributions]	\label{covariance sub-gaussian} \hfill
  Consider a sub-gaussian distribution in $\R^n$ with covariance matrix $\Sigma$,
  and let $\e \in (0,1)$, $t \ge 1$. Then with probability at least $1 - 2 \exp(- t^2 n)$ one has
  $$
  \text{If } N \ge C(t/\e)^2 n \quad \text{then } \|\Sigma_N - \Sigma\| \le \e.
  $$
  Here $C = C_K$ depends only on the sub-gaussian norm $K = \|X\|_\psitwo$ of a random vector
  taken from this distribution. 
\end{corollary}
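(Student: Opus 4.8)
The plan is to reduce the covariance estimation problem to the non-isotropic version of Theorem~\ref{sub-gaussian rows}, which is stated as \eqref{A*A rows non-isotropic multiplicative} in Remark~\ref{r: non-isotropic}. First I would observe that the sample covariance matrix is precisely $\Sigma_N = \frac{1}{N} A^* A$, where $A$ is the $N \times n$ matrix whose rows $A_i$ are the independent samples $X_i$. Since the distribution is sub-gaussian with covariance $\Sigma$, the normalized vectors $\Sigma^{-1/2} A_i$ are isotropic sub-gaussian random vectors (assuming $\Sigma$ is invertible; otherwise one restricts to its range), with sub-gaussian norm bounded in terms of $K = \|X\|_\psitwo$. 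Hence \eqref{A*A rows non-isotropic multiplicative} applies and gives, with probability at least $1 - 2\exp(-c t_0^2)$,
$$
\|\Sigma_N - \Sigma\| = \big\| \tfrac{1}{N} A^* A - \Sigma \big\| \le \max(\d, \d^2)\,\|\Sigma\|,
\quad \d = C_0 \sqrt{n/N} + t_0/\sqrt{N}.
$$

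The second step is bookkeeping to match the statement. I would choose the deviation parameter as $t_0 = t\sqrt{n}$ with $t \ge 1$, so that the failure probability becomes $2\exp(-c t^2 n)$, as required. Then $\d = (C_0 + t)\sqrt{n/N} \le (C_0 + 1) t \sqrt{n/N}$ using $t \ge 1$. Now I want $\max(\d,\d^2)\|\Sigma\| \le \e$. Since $\|\Sigma\|$ is a fixed quantity one typically absorbs it, or—to match the clean statement exactly—one notes that for $\e \in (0,1)$ it suffices to force $\d \le \e$ (then $\max(\d,\d^2) = \d \le \e$), which holds as soon as $(C_0+1) t \sqrt{n/N} \le \e$, i.e. $N \ge (C_0+1)^2 (t/\e)^2 n$. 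Setting $C = (C_0+1)^2$ (suitably enlarged to also dominate the $\|\Sigma\|$ factor if one does not normalize) yields exactly the claimed implication.

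There is essentially no hard analytic step here; the work has all been done in Theorem~\ref{sub-gaussian rows} and its non-isotropic corollary. The only points requiring mild care are: (i) handling the possibly singular covariance matrix $\Sigma$—this is routine, since one passes to the subspace $\im(\Sigma)$ and all vectors live there almost surely; (ii) keeping track of how the absolute constant $C_K$ depends on $K$, which is inherited directly from the constant in Theorem~\ref{sub-gaussian rows} since $\|\Sigma^{-1/2} X\|_\psitwo$ is controlled by $\|X\|_\psitwo$ up to the norm of $\Sigma^{-1/2}\Sigma^{1/2}$-type factors that cancel; and (iii) the cosmetic choice of whether to display the bound with or without the $\|\Sigma\|$ multiplier. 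I expect the only ``obstacle'' worth a sentence in the writeup is spelling out the reduction to the isotropic case via $\Sigma^{-1/2}$ and confirming the constant depends only on $K$.
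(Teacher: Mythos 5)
Your proposal has a genuine gap. You reduce to the multiplicative bound \eqref{A*A rows non-isotropic multiplicative}, whose hypothesis is that the whitened vectors $\Sigma^{-1/2}A_i$ are sub-gaussian with norm $K' = \|\Sigma^{-1/2}X\|_\psitwo$, and whose constant $C$ depends on $K'$. Your item (ii) asserts that $K'$ "is controlled by $\|X\|_\psitwo$ up to the norm of $\Sigma^{-1/2}\Sigma^{1/2}$-type factors that cancel" --- this is not true. One only has $\|\Sigma^{-1/2}X\|_\psitwo \le \lambda_{\min}(\Sigma)^{-1/2}\|X\|_\psitwo$, and the small-eigenvalue factor does not cancel against the $\|\Sigma\| = \lambda_{\max}(\Sigma)$ multiplier: for instance, take $X = (Z, G)$ in $\R^2$ with $G$ standard normal and $Z$ equal to $\pm 1$ with probability $\delta/2$ each and $0$ otherwise. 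Then $K = \|X\|_\psitwo = O(1)$ but $\Sigma = \mathrm{diag}(\delta,1)$, so $\|\Sigma\| = 1$ while $K' = \|\Sigma^{-1/2}X\|_\psitwo \asymp \delta^{-1/2} \to \infty$. So the constant you would inherit depends on the condition number of $\Sigma$, not on $K$ alone, contradicting the claim ``$C = C_K$ depends only on $K$''.

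The paper's own proof sidesteps this entirely: it invokes the additive, un-whitened inequality \eqref{A*A rows non-isotropic}, which asserts $\|\tfrac{1}{N}A^*A - \Sigma\| \le \max(\d,\d^2)$ with a constant that depends only on $\max_i\|A_i\|_\psitwo = \|X\|_\psitwo = K$, and then simply sets $s = C' t\sqrt{n}$. No whitening, no $\|\Sigma\|$ factor, no issue with ill-conditioned $\Sigma$. Your overall bookkeeping (choosing the deviation parameter as $t\sqrt{n}$, noting $\max(\d,\d^2)=\d$ for $\d\le\e<1$) is exactly the paper's; the fix is to swap \eqref{A*A rows non-isotropic multiplicative} for \eqref{A*A rows non-isotropic}. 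Indeed the paper's remark immediately following this corollary explicitly identifies the multiplicative route as a \emph{different} result: it gives $\|\Sigma_N-\Sigma\|\le\e\|\Sigma\|$ with a constant controlled by $K'$, which is useful in special cases such as Gaussian distributions where $K'$ is an absolute constant, but it does not reproduce the corollary as stated.
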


\begin{proof}
It follows from \eqref{A*A rows non-isotropic} that for every $s \ge 0$, 
with probability at least $1 - 2\exp(-cs^2)$ we have 
$\|\Sigma_N-\Sigma\| \le \max(\d, \d^2)$ where
$\d = C \sqrt{n/N} + s/\sqrt{N}$.
The conclusion follows for $s = C' t \sqrt{n}$ where
$C' = C'_K$ is sufficiently large. 
\end{proof}

Summarizing, Corollary~\ref{covariance sub-gaussian} shows that the sample size
$$
N = O(n)
$$ suffices to approximate the covariance 
matrix of a sub-gaussian distribution in $\R^n$ by the sample covariance matrix. 

\begin{remark}[Multiplicative estimates, Gaussian distributions]
  A weak point of Corollary~\ref{covariance sub-gaussian} 
  is that the sub-gaussian norm $K$ may in turn depend on $\|\Sigma\|$.
    
  To overcome this drawback, instead of using \eqref{A*A rows non-isotropic}
  in the proof of this result one can use the multiplicative version \eqref{A*A rows non-isotropic multiplicative}.
  The reader is encouraged to state a general result that follows from this argument.
  We just give one special example for arbitrary {\em centered Gaussian distributions} in $\R^n$.
  For every $\e \in (0,1)$, $t \ge 1$, the following holds with probability at least $1 - 2 \exp(- t^2 n)$:
  $$
  \text{If } N \ge C(t/\e)^2 n \quad \text{then } \|\Sigma_N - \Sigma\| \le \e \|\Sigma\|.
  $$
  Here $C$ is an absolute constant. 
\end{remark}

Finally, Theorem~\ref{heavy-tailed rows non-isotropic} yields a similar estimation result for arbitrary distributions,
possibly heavy-tailed:

\begin{corollary}[Covariance estimation for arbitrary distributions]	\label{covariance heavy-tailed} \hfill
  Consider a distribution in $\R^n$ with covariance matrix $\Sigma$ and supported in 
  some centered Euclidean ball whose radius we denote $\sqrt{m}$.
  Let $\e \in (0,1)$ and $t \ge 1$. 
  Then the following holds with probability at least $1 - n^{-t^2}$:
  $$
  \text{If } N \ge C(t/\e)^2 \|\Sigma\|^{-1} m \log n 
  \quad \text{then } \|\Sigma_N - \Sigma\| \le \e \|\Sigma\|.
  $$
  Here $C$ is an absolute constant.
\end{corollary}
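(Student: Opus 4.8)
The plan is to read this off from Theorem~\ref{heavy-tailed rows non-isotropic}, in exactly the way Corollary~\ref{covariance sub-gaussian} was read off from the sub-gaussian row bound. First I would set up the random matrix: draw $N$ independent samples $X_i$ from the distribution and arrange them as the rows $A_i$ of an $N \times n$ matrix $A$, so that the sample covariance matrix is $\Sigma_N = \frac{1}{N} A^* A$ and the common second moment matrix of the rows is $\E A_i \otimes A_i = \Sigma$ (the distribution is centered, or, as in the footnote of Section~\ref{s: covariance}, we read $\Sigma$ as the second moment matrix throughout). Since the distribution is supported in a centered Euclidean ball of radius $\sqrt{m}$, we have $\|A_i\|_2 \le \sqrt{m}$ almost surely, so the boundedness hypothesis of Theorem~\ref{heavy-tailed rows non-isotropic} holds with this very $m$.

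Next I would apply Theorem~\ref{heavy-tailed rows non-isotropic} with a free parameter $s$ in place of $t$: with probability at least $1 - n \cdot \exp(-c s^2)$ one has, by \eqref{A*A heavy-tailed rows non-isotropic},
$$
\|\Sigma_N - \Sigma\| \le \max\big( \|\Sigma\|^{1/2} \d, \, \d^2 \big), \qquad \d = s \sqrt{m/N}.
$$
The key elementary calculation is to choose $N$ large enough that $\d \le \e \|\Sigma\|^{1/2}$; because $\e \in (0,1)$ this simultaneously forces $\d^2 \le \e^2 \|\Sigma\| \le \e \|\Sigma\|$, so the right-hand side above is at most $\e \|\Sigma\|$, which is the desired conclusion. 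The requirement $\d \le \e \|\Sigma\|^{1/2}$ is equivalent to $N \ge s^2 m / (\e^2 \|\Sigma\|)$.

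Finally I would calibrate $s$ against the target failure probability $n^{-t^2}$. Choosing $s = C' t \sqrt{\log n}$, with $C'$ depending only on the absolute constant $c$ from Theorem~\ref{heavy-tailed rows non-isotropic}, makes $n \cdot \exp(-c s^2) \le n^{-t^2}$ for all $t \ge 1$; the sample-size bound then becomes $N \ge C (t/\e)^2 \|\Sigma\|^{-1} m \log n$ with an absolute constant $C$, where $t \ge 1$ is used to absorb the extra additive term that appears when one requires $n \cdot \exp(-cs^2)$ to beat $n^{-t^2}$ rather than $\exp(-t^2)$. Since this is a routine specialization I do not expect any genuine obstacle; the only points needing care are correctly tracking the $\max(\cdot,\cdot)$ in the conclusion of Theorem~\ref{heavy-tailed rows non-isotropic} and noting that it is precisely the hypothesis $\e < 1$ that renders the $\d^2$ branch harmless.
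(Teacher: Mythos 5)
Your proof is correct and follows the paper's own argument essentially verbatim: apply Theorem~\ref{heavy-tailed rows non-isotropic} with a free parameter $s$, observe that $N \ge (s/\e)^2\|\Sigma\|^{-1}m$ forces both $\|\Sigma\|^{1/2}\d$ and $\d^2$ below $\e\|\Sigma\|$ (the latter using $\e<1$), and then set $s = C't\sqrt{\log n}$ with $t\ge 1$ to convert the failure probability $n\cdot\exp(-cs^2)$ into $n^{-t^2}$. There is no gap; the only genuinely load-bearing observations are the ones you flagged — the $\max(\cdot,\cdot)$ bookkeeping and the role of $\e<1$.
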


\begin{proof}
It follows from Theorem~\ref{heavy-tailed rows non-isotropic} that for every $s \ge 0$, 
with probability at least $1 - n \cdot \exp(-cs^2)$ we have
$\|\Sigma_N-\Sigma\| \le \max(\|\Sigma\|^{1/2}\d, \d^2)$
where $\d = s \sqrt{m/N}$.
Therefore, if $N \ge (s/\e)^2 \|\Sigma\|^{-1} m$ then $\|\Sigma_N - \Sigma\| \le \e \|\Sigma\|$.
The conclusion follows with $s = C' t \sqrt{\log n}$ where $C'$ is a sufficiently large 
absolute constant.
\end{proof}

Corollary~\ref{covariance heavy-tailed} is typically used with $m = O(\|\Sigma\| n)$. 
Indeed, if $X$ is a random vector chosen from the distribution in question, 
then its expected norm is easy to estimate: 
$\E \|X\|_2^2  = \tr(\Sigma) \le n \|\Sigma\|$. 
So, by Markov's inequality, most of the distribution is supported
in a centered ball of radius $\sqrt{m}$ where $m = O(n \|\Sigma\|)$. 
If all distribution is supported there, i.e. if $\|X\| = O(\sqrt{n \|\Sigma\|})$ almost surely, 
then the conclusion of Corollary~\ref{covariance heavy-tailed} holds 
with sample size $N \ge C(t/\e)^2 n \log n$.

\begin{remark}[Low-rank estimation]
  In certain applications, the distribution in $\R^n$ lies close to a low dimensional subspace.
  In this case, a smaller sample suffices for covariance estimation. The intrinsic dimension 
  of the distribution can be measured with the {\em effective rank} \index{Effective rank}
  of the matrix $\Sigma$, defined as
  $$
  r(\Sigma) = \frac{\tr(\Sigma)}{\|\Sigma\|}. 
  $$
  One always has $r(\Sigma) \le \rank(\Sigma) \le n$, and this bound is sharp. 
  For example, if $X$ is an isotropic random vector in $\R^n$ then $\S = I$ and $r(\Sigma) = n$. 
  A more interesting example is where $X$ takes values in some $r$-dimensional subspace $E$,
  and the restriction of the distribution of $X$ onto $E$ is isotropic. The latter means that 
  $\Sigma = P_E$, where $P_E$ denotes the orthogonal projection in $\R^n$ onto $E$.
  Therefore in this case $r(\Sigma) = r$. The effective rank is a stable quantity compared with the 
  usual rank. For distributions that are approximately low-dimenional, the effective rank is 
  still small.
    
  The effective rank $r = r(\Sigma)$ always controls the typical norm of $X$, 
  as $\E \|X\|_2^2  = \tr(\Sigma) = r \|\Sigma\|$.
  It follows by Markov's inequality that most of the distribution is supported in a ball of radius $\sqrt{m}$ where
  $m = O(r \|\Sigma\|)$. Assume that all of the distribution is supported there, i.e. if $\|X\| = O(\sqrt{r \|\Sigma\|})$ 
  almost surely. Then the conclusion of Corollary~\ref{covariance heavy-tailed} holds 
  with sample size $N \ge C(t/\e)^2 r \log n$.
\end{remark}

We can summarize this discussion in the following way: the sample size
$$
N = O(n \log n)
$$ 
suffices to approximate the covariance matrix of a general distribution in $\R^n$ by the sample covariance matrix. 
Furthermore, for distributions that are approximately low-dimensional, a smaller sample size is sufficient. 
Namely, if the effective rank of $\Sigma$ equals $r$ then a sufficient sample size is 
$$
N = O(r \log n).
$$

\begin{remark}[Boundedness assumption]				\label{r: covariance boundedness}
  Without the boundedness assumption on the distribution, Corollary~\ref{covariance heavy-tailed}
  may fail. The reasoning is the same as in Remark~\ref{r: boundedness}:
  for an isotropic distribution which is highly concentrated at the origin, 
  the sample covariance matrix will likely equal $0$. 
  
  Still, one can weaken the boundedness assumption using Theorem~\ref{heavy-tailed rows exp si non-isotropic}
  instead of Theorem~\ref{heavy-tailed rows non-isotropic} in the proof of Corollary~\ref{covariance heavy-tailed}. 
  The weaker requirement is that $\E \max_{i \le N} \|X_i\|_2^2 \le m$ where $X_i$ denote
  the sample points. In this case, the covariance estimation will be guaranteed in expectation rather than
  with high probability; we leave the details for the interested reader.
  
  A different way to enforce the boundedness assumption is 
  to reject any sample points $X_i$ that fall outside the centered ball of radius $\sqrt{m}$. 
  This is equivalent to sampling from the conditional distribution inside the ball. 
  The conditional distribution satisfies the boundedness requirement,
  so the results discussed above provide a good covariance estimation for it. In many cases, this estimate
  works even for the original distribution -- namely, if only a small part of the 
  distribution lies outside the ball of radius $\sqrt{m}$. We leave the details for the interested reader;
  see e.g. \cite{V marginals}.
\end{remark}

\subsection{Applications to random sub-matrices and sub-frames}	
\index{Sampling from matrices and frames}	\label{s: sub-matrices}

The absence of any moment hypotheses on the distribution in Section~\ref{s: heavy-tailed rows} 
(except finite variance) makes these results especially relevant for discrete distributions.
One such situation arises when one wishes to sample entries or rows 
from a given matrix $B$, thereby creating a {\em random sub-matrix} $A$. 
It is a big program to understand what we can learn about 
$B$ by seeing $A$, see \cite{GMDL, DKM, RV sampling}.
In other words, we ask -- what properties of $B$ pass onto $A$?
Here we shall only scratch the surface of this problem: 
we notice that random sub-matrices of certain size preserve the property of being an {\em approximate isometry}.

\begin{corollary}[Random sub-matrices] \index{Sub-matrices}				\label{random sub-matrices}
  Consider an $M \times n$ matrix $B$ such that\footnote{The first hypothesis says 
  $B^*B = MI$. Equivalently, $\bar{B} := \frac{1}{\sqrt{M}}B$
  is an isometry, i.e. $\|\bar{B}x\|_2 = \|x\|_2$ for all $x$. Equivalently, the columns of $\bar{B}$ are orthonormal.}
  $\smin(B) = \smax(B) = \sqrt{M}$. 
  Let $m$ be such that all rows $B_i$ of $B$ satisfy $\|B_i\|_2 \le \sqrt{m}$.
  Let $A$ be an $N \times n$ matrix obtained by sampling $N$ random rows from $B$ 
  uniformly and independently. 
  Then for every $t \ge 0$, with probability at least $1 - 2n \cdot \exp(-ct^2)$ one has 
  $$
  \sqrt{N} - t \sqrt{m} \le \smin(A) \le \smax(A) \le \sqrt{N} + t \sqrt{m}.
  $$
  Here $c>0$ is an absolute constant.
\end{corollary}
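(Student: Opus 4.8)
The plan is to recognize the random sub-matrix $A$ as a matrix with independent, isotropic, heavy-tailed rows, and then invoke Theorem~\ref{heavy-tailed rows} essentially verbatim. Write $B_1,\dots,B_M$ for the rows of $B$, and let $X$ be a random vector drawn uniformly from the finite set $\{B_1,\dots,B_M\}$. By construction the rows $A_1,\dots,A_N$ of $A$ are independent copies of $X$ (the sampling is with replacement, so they are genuinely i.i.d.), and hence it suffices to check that $X$ is isotropic and that $\|X\|_2 \le \sqrt m$ almost surely; the corollary is then the special case of Theorem~\ref{heavy-tailed rows} for this value of $m$.

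First I would verify isotropy. The hypothesis $\smin(B) = \smax(B) = \sqrt M$ says precisely that $\|Bx\|_2 = \sqrt M \, \|x\|_2$ for all $x \in \R^n$, equivalently $B^*B = MI$ (the columns of $\bar B := \frac{1}{\sqrt M} B$ are orthonormal). Therefore, for every $x \in \R^n$,
$$
\E \, \< X, x\> ^2 = \frac{1}{M} \sum_{i=1}^M \< B_i, x\> ^2 = \frac{1}{M} \|Bx\|_2^2 = \|x\|_2^2,
$$
which is exactly the isotropy condition \eqref{isotropy}. The boundedness is immediate from the hypothesis on the rows: whichever row index is selected, $\|X\|_2 = \|B_i\|_2 \le \sqrt m$, so $\|A_i\|_2 \le \sqrt m$ almost surely for every $i$.

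With these two facts established, Theorem~\ref{heavy-tailed rows} applied to $A$ with this $m$ gives, for every $t \ge 0$, that $\sqrt N - t\sqrt m \le \smin(A) \le \smax(A) \le \sqrt N + t\sqrt m$ with probability at least $1 - 2n \cdot \exp(-ct^2)$, where $c>0$ is the absolute constant from that theorem. This is the desired conclusion.

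I do not expect any genuine obstacle: the entire content of the corollary is the observation that uniform sampling of rows from a matrix whose (scaled) columns are orthonormal produces an isotropic random vector, after which everything is inherited from the heavy-tailed row theorem (whose own proof runs through the matrix Bernstein inequality). The only minor remarks worth recording are that the setup forces $m \ge n$, since $\E\|X\|_2^2 = n$ by Lemma~\ref{norm isotropic}, so one typically has $m = O(n)$ when $B$ is close to flat; and that, as in Theorem~\ref{heavy-tailed rows}, the probability bound is meaningful only for $t \gtrsim \sqrt{\log n}$, so the effective statement is that $N \gtrsim m \log n$ random rows suffice for $A/\sqrt N$ to remain an approximate isometry.
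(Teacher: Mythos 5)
Your proof is correct and follows exactly the paper's approach: you recognize that $B^*B = MI$ makes the uniform distribution on the rows of $B$ isotropic, note the almost-sure bound $\|X\|_2 \le \sqrt m$, and invoke Theorem~\ref{heavy-tailed rows}. The only cosmetic difference is that you verify isotropy via $\E\langle X,x\rangle^2 = \frac{1}{M}\|Bx\|_2^2 = \|x\|_2^2$ while the paper writes the equivalent identity $\frac{1}{M}\sum_i B_i \otimes B_i = I$.
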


\begin{proof}
By assumption, $I = \frac{1}{M} B^*B = \frac{1}{M} \sum_{i=1}^M B_i \otimes B_i$. 
Therefore, the uniform distribution on the set of the rows $\{B_1,\ldots,B_M\}$
is an isotropic distribution in $\R^n$. The conclusion then follows from Theorem~\ref{heavy-tailed rows}.  
\end{proof}

Note that the conclusion of Corollary~\ref{random sub-matrices}
does not depend on the dimension $M$ of the ambient matrix $B$. 
This happens because this result is a specific version of sampling from a discrete
isotropic distribution (uniform on the rows of $B$), where size $M$
of the support of the distribution is irrelevant. 

The hypothesis of Corollary~\ref{random sub-matrices} implies\footnote{To recall why this is true, 
take trace of both sides in the identity $I = \frac{1}{M} \sum_{i=1}^M B_i \otimes B_i$.}
that $\frac{1}{M} \sum_{i=1}^M \|B_i\|_2^2 = n$. 
Hence by Markov's inequality, most of the rows $B_i$ satisfy $\|B_i\|_2 = O(\sqrt{n})$. 
This indicates that Corollary~\ref{random sub-matrices} would be often used with $m = O(n)$.
Also, to ensure a positive probability of success, the useful magnitude of $t$ would be 
$t \sim \sqrt{\log n}$. With this in mind, the extremal singular values of $A$ will be close
to each other (and to $\sqrt{N}$) if $N \gg t^2 m \sim n \log n$.

Summarizing, Corollary~\ref{random sub-matrices} states that  
a random $O(n \log n) \times n$ sub-matrix of an $M \times n$ isometry
is an approximate isometry.\footnote{For the purposes of compressed sensing, 
we shall study the more difficult {\em uniform} 
problem for random sub-matrices in Section~\ref{s: restricted isometries}.
There $B$ itself will be chosen as a column sub-matrix of a given $M \times M$ matrix (such as DFT),
and one will need to control all such $B$ simultaneously, see Example~\ref{random measurements}.}

\medskip

Another application of random matrices with heavy-tailed isotropic rows is 
for {\em sampling from frames}. Recall that frames are generalizations of bases without 
linear independence, see Example~\ref{random vectors}. 
Consider a tight frame $\{u_i\}_{i=1}^M$ in $\R^n$, and for the sake of convenient 
normalization, assume that it has bounds $A=B=M$. 
We are interested in whether a small random subset of $\{u_i\}_{i=1}^M$ is still a nice frame in $\R^n$.
Such question arises naturally because frames are used in signal processing to
create {\em redundant representations} of signals. Indeed, every signal $x \in \R^n$ admits frame expansion 
$x = \frac{1}{M} \sum_{i=1}^M \< u_i, x\> u_i$. Redundancy makes 
frame representations more robust to errors and losses than basis representations. 
Indeed, we will show that if one loses all except $N = O(n \log n)$ random coefficients 
$\< u_i, x\> $ one is still able to reconstruct $x$ from the received coefficients $\< u_{i_k}, x\> $
as $x \approx \frac{1}{N} \sum_{k=1}^N \< u_{i_k}, x\> u_{i_k}$.
This boils down to showing that a random subset of size $N = O(n \log n)$ of a tight frame in $\R^n$
is an approximate tight frame. 

\begin{corollary}[Random sub-frames, see \cite{V frames}]	\index{Frames}			\label{random sub-frames}
  Consider a tight frame $\{u_i\}_{i=1}^M$ in $\R^n$ with frame bounds $A=B=M$. 
  Let number $m$ be such that all frame elements satisfy $\|u_i\|_2 \le \sqrt{m}$.
  Let $\{v_i\}_{i=1}^N$ be a set of vectors obtained by sampling $N$ random elements
  from the frame $\{u_i\}_{i=1}^M$ uniformly and independently.
  Let $\e \in (0,1)$ and $t \ge 1$. 
  Then the following holds with probability at least $1 - 2n^{-t^2}$:
  $$
  \text{If } N \ge C(t/\e)^2 m \log n 
  \quad \text{then $\{v_i\}_{i=1}^N$ is a frame in $\R^n$} 
  $$
  with bounds $A = (1-\e)N$, $B = (1+\e)N$. 
  Here $C$ is an absolute constant. 
  
  In particular, if this event holds, then every $x \in \R^n$ admits an approximate
  representation using only the sampled frame elements: 
  $$
  \Big\| \frac{1}{N} \sum_{i=1}^N \< v_i, x\> v_i - x \Big\| \le \e \|x\|.
  $$
\end{corollary}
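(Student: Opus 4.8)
The plan is to observe that sampling uniformly from a tight frame with bounds $A=B=M$ is the same as sampling from a discrete isotropic distribution in $\R^n$, which puts us squarely in the setting of Section~\ref{s: heavy-tailed rows}, and then to quote Theorem~\ref{heavy-tailed rows non-isotropic} with $\Sigma = I$ (this is the frame analogue of Corollary~\ref{random sub-matrices} and Corollary~\ref{covariance heavy-tailed}).

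First I would record that the tight-frame hypothesis $A=B=M$ means exactly $\sum_{i=1}^M u_i \otimes u_i = M\,I$, equivalently $\frac{1}{M}\sum_{i=1}^M u_i\otimes u_i = I$; hence the uniform distribution on the multiset $\{u_1,\dots,u_M\}$ is isotropic, and $v_1,\dots,v_N$ are i.i.d. isotropic random vectors satisfying $\|v_i\|_2 \le \sqrt{m}$ almost surely. Arranging them as the rows of an $N\times n$ matrix $A$, we have $A^*A = \sum_{i=1}^N v_i\otimes v_i$, so the assertion that $\{v_i\}_{i=1}^N$ is a frame with bounds $(1-\e)N$ and $(1+\e)N$ is literally the operator-norm bound $\big\|\frac{1}{N}A^*A - I\big\| \le \e$.

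Next I would apply Theorem~\ref{heavy-tailed rows non-isotropic} with $\Sigma = I$ (so $\|\Sigma\|=1$): for every $s\ge 0$, with probability at least $1 - n\exp(-cs^2)$ one has $\big\|\frac{1}{N}A^*A - I\big\| \le \max(\d,\d^2)$ with $\d = s\sqrt{m/N}$. Since $\e\in(0,1)$ we have $\d\le\e \Rightarrow \d^2\le\d\le\e$, so it is enough to arrange $\d \le \e$, i.e. $N \ge (s/\e)^2 m$; choosing $s = C' t\sqrt{\log n}$ for a suitably large absolute constant $C'$ (with $cC'^2\ge 2$) makes the failure probability at most $n^{1-cC'^2 t^2} \le 2n^{-t^2}$ for $t\ge 1$, while turning the requirement on $N$ into $N \ge C(t/\e)^2 m\log n$ with $C = C'^2$. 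This is exactly the bookkeeping between the accuracy $\e$ and the confidence $t$ carried out in the proof of Corollary~\ref{covariance heavy-tailed}, and I expect it to be the only place requiring a little care.

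Finally, on the event just described the frame operator of $\{v_i\}$ acts as $x \mapsto \frac{1}{N}\sum_{i=1}^N \langle v_i,x\rangle v_i = \frac{1}{N}A^*A x$, so for every $x\in\R^n$,
$$
\Big\| \frac{1}{N}\sum_{i=1}^N \langle v_i, x\rangle v_i - x \Big\|_2
= \Big\| \big(\tfrac{1}{N}A^*A - I\big) x \Big\|_2
\le \big\| \tfrac{1}{N}A^*A - I \big\|\, \|x\|_2
\le \e \|x\|_2 ,
$$
which is the ``in particular'' statement. There is no substantive obstacle here: the entire content is the translation of frame language into the independent-isotropic-rows framework, followed by a single invocation of Theorem~\ref{heavy-tailed rows non-isotropic}.
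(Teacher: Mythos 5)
Your proposal is correct and takes essentially the same route as the paper: the paper invokes Corollary~\ref{covariance heavy-tailed} with $\Sigma = I$ and $\Sigma_N = \frac{1}{N}\sum_i v_i\otimes v_i$, and that corollary is itself proved by exactly the reduction from Theorem~\ref{heavy-tailed rows non-isotropic} with $\d = s\sqrt{m/N}$, $s = C't\sqrt{\log n}$ that you carry out explicitly. You have simply unrolled the intermediate corollary; the translation of frame language into independent isotropic rows, the bookkeeping between $\e$ and $t$, and the observation that $\|\frac1N A^*A - I\|\le\e$ gives both the frame bounds and the approximate reconstruction inequality all match the paper.
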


\begin{proof}
The assumption implies that $I = \frac{1}{M} \sum_{i=1}^M u_i \otimes u_i$. 
Therefore, the uniform distribution on the set $\{u_i\}_{i=1}^M$
is an isotropic distribution in $\R^n$. 
Applying Corollary~\ref{covariance heavy-tailed} with $\Sigma = I$ and 
$\Sigma_N = \frac{1}{N} \sum_{i=1}^N v_i \otimes v_i$ we conclude that 
$\|\Sigma_N - I\| \le \e$ with the required probability. This clearly completes the proof. 
\end{proof}

As before, we note that $\frac{1}{M} \sum_{i=1}^M \|u_i\|_2^2 = n$, so 
Corollary~\ref{random sub-frames} would be often used with $m = O(n)$. 
This shows, liberally speaking, that a random subset of a frame in $\R^n$
of size $N = O(n \log n)$ is again a frame. 

\begin{remark}[Non-uniform sampling]
  The boundedness assumption $\|u_i\|_2 \le \sqrt{m}$, 
  although needed in Corollary~\ref{random sub-frames}, 
  can be removed by non-uniform sampling. 
  To this end, one would sample from the set of normalized vectors $\bar{u}_i := \sqrt{n} \frac{u_i}{\|u_i\|_2}$
  with probabilities proportional to $\|u_i\|_2^2$. 
  This defines an isotropic distribution in $\R^n$, and clearly $\|\bar{u}_i\|_2 = \sqrt{n}$.
  Therefore, by Theorem~\ref{random sub-frames}, a random sample of $N = O(n \log n)$ vectors
  obtained this way forms an almost tight frame in $\R^n$. 
  This result does not require any bound on $\|u_i\|_2$.
\end{remark}

\section{Random matrices with independent columns}				\label{s: columns}

In this section we study the extreme singular values of $N \times n$ random matrices $A$ 
with independent columns $A_j$. We are guided by our ideal bounds \eqref{heuristic} as before. 
The same phenomenon occurs in the column independent model as in the row independent
model -- sufficiently tall random matrices $A$ are approximate isometries.
As before, being tall will mean $N \gg n$ for sub-gaussian distributions 
and $N \gg n \log n$ for arbitrary distributions. 

The problem 
is equivalent to studying {\em Gram matrices} $G = A^*A = (\< A_j, A_k\> )_{j,k=1}^n$ \index{Gram matrix}
of independent isotropic random vectors $A_1,\ldots, A_n$ in $\R^N$.
Our results can be interpreted using Lemma~\ref{approximate isometries} as showing that 
the normalized Gram matrix $\frac{1}{N} G$ is an {\em approximate identity}
for $N, n$ as above. 

Let us first try to prove this with a heuristic argument.
By Lemma~\ref{norm isotropic} we know that the diagonal entries of $\frac{1}{N} G$
have mean $\frac{1}{N} \E \|A_j\|_2^2 = 1$ and off-diagonal ones have zero mean and 
standard deviation $\frac{1}{N} (\E \< A_j, A_k\> ^2)^{1/2} = \frac{1}{\sqrt{N}}$.
If, hypothetically, the off-diagonal entries were independent, then we could use the 
results of matrices with independent entries (or even rows) developed in Section~\ref{s: rows}. 
The off-diagonal part of $\frac{1}{N}G$ would have norm $O(\sqrt{\frac{n}{N}})$
while the diagonal part would approximately  equal $I$. Hence we would have
\begin{equation}										\label{Gram ideal}
\big\| \frac{1}{N} G - I \big\| = O \Big( \sqrt{\frac{n}{N}} \Big),
\end{equation}
i.e. $\frac{1}{N} G$ is an approximate identity 
for $N \gg n$. Equivalently, by Lemma~\ref{approximate isometries}, \eqref{Gram ideal}
would yield the ideal bounds \eqref{heuristic} on the extreme singular values of $A$.

Unfortunately, the entries of the Gram matrix $G$ are obviously not independent. 
To overcome this obstacle we shall use the {\em decoupling} technique 
of probability theory \cite{dG}.
We observe that there is still enough independence encoded in $G$. Consider a 
principal sub-matrix $(A_S)^*(A_T)$ of $G = A^*A$ with disjoint index sets $S$ and $T$.
If we condition on $(A_k)_{k \in T}$ then this sub-matrix has independent rows.
Using an elementary decoupling technique, we will indeed seek to replace the full Gram matrix 
$G$ by one such decoupled $S \times T$ matrix with independent rows, 
and finish off by applying results of Section~\ref{s: rows}.

\medskip

By transposition one can try to reduce our problem to studying the $n \times N$ 
matrix $A^*$. It has independent rows and the same singular values as $A$, 
so one can apply results of Section~\ref{s: rows}.
The conclusion would be that, with high probability,
$$
\sqrt{n} - C \sqrt{N} \le \smin(A) \le \smax(A) \le \sqrt{n} + C \sqrt{N}.
$$
Such estimate is only good for {\em flat} matrices ($N \le n$).
For {\em tall} matrices ($N \ge n$) the lower bound would be trivial because of the (possibly large)
constant $C$. 
So, from now on we can focus on tall matrices ($N \ge n$) with independent columns.

\subsection{Sub-gaussian columns}								
 \index{Sub-gaussian!random matrices with independent columns}  \label{s: sub-gaussian columns}

Here we prove a version of Theorem~\ref{sub-gaussian rows} for matrices with independent columns. 

\begin{theorem}[Sub-gaussian columns]				\label{sub-gaussian columns}
  Let $A$ be an $N \times n$ matrix ($N \ge n$) whose columns $A_i$ are independent
  sub-gaussian isotropic random vectors in $\R^N$ with $\|A_j\|_2 = \sqrt{N}$ a. s. 
  Then for every $t \ge 0$, the inequality holds
  \begin{equation}										\label{smin smax columns}
  \sqrt{N} - C \sqrt{n} - t \le \smin(A) \le \smax(A) \le \sqrt{N} + C \sqrt{n} + t
  \end{equation}
  with probability at least $1 - 2\exp(-ct^2)$, 
  where $C = C'_K$, $c = c'_K > 0$ depend only on the subgaussian norm 
  $K = \max_j \|A_j\|_\psitwo$ of the columns. 
\end{theorem}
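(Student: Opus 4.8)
The plan is to mimic the covering-argument structure of Theorem~\ref{sub-gaussian rows}, but the roles of the ambient sphere change: now $A$ acts from $\R^n$ to $\R^N$, the columns $A_j$ live in $\R^N$, and we must control $\|Ax\|_2$ for $x \in S^{n-1}$ by writing $Ax = \sum_{j=1}^n x_j A_j$. By Lemma~\ref{approximate isometries} applied to $B = A/\sqrt{N}$, it suffices to show $\bigl\| \frac1N A^*A - I \bigr\| \le \max(\d,\d^2) =: \e$ with $\d = C\sqrt{n/N} + t/\sqrt{N}$, and by Lemma~\ref{norm on net} this operator norm (of a symmetric matrix) can be evaluated on a $\frac14$-net $\NN$ of $S^{n-1}$ of cardinality $|\NN| \le 9^n$, up to the factor $2$. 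So the target becomes: with the stated probability, $\max_{x \in \NN} \bigl| \frac1N \|Ax\|_2^2 - 1 \bigr| \le \e/2$.

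The concentration step is where the column model differs from the row model. Fix $x \in S^{n-1}$ and write $\|Ax\|_2^2 = \bigl\| \sum_j x_j A_j \bigr\|_2^2$. I would expand this as $\sum_j x_j^2 \|A_j\|_2^2 + \sum_{j \ne k} x_j x_k \langle A_j, A_k\rangle = N + \sum_{j\ne k} x_j x_k \langle A_j, A_k\rangle$, using the hypothesis $\|A_j\|_2 = \sqrt N$ a.s. to make the diagonal term a deterministic constant. Thus $\frac1N\|Ax\|_2^2 - 1 = \frac1N \sum_{j \ne k} x_j x_k \langle A_j, A_k\rangle$, a quadratic-type chaos in the independent vectors $A_j$. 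The natural route is a \emph{decoupling} argument (as foreshadowed in the section's introduction): replace the off-diagonal sum by a decoupled bilinear form $\frac1N \sum_{j \in S, k \in T} x_j x_k \langle A_j, A_k \rangle$ over disjoint index sets, then condition on $(A_k)_{k \in T}$. Conditionally, $\sum_{k \in T} x_k A_k =: z$ is a fixed vector in $\R^N$ with $\|z\|_2 \le (\text{const}) \sqrt N$ (by sub-gaussian concentration of $\|z\|_2$, itself a consequence of \eqref{sub-gaussian tail} applied to $\langle \sum_k x_k A_k, \cdot\rangle$), and $\sum_{j \in S} x_j \langle A_j, z\rangle$ is a sum of independent centered sub-gaussians with $\psi_2$-norms bounded by $K|x_j| \|z\|_2$; Proposition~\ref{sub-gaussian large deviations} then gives a sub-gaussian tail $\exp(-c u^2 / (K^4 N))$ for the bilinear form. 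Choosing $u \sim \e N$ and absorbing $\|z\|_2 \lesssim \sqrt N$ yields $\P\{ |\frac1N\|Ax\|_2^2 - 1| \ge \e/2 \} \le C \exp(-c_1 (C^2 n + t^2)/K^4)$, exactly as in Step~2 of Theorem~\ref{sub-gaussian rows}.

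The union bound over $|\NN| \le 9^n$ points then kills the $9^n$ factor provided $C = C'_K$ is taken large enough (e.g.\ proportional to $K^2$), leaving probability at least $1 - 2\exp(-c t^2/K^4)$, which is the claim after renaming constants. \textbf{The main obstacle} I anticipate is making the decoupling and the conditional norm bound on $\|z\|_2$ fully rigorous: one needs a clean decoupling inequality for the quadratic form $\sum_{j\ne k} x_j x_k \langle A_j, A_k\rangle$ (a standard but slightly delicate symmetrization/decoupling lemma, cf.\ \cite{dG}), and one must handle the event that $\|z\|_2$ is atypically large by a separate tail bound rather than treating it as deterministic. An alternative that avoids decoupling entirely is to condition column-by-column and run a martingale/iterative argument, but the decoupling route parallels the rest of the section and is cleanest to present; either way, the hypothesis $\|A_j\|_2 = \sqrt N$ a.s.\ is essential because it removes the diagonal fluctuations that would otherwise dominate.
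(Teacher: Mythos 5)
Your overall strategy -- net approximation, using $\|A_j\|_2 = \sqrt N$ to make the diagonal deterministic, decoupling the off-diagonal chaos, conditioning, and a sub-gaussian tail bound plus union bound -- is exactly the paper's plan, so the skeleton is right. But there are three real gaps, and the first is the crux of the whole proof.

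First, the bound $\|z\|_2 \lesssim \sqrt N$ is not handled correctly. You invoke ``sub-gaussian concentration of $\|z\|_2$, a consequence of \eqref{sub-gaussian tail} applied to $\langle z, \cdot\rangle$,'' but that inequality controls one-dimensional marginals $\langle z, y\rangle$ for a \emph{fixed} $y$; converting it into a bound on $\|z\|_2 = \sup_{y \in S^{N-1}}\langle z,y\rangle$ would need its own covering argument on $S^{N-1}$, and then you would face a further union bound over all $x \in \NN$ and all $T$, which is precisely where a careless argument would leak a factor it cannot afford. The paper sidesteps this cleanly: before the net argument it establishes an \emph{a priori} bound $\smax(A) \le 3 C_K\sqrt N$ with probability $1 - 2\exp(-c_K t^2)$ by applying the already-proved row theorem (Theorem~\ref{sub-gaussian rows}) to the transposed matrix $A^*$, then works on that event $\EE$ throughout. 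This makes $\|z\|_2 = \|A_{T^c} x_{T^c}\|_2 \le \|A\|\|x\|_2 \le 3C_K\sqrt N$ a deterministic consequence of $\EE$, uniformly in $x$ and $T$, with no extra union bound. You should adopt this device rather than try to push through a direct concentration bound on $\|z\|_2$.

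Second, to apply Lemma~\ref{rotation invariance} (or Proposition~\ref{sub-gaussian large deviations}) to $\sum_{j\in T} x_j \langle A_j, z\rangle$ you need the $\langle A_j, z\rangle$ to be \emph{centered}, but isotropy of $A_j$ does not imply $\E A_j = 0$. The paper reduces to this case at the outset by multiplying each column by an independent Bernoulli sign, which preserves singular values, isotropy, and sub-gaussian norms. Your proof should include this reduction.

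Third, the a priori event $\EE$ is only useful (and only likely to occur) when $t \le C_K\sqrt N$, so the paper first disposes of the regime $t \ge C_K \sqrt N$ directly from the transposed row theorem, where the lower bound in \eqref{smin smax columns} is vacuous. You need this reduction too, else the definition of $\EE$ is $t$-dependent in a circular way. With these three fixes in place -- a priori bound via transposition, random-sign centering, and the split on $t$ -- your argument matches the paper's.
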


The only significant difference between Theorem~\ref{sub-gaussian rows} for independent rows
and Theorem~\ref{sub-gaussian columns} for independent columns is 
that the latter requires {\em normalization of columns}, $\|A_j\|_2 = \sqrt{N}$ almost surely.
Recall that by isotropy of $A_j$ (see Lemma~\ref{norm isotropic}) 
one always has $(\E\|A_j\|_2^2)^{1/2} = \sqrt{N}$,
but the normalization is a bit stronger requirement.
We will discuss this more after the proof of Theorem~\ref{sub-gaussian columns}. 

\begin{remark}[Gram matrices are an approximate identity]
  By Lemma~\ref{approximate isometries}, the conclusion of Theorem~\ref{sub-gaussian columns}
  is equivalent to 
  $$
  \big\| \frac{1}{N} A^*A - I \| \le C \sqrt{\frac{n}{N}} + \frac{t}{\sqrt{N}}
  $$
  with the same probability $1 - 2\exp(-ct^2)$. This establishes our ideal inequality \eqref{Gram ideal}.
  In words, the normalized Gram matrix of $n$ independent sub-gaussian isotropic 
  random vectors in $\R^N$ is an approximate identity whenever $N \gg n$. 
\end{remark}

The proof of Theorem~\ref{sub-gaussian columns} is based on the decoupling technique \cite{dG}.
What we will need here is an elementary decoupling lemma for double arrays.
Its statement involves the notion of a {\em random subset} \index{Random subset} of a given finite set.
To be specific, we define a random set $T$ of $[n]$ with a given average size $m \in [0,n]$
as follows. Consider independent $\{0,1\}$ valued random variables $\d_1,\ldots,\d_n$
with $\E\d_i = m/n$; these are sometimes called {\em independent selectors}. \index{Selectors}
Then we define the random subset
$T = \{ i \in [n]:\; \d_i=1 \}$. Its average size equals $\E|T| = \E \sum_{i=1}^n \d_i = m$.

\begin{lemma}[Decoupling]		\index{Decoupling}					\label{decoupling}
  Consider a double array of real numbers $(a_{ij})_{i,j=1}^n$
  such that $a_{ii} = 0$ for all $i$. Then
  $$
  \sum_{i,j \in [n]} a_{ij} = 4 \E \sum_{i \in T,\, j \in T^c} a_{ij}
  $$
  where $T$ is a random subset of $[n]$ with average size $n/2$.
  In particular,
  $$
  4 \min_{T \subseteq [n]} \sum_{i \in T,\, j \in T^c} a_{ij}
  \le  \sum_{i,j \in [n]} a_{ij}
  \le 4 \max_{T \subseteq [n]} \sum_{i \in T,\, j \in T^c} a_{ij}
  $$
  where the minimum and maximum are over all subsets $T$ of $[n]$.
\end{lemma}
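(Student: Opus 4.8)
The plan is to compute the expectation $\E \sum_{i \in T,\, j \in T^c} a_{ij}$ directly, entry by entry, using the independent selectors $\delta_1,\ldots,\delta_n$ with $\E\delta_i = 1/2$ that define the random subset $T$. For a fixed ordered pair $(i,j)$ with $i \ne j$, the event $\{i \in T,\, j \in T^c\}$ is exactly $\{\delta_i = 1,\ \delta_j = 0\}$, and since $i \ne j$ the selectors $\delta_i$ and $\delta_j$ are independent, so $\P\{i \in T,\ j \in T^c\} = \tfrac12 \cdot \tfrac12 = \tfrac14$. Hence by linearity of expectation,
$$
\E \sum_{i \in T,\, j \in T^c} a_{ij}
= \sum_{i \ne j} a_{ij}\, \P\{i \in T,\ j \in T^c\}
= \frac14 \sum_{i \ne j} a_{ij}.
$$
The hypothesis $a_{ii} = 0$ means $\sum_{i \ne j} a_{ij} = \sum_{i,j \in [n]} a_{ij}$, so multiplying through by $4$ gives the claimed identity.

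For the second statement, the point is simply that an expectation lies between the minimum and the maximum of the random quantity being averaged. Since $T$ takes values among the subsets of $[n]$, the random variable $\sum_{i \in T,\, j \in T^c} a_{ij}$ always lies in the interval $\big[\min_{T \subseteq [n]} \sum_{i \in T,\, j \in T^c} a_{ij},\ \max_{T \subseteq [n]} \sum_{i \in T,\, j \in T^c} a_{ij}\big]$, hence so does its expectation. Combining this with the identity just proved yields
$$
4 \min_{T \subseteq [n]} \sum_{i \in T,\, j \in T^c} a_{ij}
\le \sum_{i,j \in [n]} a_{ij}
\le 4 \max_{T \subseteq [n]} \sum_{i \in T,\, j \in T^c} a_{ij}.
$$

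There is no real obstacle here; the only thing to be careful about is the bookkeeping on the diagonal, i.e. remembering that the sum $\sum_{i \in T,\, j \in T^c}$ ranges over pairs with $i \ne j$ automatically (because $T$ and $T^c$ are disjoint), which is precisely why the assumption $a_{ii} = 0$ is exactly what is needed to match it with the full double sum $\sum_{i,j}$, and to make the independence of $\delta_i,\delta_j$ applicable for every term that actually appears.
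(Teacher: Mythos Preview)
Your proof is correct and is essentially identical to the paper's own argument. The paper writes the sum as $\E \sum_{i,j} \d_i(1-\d_j) a_{ij}$ and uses $\E \d_i(1-\d_j) = \tfrac14$ for $i\ne j$ together with $a_{ii}=0$; you phrase the same computation in terms of the probability $\P\{i\in T,\ j\in T^c\}=\tfrac14$, and both of you finish the second part by bounding the expectation between the minimum and maximum.
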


\begin{proof}
Expressing the random subset as $T = \{ i \in [n]:\; \d_i=1 \}$
where $\d_i$ are independent selectors with $\E\d_i=1/2$, we see that
$$
\E \sum_{i \in T,\, j \in T^c} a_{ij}
= \E \sum_{i,j \in [n]} \d_i (1-\d_j) a_{ij}
= \frac{1}{4} \sum_{i,j \in [n]} a_{ij},
$$
where we used that $\E \d_i (1-\d_j) = \frac{1}{4}$ for $i \ne j$ and the assumption $a_{ii}=0$.
This proves the first part of the lemma. The second part follows trivially by estimating 
expectation by maximum and minimum.
\end{proof}

\begin{proof}[Proof of Theorem~\ref{sub-gaussian columns}]
{\bf Step 1: Reductions.}
Without loss of generality we can assume that the columns $A_i$ have zero mean. 
Indeed, multiplying each column $A_i$ by $\pm 1$ arbitrarily 
preserves the extreme singular values of $A$, the isotropy of $A_i$ and
the sub-gaussian norms of $A_i$. Therefore, by multiplying
$A_i$ by independent symmetric Bernoulli random variables we achieve that $A_i$
have zero mean. 

For $t = O(\sqrt{N})$ the conclusion of Theorem~\ref{sub-gaussian columns}
follows from Theorem~\ref{sub-gaussian rows} by transposition. 
Indeed, the $n \times N$ random matrix $A^*$ has independent rows, so for $t \ge 0$ we have
\begin{equation}							\label{smax smax star}
\smax(A) = \smax(A^*) \le \sqrt{n} + C_K \sqrt{N} + t
\end{equation}
with probability at least $1 - 2 \exp(-c_K t^2)$. 
Here $c_K > 0$ and we can obviously assume that $C_K \ge 1$. 
For $t \ge C_K\sqrt{N}$ it follows that 
$\smax(A) \le \sqrt{N} + \sqrt{n} + 2t$, which
yields the conclusion of Theorem~\ref{sub-gaussian columns} 
(the left hand side of \eqref{smin smax columns} being trivial). 
So, it suffices to prove the conclusion for
$t \le C_K \sqrt{N}$.
Let us fix such $t$.

It would be useful to have some a priori control of $\smax(A) = \|A\|$. 
We thus consider the desired event 
$$
\EE := \big\{ \smax(A) \le 3C_K \sqrt{N} \big\}.
$$
Since $3C_K \sqrt{N} \ge \sqrt{n} + C_K \sqrt{N} + t$, by \eqref{smax smax star} 
we see that $\EE$ is likely to occur:
\begin{equation}										\label{EEc}
\P(\EE^c) \le 2 \exp(-c_K t^2).
\end{equation}

{\bf Step 2: Approximation.}
This step is parallel to Step~1 in the proof of Theorem~\ref{sub-gaussian rows},
except now we shall choose $\e := \d$. 
This way we reduce our task to the following. 
Let $\NN$ be a $\frac{1}{4}$-net of the unit sphere $S^{n-1}$ such that 
$|\NN| \le 9^n$.
It suffices to show that with probability at least $1 - 2\exp(-c'_K t^2)$ one has
$$
\max_{x \in \NN} \Big| \frac{1}{N} \|Ax\|_2^2 - 1 \Big| \le  \frac{\d}{2},
\quad \text{where } \d = C \sqrt{\frac{n}{N}} + \frac{t}{\sqrt{N}}.
$$
By \eqref{EEc}, it is enough to show that the probability
\begin{equation}							\label{desired p}
p:= \P \Big\{ \max_{x \in \NN} \Big| \frac{1}{N} \|Ax\|_2^2 - 1 \Big| > \frac{\d}{2} 
  \text{ and } \EE \Big\} 
\end{equation}
satisfies $p \le 2\exp(-c_K'' t^2)$,
where $c_K''>0$ may depend only on $K$.

{\bf Step 3: Decoupling.}
As in the proof of Theorem~\ref{sub-gaussian rows}, we will obtain the required bound for a fixed 
$x \in \NN$ with high probability, and then take a union bound over $x$. 
So let us fix any $x = (x_1,\ldots,x_n) \in S^{n-1}$.  
We expand
\begin{equation}										\label{norm expansion}
\|Ax\|_2^2 
= \Big\| \sum_{j=1}^n x_j A_j \Big\|_2^2
= \sum_{j=1}^n x_j^2 \|A_j\|_2^2 + \sum_{j,k \in [n], \, j \ne k} x_j x_k \< A_j, A_k \> .
\end{equation}
Since $\|A_j\|_2^2 = N$ by assumption and $\|x\|_2 =1$, the first sum equals $N$. 
Therefore, subtracting $N$ from both sides and dividing by $N$, we obtain the bound 
$$
\Big| \frac{1}{N} \|Ax\|_2^2 - 1 \Big|
\le \Big| \frac{1}{N} \sum_{j,k \in [n], \, j \ne k} x_j x_k \< A_j, A_k \> \Big| .
$$
The sum in the right hand side is $\< G_0 x, x\> $ 
where $G_0$ is the off-diagonal part of the Gram matrix $G = A^*A$.
As we indicated in the beginning of Section~\ref{s: columns}, we are going to replace
$G_0$ by its decoupled version whose rows and columns are indexed by disjoint sets. 
This is achieved by Decoupling Lemma~\ref{decoupling}: we obtain 
$$
\Big| \frac{1}{N} \|Ax\|_2^2 - 1 \Big|
\le \frac{4}{N} \max_{T \subseteq [n]} |R_T(x)|, 
\quad \text{where }
R_T(x) = \sum_{j \in T, \, k \in T^c} x_j x_k \< A_j, A_k \> .
$$
We substitute this into \eqref{desired p} and 
take union bound over all choices of $x \in \NN$ and $T \subseteq [n]$. 
As we know, $|\NN| \le 9^n$, and there are $2^n$ subsets $T$ in $[n]$. This gives
\begin{align}										\label{p}
p 
&\le \P \Big\{ \max_{x \in \NN, \, T \subseteq [n]} |R_T(x)| > \frac{\d N}{8} \text{ and } \EE \Big\} \notag\\
&\le 9^n \cdot 2^n \cdot \max_{x \in \NN, \, T \subseteq [n]}
  \P \Big\{ |R_T(x)| > \frac{\d N}{8} \text{ and } \EE \Big\}.
\end{align}

{\bf Step 4: Conditioning and concentration.}
To estimate the probability in \eqref{p}, we fix a vector $x \in \NN$ 
and a subset $T \subseteq [n]$ and we condition on a realization of 
random vectors $(A_k)_{k \in T^c}$. We express
\begin{equation}										\label{RJ equivalent}
R_T(x) = \sum_{j \in T} x_j \langle A_j, z \rangle 
\quad \text{where } z = \sum_{k \in T^c} x_k A_k.
\end{equation}
Under our conditioning $z$ is a fixed vector, so $R_T(x)$ is a sum of independent random variables.
Moreover, if event $\EE$ holds then $z$ is nicely bounded:
\begin{equation}										\label{norm z}
\|z\|_2 \le \|A\| \|x\|_2 \le 3 C_K \sqrt{N}.
\end{equation}
If in turn \eqref{norm z} holds then the terms
$\< A_j, z\> $ in \eqref{RJ equivalent} are independent centered sub-gaussian 
random variables with $\|\< A_j, z\> \|_\psitwo \le 3 K C_K \sqrt{N}$.
By Lemma~\ref{rotation invariance}, their linear combination $R_T(x)$
is also a sub-gaussian random variable with
\begin{equation}							\label{RT sub-gaussian}
\|R_T(x)\|_\psitwo \le C_1 \Big( \sum_{j \in T} x_j^2 \|\< A_j, z\> \|_\psitwo^2 \Big)^{1/2} 
\le \widehat{C}_K \sqrt{N}
\end{equation}
where $\widehat{C}_K$ depends only on $K$.

We can summarize these observations as follows. Denoting the conditional probability 
by $\P_T = \P \{ \; \cdot \; | (A_k)_{k \in T^c} \}$
and the expectation with respect to $(A_k)_{k \in T^c}$ by $\E_{T^c}$,
we obtain by \eqref{norm z} and \eqref{RT sub-gaussian} that
\begin{align*}
\P &\Big\{ |R_T(x)| > \frac{\d N}{8} \text{ and } \EE \Big\}
 \le \E_{T^c}\P_T \Big\{ |R_T(x)| > \frac{\d N}{8} \text{ and } \|z\|_2 \le 3 C_K \sqrt{N} \Big\} \\
 &\le 2 \exp \Big[ -c_1 \Big( \frac{\d N/8}{\widehat{C}_K \sqrt{N}} \Big)^2 \Big]
   = 2 \exp \Big( -\frac{c_2 \d^2 N}{\widehat{C}_K^2} \Big) 
 \le 2 \exp \Big( -\frac{c_2 C^2 n}{\widehat{C}_K^2} - \frac{c_2 t^2}{\widehat{C}_K^2} \Big).
\end{align*}
The second inequality follows because $R_T(x)$ is a sub-gaussian random variable \eqref{RT sub-gaussian}
whose tail decay is given by \eqref{sub-gaussian tail}. Here $c_1,c_2>0$ are absolute constants. 
The last inequality follows from the definition of $\d$.
Substituting this into \eqref{p} and choosing $C$ sufficiently large (so that $\ln 36 \le c_2 C^2/\widehat{C}_K^2$),
we conclude that 
$$
p \le 2 \exp \big( - c_2 t^2/\widehat{C}_K^2 \big).
$$
This proves an estimate that we desired in Step 2. The proof is complete.
\end{proof}

\begin{remark}[Normalization assumption]
Some a priori control of the norms of the columns $\|A_j\|_2$ is necessary 
for estimating the extreme singular values, since
$$
\smin(A) \le \min_{i \le n} \|A_j\|_2 \le \max_{i \le n} \|A_j\|_2 \le \smax(A).
$$
With this in mind, it is easy to construct an example showing that a normalization assumption $\|A_i\|_2 = \sqrt{N}$
is essential in Theorem~\ref{sub-gaussian columns}; it can not even be replaced by a boundedness
assumption $\|A_i\|_2 = O(\sqrt{N})$.
 
Indeed, consider a random vector $X = \sqrt{2} \xi Y$ in $\R^N$
where $\xi$ is a $\{0,1\}$-valued random variable with $\E \xi = 1/2$ (a ``selector'') 
and $X$ is an independent spherical random 
vector in $\R^n$ (see Example~\ref{random vectors sub-gaussian}). 
Let $A$ be a random matrix whose columns $A_j$ are independent copies of $X$. 
Then $A_j$ are independent centered sub-gaussian isotropic random vectors in $\R^n$ with $\|A_j\|_\psitwo = O(1)$
and $\|A_j\|_2 \le \sqrt{2N}$ a.s. 
So all assumptions of Theorem~\ref{sub-gaussian columns} except normalization are satisfied.  
On the other hand $\P\{X=0\}=1/2$, so matrix $A$ has a zero column with overwhelming probability $1 - 2^{-n}$.
This implies that $\smin(A)=0$ with this probability, so the lower estimate
in \eqref{smin smax columns} is false for all nontrivial $N,n,t$.

\end{remark}

\subsection{Heavy-tailed columns}
\index{Heavy-tailed!random matrices with independent columns}

Here we prove a version of Theorem~\ref{heavy-tailed rows exp si} for independent heavy-tailed
columns.

We thus consider $N \times n$ random matrices $A$ with independent columns $A_j$. 
In addition to the normalization assumption $\|A_j\|_2 = \sqrt{N}$ already present in
Theorem~\ref{sub-gaussian columns} for sub-gaussian columns, our new result must also 
require an a priori control of the off-diagonal part of the Gram matrix 
$G = A^*A = (\< A_j, A_k \> )_{j,k=1}^n$.

\begin{theorem}[Heavy-tailed columns]					\label{heavy-tailed columns}
  Let $A$ be an $N \times n$ matrix ($N \ge n$) whose columns $A_j$ are independent
  isotropic random vectors in $\R^N$ with $\|A_j\|_2 = \sqrt{N}$ a. s. 
  Consider the incoherence parameter
  $$ 
  m := \frac{1}{N} \E \max_{j \le n} \sum_{k \in [n], \, k \ne j} \< A_j, A_k \> ^2.
  $$
  Then $\E \big\| \frac{1}{N} A^*A - I \big\| \le C_0 \sqrt{ \frac{m \log n}{N}}$. In particular,
  \begin{equation}							\label{eq heavy-tailed columns}
  \E \max_{j \le n} |s_j(A) - \sqrt{N}|
  \le C \sqrt{m \log n}.
  \end{equation}
\end{theorem}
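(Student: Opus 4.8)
The plan is to follow the decoupling strategy announced at the start of Section~\ref{s: columns}, but combined with the symmetrization-plus-Rudelson machinery used in Theorem~\ref{heavy-tailed rows exp si}, rather than with matrix Bernstein. First I would reduce, exactly as in Step~1 of Theorem~\ref{sub-gaussian columns}, to controlling $E := \E\|\frac{1}{N}A^*A - I\|$ and then deriving \eqref{eq heavy-tailed columns} from it by the diagonalization argument of Step~2 of Theorem~\ref{heavy-tailed rows exp si} (using Lemma~\ref{deviation from 1} on $\smin(A)/\sqrt N$ and $\smax(A)/\sqrt N$ separately, then summing the two deviations). Since $\|A_j\|_2^2 = N$ for every $j$, the diagonal of $A^*A$ is exactly $NI$, so $\frac{1}{N}A^*A - I$ equals $\frac{1}{N}G_0$, the normalized off-diagonal part of the Gram matrix. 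Writing the quadratic form $\langle G_0 x, x\rangle = \sum_{j\ne k} x_j x_k \langle A_j, A_k\rangle$ and invoking Lemma~\ref{norm on net} on a $\frac14$-net $\NN$ of $S^{n-1}$ reduces the operator-norm bound to controlling $\max_{x\in\NN}|\langle G_0 x, x\rangle|$; but for the expectation bound it is cleaner to keep the operator norm and apply Decoupling Lemma~\ref{decoupling} directly at the matrix level, getting $\|G_0\| \le 4 \max_{T\subseteq[n]}\|(G_0)_{T\times T^c}\|$ up to passing from the bilinear form to disjoint index blocks.

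Next, the main work: estimating $\E\|(A_T)^*(A_{T^c})\|$ for the decoupled block, where $A_T$ has columns $(A_j)_{j\in T}$ and $A_{T^c}$ has columns $(A_k)_{k\in T^c}$. Conditioning on $(A_k)_{k\in T^c}$, the block $(A_T)^*(A_{T^c})$ has independent rows indexed by $T$ (the $j$-th row being $(\langle A_j, A_k\rangle)_{k\in T^c}$), so I would apply the symmetrization lemma (Lemma~\ref{symmetrization}) to introduce Bernoulli signs $\e_j$ on the columns $A_j$, $j\in T$, and then Rudelson's inequality (Corollary~\ref{Rudelson}) — or rather the rectangular/bilinear form of it obtained from \eqref{Khintchine operator norm} — to bound $\E_\e\|\sum_{j\in T}\e_j A_j \otimes (\text{row}_j)\|$ by $C\sqrt{\log n}\cdot \max_j\|\text{row}_j\|_2 \cdot \|\sum_j (\text{stuff})\|^{1/2}$. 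The quantity $\max_j \|\text{row}_j\|_2^2 = \max_j \sum_{k\in T^c}\langle A_j, A_k\rangle^2$ is exactly what the incoherence parameter $m$ controls: $\E\max_{j\le n}\sum_{k\ne j}\langle A_j,A_k\rangle^2 = mN$. Pushing the Rudelson bound through, taking expectations over everything, and using $\|A_{T^c}^*A_{T^c}\|\le \|A^*A\| = N\|\frac1N A^*A\|\le N(E+1)$ together with Cauchy–Schwarz, one arrives at a self-bounding inequality of the shape $E \le C\sqrt{\frac{m\log n}{N}}\,(E+1)^{1/2}$, which resolves (as in Theorem~\ref{heavy-tailed rows exp si}) to $E \le \max(\d,\d^2)$ with $\d = C'\sqrt{m\log n/N}$; since $m\ge 1$ and $N\ge n$, the relevant branch gives $E \le C_0\sqrt{m\log n/N}$.

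The step I expect to be the main obstacle is getting the decoupling and Rudelson applications to interact cleanly. Decoupling Lemma~\ref{decoupling} as stated is for scalar double arrays $\sum_{i,j}a_{ij}$, so I need its (standard, but not-quite-stated-here) vector/matrix-valued analogue, or else I apply it to the scalar quadratic form $\langle G_0 x,x\rangle$ for each fixed $x$ in the net and then union-bound — but a union bound over the net and over the $2^n$ choices of $T$ only works painlessly for tail bounds, not directly for an expectation bound, so some care is needed to keep everything at the level of expectations (replacing $\E\max$ by $\max\E$ where legitimate, and absorbing the $\log n$ from Rudelson rather than from a union bound). The cleanest route is probably: decouple the bilinear form pointwise, observe $\max_T$ inside, bound $\|G_0\|\le 4\max_T\|G_{T\times T^c}\|$ deterministically, then estimate $\E\max_T\|G_{T\times T^c}\|$ — but since there are $2^n$ sets $T$ this last $\max$ must be handled by a tail estimate from Rudelson's inequality (via its high-probability companion) rather than naively, which reintroduces a union-bound subtlety. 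A safe alternative, and the one I would ultimately follow, is to avoid $\max_T$ altogether: use the \emph{random} subset $T$ of average size $n/2$ in Decoupling Lemma~\ref{decoupling}, so that $\E\langle G_0 x, x\rangle = 4\,\E_T \langle G_{T\times T^c}x,x\rangle$ in expectation and hence $E \le 4\,\E_T \E_A \|G_{T\times T^c}\|$; then for the fixed (random but then frozen) $T$, apply symmetrization and Rudelson to the block with independent rows, obtaining the self-bounding inequality above with no $2^n$ factor. The remaining routine calculations — verifying the Rudelson bound applies to the rectangular block, checking $\|A_{T^c}^*A_{T^c}\|\le N(E+1)$, and solving the quadratic inequality — mirror Theorem~\ref{heavy-tailed rows exp si} almost verbatim.
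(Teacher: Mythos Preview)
Your strategy is the same as the paper's---decouple the Gram matrix, condition on the columns in $T^c$ so that the block $\Gamma=(A_T)^*A_{T^c}$ has independent rows, then apply the heavy-tailed row machinery---but two tactical choices differ and are worth flagging.

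First, the paper does not self-bound. Instead it obtains an a~priori estimate $\E\|A\|\le C\sqrt{N\log n}$ by transposition: $A^*$ is an $n\times N$ matrix with independent rows $A_j$ satisfying $\|A_j\|_2=\sqrt N$, so Theorem~\ref{heavy-tailed rows exp si} applies directly. This bounds the troublesome term $\E\|A_{T^c}\|\le\E\|A\|\le C\sqrt{N\log n}$ outright, and the final bound $E\le C\sqrt{m\log n/N}$ drops out linearly, with no quadratic-in-$\delta$ branch to resolve. Your self-bounding inequality as written, $E\le C\sqrt{m\log n/N}\,(E+1)^{1/2}$, is not quite the shape that emerges: tracking the argument one actually gets $E\le C_1\sqrt{n/N}\,(E+1)^{1/2}+C_2\sqrt{m\log n/N}$, with the $(E+1)^{1/2}$ factor attached only to the $\sqrt{n/N}$ piece coming from $\|\Sigma(\Gamma_j)\|\le\|A_{T^c}\|^2$. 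This still resolves correctly using $n\le m$ and $N\ge n$, but the a~priori route is cleaner.

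Second, rather than invoking a ``rectangular'' version of Rudelson (which the paper does not state), the paper applies Theorem~\ref{heavy-tailed rows exp si non-isotropic} as a black box to $\Gamma$ viewed as an $|T|\times|T^c|$ matrix with conditionally independent rows $\Gamma_j$; since that theorem works through the symmetric matrix $\Gamma^*\Gamma$, only the standard Rudelson inequality is needed internally. Finally, the paper formalizes your random-$T$ decoupling step as a separate Matrix Decoupling Lemma yielding $E\le\frac{4}{N}\max_T\E\|(A_T)^*A_{T^c}\|$ (note: $\max_T$ outside the expectation), which disposes of the $2^n$ worry you raised without any tail estimates.
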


Let us briefly clarify the role of the incoherence parameter $m$, which 
controls the lengths of the rows of the off-diagonal part of $G$. 
After the proof we will see that a control of $m$ is essential in Theorem~\ref{heavy-tailed rows}. 
But for now, let us get a feel of the typical size of $m$. 
We have $\E \< A_j, A_k\> ^2 = N$ by Lemma~\ref{norm isotropic}, so for every row $j$ we see that 
$\frac{1}{N} \sum_{k \in [n], \, k \ne j} \< A_j, A_k \> ^2 = n-1$.
This indicates that Theorem~\ref{heavy-tailed columns} would be often used
with $m = O(n)$. 

In this case, Theorem~\ref{heavy-tailed rows} establishes our ideal inequality \eqref{Gram ideal}
up to a logarithmic factor. 
In words, the normalized Gram matrix \index{Gram matrix} of $n$ independent isotropic 
random vectors in $\R^N$ is an approximate identity whenever $N \gg n \log n$. 

\medskip

Our proof of Theorem~\ref{heavy-tailed columns} will be based on decoupling, symmetrization 
and an application of Theorem~\ref{heavy-tailed rows exp si non-isotropic} for a decoupled
Gram matrix with independent rows. 
The decoupling is done similarly to Theorem~\ref{sub-gaussian columns}. 
However, this time we will benefit from formalizing the decoupling inequality for Gram matrices:

\begin{lemma}[Matrix decoupling]	\index{Decoupling}					\label{matrix decoupling} 
  Let $B$ be a $N \times n$ random matrix whose columns $B_j$ satisfy $\|B_j\|_2 = 1$. 
  Then
  $$
  \E \|B^*B-I\| \le 4 \max_{T \subseteq [n]} \E \|(B_T)^* B_{T^c} \|.
  $$
\end{lemma}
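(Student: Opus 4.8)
The plan is to express $\|B^*B-I\|$ as a supremum of quadratic forms, decouple each quadratic form using Lemma~\ref{decoupling}, and then interchange the expectation over $B$ with the expectation over the random index set, so that an average over subsets gets dominated by a maximum.

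First I would note that the normalization $\|B_j\|_2 = 1$ makes the diagonal of $B^*B$ equal to the identity, so $M := B^*B - I$ is symmetric with zero diagonal and off-diagonal entries $M_{ij} = \langle B_i, B_j\rangle$. Using the standard identity $\|M\| = \sup_{x \in S^{n-1}} |\langle Mx, x\rangle|$ for symmetric matrices, it suffices to control, for fixed $x \in S^{n-1}$ and fixed $B$, the quadratic form
$$\langle Mx, x\rangle = \sum_{i,j \in [n]} a_{ij}, \qquad a_{ij} := x_i x_j \langle B_i, B_j\rangle, \quad a_{ii} = 0.$$
Applying Decoupling Lemma~\ref{decoupling} to the array $(a_{ij})$ gives $\langle Mx, x\rangle = 4\,\E_T \sum_{i \in T,\, j \in T^c} a_{ij}$, where $T$ is a random subset of $[n]$ of average size $n/2$, chosen independently of $B$. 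The crucial observation is that the inner sum is precisely the bilinear form $\langle (B_T)^* B_{T^c}\, x_{T^c}, x_T\rangle$, with $x_T$, $x_{T^c}$ the coordinate restrictions of $x$.

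Next I would estimate, for each realization of $B$,
$$|\langle Mx, x\rangle| \le 4\,\E_T \big| \langle (B_T)^* B_{T^c}\, x_{T^c}, x_T\rangle \big| \le 4\,\E_T \big\| (B_T)^* B_{T^c} \big\|\, \|x_T\|_2\, \|x_{T^c}\|_2 \le 4\,\E_T \big\| (B_T)^* B_{T^c}\big\|,$$
where the last step uses $\|x_T\|_2 \|x_{T^c}\|_2 \le \|x_T\|_2^2 + \|x_{T^c}\|_2^2 = \|x\|_2^2 = 1$. Since the right-hand side is independent of $x$, taking the supremum over $x \in S^{n-1}$ yields, pointwise in $B$,
$$\|B^*B - I\| \le 4\,\E_T \big\| (B_T)^* B_{T^c}\big\|.$$
Finally, taking the expectation over $B$ and swapping it with $\E_T$ (legitimate by Fubini, since $T$ is independent of $B$ and all quantities are nonnegative), I would conclude
$$\E \|B^*B - I\| \le 4\,\E_T\, \E_B \big\| (B_T)^* B_{T^c}\big\| \le 4 \max_{T \subseteq [n]} \E \big\| (B_T)^* B_{T^c}\big\|,$$
the last inequality because an average over random subsets never exceeds the maximum over all subsets.

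The one point to handle carefully — really the only subtlety — is the order of operations: Lemma~\ref{decoupling} is a statement about deterministic arrays, so it must be invoked conditionally on $B$, which is why the bound first appears with $\E_T$ on the outside; only afterwards can one take $\E_B$ and use independence to replace $\E_T \E_B$ by $\max_T \E_B$. The remaining ingredients — recognizing the decoupled sum as the bilinear form of $(B_T)^* B_{T^c}$, and the elementary bound $\|x_T\|_2 \|x_{T^c}\|_2 \le 1$ — are routine.
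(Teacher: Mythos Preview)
Your proof is correct and follows essentially the same route as the paper's: express $\|B^*B-I\|$ as a supremum of quadratic forms, apply the scalar Decoupling Lemma~\ref{decoupling} to kill the diagonal (which vanishes thanks to $\|B_j\|_2=1$), identify the decoupled sum as the bilinear form of $(B_T)^*B_{T^c}$, and then pass from $\E_T$ to $\max_T$ after taking $\E_B$. Your write-up is in fact a bit more explicit than the paper's at the step $\sup_{x\in S^{n-1}}|\langle (B_T)^*B_{T^c}x_{T^c},x_T\rangle|\le \|(B_T)^*B_{T^c}\|$, where you spell out the bound $\|x_T\|_2\|x_{T^c}\|_2\le 1$.
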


\begin{proof}
We first note that $\|B^*B-I\| = \sup_{x \in S^{n-1}} \big| \|Bx\|_2^2 - 1 \big|$.
We fix $x = (x_1,\ldots,x_n) \in S^{n-1}$ and, expanding as in \eqref{norm expansion}, observe that
$$
\|Bx\|_2^2 = \sum_{j=1}^n x_j^2 \|B_j\|_2^2 + \sum_{j,k \in [n], \, j \ne k} x_j x_k \< B_j, B_k \> .
$$
The first sum equals $1$ since $\|B_j\|_2 = \|x\|_2 = 1$. 
So by Decoupling Lemma~\ref{decoupling}, a random subset $T$ of $[n]$ with average 
cardinality $n/2$ satisfies
$$
\|Bx\|_2^2 - 1 = 4 \E_T \sum_{j \in T, k \in T^c} x_j x_k \< B_j, B_k \> .
$$
Let us denote by $\E_T$ and $\E_B$ the expectations with respect to the random
set $T$ and the random matrix $B$ respectively. 
Using Jensen's inequality we obtain
\begin{align*}
\E_B \|B^*B-I\| 
&= \E_B \sup_{x \in S^{n-1}} \big| \|Bx\|_2^2 - 1 \big| \\
&\le 4 \E_B \E_T \sup_{x \in S^{n-1}} \Big| \sum_{j \in T, k \in T^c} x_j x_k \< B_j, B_k \> \Big| 
= 4 \E_T \E_B \|(B_T)^* B_{T^c} \|.
\end{align*}
The conclusion follows by replacing the expectation by the maximum over $T$. 
\end{proof}

\begin{proof}[Proof of Theorem~\ref{heavy-tailed columns}]
{\bf Step 1: Reductions and decoupling.}
It would be useful to have an a priori bound on $\smax(A) = \|A\|$.
We can obtain this by transposing $A$ and applying one of the results of Section~\ref{s: rows}. Indeed, the random 
$n \times N$ matrix $A^*$ has independent rows $A_i^*$ which by our assumption
are normalized as $\|A_i^*\|_2 = \|A_i\|_2 = \sqrt{N}$.
Applying Theorem~\ref{heavy-tailed rows exp si} with the roles of $n$ and $N$
switched, we obtain by the triangle inequality that 
\begin{equation}							\label{norm A}
\E \|A\| = \E \|A^*\| = \E \smax(A^*)
\le \sqrt{n} + C \sqrt{N \log n}
\le C \sqrt{N \log n}.
\end{equation}

Observe that $n \le m$ since by Lemma~\ref{norm isotropic} we have 
$\frac{1}{N} \E \< A_j, A_k \> ^2 =1$ for $j \ne k$.

We use Matrix Decoupling Lemma~\ref{matrix decoupling} for $B = \frac{1}{\sqrt{N}} A$ 
and obtain 
\begin{equation}								\label{E via Sigma}
E \le \frac{4}{N} \max_{T \subseteq [n]} \E \|(A_T)^* A_{T^c} \|
= \frac{4}{N} \max_{T \subseteq [n]} \E \|\Gamma\|
\end{equation}
where $\Gamma = \Gamma(T)$ denotes the decoupled Gram matrix 
$$
\Gamma = (A_T)^* A_{T^c} 
= \big( \< A_j, A_k\> \big)_{j \in T, k \in T^c}.
$$
Let us fix $T$; our problem then reduces to bounding the expected norm of $\Gamma$.

{\bf Step 2: The rows of the decoupled Gram matrix.}
For a subset $S \subseteq [n]$, we denote by $\E_{A_S}$ 
the conditional expectation given $A_{S^c}$, 
i.e. with respect to $A_S = (A_j)_{j \in S}$.
Hence $\E = \E_{A_{T^c}} \E_{A_T}$.

Let us condition on $A_{T^c}$.
Treating  $(A_k)_{k \in T^c}$ as fixed vectors we see that, 
conditionally, the random matrix $\Gamma$ has independent 
rows 
$$
\Gamma_j = \big( \< A_j, A_k\> \big)_{k \in T^c}, \quad j \in T.
$$
So we are going to use Theorem~\ref{heavy-tailed rows exp si non-isotropic} to 
bound the norm of $\Gamma$. To do this we need estimates on (a) the norms 
and (b) the second moment matrices of the rows $\Gamma_j$.

(a) Since for $j \in T$, $\Gamma_j$ is a random vector valued in $\R^{T^c}$, we estimate
its second moment matrix by choosing $x \in \R^{T^c}$ and evaluating the scalar second moment
\begin{align*}
\E_{A_T} \< \Gamma_j, x\> ^2
&= \E_{A_T} \Big( \sum_{k \in T^c} \< A_j, A_k\> x_k \Big)^2
= \E_{A_T} \Big\langle A_j, \sum_{k \in T^c} x_k A_k \Big\rangle ^2 \\
&= \Big\| \sum_{k \in T^c} x_k A_k \Big\|^2
= \|A_{T^c}x\|_2^2
\le \|A_{T^c}\|_2^2 \|x\|_2^2.
\end{align*}
In the third equality we used isotropy of $A_j$. 
Taking maximum over all $j \in T$ and $x \in \R^{T^c}$, 
we see that the second moment matrix 
$\Sigma(\Gamma_j) = \E_{A_T} \Gamma_j \otimes \Gamma_j$ satisfies 
\begin{equation}										\label{Sigma Gj}
  \max_{j \in T} \|\Sigma(\Gamma_j)\| \le \|A_{T^c}\|^2.
\end{equation}

(b) To evaluate the norms of $\Gamma_j$, $j \in T$, note that 
$\|\Gamma_j\|_2^2 = \sum_{k \in T^c} \< A_j, A_k\> ^2$. 
This is easy to bound, because the assumption says that the random variable 
$$
M := \frac{1}{N} \max_{j \in [n]} \sum_{k \in [n], \, k \ne j} \< A_j, A_k \> ^2
\quad \text{satisfies } \E M = m.
$$
This produces the bound $\E \max_{j \in T} \|\Gamma_j\|_2^2 \le N \cdot \E M = Nm$. But at this moment 
we need to work conditionally on $A_{T^c}$, so for now we will be satisfied with 
\begin{equation}										\label{rows of G}
\E_{A_T} \max_{j \in T} \|\Gamma_j\|_2^2 \le N \cdot \E_{A_T} M. 
\end{equation}

{\bf Step 3: The norm of the decoupled Gram matrix.}
We bound the norm of the random $T \times T^c$ Gram matrix $\Gamma$ 
with (conditionally) independent rows using Theorem~\ref{heavy-tailed rows exp si non-isotropic}
and Remark~\ref{r: different second moments}. 
Since by \eqref{Sigma Gj} we have
$\big\| \frac{1}{|T|} \sum_{j \in T} \Sigma(\Gamma_j) \big\| 
\le \frac{1}{|T|} \sum_{j \in T} \|\Sigma(\Gamma_j)\|
\le \|A_{T^c}\|^2$,  
we obtain using \eqref{rows of G} that
\begin{align}								\label{EAT Sigma}
\E_{A_T} \|\Gamma\| 
&\le (\E_{A_T} \|\Gamma\|^2)^{1/2} 
\le \|A_{T^c}\| \sqrt{|T|} + C \sqrt{N \cdot \E_{A_T} (M) \log |T^c|} \nonumber\\
&\le \|A_{T^c}\| \sqrt{n} + C \sqrt{N \cdot \E_{A_T} (M) \log n}.
\end{align}
Let us take expectation of both sides with respect to $A_{T^c}$.
The left side becomes the quantity we seek to bound, $\E \|\Gamma\|$.
The right side will contain the term which we can estimate by \eqref{norm A}:
$$
\E_{A_{T^c}} \|A_{T^c}\| = \E \|A_{T^c}\| \le \E \|A\| \le C \sqrt{N \log n}.
$$
The other term that will appear in the expectation of \eqref{EAT Sigma}
is 
$$
\E_{A_{T^c}} \sqrt{\E_{A_T} (M)}
\le \sqrt{\E_{A_{T^c}} \E_{A_T} (M)}
\le \sqrt{\E M}
= \sqrt{m}.
$$
So, taking the expectation in \eqref{EAT Sigma} and using these bounds, we obtain
$$
\E \|\Gamma\| 
= \E_{A_{T^c}} \E_{A_T} \|\Gamma\|
\le C \sqrt{N \log n} \sqrt{n} + C \sqrt{N m \log n}
\le 2C \sqrt{N m \log n}
$$ 
where we used that $n \le m$.
Finally, using this estimate in \eqref{E via Sigma} we conclude 
$$
E \le 8C \sqrt{\frac{m \log n}{N}}.
$$
This establishes the first part of Theorem~\ref{heavy-tailed columns}. 
The second part follow by the diagonalization argument
as in Step 2 of the proof of Theorem~\ref{heavy-tailed rows exp si}.
\end{proof}

\begin{remark}[Incoherence]
  A priori control on the {\em incoherence} \index{Incoherence} is essential in Theorem~\ref{heavy-tailed columns}.
  Consider for instance an $N \times n$ random matrix $A$ whose columns are independent coordinate random 
  vectors in $\R^N$. Clearly $\smax(A) \ge \max_j \|A_i\|_2 = \sqrt{N}$.
  On the other hand, if the matrix is not too tall, $n \gg \sqrt{N}$, then $A$ has two identical columns 
  with high probability, which yields $\smin(A)=0$. 
\end{remark}

\section{Restricted isometries}		\index{Restricted isometries}	\label{s: restricted isometries}

In this section we consider an application of the non-asymptotic random matrix theory
in compressed sensing. 
For a thorough introduction to compressed sensing, see the introductory chapter of
this book and \cite{FR, CS website}. 

In this area, $m \times n$ matrices $A$ 
are considered as measurement devices, taking as input a signal $x \in \R^n$ and returning 
its measurement $y = Ax \in \R^m$. One would like to take measurements economically, 
thus keeping $m$ as small as possible, and still to be able to recover the signal $x$ from its 
measurement $y$. 

The interesting regime for compressed sensing is where we take 
very few measurements, $m \ll n$. Such matrices $A$ are not one-to-one, 
so recovery of $x$ from $y$ is not possible for all signals $x$. 
But in practical applications, the amount of ``information'' contained in the signal is often small. 
Mathematically this is expressed as {\em sparsity} of $x$.
In the simplest case, one assumes that $x$ has few non-zero coordinates, say 
$|\supp(x)| \le k \ll n$. In this case, using any non-degenerate matrix $A$ one can check 
that $x$ can be recovered whenever $m > 2k$ using the optimization problem 
$\min \{ |\supp(x)|: \; Ax=y \}$.

This optimization problem is highly non-convex and generally NP-complete. 
So instead one considers a convex relaxation of this problem, $\min \{ \|x\|_1: \; Ax=y \}$.
A basic result in compressed sensing, due to Cand\`es and Tao \cite{Candes-Tao, Candes}, 
is that for sparse signals $|\supp(x)| \le k$,
the convex problem recovers the signal $x$ from its measurement $y$ exactly, provided 
that the measurement matrix $A$ is quantitatively non-degenerate. Precisely, the non-degeneracy  
of $A$ means that it satisfies the following {\em restricted isometry property} with $\d_{2k}(A) \le 0.1$.

\begin{definition-notag}[Restricted isometries] 
  An $m \times n$ matrix $A$ satisfies the {\em restricted isometry property} of order $k \ge 1$ if
  there exists $\d_k \ge 0$ such that the inequality
  \begin{equation}							\label{eq RIP}
  (1-\d_k) \|x\|_2^2 \le \|Ax\|_2^2 \le (1+\d_k) \|x\|_2^2
  \end{equation}
  holds for all $x \in \R^n$ with $|\supp(x)| \le k$.
  The smallest number $\d_k = \d_k(A)$ is called the {\em restricted isometry constant} of $A$. 
\end{definition-notag}

In words, $A$ has a restricted isometry property if $A$ acts as an approximate isometry 
on all sparse vectors. Clearly,
\begin{equation}							\label{equiv RIP}
\d_k(A) = \max_{|T| \le k} \|A_T^* A_T - I_{\R^T}\|
= \max_{|T| = \lfloor k \rfloor} \|A_T^* A_T - I_{\R^T}\|
\end{equation}
where the maximum is over all subsets $T \subseteq [n]$ with $|T| \le k$ or $|T| = \lfloor k \rfloor$.

The concept of restricted isometry can also be expressed via extreme singular values, 
which brings us to the topic we studied in the previous sections. $A$ is a restricted isometry 
if and only if all $m \times k$ sub-matrices $A_T$ of $A$
(obtained by selecting arbitrary $k$ columns from $A$) are approximate isometries. 
Indeed, for every $\d \ge 0$, Lemma~\ref{approximate isometries} shows that 
the following two inequalities are equivalent up to an absolute constant: 
\begin{gather}							
\d_k(A) \le \max(\d,\d^2);						\label{dk dd}		\\					
1-\d \le \smin(A_T) \le \smax(A_T) \le 1+\d		\label{s restricted}
\quad \text{for all } |T| \le k.
\end{gather}
More precisely, \eqref{dk dd} implies \eqref{s restricted} and
\eqref{s restricted} implies $\d_k(A) \le 3\max(\d,\d^2)$.

\medskip

Our goal is thus to find matrices that are good restricted isometries. What good means is clear
from the goals of compressed sensing described above. First, we need to keep 
the restricted isometry constant $\d_k(A)$ below some small absolute constant, say $0.1$. 
Most importantly, we would like the number of measurements $m$ to be small, ideally 
proportional to the sparsity $k \ll n$. 

This is where non-asymptotic random matrix theory enters. 
We shall indeed show that, with high probability, 
$m \times n$ random matrices $A$ are good restricted isometries of order $k$ 
with $m = O^*(k)$. Here the $O^*$ notation hides some logarithmic factors of $n$. 
Specifically, in Theorem~\ref{sub-gaussian RIP} we will show that
$$
m = O(k \log(n/k))
$$
for sub-gaussian random matrices $A$ (with independent rows or columns). 
This is due to the strong concentration properties of such matrices. 
A general observation of this kind is Proposition~\ref{concentration RIP}. 
It says that if for a given $x$, a random matrix $A$ (taken from any distribution) satisfies inequality \eqref{eq RIP}
with high probability, then $A$ is a good restricted isometry.

In Theorem~\ref{heavy-tailed RIP} we will extend these results to random matrices without concentration properties. 
Using a uniform extension of Rudelson's inequality, Corollary~\ref{Rudelson}, we shall show that
\begin{equation}										\label{m heavy-tailed}
m = O(k \log^4 n)
\end{equation}
for heavy-tailed random matrices $A$ (with independent rows). This includes the important
example of random Fourier matrices.

\subsection{Sub-gaussian restricted isometries} \index{Sub-gaussian!restricted isometries}

In this section we show that $m \times n$ sub-gaussian random matrices $A$ are good restricted isometries. 
We have in mind either of the following two models, which we analyzed in Sections~\ref{s: sub-gaussian rows}
and \ref{s: sub-gaussian columns} respectively:
\begin{description}
  \item[Row-independent model:] the rows of $A$ are independent 
    sub-gaussian isotropic random vectors in $\R^n$;
  \item[Column-independent model:] the columns $A_i$ of $A$ are independent 
    sub-gaussian isotropic random vectors in $\R^m$ with $\|A_i\|_2 = \sqrt{m}$ a.s.
\end{description}
Recall that these models cover many natural examples, including Gaussian and Bernoulli matrices 
(whose entries are independent standard normal or symmetric Bernoulli random variables), 
general sub-gaussian random matrices (whose entries are independent sub-gaussian random variables 
with mean zero and unit variance),
``column spherical'' matrices whose columns are independent vectors uniformly distributed on the centered
Euclidean sphere in $\R^m$ with radius $\sqrt{m}$, 
``row spherical'' matrices whose rows are independent vectors uniformly distributed on the centered
Euclidean sphere in $\R^d$ with radius $\sqrt{d}$, etc.

\begin{theorem}[Sub-gaussian restricted isometries]					\label{sub-gaussian RIP}
  Let $A$ be an $m \times n$ sub-gaussian random matrix with independent rows or columns, 
  which follows either of the two models above.   
  Then the normalized matrix $\bar{A} = \frac{1}{\sqrt{m}} A$ satisfies the following for every 
  sparsity level $1 \le k \le n$ and every number $\d \in (0,1)$:
  $$
  \text{if } m \ge C \d^{-2} k \log (en/k)
  \quad \text{then } \d_k(\bar{A}) \le \d
  $$
  with probability at least $1 - 2\exp(-c \d^2 m)$. 
  Here $C = C_K$, $c = c_K > 0$ depend only on the subgaussian norm 
  $K = \max_i \|A_i\|_\psitwo$ of the rows or columns of $A$. 
\end{theorem}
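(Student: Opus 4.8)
The plan is to reduce the restricted isometry property, via the identity \eqref{equiv RIP}, to a statement about all $m \times k$ column submatrices of $\bar A$, and then take a union bound over the $\binom{n}{k}$ sparsity patterns. The key observation is that for each \emph{fixed} $T \subseteq [n]$ with $|T| = \lfloor k \rfloor$, the submatrix $A_T$ again fits one of the two models of the theorem, but now in the regime $m \times k$: in the row-independent model the rows of $A_T$ are the coordinate projections $P_T A_i$, which are still isotropic in $\R^T$ with $\|P_T A_i\|_\psitwo \le \|A_i\|_\psitwo \le K$; in the column-independent model the columns of $A_T$ are just $k$ of the original columns, with the same normalization $\|A_j\|_2 = \sqrt{m}$. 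So the heart of the matter is a per-$T$ tail bound on $\|\frac{1}{m}A_T^*A_T - I_{\R^T}\|$ whose exponent is strong enough to survive multiplication by $\binom{n}{k} \le (en/k)^k$.

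For the row-independent model I would simply re-run the net-and-concentration argument from the proof of Theorem~\ref{sub-gaussian rows}, restricted to vectors supported on $T$. Fix a $\tfrac14$-net $\NN_T$ of the unit sphere of $\R^T$ with $|\NN_T| \le 9^k$ (Lemma~\ref{net cardinality}); by Lemma~\ref{norm on net}, $\|\frac{1}{m}A_T^*A_T - I_{\R^T}\| \le 2\max_{x \in \NN_T}\big|\frac{1}{m}\|A_T x\|_2^2 - 1\big|$. For a fixed unit vector $x$ supported on $T$, $\|A_T x\|_2^2 = \sum_{i \le m}\<A_i,x\>^2$ is a sum of i.i.d.\ centered sub-exponential terms $\<A_i,x\>^2 - 1$ with $\psione$-norm at most $4K^2$, so Corollary~\ref{average sub-exponentials} together with $\d \in (0,1)$ gives $\P\big\{\big|\frac{1}{m}\|A_T x\|_2^2 - 1\big| \ge \d/2\big\} \le 2\exp(-c_1\d^2 m/K^4)$. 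A union bound over $\NN_T$ yields $\P\big\{\|\frac{1}{m}A_T^*A_T - I_{\R^T}\| > \d\big\} \le 2\cdot 9^k\exp(-c_1\d^2 m/K^4)$. For the column-independent model the entries of $A_T^*A_T$ are no longer independent; there the natural route is to reproduce the decoupling-and-conditioning argument behind Theorem~\ref{sub-gaussian columns} for the $m \times k$ matrix $A_T$ (or, equivalently, to feed the single-vector estimate $\P\{|\frac1m\|A_T x\|_2^2 - 1| > \d/2\} \le 2\exp(-c\d^2 m)$ into the general reduction recorded in Proposition~\ref{concentration RIP}), obtaining a bound of the same shape $2\cdot C^k\exp(-c_K\d^2 m)$ with constants depending only on $K$.

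Assembling the union bound over $T$ and using $\binom{n}{k} \le (en/k)^k$ and the second equality in \eqref{equiv RIP} (so only $|T| = \lfloor k \rfloor$ matters), one gets
$$
\P\{\d_k(\bar A) > \d\} \;\le\; \binom{n}{k}\cdot 2\cdot 9^k\exp(-c_1\d^2 m/K^4) \;\le\; 2\exp\!\big(k\log(9en/k) - c_1\d^2 m/K^4\big).
$$
Since $\log(9en/k) \le 2\log(en/k)$ once $n/k$ is bounded below (the opposite case, $k$ comparable to $n$, being disposed of by enlarging $C$), the hypothesis $m \ge C_K\d^{-2}k\log(en/k)$ with $C_K$ large absorbs the entropy term into half of the exponent, leaving the claimed probability bound $2\exp(-c_K\d^2 m)$.

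The one genuinely delicate step is the column-independent model: there one cannot quote the clean net-plus-Bernstein computation, and must instead carry the decoupling argument of Theorem~\ref{sub-gaussian columns} through the union bound while checking that every constant is uniform in $T$ (this is exactly the content that Proposition~\ref{concentration RIP} is designed to encapsulate). In the row-independent model everything is a routine repackaging of tools already developed. A minor bookkeeping point is the mismatch between the $\max(\d,\d^2)$-type output of approximate-isometry estimates (Lemma~\ref{approximate isometries}) and the target $\d_k \le \d$; this is harmless because $\d < 1$, so $\max(\d,\d^2) = \d$ and one may work with $\e := \d$ throughout.
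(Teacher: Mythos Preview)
Your approach is correct and essentially the same as the paper's: obtain a per-$T$ tail bound on $\|\frac{1}{m}A_T^*A_T - I_{\R^T}\|$ and union-bound over the $\binom{n}{k}$ sparsity patterns. The only packaging difference is that the paper does not re-run the net or decoupling arguments; it simply observes that for fixed $T$ the submatrix $A_T$ satisfies the hypotheses of Theorem~\ref{sub-gaussian rows} or Theorem~\ref{sub-gaussian columns}, quotes those theorems as black boxes to get $\sqrt{m}-C_0\sqrt{k}-s \le s_{\min}(A_T)\le s_{\max}(A_T)\le \sqrt{m}+C_0\sqrt{k}+s$ with probability $1-2\exp(-cs^2)$, converts via Lemma~\ref{approximate isometries}, and then chooses $s = C_1\sqrt{k\log(en/k)} + (\delta/6)\sqrt{m}$ after the union bound. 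In particular, the column-independent case needs no separate ``delicate'' treatment in the paper's version: Theorem~\ref{sub-gaussian columns} already encapsulates the decoupling, so both models are dispatched in one line.
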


\begin{proof}
Let us check that the conclusion follows from Theorem~\ref{sub-gaussian rows} 
for the row-independent model, and from Theorem~\ref{sub-gaussian columns} 
for the column-independent model.
We shall control the restricted isometry constant using its equivalent description \eqref{equiv RIP}.
We can clearly assume that $k$ is a positive integer.

Let us fix a subset $T \subseteq [n]$, $|T| = k$ 
and consider the $m \times k$ random matrix $A_T$. If $A$ folows
the row-independent model, then the rows of $A_T$ are orthogonal projections of the rows of
$A$ onto $\R^T$, so they are still independent sub-gaussian
isotropic random vectors in $\R^T$. If alternatively, $A$ follows the column-independent 
model, then trivially the columns of $A_T$ satisfy the same assumptions as the columns of $A$.
In either case, Theorem~\ref{sub-gaussian rows} or Theorem~\ref{sub-gaussian columns} applies
to $A_T$. Hence for every $s \ge 0$, with probability at least $1-2\exp(-cs^2)$
one has
\begin{equation}							\label{AT smin smax}
\sqrt{m} - C_0\sqrt{k} - s \le \smin(A_T) \le \smax(A_T) \le \sqrt{m} + C_0\sqrt{k} + s.
\end{equation}
Using Lemma~\ref{approximate isometries} for
$\bar{A}_T = \frac{1}{\sqrt{m}}{A_T}$, we see that \eqref{AT smin smax} implies that
$$
\| \bar{A}_T^* \bar{A}_T - I_{\R^T} \| \le 3 \max(\d_0,\d_0^2)
\quad \text{where } \d_0 = C_0 \sqrt{\frac{k}{m}} + \frac{s}{\sqrt{m}}.
$$

Now we take a union bound over all subsets $T \subset [n]$, $|T| = k$. 
Since there are $\binom{n}{k} \le (en/k)^k$ ways to choose $T$, we conclude that 
$$
\max_{|T| = k} \| \bar{A}_T^* \bar{A}_T - I_{\R^T} \| \le 3 \max(\d_0,\d_0^2)
$$
with probability at least 
$1 - \binom{n}{k} \cdot 2\exp(-cs^2) 
\ge 1 - 2 \exp \big( k \log (en/k) - cs^2)$.
Then, once we choose $\e > 0$ arbitrarily and let 
$s = C_1 \sqrt{k \log(en/k)} + \e \sqrt{m}$, 
we conclude with probability at least $1 - 2\exp(-c \e^2 m)$ that 
$$
\d_k(\bar{A}) \le 3 \max(\d_0,\d_0^2) \quad \text{where } \d_0 = C_0 \sqrt{\frac{k}{m}} + C_1 \sqrt{\frac{k \log(en/k)}{m}} + \e.
$$

Finally, we apply this statement for $\e := \d/6$. By choosing constant $C$ in the statement of the theorem sufficiently large, 
we make $m$ large enough so that $\d_0 \le \d/3$, which yields $3 \max(\d_0,\d_0^2) \le \d$. The proof is complete.
\end{proof}

The main reason Theorem~\ref{sub-gaussian RIP} holds is that the random matrix $A$ 
has a strong concentration property, i.e. that $\|\bar{A}x\|_2 \approx \|x\|_2$ with high 
probability for every fixed sparse vector $x$. This concentration property alone implies
the restricted isometry property, regardless of the specific random matrix model:

\begin{proposition}[Concentration implies restricted isometry, see \cite{BDDW}]		\label{concentration RIP}
  Let $A$ be an $m \times n$ random matrix, and let $k \ge 1$, $\d \ge 0$, $\e > 0$.
  Assume that for every fixed $x \in \R^n$, $|\supp(x)| \le k$, the inequality
  $$
  (1-\d) \|x\|_2^2 \le \|Ax\|_2^2 \le (1+\d) \|x\|_2^2
  $$
  holds with probability at least $1 - \exp(-\e m)$.
  Then we have the following: 
  $$
  \text{if } m \ge C \e^{-1} k \log (en/k)
  \quad \text{then } \d_k(\bar{A}) \le 2\d
  $$
  with probability at least $1 - \exp(-\e m / 2)$. Here $C$ is an absolute constant.
\end{proposition}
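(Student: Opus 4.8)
The plan is a \emph{covering argument}, structurally identical to Steps~1--3 in the proof of Theorem~\ref{sub-gaussian rows}, with the concentration hypothesis playing the role that the deviation inequality played there. We may clearly assume that $k$ is a positive integer. By the description \eqref{equiv RIP} of the restricted isometry constant, it suffices to show that, with probability at least $1 - \exp(-\e m/2)$, one has $\|A_T^* A_T - I_{\R^T}\| \le 2\d$ for every subset $T \subseteq [n]$ of size $k$ simultaneously.

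\textbf{Fixed $T$.} Since $A_T^* A_T - I_{\R^T}$ is symmetric, Lemma~\ref{norm on net} applied with a $\tfrac14$-net $\NN_T$ of the unit sphere $S^{k-1}$ of $\R^T$ gives
$$
\|A_T^* A_T - I_{\R^T}\|
\le 2 \max_{x \in \NN_T} \big| \big\langle (A_T^* A_T - I_{\R^T}) x, x \big\rangle \big|
= 2 \max_{x \in \NN_T} \big| \|A_T x\|_2^2 - 1 \big|,
$$
and by Lemma~\ref{net cardinality} we may take $|\NN_T| \le 9^k$. Each $x \in \NN_T$, extended by zeros outside $T$, is a \emph{deterministic} vector in $\R^n$ with $|\supp(x)| \le k$ and $\|x\|_2 = 1$, so the hypothesis yields $\big| \|Ax\|_2^2 - 1 \big| \le \d$ with probability at least $1 - \exp(-\e m)$. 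A union bound over $\NN_T$ gives
$$
\P\big\{ \|A_T^* A_T - I_{\R^T}\| > 2\d \big\} \le 9^k \exp(-\e m).
$$

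\textbf{Union over $T$.} There are $\binom{n}{k} \le (en/k)^k$ subsets $T$ of size $k$, so
$$
\P\big\{ \|A_T^* A_T - I_{\R^T}\| > 2\d \ \text{for some such } T \big\}
\le (9en/k)^k \exp(-\e m)
= \exp\big( k\log(9en/k) - \e m \big).
$$
Since $\log(en/k) \ge 1$ we have $k\log(9en/k) \le (1+\log 9)\,k\log(en/k)$; hence if $m \ge C\e^{-1} k\log(en/k)$ with $C \ge 2(1+\log 9)$ then $k\log(9en/k) \le \tfrac12 \e m$, so the failure probability is at most $\exp(-\tfrac12 \e m)$. On the complementary event $\d_k(A) = \max_{|T|=k}\|A_T^* A_T - I_{\R^T}\| \le 2\d$, which is the assertion.

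The argument has no genuine obstacle: the hypothesis supplies exactly the pointwise control of the quadratic form $x \mapsto \|Ax\|_2^2$ that the covering argument consumes, and the entire content is bookkeeping of the two union bounds --- over the $9^k$ net points inside each $T$, and over the $\binom nk$ choices of $T$ --- against the budget $\exp(-\e m)$ in the hypothesis, both of which are absorbed precisely because $m \gtrsim \e^{-1} k\log(en/k)$ dominates $k\log(en/k)$. The one point worth stating is that each net is chosen deterministically (a net of $S^{k-1}$, fixed before the randomness of $A$ is revealed), which is what makes the union bound over net points legitimate given that the hypothesis is stated only for \emph{fixed} $x$.
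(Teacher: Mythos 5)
Your proof is correct and follows essentially the same route as the paper's: reduce $\d_k$ to a double maximum over subsets $T$ and net points via \eqref{equiv RIP}, Lemma~\ref{net cardinality}, and Lemma~\ref{norm on net}, apply the concentration hypothesis pointwise, and take a union bound over the $\binom{n}{k} \cdot 9^k$ events. Your version is merely a touch more explicit about verifying that $C \ge 2(1+\log 9)$ suffices, using $\log(en/k) \ge 1$; otherwise the two arguments coincide.
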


In words, the restricted isometry property can be checked on each individual 
vector $x$ with high probability. 

\begin{proof}
We shall use the expression \eqref{equiv RIP} to estimate the restricted isometry constant.
We can clearly assume that $k$ is an integer, and focus on the sets 
$T \subseteq [n]$, $|T| = k$. 
By Lemma~\ref{net cardinality}, we can find a net $\NN_T$ of the unit sphere $S^{n-1} \cap \R^T$
with cardinality $|\NN_T| \le 9^k$. 
By Lemma~\ref{norm on net}, we estimate the operator norm as 
$$
\big\| A_T^* A_T - I_{\R^T} \big\|
\le 2 \max_{x \in \NN_T} \big| \big\langle (A_T^* A_T - I_{\R^T})x, x \big\rangle \big|
= 2 \max_{x \in \NN_T} \big| \|Ax\|_2^2 - 1 \big|.
$$
Taking maximum over all subsets $T \subseteq [n]$, $|T| = k$, we conclude that 
$$
\d_k(A) \le 2 \max_{|T| = k} \max_{x \in \NN_T} \big| \|Ax\|_2^2 - 1 \big|.
$$ 
On the other hand, by assumption we have for every $x \in \NN_T$ that 
$$
\P \big\{ \big| \|Ax\|_2^2 - 1 \big| > \d \big\} 
\le \exp(-\e m).
$$
Therefore, taking a union bound over $\binom{n}{k} \le (en/k)^k$ choices of the set $T$ 
and over $9^k$ elements $x \in \NN_T$, we obtain that
\begin{align*}
\P \{ \d_k(A) > 2\d \} 
&\le \binom{n}{k} 9^k \exp(-\e m)
\le \exp \big( k \ln(en/k) + k \ln 9 - \e m \big) \\
&\le \exp(- \e m / 2) 
\end{align*}
where the last line follows by the assumption on $m$. 
The proof is complete.
\end{proof}

\subsection{Heavy-tailed restricted isometries}	 \index{Heavy-tailed!restricted isometries}		\label{s: heavy-tailed RIP}

In this section we show that $m \times n$ random matrices $A$ with independent 
heavy-tailed rows (and uniformly bounded coefficients) are good restricted isometries.
This result will be established in Theorem~\ref{heavy-tailed RIP}. 
As before, we will prove this by controlling the extreme singular values of 
all $m \times k$ sub-matrices $A_T$. For each individual subset $T$, this can be achieved 
using Theorem~\ref{heavy-tailed rows}:
one has 
\begin{equation}										\label{AT individual control}
\sqrt{m} - t \sqrt{k} \le \smin(A_T) \le \smax(A_T) \le \sqrt{m} + t \sqrt{k}
\end{equation}
with probability at least $1 - 2 k \cdot \exp(-ct^2)$.
Although this optimal probability estimate has optimal order, it is too weak to allow for
a union bound over all $\binom{n}{k} = (O(1) n/k)^k$ choices of the subset $T$. 
Indeed, in order that $1 - \binom{n}{k} 2 k \cdot \exp(-ct^2) > 0$ 
one would need to take $t > \sqrt{k \log(n/k)}$. So in order to achieve a nontrivial 
lower bound in \eqref{AT individual control}, one would be forced to take $m \ge k^2$. 
This is too many measurements; recall that our hope is $m = O^*(k)$.

This observation suggests that instead of controlling each sub-matrix $A_T$ separately, 
we should learn how to control all $A_T$ at once. 
This is indeed possible with the following uniform version 
of Theorem~\ref{heavy-tailed rows exp si}:

\begin{theorem}[Heavy-tailed rows; uniform]					\label{heavy-tailed rows uniform}
  Let $A=(a_{ij})$ be an $N \times d$ matrix ($1 < N \le d$) whose rows $A_i$ are independent
  isotropic random vectors in $\R^d$. Let $K$ be a number such that
  all entries $|a_{ij}| \le K$ almost surely.
  Then for every $1 < n \le d$, we have
  $$
  \E \max_{|T| \le n} \max_{j \le |T|} |s_j(A_T) - \sqrt{N}|
  \le C l \sqrt{n}
  $$
  where $l = \log(n) \sqrt{\log d} \sqrt{\log N}$
  and where $C=C_K$ may depend on $K$ only.
  The maximum is, as usual, over all subsets $T \subseteq [d]$, $|T| \le n$.
\end{theorem}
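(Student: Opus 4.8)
The argument is a uniform upgrade of the proof of Theorem~\ref{heavy-tailed rows exp si}: symmetrization, Rudelson's inequality, a self-improving bound, and diagonalization, with every estimate now holding simultaneously over the family of principal submatrices $A_T$, $T\subseteq[d]$, $|T|\le n$. The only genuinely new ingredient is a \emph{uniform} version of Rudelson's inequality (Corollary~\ref{Rudelson}); granting that, the rest is essentially routine.

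\textbf{Step 1: diagonalization.} Put
$$
E:=\E D,\qquad D:=\max_{|T|\le n}\Big\|\tfrac1N (A_T)^*A_T-I_{\R^T}\Big\|.
$$
The eigenvalues of $\tfrac1N(A_T)^*A_T-I_{\R^T}$ are the numbers $s_j(A_T)^2/N-1$, so $D=\max_{|T|\le n}\max_{j\le|T|}\big|s_j(A_T)^2/N-1\big|$; using $|z-1|\le|z^2-1|$ for $z\ge0$ we get, pointwise, $\max_{|T|\le n}\max_{j\le|T|}\big|s_j(A_T)/\sqrt N-1\big|\le D$. Hence it suffices to prove
$$
E\le\max(\d,\d^2),\qquad \d:=C_K\,l\sqrt{n/N}.
$$
Indeed, if $\d\le1$ this gives $\E\max_{|T|,j}|s_j(A_T)-\sqrt N|\le\sqrt N\,E\le C_K l\sqrt n$; and if $\d>1$ then $\E\max_{|T|\le n}\|\tfrac1N(A_T)^*A_T\|\le E+1\le 2\d^2$, so $\E\max_{|T|\le n}\|A_T\|\le(2N\d^2)^{1/2}=C'_K l\sqrt n$, while $\sqrt N\le C_K l\sqrt n$; combining via $|s_j(A_T)-\sqrt N|\le\max(\|A_T\|,\sqrt N)$ again gives the conclusion.

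\textbf{Step 2: symmetrization and reduction to uniform Rudelson.} Set $Z_i:=A_i\otimes A_i$, so $\E Z_i=I_{\R^d}$ by isotropy, and equip $d\times d$ matrices with the norm $\|M\|_*:=\max_{|T|\le n}\|M_T\|$ (a norm since $n>1$), for which $\|\tfrac1N\sum_i(Z_i-\E Z_i)\|_*=D$. Symmetrization Lemma~\ref{symmetrization} applied to $\|\cdot\|_*$ yields
$$
E\le\frac2N\,\E\max_{|T|\le n}\Big\|\sum_{i=1}^N\e_i\,(A_i)_T\otimes(A_i)_T\Big\|,
$$
with $\e_i$ independent symmetric Bernoulli, independent of $A$. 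Conditioning on $A$ (so the $(A_i)_T$ are fixed vectors with $\max_i\max_{|T|\le n}\|(A_i)_T\|_2\le K\sqrt n$ by the entrywise bound) and invoking the uniform Rudelson inequality stated in Step~3 gives
$$
E\le\frac{C_1 K\,l\sqrt n}{N}\,\E\max_{|T|\le n}\Big\|\sum_{i=1}^N(A_i)_T\otimes(A_i)_T\Big\|^{1/2}.
$$
By Jensen's inequality $\E\max_{|T|\le n}\|\sum_i(A_i)_T\otimes(A_i)_T\|^{1/2}\le\big(N\,\E\max_{|T|\le n}\|\tfrac1N(A_T)^*A_T\|\big)^{1/2}\le(N(E+1))^{1/2}$, since $\max_{|T|\le n}\|\tfrac1N(A_T)^*A_T\|\le D+1$. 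Thus $E\le C_1 K\,l\sqrt{n/N}\,(E+1)^{1/2}$, and separating the cases $E\le1$ and $E>1$ gives $E\le\max(\d,\d^2)$ with $\d=C_K l\sqrt{n/N}$, exactly what Step~1 needs.

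\textbf{Step 3: the uniform Rudelson inequality (the main obstacle).} What remains is to prove that for fixed $x_1,\dots,x_N\in\R^d$ with $|\langle x_i,e_j\rangle|\le K$ for all $i,j$, and independent symmetric Bernoulli $\e_i$,
$$
\E\max_{|T|\le n}\Big\|\sum_{i=1}^N\e_i\,(x_i)_T\otimes(x_i)_T\Big\|\le C K\,l\sqrt n\cdot\Big(\max_{|T|\le n}\Big\|\sum_{i=1}^N(x_i)_T\otimes(x_i)_T\Big\|\Big)^{1/2},\quad l=\log n\sqrt{\log d}\sqrt{\log N}.
$$
I would obtain this by adapting Rudelson's original chaining/entropy argument. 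One has $\max_{|T|\le n}\|\sum_i\e_i(x_i)_T\otimes(x_i)_T\|=\sup_{y\in U_n}\big|\sum_i\e_i\langle x_i,y\rangle^2\big|$ where $U_n:=\{y\in\R^d:\ |\supp y|\le n,\ \|y\|_2\le1\}$, and the right side is a Bernoulli (hence sub-Gaussian) process on $U_n$, to which Dudley's entropy bound applies. The index set $U_n$ is a union of $\binom dn$ copies of an $n$-dimensional unit ball, so its metric entropy is of order $n\log(ed/n)+n\log(1/u)$ at scale $u$; this carries the $\sqrt{\log d}$ and, after integration, a factor of order $\sqrt n$. The discretization of the $N$-term sum contributes $\sqrt{\log N}$ as in the scalar case, and the extra $\log n$ is the price of resolving the self-referential appearance of $\max_{|T|\le n}\|\sum_i(x_i)_T\otimes(x_i)_T\|$ on the right (by a dyadic iteration of the estimate). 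The delicate point, and where essentially all the work lies, is that a naive bound of the process metric by the $\ell_2$ metric on $U_n$ is too lossy; one must exploit the entrywise bound $|a_{ij}|\le K$ more carefully than a direct $\ell_2$ estimate allows in order to recover the correct power $\sqrt n$ rather than $n$. With Step~3 in hand, Steps~1--2 complete the proof.
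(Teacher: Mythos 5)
Your proposal follows the same route as the paper: symmetrize (your norm $\|\cdot\|_*$ is precisely the device behind the paper's Lemma~\ref{symmetrization uniform}), condition on $A$ and apply a uniform Rudelson inequality (this is exactly the paper's Proposition~\ref{RV Fourier}, which the paper states as a separate result and cites from \cite{RV Fourier} without proof), close the self-referential estimate to get $E\le\max(\d,\d^2)$, and diagonalize. Your diagonalization in Step~1 uses an explicit case split on $\d\lessgtr 1$ where the paper invokes Lemma~\ref{deviation from 1 uniform}; both are fine, and your Step~3 sketch of the uniform Rudelson inequality is directionally consistent with the Dudley/entropy argument the paper attributes to \cite{RV Fourier}, though of course it is only a sketch and the paper itself does not prove that proposition here.
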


The non-uniform prototype of this result, 
Theorem~\ref{heavy-tailed rows exp si}, 
was based on Rudelson's inequality, Corollary~\ref{Rudelson}.
In a very similar way, Theorem~\ref{heavy-tailed rows uniform}
is based on the following uniform version of Rudelon's inequality.

\begin{proposition}[Uniform Rudelson's inequality \cite{RV Fourier}]	 \index{Rudelson's inequality}	\label{RV Fourier}
  Let $x_1, \ldots, x_N$ be vectors in $\R^d$, $1 < N \le d$, 
  and let $K$ be a number such that
  all $\|x_i\|_\infty \le K$. 
  Let $\e_1, \ldots, \e_N$ be independent symmetric Bernoulli random variables. 
  Then for every $1 < n \le d$ one has
  $$
  \E \max_{|T| \le n} \Big\| \sum_{i=1}^N \e_i (x_i)_T \otimes (x_i)_T \Big\|
  \le C l \sqrt{n} \cdot \max_{|T| \le n} \Big\| \sum_{i=1}^N (x_i)_T \otimes (x_i)_T \Big\|^{1/2}
  $$ 
  where $l = \log(n) \sqrt{\log d} \sqrt{\log N}$
  and where $C = C_K$ may depend on $K$ only.
\end{proposition}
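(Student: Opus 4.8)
The plan is to recast the left-hand side as the supremum of a Bernoulli process indexed by sparse unit vectors and then bound that supremum by a chaining argument, the essential input being a covering-number estimate for the index set. For the first step, set $D := \{y \in \R^d : \|y\|_2 = 1,\ |\supp(y)| \le n\}$. Since restricting $x_i$ to coordinates in $T$ and pairing with a vector supported in $T$ is the same as pairing $x_i$ with an $n$-sparse vector, one has the identity $\max_{|T| \le n} \|\sum_{i=1}^N \e_i (x_i)_T \otimes (x_i)_T\| = \sup_{y \in D} |\sum_{i=1}^N \e_i \langle x_i, y\rangle^2|$, so it suffices to bound $\E_\e \sup_{y \in D} |Z_y|$ with $Z_y := \sum_i \e_i \langle x_i, y\rangle^2$. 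By Proposition~\ref{sub-gaussian large deviations} the process $(Z_y)_{y \in D}$ is sub-gaussian with respect to the metric $\rho(y,y')^2 := \sum_{i=1}^N (\langle x_i,y\rangle^2 - \langle x_i,y'\rangle^2)^2$, so Dudley's entropy bound gives $\E_\e \sup_{y\in D} |Z_y| \le C \int_0^\infty \sqrt{\log \NN(D, \rho, u)}\, du$, and everything reduces to estimating these covering numbers. Write $R := (\max_{|T|\le n} \|\sum_i (x_i)_T \otimes (x_i)_T\|)^{1/2}$ for the quantity on the right of the proposition, so that $\sup_{y\in D}\sum_i \langle x_i,y\rangle^2 = R^2$ and $\sum_i \langle x_i,y\rangle^4 \le K^2 n R^2$ for $y\in D$; in particular $\rho$-distances never exceed $2K\sqrt n R$.

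Next I would estimate $\NN(D,\rho,u)$ in two complementary regimes. For \emph{large scales} I use the crude Lipschitz comparison obtained by factoring $\langle x_i,y\rangle^2 - \langle x_i,y'\rangle^2 = \langle x_i, y-y'\rangle\langle x_i,y+y'\rangle$ and bounding $|\langle x_i, y\pm y'\rangle| \le \|x_i\|_\infty \|y\pm y'\|_1 \le K\sqrt{2n}\, \|y\pm y'\|_2$ while $\sum_i \langle x_i, y-y'\rangle^2 \le 2 R^2 \|y-y'\|_2^2$ (permitting $2n$-sparse differences and adjusting the constant); this gives $\rho(y,y') \le C K\sqrt n\, R\, \|y-y'\|_2$, and covering each of the $\binom{d}{n}$ coordinate subspaces inside $D$ by Lemma~\ref{net cardinality} yields $\log \NN(D,\rho,u) \le C(n\log(ed/n) + n\log(1 + K\sqrt n R/u))$. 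For \emph{small scales} the volumetric bound is too weak, and I would instead use a dimension-free estimate from Maurey's empirical method: one approximates $y\otimes y$ by a short average of rank-one atoms and controls the error in the $\rho$-metric through the non-commutative Khintchine inequality \eqref{Khintchine operator norm}. This is a uniform version of Rudelson's Corollary~\ref{Rudelson}, and it gives $\log \NN(D,\rho,u) \le C (K\sqrt n R)^2 u^{-2} \log d\, \log N$, with the $\log d$ coming from applying Khintchine to $d\times d$ matrices and the $\log N$ from counting the possible empirical atoms.

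With both estimates in hand, I substitute their minimum into Dudley's integral and rescale $u = K\sqrt n R\, w$, reducing matters to a dimensionless integral of $w$ over $(0,2]$. Splitting at the crossover scale $w_0 \sim \log(n)\sqrt{\log N}/\sqrt n$, the volumetric bound on $(0,w_0]$ contributes at most $C\sqrt n \int_0^{w_0} \sqrt{\log(d/n) + \log(1/w)}\, dw \le C\log(n)\sqrt{\log d\, \log N}$, while the empirical bound on $[w_0, 2]$ contributes at most $C\sqrt{\log d\, \log N} \int_{w_0}^2 w^{-1}\, dw \le C\log(1/w_0)\sqrt{\log d\, \log N} \le C\log(n)\sqrt{\log d\, \log N}$. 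Altogether the entropy integral is $\le C K\sqrt n R\cdot l$ with $l = \log(n)\sqrt{\log d}\sqrt{\log N}$, which is exactly the claimed bound after absorbing the power of $K$ into $C_K$.

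I expect the main obstacle to be the small-scale covering estimate. The naive Maurey argument applied to the probability vector $(\langle x_i,y\rangle^2/R^2)_i \in \R^N$ only produces $\log\NN(D,\rho,u) \le C R^4 u^{-2}\log N$, which is off from what is needed by a factor $R^2/(K^2 n\log d)$ --- possibly as large as $N/\log d$. Getting the correct $(K\sqrt n R)^2$ in place of $R^4$ requires Rudelson's matrix refinement of the empirical method, in which one samples rank-one matrices and measures the approximation error in the operator norm via non-commutative Khintchine; this covering lemma is the technical heart of the proof. A secondary, bookkeeping difficulty is arranging the chaining so that the expensive volumetric bound is confined to the short range $w \le w_0$, which is precisely what forces the shape $l = \log(n)\sqrt{\log d}\sqrt{\log N}$ rather than an extra $\sqrt n$ or $\log^{3/2} n$.
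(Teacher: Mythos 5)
The paper itself does not prove this proposition: it defers entirely to \cite{RV Fourier}, noting only that the argument goes via Dudley's integral inequality and Carl-type covering number estimates, and that the logarithmic factors are probably not sharp. Your plan correctly reconstructs that strategy, and the skeleton is sound: recasting the left-hand side as $\sup_{y\in D}|\sum_i\e_i\langle x_i,y\rangle^2|$ over $n$-sparse unit vectors, invoking sub-gaussian chaining with the increment metric $\rho$, splitting the entropy integral at a crossover scale, and verifying that the pieces sum to $CK\sqrt{n}R\cdot l$. The closing parameter check is also correct as written. However, two points need repair. First, a labeling slip: you describe the volumetric bound as the ``large scale'' estimate and Maurey as the ``small scale'' one, but in the final integration you correctly use volumetric on $(0,w_0]$ (small $u$) and the empirical bound on $[w_0,2]$ (large $u$) --- which is the right allocation, since $\log\NN\sim u^{-2}$ grows much faster as $u\to 0$ than $\log\NN\sim n\log(1/u)$. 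The text and the computation contradict each other.

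The more substantive gap is the covering number estimate in the empirical regime, which you rightly identify as the technical heart but then describe via the wrong mechanism. Non-commutative Khintchine applied to $y\otimes y$ is not what drives this bound. The correct route starts from the elementary factorization $a^2-b^2=(a-b)(a+b)$: writing $p_y=(\langle x_i,y\rangle)_{i\le N}$, one has
$$
\rho(y,y')=\|p_y\circ p_y-p_{y'}\circ p_{y'}\|_2
\le\|p_y-p_{y'}\|_\infty\,\|p_y+p_{y'}\|_2
\le 2R\,\|y-y'\|_X,
\qquad\|z\|_X:=\max_{i\le N}|\langle x_i,z\rangle|,
$$
and the problem becomes covering $D$ in the $X$-norm. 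Maurey's empirical method does this: since $\|y\|_1\le\sqrt n$ for $y\in D$, one represents $y$ as $\sqrt n$ times a convex combination of signed coordinate vectors, samples $k$ atoms, and controls $\E\|y-\hat y\|_X$ by a \emph{scalar} sub-gaussian maximal inequality over the $N$ coordinates of $p_{y-\hat y}$ --- this is where the $\sqrt{\log N}$ enters --- while the $\log d$ comes from counting the $(2d)^k$ possible empirical sums. You have these two attributions interchanged and insert a matrix Khintchine step that does not appear. The final form $\log\NN(D,\rho,u)\lesssim K^2 nR^2\,\log d\,\log N/u^2$ you state is right, but the path you sketch to it would not produce it, so as written the proposal leaves the key lemma unestablished.
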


The non-uniform Rudelson's inequality (Corollary~\ref{Rudelson}) was a consequence of 
a non-commutative Khintchine inequality.
Unfortunately, there does not seem to exist a way to deduce Proposition~\ref{RV Fourier}
from any known result. 
Instead, this proposition is proved using Dudley's integral inequality for Gaussian processes and estimates
of covering numbers going back to Carl, see \cite{RV Fourier}. 
It is known however that such usage of Dudley's inequality is not optimal 
(see e.g. \cite{Ta book}). 
As a result, the logarithmic factors in Proposition~\ref{RV Fourier} are probably not optimal.
 
In contrast to these difficulties with Rudelson's inequality, proving uniform versions of 
the other two ingredients of Theorem~\ref{heavy-tailed rows exp si} --
the deviation Lemma~\ref{deviation from 1} and Symmetrization Lemma~\ref{symmetrization} --
is straightforward.

\begin{lemma}				\label{deviation from 1 uniform}
  Let $(Z_t)_{t \in \TT}$ be a stochastic process\footnote{A stochastic process $(Z_t)$ 
    is simply a collection of random variables 
    on a common probability space indexed by elements $t$ of some abstract set $\TT$.
    In our particular application, $\TT$ will consist of all subsets 
    $T \subseteq [d]$, $|T| \le n$.} 
  such that all $Z_t \ge 0$. Then 
  $
  \E \sup_{t \in \TT} |Z_t^2-1| \ge \max( \E \sup_{t \in \TT} |Z_t-1|, (\E \sup_{t \in \TT} |Z_t-1|)^2 ).
  $
\end{lemma}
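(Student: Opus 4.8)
The plan is to mimic the proof of Lemma~\ref{deviation from 1} verbatim, upgrading each pointwise inequality into a statement about the supremum over $t \in \TT$. First I would record the two elementary scalar inequalities that were used there: for every real number $z \ge 0$ one has $|z-1| \le |z^2-1|$ and $|z-1|^2 \le |z^2-1|$. Both follow from the factorization $|z^2-1| = |z-1|\,(z+1)$ together with the bounds $z+1 \ge 1$ and $z+1 \ge |z-1|$, which are valid precisely because $z \ge 0$.

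Next I would apply these inequalities with $z = Z_t(\omega)$ for each fixed index $t$ and each sample point $\omega$ (note $Z_t \ge 0$ by hypothesis), and then take the supremum over $t \in \TT$ on both sides. This yields, pointwise in $\omega$, the two bounds $\sup_{t \in \TT} |Z_t - 1| \le \sup_{t \in \TT} |Z_t^2 - 1|$ and $\sup_{t \in \TT} |Z_t - 1|^2 \le \sup_{t \in \TT} |Z_t^2 - 1|$. The second one I would rewrite as $\big(\sup_{t \in \TT} |Z_t - 1|\big)^2 \le \sup_{t \in \TT} |Z_t^2 - 1|$, using that $s \mapsto s^2$ is nondecreasing on $[0,\infty)$ so that it commutes with the supremum of the nonnegative quantities $|Z_t - 1|$.

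Finally I would take expectations. The first bound immediately gives $\E \sup_{t \in \TT} |Z_t - 1| \le \E \sup_{t \in \TT} |Z_t^2 - 1|$. For the second, combining it with Jensen's inequality $\big(\E \sup_{t \in \TT} |Z_t - 1|\big)^2 \le \E\big(\sup_{t \in \TT} |Z_t - 1|\big)^2$ gives $\big(\E \sup_{t \in \TT} |Z_t - 1|\big)^2 \le \E \sup_{t \in \TT} |Z_t^2 - 1|$. Taking the maximum of the two resulting bounds is exactly the claimed inequality.

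There is essentially no serious obstacle here; the argument is a routine adaptation. The only point worth flagging is the exchange of supremum and squaring, which needs nonnegativity of $|Z_t - 1|$ (automatic) and monotonicity of squaring on the half-line; measurability of the suprema is not an issue in the intended application, where $\TT$ is the finite collection of subsets $T \subseteq [d]$ with $|T| \le n$.
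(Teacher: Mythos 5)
Your proof is correct and is exactly the "entirely parallel" adaptation the paper has in mind: the two scalar inequalities $|z-1| \le |z^2-1|$ and $|z-1|^2 \le |z^2-1|$ (valid for $z \ge 0$), a supremum over $t$ taken pointwise in $\omega$ (with the harmless exchange of $\sup$ and squaring), and one application of Jensen. Your phrasing of Jensen as $(\E X)^2 \le \E X^2$ is just the transposed form of the paper's $\E Y^{1/2} \le (\E Y)^{1/2}$ with $Y = X^2$, so the arguments coincide.
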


\begin{proof}
The argument is entirely parallel to that of Lemma~\ref{deviation from 1}. 
\end{proof}

\begin{lemma}[Symmetrization for stochastic processes]	 \index{Symmetrization}		\label{symmetrization uniform}
  Let $X_{it}$, $1 \le i \le N$, $t \in \TT$, be random vectors valued in some Banach space $B$,
  where $\TT$ is a finite index set. Assume that the random vectors $X_i = (X_{ti})_{t \in \TT}$ 
  (valued in the product space $B^\TT$) are independent.
  Let $\e_1,\ldots, \e_N$ be independent symmetric Bernoulli random variables. 
  Then 
  $$ 
  \E \sup_{t \in \TT} \Big\| \sum_{i=1}^N (X_{it} - \E X_{it}) \Big\|
  \le 2 \E \sup_{t \in \TT} \Big\| \sum_{i=1}^N \e_i X_{it} \Big\|.
  $$
\end{lemma}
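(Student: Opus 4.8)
The plan is to observe that this lemma is simply Lemma~\ref{symmetrization} applied in a suitable Banach space, so essentially no new work is required. Since $\TT$ is finite, the product space $B^{\TT}$ equipped with the norm $\|y\| := \sup_{t \in \TT} \|y_t\|_B$ (that is, $\ell_\infty(\TT;B)$) is again a Banach space. The hypothesis of the lemma is precisely that the $B^{\TT}$-valued random vectors $X_i = (X_{ti})_{t \in \TT}$ are independent. Applying Lemma~\ref{symmetrization} to this finite sequence of $B^{\TT}$-valued vectors, with the \emph{same} symmetric Bernoulli variables $\e_i$ (which now scale each vector $X_i$ as a whole), we get $\E \| \sum_i (X_i - \E X_i) \| \le 2\, \E \| \sum_i \e_i X_i \|$. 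Unwinding the definition of the norm on $B^{\TT}$ --- namely $\| \sum_i (X_i - \E X_i) \| = \sup_{t \in \TT} \| \sum_i (X_{it} - \E X_{it}) \|$ and $\| \sum_i \e_i X_i \| = \sup_{t \in \TT} \| \sum_i \e_i X_{it} \|$ --- yields exactly the asserted inequality.

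The only point that deserves emphasis is that in $B^{\TT}$ the scalar $\e_i$ multiplies the entire vector $X_i$, i.e. it is one and the same sign for every index $t \in \TT$. This is exactly what makes the symmetrization valid: the proof of Lemma~\ref{symmetrization} uses that, after passing to $\tilde X_i = X_i - X_i'$ with $X_i'$ an independent copy, the sequence $(\tilde X_i)$ has the same distribution as $(\e_i \tilde X_i)$, and this distributional identity holds in $B^{\TT}$ only because the sign flip is applied blockwise in $i$, not in the pair $(i,t)$. If one tried to use independent signs $\e_{it}$ this symmetry would fail, since the $X_{it}$ for different $t$ are in general dependent. There is no genuine obstacle beyond this bookkeeping; in particular finiteness of $\TT$ guarantees that all suprema are measurable and that $B^{\TT}$ is a bona fide Banach space.

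For completeness one can also give the self-contained version of the same argument, mirroring the proof of Lemma~\ref{symmetrization} line by line. Let $X_i' = (X_{ti}')_{t \in \TT}$ be an independent copy of the sequence $(X_i)$ and set $\tilde X_{it} = X_{it} - X_{it}'$. First, replacing $\E X_{it}$ by $\E X_{it}'$ and using $\sup_t |\E Y_t| \le \E \sup_t |Y_t|$ together with Jensen's inequality to move the expectation over $(X_i')$ outside the supremum, one obtains $\E \sup_t \| \sum_i (X_{it} - \E X_{it}) \| \le \E \sup_t \| \sum_i \tilde X_{it} \|$. Next, since for each fixed realization of the signs the block array $(\tilde X_{it})_{i,t}$ has the same law as $(\e_i \tilde X_{it})_{i,t}$, we have $\E \sup_t \| \sum_i \tilde X_{it} \| = \E \sup_t \| \sum_i \e_i \tilde X_{it} \|$. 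Finally, applying the triangle inequality inside the supremum, $\sup_t \| \sum_i \e_i (X_{it} - X_{it}') \| \le \sup_t \| \sum_i \e_i X_{it} \| + \sup_t \| \sum_i \e_i X_{it}' \|$, and taking expectations (both terms having the same mean) completes the proof.
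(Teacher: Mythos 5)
Your proof is correct and follows precisely the same route as the paper: apply Lemma~\ref{symmetrization} to the $B^{\TT}$-valued random vectors $X_i$ with the sup norm, then unwind the norm. The paper's proof is literally this one sentence (plus the remark that the reader can also argue directly, which is your self-contained version), so there is nothing to add.
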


\begin{proof}
The conclusion follows from Lemma~\ref{symmetrization} applied to random vectors
$X_i$ valued in the product Banach space $B^\TT$ equipped with the norm
$||| (Z_t)_{t \in \TT} ||| = \sup_{t \in \TT} \|Z_t\|$.
The reader should also be able to prove the result directly, following the proof 
of Lemma~\ref{symmetrization}.
\end{proof}

\begin{proof}[Proof of Theorem~\ref{heavy-tailed rows uniform}]
Since the random vectors $A_i$ are isotropic in $\R^d$,
for every fixed subset $T \subseteq [d]$ the random vectors $(A_i)_T$ are
also isotropic in $\R^T$, so $\E (A_i)_T \otimes (A_i)_T = I_{\R^T}$.
As in the proof of Theorem~\ref{heavy-tailed rows exp si}, we are going to control
\begin{align*}
E 
:= \E \max_{|T| \le n} \big\| \frac{1}{N} A_T^* A_T - I_{\R^T} \big\| 
&= \E \max_{|T| \le n} \Big\| \frac{1}{N} \sum_{i=1}^N (A_i)_T \otimes (A_i)_T - I_{\R^T} \Big\| \\
&\le  \frac{2}{N} \, \E \max_{|T| \le n} \Big\| \sum_{i=1}^N \e_i (A_i)_T \otimes (A_i)_T \Big\|
\end{align*}
where we used Symmetrization Lemma~\ref{symmetrization uniform}
with independent symmetric Bernoulli random variables $\e_1,\ldots, \e_N$.
The expectation in the right hand side is taken both with respect 
to the random matrix $A$ and the signs $(\e_i)$. 
First taking the expectation with respect to $(\e_i)$
(conditionally on $A$) and afterwards the expectation with respect to $A$,
we obtain by Proposition~\ref{RV Fourier} that
$$
E \le \frac{C_K l \sqrt{n}}{N} \, 
  \E \max_{|T| \le n} \Big\| \sum_{i=1}^N (A_i)_T \otimes (A_i)_T \Big\|^{1/2}
=  \frac{C_K l \sqrt{n}}{\sqrt{N}} \, 
  \E \max_{|T| \le n} \big\| \frac{1}{N} A_T^* A_T \big\|^{1/2}
$$
By the triangle inequality,
$\E \max_{|T| \le n} \big\| \frac{1}{N} A_T^* A_T \big\| \le E + 1$.
Hence we obtain
$$
E \le C_K l \sqrt{\frac{n}{N}} (E+1)^{1/2}
$$
by H\"older's inequality.
Solving this inequality in $E$ we conclude that 
\begin{equation}										\label{A*A heavy-tailed exp}
E = \E \max_{|T| \le n} \big\| \frac{1}{N} A_T^* A_T - I_{\R^T} \big\|
\le \max(\d, \d^2)
\quad \text{where } \d = C_K l \sqrt{\frac{2n}{N}}.
\end{equation}
The proof is completed by a diagonalization argument similar to Step 2 in the 
proof of Theorem~\ref{heavy-tailed rows exp si}. One uses there a uniform version of 
deviation inequality given in Lemma~\ref{deviation from 1 uniform} for stochastic 
processes indexed by the sets $|T| \le n$. We leave the details to the reader. 
\end{proof}

\begin{theorem}[Heavy-tailed restricted isometries]					\label{heavy-tailed RIP}
  Let $A=(a_{ij})$ be an $m \times n$ matrix whose rows $A_i$ are independent
  isotropic random vectors in $\R^n$. Let $K$ be a number such that
  all entries $|a_{ij}| \le K$ almost surely.
  Then the normalized matrix $\bar{A} = \frac{1}{\sqrt{m}} A$ satisfies the following for $m \le n$, for every 
  sparsity level $1 < k \le n$ and every number $\d \in (0,1)$:
  \begin{equation}							\label{eq heavy-tailed RIP}
  \text{if } m \ge C \d^{-2} k \log n \log^2(k) \log(\d^{-2} k \log n \log^2 k)
  \quad \text{then } \E \d_k(\bar{A}) \le \d.
  \end{equation}
  Here $C = C_K > 0$ may depend only on $K$.  
\end{theorem}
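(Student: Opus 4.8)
The plan is to reduce the theorem to the uniform heavy-tailed estimate, Theorem~\ref{heavy-tailed rows uniform}, applied simultaneously to all $m\times k$ column sub-matrices $A_T$ of $A$. First I would use the description \eqref{equiv RIP} of the restricted isometry constant: writing $\bar A_T = \frac{1}{\sqrt m}A_T$, one has
\[
\d_k(\bar A) \;=\; \max_{|T|\le k}\big\|\bar A_T^*\bar A_T - I_{\R^T}\big\| \;=\; \max_{|T|\le k}\Big\|\tfrac1m A_T^*A_T - I_{\R^T}\Big\|,
\]
and we may assume that $k$ is an integer with $2\le k\le n$. The hypotheses of Theorem~\ref{heavy-tailed rows uniform} are then met with its pair $(N,d)$ taken to be our $(m,n)$ (note $1<m\le n$) and its sparsity parameter taken to be our $k$: the rows $A_i$ are independent isotropic vectors in $\R^n$ and all entries satisfy $|a_{ij}|\le K$. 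Its proof yields the matrix form \eqref{A*A heavy-tailed exp}, which in the present notation reads
\[
\E\,\d_k(\bar A) \;=\; \E\max_{|T|\le k}\Big\|\tfrac1m A_T^*A_T - I_{\R^T}\Big\| \;\le\; \max(\d_0,\d_0^2),\qquad \d_0 \;=\; C_K\,l\,\sqrt{\tfrac{2k}{m}},
\]
where $l=\log(k)\sqrt{\log n}\sqrt{\log m}$ and $C_K$ depends only on $K$.

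It then remains only to do the bookkeeping with logarithms: I must check that the stated lower bound on $m$ forces $\d_0\le\d$, which (using $\d<1$, so $\d_0^2\le\d_0$) gives $\E\,\d_k(\bar A)\le\d$. Squaring, $\d_0\le\d$ is equivalent to $m\ge 2C_K^2\,a\log m$, where $a:=\d^{-2}k\log n\log^2 k$. Since $t\mapsto(\log t)/t$ is decreasing for $t\ge e$, it suffices to verify this inequality at the smallest admissible value $m_0=Ca\log a$ (which exceeds $e$ once $C$ is large, as $a\ge2$); there $\log m_0\le\log C+2\log a$, so the desired inequality follows from $2C_K^2(\log C+2\log a)\le C\log a$, and for $a\ge2$ this holds upon taking $C=C_K$ large enough depending only on $K$. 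For $m>m_0$ the inequality only improves, again by monotonicity of $(\log t)/t$. This completes the proof.

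The real content is entirely inside Theorem~\ref{heavy-tailed rows uniform}; the main obstacle there --- and the source of the $\log n\,\log^2 k$ factor rather than a single logarithm --- is the uniform Rudelson inequality, Proposition~\ref{RV Fourier}, which controls $\max_{|T|\le k}\big\|\sum_i\e_i(A_i)_T\otimes(A_i)_T\big\|$ over all $\binom{n}{k}$ sub-matrices at once and is proved by a Dudley-type chaining argument rather than by non-commutative Khintchine. Granting that, the symmetrization step (Lemma~\ref{symmetrization uniform}), the self-improving inequality $E\le C_K l\sqrt{k/m}\,(E+1)^{1/2}$ together with its resolution in $E$, and the diagonalization via Lemma~\ref{deviation from 1 uniform} are all routine, and the reduction above adds nothing beyond the elementary computation with $m$, $k$, $\d$ and the logarithms.
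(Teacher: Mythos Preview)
Your proof is correct and follows exactly the paper's approach: invoke the uniform estimate \eqref{A*A heavy-tailed exp} from Theorem~\ref{heavy-tailed rows uniform} to bound $\E\,\d_k(\bar A)\le\max(\d_0,\d_0^2)$ with $\d_0=C_K l\sqrt{k/m}$, then solve the resulting inequality in $m$. You have in fact spelled out the logarithmic bookkeeping that the paper dismisses with ``easily follows,'' and your closing paragraph accurately identifies where the real work lies.
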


\begin{proof}
The result follows from Theorem~\ref{heavy-tailed rows uniform}, more precisely
from its equivalent statement \eqref{A*A heavy-tailed exp}. In our notation, it says that 
$$
\E \d_k(\bar{A}) \le \max(\d,\d^2)
\quad \text{where } 
\d = C_K l \sqrt{\frac{k}{m}} = C_K \sqrt{\frac{k \log m}{m}} \log(k) \sqrt{\log n}.
$$
The conclusion of the theorem easily follows. 
\end{proof}

In the interesting sparsity range $k \ge \log n$ and $k \ge \d^{-2}$, the condition in 
Theorem~\ref{heavy-tailed RIP} clearly reduces to 
$$
m \ge C \d^{-2} k \log (n) \log^{3} k.
$$

\begin{remark}[Boundedness requirement]
  The {\em boundedness assumption} on the entries of $A$ is essential in
  Theorem~\ref{heavy-tailed RIP}. Indeed, if the rows of $A$ are independent
  coordinate vectors in $\R^n$, then $A$ necessarily has a zero column (in fact $n-m$ of them).
  This clearly contradicts the restricted isometry property.
\end{remark}

\begin{example}								\label{random measurements}
\begin{enumerate}
  \item {\bf (Random Fourier measurements):} \index{Fourier measurements}
    An important example for Theorem~\ref{heavy-tailed rows} 
    is where $A$ realizes random Fourier measurements. 
    Consider the $n \times n$ Discrete Fourier Transform (DFT) matrix $W$ with entries 
    $$
    W_{\omega,t} = \exp \Big( -\frac{2 \pi i \omega t}{n} \Big), 
    \quad \omega, t \in \{0,\ldots,n-1\}.
    $$
    Consider a random vector $X$ in $\C^n$ which picks a random row of $W$ (with uniform distribution).
    It follows from Parseval's inequality that $X$ is isotropic.\footnote{For convenience we have developed the theory over $\R$,
    while this example is over $\C$. As we noted earlier, all our definitions and results can be carried
    over to the complex numbers. So in this example we use the obvious complex versions of the notion of isotropy and 
    of Theorem~\ref{heavy-tailed RIP}.}
    Therefore the $m \times n$ random matrix $A$ whose rows are independent copies of $X$ 
    satisfies the assumptions of Theorem~\ref{heavy-tailed rows} with $K=1$.
    Algebraically, we can view $A$ as a {\em random row sub-matrix of the DFT matrix}.
    
    In compressed sensing, such matrix $A$ has a remarkable meaning -- it realizes 
    $m$ {\em random Fourier measurements} of a signal $x \in \R^n$. Indeed, $y=Ax$ is the DFT
    of $x$ evaluated at $m$ random points; in words, $y$ consists of $m$ random frequencies of $x$. 
    Recall that in compressed sensing, we would like to guarantee that with high probability 
    every sparse signal $x \in \R^n$ (say, $|\supp(x)| \le k$) 
    can be effectively recovered from its $m$ random frequencies $y=Ax$.
    Theorem~\ref{heavy-tailed RIP} together with Cand\`es-Tao's result (recalled in the beginning of 
    Section~\ref{s: restricted isometries}) imply that an exact recovery is given by the convex 
    optimization problem $\min\{ \|x\|_1 : Ax=y\}$ provided that we observe {\em slightly more frequencies 
    than the sparsity of a signal}: $m \gtrsim \ge C \d^{-2} k \log (n) \log^{3} k$.
    
  \item {\bf (Random sub-matrices of orthogonal matrices):} \index{Sub-matrices}
    In a similar way, Theorem~\ref{heavy-tailed RIP} applies to a random row sub-matrix $A$ 
    of an {\em arbitrary bounded orthogonal matrix} $W$. Precisely, $A$ may consist of  
    $m$ randomly chosen rows, uniformly 
    and without replacement,\footnote{Since in the interesting regime 
    very few rows are selected, $m \ll n$, sampling with or without replacement are formally equivalent. 
    For example, see \cite{RV Fourier} which deals with the model of sampling without replacement.}
    from an arbitrary $n \times n$ matrix $W = (w_{ij})$ such that $W^*W = n I$ 
    and with uniformly bounded coefficients, $\max_{ij} |w_{ij}| = O(1)$.
    The examples of such $W$ include the class of {\em Hadamard matrices} \index{Hadamard matrices}
    -- orthogonal matrices in which all entries equal $\pm 1$. 
\end{enumerate}
\end{example}

\section{Notes}					\label{s: notes}

\paragraph{For Section~\ref{s: introduction}}

We work with two kinds of moment assumptions for random matrices: sub-gaussian and heavy-tailed. 
These are the two extremes. By the central limit theorem, the sub-gaussian tail decay 
is the strongest condition one can demand from an isotropic distribution. In contrast, our heavy-tailed model
is completely general -- no moment assumptions (except the variance) are required.
It would be interesting to analyze random matrices with independent rows or columns in the 
intermediate regime, {\em between sub-gaussian and heavy-tailed} moment assumptions.
We hope that for distributions with an appropriate finite moment (say, $(2+\e)$th or $4$th), 
the results should be the same as for sub-gaussian distributions, i.e. no $\log n$ factors should occur. 
In particular, tall random matrices ($N \gg n)$ should still be approximate isometries. 
This indeed holds for sub-exponential distributions \cite{ALPT}; 
see \cite{V covariance} for an attempt to go down to finite moment assumptions.

\paragraph{For Section~\ref{s: preliminaries}}

The material presented here is well known. 
The volume argument presented in Lemma~\ref{net cardinality} is quite flexible. 
It easily generalizes to covering numbers of more general metric spaces, including convex bodies
in Banach spaces. See \cite[Lemma 4.16]{Pisier volume} and other parts of \cite{Pisier volume} for various 
methods to control covering numbers.

\paragraph{For Section~\ref{s: sub-gaussian}}

The concept of sub-gaussian random variables is due to Kahane~\cite{Kahane}.
His definition was based on the moment generating function 
(Property 4 in Lemma~\ref{sub-gaussian properties}),
which automatically required sub-gaussian random variables to be centered. 
We found it more convenient to use the equivalent Property 3 instead. 
The characterization of sub-gaussian random variables in terms of tail decay and moment growth
in Lemma~\ref{sub-gaussian properties} also goes back to \cite{Kahane}.

The rotation invariance of sub-gaussian random variables (Lemma~\ref{rotation invariance}) 
is an old observation \cite{BuKo}. Its consequence, Proposition~\ref{sub-gaussian large deviations},
is a general form of {\em Hoeffding's inequality}, which is usually stated for bounded random variables.
For more on large deviation inequalities, see also notes for Section~\ref{s: sub-exponential}.

Khintchine inequality is usually stated for the particular case of symmetric Bernoulli random variables. 
It can be extended for $0<p<2$ using a simple extrapolation argument based on H\"older's inequality,
see \cite[Lemma~4.1]{Ledoux-Talagrand}.

\paragraph{For Section~\ref{s: sub-exponential}}

Sub-gaussian and sub-exponential random variables can be studied together in a general framework. 
For a given exponent $0 < \alpha < \infty$, 
one defines general $\psi_\alpha$ random variables, 
those with moment growth $(\E |X|^p)^{1/p} = O(p^{1/\a})$. 
Sub-gaussian random variables correspond to $\a = 2$ and sub-exponentials 
to $\a = 1$. The reader is encouraged to extend 
the results of Sections~\ref{s: sub-gaussian} and \ref{s: sub-exponential} to this general class. 

Proposition~\ref{sub-exponential large deviations} is a form of {\em Bernstein's inequality},
which is usually stated for bounded random variables in the literature. 
These forms of Hoeffding's and Bernstein's inequalities 
(Propositions~\ref{sub-gaussian large deviations} and \ref{sub-exponential large deviations})
are partial cases of a large deviation inequality for general $\psi_\a$ norms, 
which can be found in \cite[Corollary~2.10]{Talagrand canonical} with a similar proof.
For a thorough introduction to large deviation inequalities for sums of independent random variables (and more),
see the books \cite{Petrov, Ledoux-Talagrand, Dembo-Zeitouni} and the tutorial \cite{BBL}.

\paragraph{For Section~\ref{s: isotropic}}

Sub-gaussian distributions in $\R^n$ are well studied in geometric functional analysis;
see \cite{MePaTJ reconstruction} for a link with compressed sensing. 
General $\psi_\alpha$ distributions in $\R^n$ are discussed e.g. in \cite{GiMi concentration}.

Isotropic distributions on convex bodies, and more generally isotropic log-concave distributions, 
are central to asymptotic convex geometry (see \cite{Giannopoulos isotropic, Paouris})
and computational geometry \cite{Vempala}. 
A completely different way in which isotropic distributions appear in convex geometry is from 
{\em John's decompositions} for contact points of convex bodies, see \cite{Ball, Rudelson contact, Vershynin John}.
Such distributions are finitely supported and therefore are usually heavy-tailed.

For an introduction to the concept of {\em frames} (Example~\ref{random vectors}), 
see \cite{KC, Christensen}.

\paragraph{For Section~\ref{s: sums matrices}}

The non-commutative Khintchine inequality, Theorem~\ref{non-commutative Khintchine},
was first proved by Lust-Piquard \cite{Lust-Piquard} with an unspecified constant $B_p$ 
in place of $C \sqrt{p}$.
The optimal value of $B_p$ was computed by Buchholz \cite{Buc01, Buc05}; 
see \cite[Section~6.5]{Rauhut structured} for an thorough introduction to Buchholz's argument.
For the complementary range $1 \le p \le 2$, a corresponding version of non-commutative Khintchine 
inequality was obtained by Lust-Piquard and Pisier \cite{Lu-Pi}. 
By a duality argument implicitly contained in \cite{Lu-Pi} and independently observed by Marius Junge, 
this latter inequality also implies the optimal order $B_p = O(\sqrt{p})$, 
see \cite{Rudelson isotropic} and \cite[Section~9.8]{Pisier operator}.

Rudelson's Corollary~\ref{Rudelson} was initially proved using a majorizing measure
technique; our proof follows Pisier's argument from \cite{Rudelson isotropic}
based on the non-commutative Khintchine inequality.

\paragraph{For Section~\ref{s: entries}}

The ``Bai-Yin law'' (Theorem~\ref{Bai-Yin}) was established for $\smax(A)$ by Geman \cite{Geman}
and Yin, Bai and Krishnaiah \cite{YBK}. 
The part for $\smin(A)$ is due to Silverstein \cite{Silverstein} for Gaussian random matrices.
Bai and Yin \cite{Bai-Yin} gave a unified treatment of both extreme singular values for general distributions.
The fourth moment assumption in Bai-Yin's law is known to be necessary \cite{Bai-Silverstein-Yin}.

Theorem~\ref{Gaussian} and its argument is due to Gordon \cite{Gordon 84, Gordon 85, Gordon 92}.
Our exposition of this result and of Corollary~\ref{Gaussian deviation} follows \cite{DS}.

Proposition~\ref{Gaussian concentration} is just a tip of an iceberg called {\em concentration of measure 
phenomenon}. \index{Concentration of measure}
We do not discuss it here because there are many excellent sources, some of which
were mentioned in Section~\ref{s: introduction}. Instead we give just one example related 
to Corollary~\ref{Gaussian deviation}.  
For a general random matrix $A$ with independent centered entries bounded by $1$,
one can use Talagrand's concentration inequality for convex
Lipschitz functions on the cube \cite{Tal1, Tal2}. 
Since $\smax(A)= \|A\|$ is a convex function of
$A$, Talagrand's concentration inequality implies
$\P \big\{ |\smax(A) - \Median(\smax(A))| \ge t \big\} \le 2 e^{-ct^2}$.
Although the precise value of the median may be unknown,
integration of this inequality shows
that $|\E \smax(A)-\Median(\smax(A))| \le C$.

For the recent developments related to the {\em hard edge} problem 
for almost square and square matrices (including Theorem~\ref{RV rectangular})
see the survey \cite{RV ICM}.

\paragraph{For Section~\ref{s: rows}}

Theorem~\ref{sub-gaussian rows} on random matrices with sub-gaussian rows,
as well as its proof by a covering argument, is a folklore in geometric functional analysis. 
The use of covering arguments in a similar context goes back to Milman's proof of Dvoretzky's theorem 
\cite{Milman Dvoretzky}; see e.g. \cite{Ball} and \cite[Chapter 4]{Pisier volume} for an introduction. 
In the more narrow context of extreme singular values of random matrices, 
this type of argument appears recently e.g. in \cite{ALPT}.

The breakthrough work on heavy-tailed isotropic distributions is due to Rudelson \cite{Rudelson isotropic}.
He used Corollary~\ref{Rudelson} in the way we described in the proof of 
Theorem~\ref{heavy-tailed rows exp si} to show that $\frac{1}{N} A^*A$ is an approximate isometry.
Probably Theorem~\ref{heavy-tailed rows} can also be deduced by a modification of this argument; 
however it is simpler to use the non-commutative Bernstein's inequality.

The symmetrization technique is well known. 
For a slightly more general two-sided inequality than Lemma~\ref{symmetrization},
see \cite[Lemma~6.3]{Ledoux-Talagrand}.

The problem of estimating covariance matrices described in Section~\ref{s: covariance} is 
a basic problem in statistics, see e.g. \cite{Johnstone}.
However, most work in the statistical literature is focused on the normal distribution 
or general product distributions (up to linear transformations), 
which corresponds to studying random matrices with independent entries. 
For non-product distributions, an interesting example
is for uniform distributions on convex sets \cite{KLS}. 
As we mentioned in Example~\ref{random vectors sub-gaussian}, such 
distributions are sub-exponential but not necessarily sub-gaussian, 
so Corollary~\ref{covariance sub-gaussian} does not apply. 
Still, the sample size $N = O(n)$ suffices to estimate the covariance matrix in this case 
\cite{ALPT}. It is conjectured that the same should hold for general distributions with 
finite (e. g. $4$th) moment assumption  \cite{V covariance}.

Corollary~\ref{random sub-matrices} on random sub-matrices is a variant of 
the Rudelson's result from \cite{Rudelson sub-matrices}. The study of random sub-matrices
was continued in \cite{RV sampling}. Random sub-frames were studied in \cite{V frames}
where a variant of Corollary~\ref{random sub-frames} was proved.

\paragraph{For Section~\ref{s: columns}}

Theorem~\ref{sub-gaussian columns} for sub-gaussian columns seems to be new. 
However, historically the efforts of geometric functional analysts were immediately focused on the more difficult case 
of sub-exponential tail decay (given by uniform distributions on convex bodies).
An indication to prove results like Theorem~\ref{sub-gaussian columns} 
by decoupling and covering is present in \cite{Bourgain}
and is followed in \cite{GiMi concentration, ALPT}.

The normalization condition $\|A_j\|_2 = \sqrt{N}$ in Theorem~\ref{sub-gaussian columns} 
can not be dropped but can be relaxed. Namely, consider the random variable
$\d := \max_{i \le n} \big| \frac{\|A_j\|_2^2}{N} - 1 \big|$.
Then the conclusion of Theorem~\ref{sub-gaussian columns} holds with \eqref{smin smax columns} replaced by 
$$
(1-\d)\sqrt{N} - C \sqrt{n} - t \le \smin(A) \le \smax(A) \le (1+\d)\sqrt{N} + C \sqrt{n} + t.
$$

Theorem~\ref{heavy-tailed columns} for heavy-tailed columns also seems to be new. 
The incoherence parameter $m$ 
is meant to prevent collisions of the columns of $A$ in a quantitative way.
It is not clear whether the {\em logarithmic factor} is needed in the conclusion
of Theorem~\ref{heavy-tailed columns}, or whether the incoherence parameter alone takes care of the
logarithmic factors whenever they appear. The same question can be raised for all other 
results for heavy-tailed matrices in Section~\ref{s: heavy-tailed rows} and their applications --
can we replace the logarithmic factors by more sensitive quantities (e.g. the logarithm 
of the incoherence parameter)?

\paragraph{For Section~\ref{s: restricted isometries}}

For a mathematical introduction to compressed sensing, see the introductory chapter of this book
and \cite{FR, CS website}. 

A version of Theorem~\ref{sub-gaussian RIP} was proved in \cite{MePaTJ} for the row-independent model; 
an extension from sub-gaussian to sub-exponential distributions is given in \cite{ALPT RIP}. 
A general framework of stochastic processes with sub-exponential tails is discussed 
in \cite{Mendelson}. 
For the column-independent model, Theorem~\ref{sub-gaussian RIP} seems to be new.

Proposition~\ref{concentration RIP} that formalizes a simple approach to restricted 
isometry property based on concentration is taken from \cite{BDDW}.
Like Theorem~\ref{sub-gaussian RIP}, it can also be used to show that 
Gaussian and Bernoulli random matrices are restricted isometries. 
Indeed, it is not difficult to check that these matrices satisfy a concentration inequality 
as required in Proposition~\ref{concentration RIP} \cite{Achlioptas}.

Section~\ref{s: heavy-tailed RIP} on heavy-tailed restricted isometries is 
an exposition of the results from \cite{RV Fourier}. 
Using concentration of measure techniques, one can prove a version of 
Theorem~\ref{heavy-tailed RIP} with high probability $1 - n^{-c \log^3 k}$ 
rather than in expectation \cite{Rauhut structured}.
Earlier, Candes and Tao \cite{Candes-Tao Fourier} proved a similar result
for random Fourier matrices, although with a slightly higher exponent
in the logarithm for the number of measurements in \eqref{m heavy-tailed},
$m = O(k \log^6 n)$.
The survey \cite{Rauhut structured} offers a thorough exposition of the material
presented in Section~\ref{s: heavy-tailed RIP} and more.

\printindex


\begin{thebibliography}{}

\bibitem{Achlioptas} Achlioptas, D. (2003).
  \newblock Database-friendly random projections: Johnson-Lindenstrauss with binary coins, 
  in: Special issue on PODS 2001 (Santa Barbara, CA).
  {\em J. Comput. System Sci.}, {\bf 66}, 671--687. 

\bibitem{ALPT} Adamczak, R., Litvak, A., Pajor, A., Tomczak-Jaegermann, N. (2010).
  \newblock Quantitative estimates of the convergence of the empirical covariance matrix 
  in log-concave ensembles,
  {\em J. Amer. Math. Soc.}, {\bf 23}, 535--561.
  
\bibitem{ALPT RIP} Adamczak, R., Litvak, A., Pajor, A., Tomczak-Jaegermann, N. (2010).
  \newblock Restricted isometry property of matrices with independent columns 
    and neighborly polytopes by random sampling, 
  {\em Constructive Approximation}, to appear.

\bibitem{AW} Ahlswede, R. and Winter, A. (2002).
  \newblock Strong converse for identification via quantum channels,
  {\em IEEE Trans. Inform. Theory}, {\bf 48}, 569--579.
  
\bibitem{AGZ} Anderson, G., Guionnet, A. and Zeitouni, O. (2009). 
  {\em An Introduction to Random Matrices}. 
  Cambridge: Cambridge University Press.  

\bibitem{Bai-Silverstein}  Bai, Z. and Silverstein, J. (2010).
  {\em Spectral analysis of large dimensional random matrices.} 
  Second edition. New York: Springer.

\bibitem{Bai-Silverstein-Yin} Bai, Z., Silverstein, J. and Yin, Y. (1988).
  \newblock A note on the largest eigenvalue of a large-dimensional sample covariance matrix, 
  {\em J. Multivariate Anal.}, {\bf 26}, 166--168. 

\bibitem{Bai-Yin} Bai, Z. and Yin, Y. (1993).
  \newblock Limit of the smallest eigenvalue of a large-dimensional sample covariance matrix,
  {\em Annals of Probability}, {\bf 21}, 1275--1294.

\bibitem{Ball} Ball, K. (1997).
  \newblock An elementary introduction to modern convex geometry,
  in {\em Flavors of geometry}, pp. 1--58.
  Math. Sci. Res. Inst. Publ., 31, 
  Cambridge: Cambridge University Press.

\bibitem{BDDW} Baraniuk, R., Davenport, M., DeVore, R. and Wakin, M. (2008).
  \newblock A simple proof of the restricted isometry property for random matrices,
  {\em Constructive Approximation}, {\bf 28}, 253--263.
  
\bibitem{BBL} Boucheron, S. Bousquet, O. and Lugosi, G. (2004).
  \newblock Concentration inequalities, 
  in {\em Advanced Lectures in Machine Learning}, edited by Bousquet, O., Luxburg, U. and R\"atsch, G. 
  Springer, pp. 208--240.
  
\bibitem{Bourgain} Bourgain, J. (1999).
  \newblock Random points in isotropic convex sets,
  in: {\em Convex geometric analysis (Berkeley, CA, 1996)},  pp. 53--58. 
  Math. Sci. Res. Inst. Publ., 34.
  Cambridge: Cambridge University Press.

\bibitem{Buc01} Buchholz, A. (2001). 
  \newblock Operator Khintchine inequality in non-commutative probability,
  {\em Math. Ann.}, {\bf 319}, 1--16.
  
\bibitem{Buc05} Buchholz, A. (2005). 
  \newblock Optimal constants in Khintchine type inequalities for fermions, 
    Rademachers and $q$-Gaussian operators,
  {\em Bull. Pol. Acad. Sci. Math.}, {\bf 53}, 315--321.

\bibitem{BuKo} Buldygin, V. V. and Kozachenko, Ju. V. (1980).
  \newblock Sub-Gaussian random variables, 
  {\em Ukrainian Mathematical Journal}, {\bf 32}, 483--489.

\bibitem{Candes} Cand\`es, E. 
  \newblock The restricted isometry property and its implications for compressed sensing, 
  {\em Compte Rendus de l'Academie des Sciences, Paris, Serie I}, {\bf 346}, 589--592.

\bibitem{Candes-Tao} Cand\`es, E. and Tao, T. (2005).
  \newblock Decoding by linear programming, 
  {\em IEEE Trans. Inform. Theory}, {\bf 51}, 4203--4215.

\bibitem{Candes-Tao Fourier} Cand\`es, E. and Tao, T. (2006).
  \newblock Near-optimal signal recovery from random projections: universal encoding strategies? 
  {\em IEEE Trans. Inform. Theory}, {\bf 52}, 5406--5425. 

\bibitem{Christensen} Christensen, O. (2008).
  \newblock {\em Frames and bases. An introductory course.} 
  Applied and Numerical Harmonic Analysis. 
  Boston, MA: Birkh\"auser Boston, Inc.

\bibitem{CS website} Compressive Sensing Resources, 
  \verb=http://dsp.rice.edu/cs=

\bibitem{DS} Davidson, K. R. and Szarek, S. J. (2001).
  \newblock Local operator theory, random matrices and Banach spaces,
  in {\em Handbook of the geometry of Banach spaces}, Vol. I, pp. 317--366.
  Amsterdam: North-Holland.

\bibitem{dG} de la Pe\~na, V. and Gin\'e, E. (1999).
  \newblock {\em Decoupling. From dependence to independence. Randomly stopped processes. 
  $U$-statistics and processes. Martingales and beyond.}
  New York: Springer-Verlag.
  
\bibitem{Deift-Gioev} Deift, P. and Gioev, D. (2009).
  \newblock {\em Random matrix theory: invariant ensembles and universality.} 
  Courant Lecture Notes in Mathematics, 18. 
  Courant Institute of Mathematical Sciences, New York; 
  Providence, RI: American Mathematical Society.

\bibitem{Dembo-Zeitouni} Dembo, A. and Zeitouni, O. (1993).
  \newblock {\em Large deviations techniques and applications.} 
  Boston, MA: Jones and Bartlett Publishers.

\bibitem{DKM} Drineas, P., Kannan, R. and Mahoney, M. (2006).
  \newblock Fast Monte Carlo algorithms for matrices. I, II III,  
  {\em SIAM J. Comput.}, {\bf 36} (2006), 132--206.

\bibitem{Durrett} Durrett, R. (2005).
  \newblock {\em Probability: theory and examples.}
  Belmont: Duxbury Press.
  
\bibitem{FeSo} Feldheim, O. and Sodin, S. (2008).
  \newblock A universality result for the smal lest eigenvalues of certain sample covariance matrices, 
  {\em Geometric and Functional Analysis}, 
  to appear.
  
\bibitem{FR} Fornasier, M. and Rauhut, H. (2010).
  \newblock Compressive Sensing,
  in {\em Handbook of Mathematical Methods in Imaging}, edited by Scherzer, O.
  Springer, to appear.

\bibitem{FuKo} F\"uredi, Z.; Koml\'os, J. (1981).
  The eigenvalues of random symmetric matrices,
  {\em Combinatorica}, {\bf 1}, 233--241. 
  
\bibitem{Geman} Geman, S. (1980).
  \newblock A limit theorem for the norm of random matrices,
  {\em Annals of Probability}, {\bf 8}, 252--261.

\bibitem{Giannopoulos isotropic} Giannopoulos, A. (2003).
  \newblock {\em Notes on isotropic convex bodies},
  Warsaw.

\bibitem{GiMi concentration} Giannopoulos, A. and Milman, V. (2000).
  \newblock Concentration property on probability spaces,
  {\em Advances in Mathematics}, {\bf 156}, 77--106. 

\bibitem{GiMi} Giannopoulos, A. and Milman, V. (2001). 
  \newblock Euclidean structure in finite dimensional normed spaces,
  in {\em Handbook of the geometry of Banach spaces}, Vol. I, pp. 707--779.
  Amsterdam: North-Holland.
  
\bibitem{GMDL} Golub, G., Mahoney, M., Drineas, P. and Lim, L.-H. (2006).
  \newblock Bridging the gap between numerical linear algebra, 
    theoretical computer science, and data applications,
  {\em SIAM News}, {\bf 9}, Number 8.
    
\bibitem{Gordon 84} Gordon, Y. (1984).
  \newblock On Dvoretzky's theorem and extensions of Slepian's lemma,
  in {\em Israel seminar on geometrical aspects of functional analysis (1983/84), II.}
  Tel Aviv: Tel Aviv University.

\bibitem{Gordon 85} Gordon, Y. (1985).
  \newblock Some inequalities for Gaussian processes and applications,
  {\em Israel Journal of Mathematics}, {\bf 50}, 265--289.

\bibitem{Gordon 92} Gordon, Y. (1992).
  \newblock Majorization of Gaussian processes and geometric applications,
  {\em Probab. Theory Related Fields}, {\bf 91}, 251--267.
  
\bibitem{Johnstone} Johnstone, I. (2001).
  \newblock On the distribution of the largest eigenvalue in principal components analysis,  
  {\em Ann. Statist.}, {\bf 29}, 295--327.

\bibitem{Kahane} Kahane, J.-P. (1960).
  \newblock Propri\'et\'es locales des fonctions \`a s\'eries de {F}ourier al\'eatoires,
  {\em Studia Mathematica}, {\bf 19}, 1--25.

\bibitem{KLS} Kannan, R., Lov\'asz, L. and Simonovits, M. (1995).
  \newblock Isoperimetric problems for convex bodies and a localization lemma,
  {\em Discrete Comput. Geom.}, {\bf 13}, 541--559.

\bibitem{KC} Kova\v{c}evi\'c, J. and Chebira, A. (2008). 
  \newblock {\em An Introduction to Frames}. 
  Foundations and Trends in Signal Processing. 
  Now Publishers.
 
 \bibitem{Latala} Latala, R. (2005).
  \newblock Some estimates of norms of random matrices,
  {\em Proc. Amer. Math. Soc.}, {\bf 133}, 1273-1282.

\bibitem{Ledoux} Ledoux, M. (2005).
  \newblock {\em The concentration of measure phenomenon.}
  Mathematical Surveys and Monographs, 89. 
  Providence: American Mathematical Society.
  
\bibitem{Ledoux extremal} Ledoux, M. (2007).
  \newblock Deviation inequalities on largest eigenvalues,
  in {\em Geometric aspects of functional analysis},  pp. 167--219. 
  Lecture Notes in Math., 1910. Berlin: Springer.
  
\bibitem{Ledoux-Talagrand} Ledoux, M. and Talagrand, M. (1991).
  \newblock {\em Probability in Banach spaces.}
  Berlin: Springer-Verlag.
  
\bibitem{LPRT} Litvak, A., Pajor, A., Rudelson, M. and Tomczak-Jaegermann, N. (2005).
  \newblock Smallest singular value of random matrices and geometry of random polytopes,
  {\em Adv. Math.}, {\bf 195}, 491--523.

\bibitem{Lu-Pi} Lust-Piquard, F. and Pisier, G. (1991).
  \newblock Noncommutative Khintchine and Paley inequalities,
  {\em Ark. Mat.}, {\bf 29}, 241--260. 
  
\bibitem{Lust-Piquard} Lust-Piquard, F. (1986).
  \newblock In\'egalit\'es de Khintchine dans $C_p (1<p<\infty)$,
  {\em C. R. Acad. Sci. Paris S\'er. I Math.}, {\bf 303}, 289--292. 
  
\bibitem{Matousek} Matou\v{s}ek, J. (2002).
  \newblock {\em Lectures on discrete geometry.}
  Graduate Texts in Mathematics, 212. 
  New York: Springer-Verlag.
  
\bibitem{Meckes} Meckes, M. (2004).
  \newblock Concentration of norms and eigenvalues of random matrices.  
  {\em J. Funct. Anal.}, {\bf 211}, 508--524.

\bibitem{Mehta} Mehta, M. L. (2004). 
  \newblock {\em Random matrices.} 
  Pure and Applied Mathematics (Amsterdam), 142. 
  Amsterdam: Elsevier/Academic Press.
  
\bibitem{Mendelson} Mendelson, S. (2008).
  \newblock On weakly bounded empirical processes,
  {\em Math. Ann.}, {\bf 340}, 293--314.

\bibitem{MePaTJ reconstruction} Mendelson, S., Pajor, A. and Tomczak-Jaegermann, N. (2007).
  \newblock Reconstruction and subgaussian operators in asymptotic geometric analysis,  
  {\em Geom. Funct. Anal.}, {\bf 17}, 1248--1282.

\bibitem{MePaTJ}Mendelson, S., Pajor, A. and Tomczak-Jaegermann, N. (2008).
  \newblock Uniform uncertainty principle for Bernoulli and subgaussian ensembles,
  {\em Constr. Approx.}, {\bf 28} (2008), 277--289. 
 
\bibitem {Milman Dvoretzky} Milman, V. D. (1974).
  \newblock A new proof of A. Dvoretzky's theorem on cross-sections of convex bodies. 
  {\em Funkcional. Anal. i Prilozhen.}, {\bf 5}, 28--37. 
  
\bibitem{MS} Milman, V. and Schechtman, G. (1986). 
  \newblock {\em Asymptotic theory of finite-dimensional normed spaces.
  With an appendix by M. Gromov.} 
  Lecture Notes in Mathematics, 1200. 
  Berlin: Springer-Verlag.
  
\bibitem{Paouris} Paouris, G. (2006).
  \newblock Concentration of mass on convex bodies,  
  {\em Geom. Funct. Anal.}, {\bf 16}, 1021--1049.
  
\bibitem{PeSo} P\'ech\'e, S. and Soshnikov, A. (2008).
  \newblock On the lower bound of the spectral norm of symmetric random matrices with independent entries,
  {\em Electron. Commun. Probab.}, {\bf 13}, 280--290. 

\bibitem{Petrov} Petrov, V. V. (1975).
  \newblock {\em Sums of independent random variables.} 
   New York-Heidelberg: Springer-Verlag.
   
\bibitem{Pisier volume} Pisier, G. (1989).
  \newblock {\em The volume of convex bodies and Banach space geometry.} 
  Cambridge Tracts in Mathematics, 94. 
  Cambridge: Cambridge University Press.

\bibitem{Pisier operator} Pisier, G. (2003).
  \newblock {\em Introduction to operator space theory.}
  London Mathematical Society Lecture Note Series, 294. 
  Cambridge: Cambridge University Press.
  
\bibitem{Rauhut structured} Rauhut, H. (2010). 
  \newblock Compressive sensing and structured random matrices,
  in {\em Theoretical Foundations and Numerical Methods for Sparse Recovery},  
  edited by Fornasier,~M. 
  Radon Series Comp. Appl. Math., Volume 9, pp. 1--92. deGruyter.

\bibitem{Rudelson contact} Rudelson, M. (1997).
  \newblock Contact points of convex bodies,
  {\em Israel Journal of Mathematics}, {\bf 101}, 93--124. 
  
\bibitem{Rudelson sub-matrices} Rudelson, M. (1999).
  \newblock Almost orthogonal submatrices of an orthogonal matrix,  
  {\em Israel J. of Math.}, {\bf 111}, 143--155.

\bibitem{Rudelson isotropic} Rudelson, M. (1999).
  \newblock Random vectors in the isotropic position, 
  {\em Journal of Functional Analysis}, {\bf 164}, 60--72. 
  
\bibitem{RV sampling} Rudelson, M. and Vershynin, R. (2007).
  \newblock Sampling from large matrices: an approach through geometric functional analysis,
  {\em J. ACM}, {\bf 54}, Art. 21, 19 pp. 
  
\bibitem{RV Fourier} Rudelson, M. and Vershynin, R. (2008).
  \newblock On sparse reconstruction from Fourier and Gaussian measurements,
  {\em Comm. Pure Appl. Math.}, {\bf 61}, 1025--1045. 

\bibitem{RV rectangular} Rudelson, M. and Vershynin, R. (2009).
  \newblock Smallest singular value of a random rectangular matrix,
  {\em Comm. Pure Appl. Math.}, {\bf 62}, 1707--1739.

\bibitem{RV ICM} Rudelson, M. and Vershynin, R. (2010).
  \newblock Non-asymptotic theory of random matrices: extreme singular values, 
  {\em Proceedings of the International Congress of Mathematicians}, 
  Hyderabad, India, to appear.

\bibitem{Silverstein} Silverstein, J. (1985).
  \newblock The smallest eigenvalue of a large-dimensional Wishart matrix,
  {\em Annals of Probability}, {\bf 13}, 1364--1368.
  
\bibitem{Soshnikov} Soshnikov, A. (2002). 
  \newblock A note on universality of the distribution of the largest eigenvalues in 
    certain sample covariance matrices, 
  {\em J. Statist. Phys.}, {\bf 108}, 1033--1056.

\bibitem{Talagrand canonical} Talagrand, M. (1994).
  \newblock The supremum of some canonical processes, 
  {\em American Journal of Mathematics}, {\bf 116}, 283--325.
  
\bibitem{Tal1} Talagrand, M. (1995).
  \newblock Concentration of measure and isoperimetric inequalities in product spaces,
  {\em Inst. Hautes \'Etudes Sci. Publ. Math.}, {\bf 81}, 73--205.

\bibitem{Tal2} Talagrand, M. (1996).
  \newblock A new look at independence,  
  {\em Annals of Probability}, {\bf 24}, 1--34.

\bibitem{Ta book} Talagrand, M. (2005).
  \newblock {\em The generic chaining. Upper and lower bounds of stochastic processes.}
  Springer Monographs in Mathematics. 
  Berlin: Springer-Verlag.
  
\bibitem{Tao-Vu survey} Tao, T. and Vu, V. (2009).
  \newblock From the Littlewood-Offord problem to the circular law: 
    universality of the spectral distribution of random matrices,
  {\em Bull. Amer. Math. Soc. (N.S.)}, {\bf 46}, 377--396.

\bibitem{Tropp} Tropp, J. (2010).
  \newblock User-friendly tail bounds for sums of random matrices,
  submitted.

\bibitem{Vempala} Vempala, S. (2005).
  \newblock Geometric random walks: a survey, 
  in {\em Combinatorial and computational geometry},  pp. 577--616. 
  Math. Sci. Res. Inst. Publ., 52.
  Cambridge: Cambridge University Press.

\bibitem{Vershynin John} Vershynin, R. (2001).
  \newblock John's decompositions: selecting a large part,
  {\em Israel Journal of Mathematics}, {\bf 122}, 253--277.

\bibitem{V frames} Vershynin, R. (2005).
  \newblock Frame expansions with erasures: an approach through the non-commutative operator theory,  
  {\em Appl. Comput. Harmon. Anal.}, {\bf 18}, 167--176.
  
\bibitem{V marginals} Vershynin, R. (2010).
  \newblock Approximating the moments of marginals of high-dimensional distributions, 
  {\em Annals of Probability}, to appear.


\bibitem{V covariance} Vershynin, R. (2010).
  \newblock How close is the sample covariance matrix to the actual covariance matrix?,
  {\em Journal of Theoretical Probability}, to appear. 
  
\bibitem{Vu} Vu, V. (2007).
  \newblock Spectral norm of random matrices,
  {\em Combinatorica}, {\bf 27}, 721--736.
  
\bibitem{YBK} Yin, Y. Q., Bai, Z. D. and Krishnaiah, P. R. (1998).
  \newblock On the limit of the largest eigenvalue of the large-dimensional sample covariance matrix,
  {\em Probab. Theory Related Fields}, {\bf 78}, 509--521.

\end{thebibliography}
\end{document}